\theoremstyle{plain}
\newtheorem{thm}{Theorem}[section]
\newtheorem{lem}[thm]{Lemma}
\newtheorem{cor}[thm]{Corollary}
\newtheorem{prop}[thm]{Proposition}
\newtheorem{amplif}[thm]{Amplification}
\newtheorem{ansatz}[thm]{Ansatz}
\theoremstyle{definition}
\newtheorem{rem}[thm]{Remark}
\newtheorem{defn}[thm]{Definition}
\newtheorem{ex}[thm]{Example}
\newcommand{\customlabel}[2]{%
\protected@write \@auxout {}{\string \newlabel {#1}{{#2}{}}}}
\newcommand{\bA}{\mathbf{A}}
\newcommand{\cA}{\mathcal{A}}
\newcommand{\cB}{\mathcal{B}}
\newcommand{\fB}{\mathfrak{B}}
\newcommand{\bC}{\mathbf{C}}
\newcommand{\cC}{\mathcal{C}}
\newcommand{\cD}{\mathcal{D}}
\newcommand{\cE}{\mathcal{E}}
\newcommand{\bG}{\mathbf{G}}
\newcommand{\cH}{\mathcal{H}}
\newcommand{\cI}{\mathcal{I}}
\newcommand{\bL}{\mathbf{L}}
\newcommand{\cL}{\mathcal{L}}
\newcommand{\cN}{\mathcal{N}}
\newcommand{\fN}{\mathfrak{N}}
\newcommand{\cO}{\mathcal{O}}
\newcommand{\bP}{\mathbf{P}}
\newcommand{\bR}{\mathbf{R}}
\newcommand{\cT}{\mathcal{T}}
\newcommand{\fU}{\mathfrak{U}}
\newcommand{\cV}{\mathcal{V}}
\newcommand{\fX}{\mathfrak{X}}
\newcommand{\bZ}{\mathbf{Z}}
\renewcommand{\bR}{{\mathbb R}}
\renewcommand{\bC}{{\mathbb C}}
\renewcommand{\bZ}{{\mathbb Z}}
\newcommand{\op}[1]{\!\!\mathop{\rm ~#1}\nolimits}
\renewcommand{\bP}{\mathbb{P}}
\renewcommand{\bA}{\mathbb{A}}
\newcommand{\pt}{\ast}
\newcommand{\sh}[1]{\mathcal{#1}}
\newcommand{\st}[1]{\mathfrak{#1}}
\newcommand{\lie}[1]{\mathfrak{#1}}
\newcommand{\oneps}{1-PS}
\renewcommand{\bG}{\mathbb{G}}
\newcommand{\inner}[1]{{\underline{#1}}}
\newcommand{\dual}{\vee}
\newcommand{\X}{\st{X}}
\renewcommand{\S}{\st{S}}
\newcommand{\Z}{\st{Z}}
\newcommand{\V}{\st{V}}
\newcommand{\coh}[1]{\op{Coh}(#1)}
\newcommand{\qcoh}[1]{\op{QCoh}(#1)}
\newcommand{\radj}[1]{\beta_{\geq #1}}
\newcommand{\ladj}[1]{\beta_{< #1}}
\newcommand{\G}{\mathbf{G}}
\newcommand{\D}{\op{D}}
\newcommand{\bdot}{{\begin{picture}(4,4)\put(2,3){\circle*{1.5}}\end{picture}}}
\newcommand{\colim}{\underrightarrow{\lim}}
\newcommand{\parr}{\dashrightarrow}
\newcommand{\gd}{\mathbb{D}}
\newcommand{\coNorm}{{\mathfrak N}^\dual}
\newcommand{\Gm}{\mathbb{G}_m}
\begin{document}

\title{The derived category of a GIT quotient}
\author{Daniel Halpern-Leistner}

\begin{abstract}
Given a quasiprojective algebraic variety with a reductive group action, we describe a relationship between its equivariant derived category and the derived category of its geometric invariant theory quotient. This generalizes classical descriptions of the category of coherent sheaves on projective space and categorifies several results in the theory of Hamiltonian group actions on projective manifolds.

This perspective generalizes and provides new insight into examples of derived equivalences between birational varieties. We provide a criterion under which two different GIT quotients are derived equivalent, and apply it to prove that any two generic GIT quotients of an equivariantly Calabi-Yau projective-over-affine manifold by a torus are derived equivalent.

\bigskip
\begin{flushright}
\textit{Dedicated to Ernst Halpern, who inspired my scientific pursuits.}
\end{flushright}
\end{abstract}

\maketitle
\tableofcontents



\section{Introduction}

We describe a relationship between the derived category of equivariant coherent sheaves on a smooth projective-over-affine variety, $X$, with a linearizable action of a reductive group, $G$, and the derived category of coherent sheaves on a GIT quotient, $X//G$, of that action. Our main theorem connects three classical circles of ideas:
\begin{itemize}
\item Serre's description of quasicoherent sheaves on a projective variety in terms of graded modules over its homogeneous coordinate ring,
\item Kirwan's theorem that the canonical map $H_G^\ast (X) \to H^\ast(X//G)$ is surjective \cite{Ki84}, and
\item the ``quantization commutes with reduction'' theorem from geometric quantization theory equating $h^0(X,\sh{L})^G$ with $h^0(X//G,\sh{L})$ when the linearization $\sh{L}$ descends to the GIT quotient \cite{Te00}.
\end{itemize}

Let us recall the construction of a GIT quotient. A $G$-linearized ample line bundle $\sh{L}$ defines an open semistable locus $X^{ss} \subset X$, defined to be the complement of the base locus of invariant global sections of $\sh{L}^k$ for $k\gg 0$. We denote the quotient stack $\X = X/G$, and in this paper the term ``GIT quotient" will refer to the quotient stack $\X^{ss} = X^{ss}/G$, as opposed to the coarse moduli space of $X^{ss}/G$.

In order to state the main theorem, we will need to recall the equivariant ``Kempf-Ness (KN) stratification" of $X \setminus X^{ss}$ by connected locally-closed subvarieties \cite{DH98}. We formally define a KN stratification and discuss its properties in Section \ref{sect_main_result}. The stratification is determined by a set of \emph{distinguished one-parameter subgroups}, $\lambda_i : \Gm \to G$, and open subvarieties of the fixed locus of $\lambda_i$ denoted $\sigma_i : Z_i \hookrightarrow X$. We will also define integers $\eta_i \geq 0$ in \eqref{eqn:define_eta}. Because $Z_i$ is fixed by $\lambda_i$, the restriction of an equivariant coherent sheaf $\sigma_i^\ast F$ is graded with respect to the weights of $\lambda_i$.

We denote the bounded derived category of coherent sheaves on $\X$ by $\D^b(\X)$, and likewise for $\X^{ss}$.\footnote{On a technical note, all of the categories in this paper will be pre-triangulated dg-categories, so $\D^b(\X)$ denotes a dg-enhancement of the triangulated category usually denoted $\D^b(\X)$. However, all of the results will be statements that can be verified on the level of homotopy categories, such as semiorthogonal decompositions and equivalences of categories, so we will often write proofs on the level of the underlying triangulated category.} Restriction gives an exact dg-functor $i^\ast : \D^b(X/G) \to \D^b(X^{ss}/G)$, and in fact any bounded complex of equivariant coherent sheaves on $X^{ss}$ can be extended equivariantly to $X$. The main result of this paper is the construction of a \emph{functorial} splitting of $i^\ast$.
\begin{thm}[categorical Kirwan surjectivity, preliminary statement]\label{thm_kirwan_surj_prelim}
Let $X$ be a smooth projective-over-affine variety with a linearized action of a reductive group $G$, and let $\X = X/G$. Specify an integer $w_i$ for each KN stratum of the unstable locus $\X \setminus \X^{ss}$. Define the full subcategory of $\D^b(\X)$
$$\G_w := \left\{ F^\bdot \in \D^b(\X) \left| \lambda_i \text{-weights of }\cH^\ast (L\sigma_i^\ast F^\bdot ) \text{ lie in } [w_i, w_i + \eta_i ) \right. \right\}$$
Then the restriction functor $i^\ast : \G_w \to \D^b(\X^{ss})$ is an equivalence of categories.
\end{thm}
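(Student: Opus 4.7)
The plan is to proceed by induction on the KN stratification, reducing to the case of a single closed stratum, then exploit the local geometry of that stratum to produce an infinite semiorthogonal decomposition whose ``window'' of width $\eta_i$ carves out the desired splitting.

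\textbf{Induction on strata.} I would first order the strata $S_1, \ldots, S_N$ so that $\bigcup_{i \geq j} S_i$ is closed in $\X$ for every $j$, a standard property of KN stratifications. Setting $\X^{(j)} := \X \setminus \bigcup_{i<j} S_i$, one has $\X^{(1)} = \X$ and $\X^{(N+1)} = \X^{ss}$, with $S_j$ closed in $\X^{(j)}$ and open complement $\X^{(j+1)}$. Writing $\G_w^{(j)} \subset \D^b(\X^{(j)})$ for the full subcategory defined by the weight condition for the remaining strata $\{S_j, \ldots, S_N\}$, it suffices to show that restriction $\G_w^{(j)} \to \G_w^{(j+1)}$ is an equivalence for each $j$. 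This reduces the problem to a single-stratum statement: for a closed KN stratum $S \hookrightarrow \X'$ with fixed locus $\sigma : Z \hookrightarrow S$, one-parameter subgroup $\lambda$ and window width $\eta$, restriction from the full subcategory of $\D^b(\X')$ whose objects have $\lambda$-weights in $[w, w+\eta)$ on $L\sigma^\ast$ to $\D^b(\X' \setminus S)$ is an equivalence.

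\textbf{Local geometry and weight decomposition.} The stratum has the form $S/G \simeq Y/P$, where $Y$ is the Białynicki-Birula attracting locus of $Z$ under $\lambda$ and $P$ is the associated parabolic with Levi $L = Z_G(\lambda)$. Because $Y \to Z$ is an affine bundle with strictly negative $\lambda$-weights on the fibres, an equivariant sheaf on $S/G$ is determined by its $\lambda$-graded restriction to $Z/L$; combined with the similarly negative weights of $N^\dual_{S/\X'}|_Z$, this would yield an infinite semiorthogonal decomposition
$$\D^b_S(\X') = \bigl\langle \ldots, \cC_{w-1}, \cC_w, \cC_{w+1}, \ldots \bigr\rangle$$
of the category of complexes supported on $S$, where each $\cC_k$ is equivalent to $\D^b(Z/L)$ and consists of objects whose derived restriction to $Z$ is pure of $\lambda$-weight $k$. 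The window width $\eta$ would be exactly the $\lambda$-weight of $\det N^\dual_{S/\X'}|_Z$, the total amount by which the conormal Koszul complex spreads a single weight.

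\textbf{Window equivalence.} To prove fully faithfulness of $i^\ast$ on $\G_w$, I would analyze the cone of $\mathrm{RHom}(F, G) \to \mathrm{RHom}(i^\ast F, i^\ast G)$ for $F, G \in \G_w$: it is controlled by $\mathrm{RHom}$ against objects of $\D^b_S(\X')$, so by the decomposition above it suffices to verify $\mathrm{RHom}(F, \cC_k) = 0 = \mathrm{RHom}(\cC_k, G)$ for every $k$. These vanishings reduce, via adjunction and local duality on $Z$, to an orthogonality between pure weights $k$ and the $\lambda$-weights of $L\sigma^\ast F$ lying in $[w, w+\eta)$, with the $\eta$-weight shift supplied by the conormal determinant ensuring the window exactly fills the gap. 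For essential surjectivity, given $F_0 \in \D^b(\X' \setminus S)$, I would extend it to some $\widetilde F \in \D^b(\X')$ (routine for equivariant coherent complexes on a projective-over-affine ambient) and then iteratively apply the baric truncation functors from the previous step to modify $\widetilde F$ by objects of $\D^b_S(\X')$ until its weights lie in $[w, w+\eta)$.

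\textbf{Main obstacle.} The substantive difficulty lies in Step 2: rigorously constructing the semiorthogonal decomposition of $\D^b_S(\X')$ together with its truncation functors, and promoting these to endofunctors of $\D^b(\X')$ preserving the support condition. Concretely, one must produce an ``induction'' functor $\D^b(Z/L) \to \cC_k \subset \D^b(\X')$, essentially pushforward along $Z \hookrightarrow Y \to S \hookrightarrow \X'$ with a specified weight shift, and verify semiorthogonality through a careful Koszul calculation: the non-trivial $\lambda$-weights of the conormal Koszul complex are precisely $1, 2, \ldots, \eta$, and this numerology is exactly what forces the window width to match $\eta_i$. Once this structural piece is in place, both parts of Step 3 follow formally.
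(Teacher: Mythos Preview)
Your overall strategy matches the paper's closely: reduce by induction to a single closed stratum, build a weight (baric) decomposition of $\D^b_\S(\X')$, use it to establish a three-term semiorthogonal decomposition $\D^b(\X')=\langle \D^b_\S(\X')_{<w},\G_w,\D^b_\S(\X')_{\geq w}\rangle$, and read off the equivalence $\G_w\simeq\D^b(\X'\setminus\S)$. You also correctly locate the main technical work in constructing the truncation functors and promoting them from $\D^b_\S(\X')$ to $\D^b(\X')$; in the paper this is done via the cotangent-complex positivity Lemma and the Koszul-system construction.

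There is one genuine slip in your fully-faithfulness step. The claim that $R\mathrm{Hom}(F,\cC_k)=0=R\mathrm{Hom}(\cC_k,G)$ for \emph{every} $k$ is false: for $F,G\in\G_w$ one has $R\mathrm{Hom}(F,\cC_k)=0$ only for $k<w$ (from $\sigma^\ast F$ having weights $\geq w$) and $R\mathrm{Hom}(\cC_k,G)=0$ only for $k\geq w$ (from $\sigma^\ast j^!G$ having weights $<w$). These complementary vanishings are what you actually need, but they do not by themselves show that $R\mathrm{Hom}(F,R\underline{\Gamma}_\S G)=0$: you must first argue that $R\underline{\Gamma}_\S G$ lies in the ind-completion of $\D^b_\S(\X')_{<w}$ (because $G$ is right-orthogonal to $\D^b_\S(\X')_{\geq w}$), and then use compactness of $F$ together with $R\mathrm{Hom}(F,\cC_k)=0$ for $k<w$. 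This is exactly the content of the paper's Quantization Theorem, and it genuinely requires the baric truncations on $\D_{qc,\S}$ rather than just the list of generators $\cC_k$. (A minor unrelated point: your indexing in the induction is inconsistent---if $\bigcup_{i\geq j}S_i$ is closed for every $j$, then $S_N$, not $S_1$, is the closed stratum you should peel off first.)
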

\begin{rem}
The general version, described in Section \ref{sect_main_result}, identifies $\G_w$ as piece of a semiorthogonal decomposition of $\D^b(\X)$, and it applies to any (possibly singular) stack $\X$ such that $\X \setminus \X^{ss}$ admits a KN stratification (Definition \ref{def_KN_stratification}) satisfying Properties \hyperref[property_L_plus]{(L+)} and \hyperref[property_A]{(A)}.
\end{rem}

The simplest example of Theorem \ref{thm_kirwan_surj_prelim} is familiar to many mathematicians: projective space $\bP(V)$ can be thought of as a GIT quotient of $V/\Gm$. Theorem \ref{thm_kirwan_surj_prelim} identifies $\D^b(\bP(V))$ with the full triangulated subcategory of the derived category of equivariant sheaves on $V$ generated by $\cO_V(q),\cdots,\cO_V(q+\dim V-1)$. In particular the semiorthogonal decompositions described in Section \ref{sect_baric_decomp} refine and provide an alternative proof of Beilinson's theorem that the line bundles $\cO_{\bP(V)}(1), \ldots, \cO_{\bP(V)}(\dim V)$ generate $\D^b(\bP(V))$.

Serre's theorem deals with the situation in which $G =\Gm$, $X$ is an affine cone, and the unstable locus consists only of the cone point -- in other words one is studying a connected, positively graded $k$-algebra $A$. The category of quasicoherent sheaves on $\op{Proj}(A)$ can be identified with the full subcategory of the category of graded $A$-modules graded in degree $\geq q$ for any fixed $q$. This classical result has been generalized to noncommutative $A$ by M. Artin and J. J. Zhang \cite{AZ94}. D. Orlov studied the derived category and the category of singularities of such an algebras in great detail in \cite{Or09}, and much of the technique of the proof of Theorem \ref{thm_kirwan_surj_prelim} derives from that paper.

In the context of equivariant K\"{a}hler geometry, one can consider Theorem \ref{thm_kirwan_surj_prelim} as a categorification of Kirwan surjectivity. Kirwan surjectivity applies to topological $K$-theory in addition to cohomology \cite{HL07}, and one immediate corollary of Theorem \ref{thm_kirwan_surj_prelim} is an analogous statement for algebraic $K$-theory.
\begin{cor}
The restriction map on algebraic $K$-theory $K_i(\X) \to K_i(\X^{ss})$ is surjective.
\end{cor}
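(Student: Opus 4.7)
The plan is to observe that Theorem~\ref{thm_kirwan_surj_prelim} already provides a functorial splitting of the restriction functor $i^\ast$ at the dg-categorical level, and then pass to algebraic $K$-theory.

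First, fix any collection of integers $w = (w_i)$ (for instance $w_i = 0$). By Theorem~\ref{thm_kirwan_surj_prelim}, the composite
\[
\G_w \inj \D^b(\X) \xrightarrow{\;i^\ast\;} \D^b(\X^{ss})
\]
is an equivalence of pre-triangulated dg-categories. Choosing a quasi-inverse and post-composing with the full inclusion $\G_w \inj \D^b(\X)$ produces an exact dg-functor $s : \D^b(\X^{ss}) \to \D^b(\X)$ with $i^\ast \circ s \simeq \id_{\D^b(\X^{ss})}$.

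Because $X$ is smooth and $G$ acts on it, both $\X = X/G$ and its open substack $\X^{ss}$ are smooth quotient stacks. Hence their bounded derived categories of coherent sheaves agree with the dg-categories of perfect complexes, and the Waldhausen $K$-theory of $\D^b(\X)$ computes $K_\ast(\X)$, and similarly for $\X^{ss}$. Applying $K_i$ to the relation $i^\ast \circ s \simeq \id$ yields
\[
K_i(i^\ast) \circ K_i(s) = \id_{K_i(\X^{ss})},
\]
which exhibits $K_i(\X) \surj K_i(\X^{ss})$ as a split surjection with explicit section $K_i(s)$.

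The argument is entirely formal once Theorem~\ref{thm_kirwan_surj_prelim} is granted, so the substantive work has already been done. The only mildly technical point to verify is the identification of the algebraic $K$-theory of the stack with the Waldhausen $K$-theory of its perfect dg-category, which is standard for smooth quotient stacks and is precisely why dg-enhancements were highlighted in the footnote preceding the theorem. In particular the same reasoning upgrades the statement to a retraction at the level of $K$-theory spectra, $K(\X^{ss})$ being a direct summand of $K(\X)$.
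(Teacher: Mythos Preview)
Your argument is correct and is precisely the intended one: the paper presents this corollary as an immediate consequence of Theorem~\ref{thm_kirwan_surj_prelim} without spelling out a proof, and the deduction you give (pass the dg-categorical splitting $i^\ast \circ s \simeq \id$ through Waldhausen $K$-theory, using smoothness to identify $\D^b$ with $\op{Perf}$) is exactly what is meant. Your observation that this in fact yields a spectrum-level retraction is a nice sharpening.
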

\noindent In a follow up paper, we will describe more precisely how to recover cohomological Kirwan surjectivity from \ref{thm_kirwan_surj_prelim} as well.

The fully faithful embedding $\D^b(\X^{ss}) \subset \D^b(\X)$ of Theorem \ref{thm_kirwan_surj_prelim} and the more precise semiorthogonal decomposition of Theorem \ref{thm_kirwan_surj} correspond, via Orlov's analogy between derived categories and motives \cite{Or05}, to the claim that the motive $\X^{ss}$ is a summand of $\X$. Via this analogy, the results of this paper bear a strong formal resemblance to the motivic direct sum decompositions of homogeneous spaces arising from Bia{\l}ynicki-Birula decompositions \cite{Br05}. However, the precise analogue of Theorem \ref{thm_kirwan_surj_prelim} would pertain to the equivariant motive $X/G$, whereas the results of \cite{Br05} pertain to the nonequivariant motive $X$.

The ``quantization commutes with reduction'' theorem from geometric quantization theory relates to the fully-faithfulness of the functor $i^\ast$. The original conjecture of Guillemin and Sternberg, that $\dim H^0(X/G,\sh{L}^k) = \dim H^0(X^{ss}/G,\sh{L}^k)$, has been proven by several authors, but the most general version was proven by Teleman in \cite{Te00}. He shows that the canonical restriction map induces an isomorphism $R\op{\Gamma}(X/G,\sh{V}) \to R\op{\Gamma}(X^{ss}/G,\sh{V})$ for any equivariant vector bundle such that $\sh{V}|_{Z_i}$ is supported in weight $>-\eta_i$. If $\sh{V}_1$ and $\sh{V}_2$ are two vector bundles such that the $\lambda_i$-weights of $\sh{V}|_{Z_i}$ lie in $[w_i,w_i+\eta_i)$, then the fact that $R\op{Hom}^\bdot_\X(\sh{V}_1,\sh{V}_2) \to R\op{Hom}^\bdot_{\X^{ss}} (\sh{V}_1|_{\X^{ss}}, \sh{V}_2|_{\X^{ss}})$ is an isomorphism is precisely Teleman's quantization theorem applied to $\sh{V}_2 \otimes \sh{V}_1^\dual \simeq R\inner{\op{Hom}}(\sh{V}_1,\sh{V}_2)$.

In Section \ref{sect_var_GIT}, we apply Theorem \ref{thm_kirwan_surj_prelim} to construct new examples of derived equivalences and embeddings resulting from birational transformations, as conjectured by Bondal \& Orlov \cite{BO02}. The $G$-ample cone in $NS_G^1(X)$ has a decomposition into convex conical chambers within which the GIT quotient $\X^{ss}(\cL)$ does not change \cite{DH98}, and $\X^{ss}(\cL)$ undergoes a birational transformation as $[\cL]$ crosses a wall between chambers. Categorical Kirwan surjectivity provides a general approach to constructing derived equivalences between the quotients on either side of the wall: in some cases both quotients can be identified by Theorem \ref{thm_kirwan_surj_prelim} with the \emph{same} subcategory of $\D^b(X/G)$. This principle is summarized in Ansatz \ref{ansatz_flop}.

For a certain class of wall crossings, \emph{balanced wall crossings} (Definition \ref{def_balanced_wall_crossing}), there is a simple criterion for when one gets an equivalence or an embedding in terms of the weights of $\omega_X|_{Z_i}$. When $G=T$ is abelian, all codimension-1 wall crossings are balanced. In particular we are able to prove that any two generic torus quotients of an equivariantly Calabi-Yau variety are derived equivalent. For nonabelian $G$, we consider a slightly larger class of \emph{almost balanced wall crossings}. We produce derived equivalences for flops which excise a Grassmannian bundle over a smooth variety and replace it with the dual Grassmannian bundle, recovering recent work of Will Donovan and Ed Segal \cite{Do11,DS12}.

Finally, in Section \ref{sect_further_applications} we investigate applications of Theorem \ref{thm_kirwan_surj} beyond smooth quotient stacks $X/G$. We identify a criterion under which Property \hyperref[property_L_plus]{(L+)} holds for a KN stratification, and apply it to hyperk\"{a}hler reductions. We also explain how the Morita theory of \cite{BFN10} recovers derived Kirawn surjectivity for certain complete intersections and derived categories of singularities (equivalently categories of matrix factorizations) ``for free" from the smooth case.

The inspiration for Theorem \ref{thm_kirwan_surj_prelim} were the grade restriction rules for the category of boundary conditions for B-branes of Landau-Ginzburg models studied by Hori, Herbst, and Page in \cite{HHP08}, as interpreted mathematically by Segal \cite{Se11}. The essential idea of splitting was present in that paper, but the analysis was only carried out for a linear action of $\Gm$, and the category $\G_w$ was identified in an ad-hoc way. The main contribution of this paper is showing that the splitting can be globalized and applies to arbitrary $X/G$ as a categorification of Kirwan surjectivity, and that the categories $\G_w$ arise naturally via the semiorthogonal decompositions to be described in the next section.

\subsection{Author's note}

I would like to thank my PhD adviser Constantin Teleman for introducing me to his work \cite{Te00}, and for his support and useful comments throughout this project. I would like to thank Daniel Pomerleano for many enlightening conversations, and for explaining how to recover derived categories of singularities using Morita theory. I'd like to thank Anatoly Preygel for useful conversations about derived algebraic geometry and for carefully reviewing section \ref{sect_further_applications}. Finally, I'd like to thank Yujiro Kawamata for suggesting that I apply my methods to hyperk\"{a}hler reduction and flops of Grassmannian bundles, and Kentaro Hori for carefully reviewing my work and discovering some mistakes in the first version of this paper.

The problems studied in this paper overlap greatly with the work \cite{BFK12}, although the projects were independently conceived and carried out. I learned about \cite{BFK12} at the January 2012 Conference on Homological Mirror Symmetry at the University of Miami, where the authors presented a method for constructing equivalences between categories of matrix factorizations of toric LG models. In the finished version of their paper, they also treat the general VGIT for smooth quotients $X/G$, and present several new applications. Here we work in slightly more generality and emphasize the categorification of Kirwan surjectivity, as well as some applications to hyperk\"{a}hler quotients. We hope that the different perspectives brought to bear on the subject will be useful in elucidating further questions.

\subsection{Notation}
We will work over an algebraically closed field of characteristic zero, although we expect that some version of these results hold in characteristic $p$ as well. We will denote schemes with Roman font, $X,S,Z$, etc. Throughout the paper, most of the schemes we encounter will have a specified action of an algebraic group, in which case we denote the corresponding quotient stacks $\X,\S, \Z$ respectively. Sheaves will be denoted with Roman font as well, and sheaves of algebras will be denoted $\cO,\cA$, etc.

For stacks and schemes, we will work with several variants of the derived category. We use cohomological indexing conventions, so $F^\bdot = \cdots \to F^i \to F^{i+1} \to \cdots$. $\D^b(\X)$ will denote the derived category of $\cO_\X$ modules with coherent, bounded cohomology. $\op{Perf}(\X) \subset \D^b(\X)$ will denote the subcategory of perfect complexes. $\D_{qc}(\X)$ will denote the unbounded derived category of complexes with quasicoherent cohomology, and $\D^+(\X)$ (resp. $\D^-(\X)$) will denote the category with coherent, bounded below (resp. above) cohomology. The category $\D^b_\S(\X) \subset \D^b(\X)$ will denote the category of complexes whose cohomology sheaves are set-theoretically supported on $\S$ (after restricting to a local atlas on $\X$).

Unless otherwise specified, all functors between derived categories are derived. For instance if $j : \S \to \X$ is a morphism of stacks, $j^\ast$ denotes the derived pullback $Lj^\ast$. At times we will revert to the classical notation $Lj^\ast, R\pi_\ast$, etc. to prevent confusion with the non-derived versions of the functors. We underline functors to denote the sheaf-theoretic version, so $\inner{\op{Hom}}(E^\bdot,F^\bdot)$ denotes the derived sheaf-Hom, and $\inner{\op{\Gamma}}_S(F^\bdot)$ denotes the derived subsheaf with supports.

We will sometimes make use of the standard $t$-structures on $\D_{qc}(\X)$ and its variants, which we denote with a superscript $\D_{qc}(\X)^{\leq p}$ and $\D_{qc}(\X)^{\geq p}$. We will also encounter subcategories of $\D^b(\X)$ defined by various ``weight conditions," which we denote using subscripts such as $\D^b(\X)_{\geq w}$ and $\D^b(\X)_{<w}$.


\section{The main theorem}
\label{sect_main_result}

First we shall review the properties of the Kempf-Ness (KN) stratification of the unstable locus in geometric invariant theory and establish notation. Then we will formulate and lay out the proof of Theorem \ref{thm_kirwan_surj_prelim}. The key technical results that comprise the proof will be treated separately in Section \ref{sect_baric_decomp}.

\subsection{Equivariant stratifications in GIT}
First let us recall the construction of the KN stratification of a projective-over-affine variety $X \subset \bP^n \times \bA^m$ invariant with respect to the action of a reductive group $G$. We let $\cL := \cO_X(1)$ with a chosen linearization. We fix an inner product on the cocharacter lattice of $G$ which is invariant under the Weil group. This allows us to define a conjugation-invariant norm $|\lambda|$ of any one-parameter subgroup (\oneps) such that $|\lambda| > 0$ for any nontrivial \oneps.

For any \oneps, $\lambda : \Gm \to G$, the \emph{blade} associated to a connected component $Z \subset X^\lambda$ is defined to be the locally closed subvariety
\begin{equation} \label{eqn_define_attracting_set}
Y_{\lambda , Z} := \{ x \in X | \lim \limits_{t \to 0} \lambda(t) \cdot x \in Z \} \subset X.
\end{equation}
The natural projection map $\pi : Y_{\lambda,Z} \to Z$ is affine with connected fibers, hence $Y_{\lambda,Z}$ is connected. $g \in G$ acts on the set of such pairs $(\lambda, Z)$ by $g \cdot (\lambda,Z) = (g \lambda g^{-1},g Z)$. Up to this action we can assume that $\lambda$ factors through a fixed choice of maximal torus of $G$, and the set of possible $Z$ appearing in such a pair is finite.

One constructs the KN stratification iteratively by selecting a pair $(\lambda_i,Z^\ast)$ which maximizes the numerical invariant
$$\mu(\lambda,Z) := \frac{-1}{|\lambda|} \op{weight}_\lambda \cL|_Z \in \bR$$
among those $(\lambda,Z)$ for which $Z$ is not contained in the union of the previously defined strata. One defines the open subvariety $Z_i \subset Z^\ast$ to consist of those points not lying on previously defined strata, and the blade $Y_i := \pi^{-1} (Z_i) \subset Y_{\lambda_i,Z^\ast}$. Finally we define the new stratum to be $S_i = G \cdot Y_i$. We repeat this process until there are no pairs with $\mu(\lambda,Z) >0$ in which $Z$ is not contained in the union of the previously defined strata.

By construction, the stratification is preordered by the value of the numerical invariant $\mu$, but for clarity we will always choose a refinement to a total ordering by integers.\footnote{Alternatively, one could index the stratification by the values of $\mu$ by defining $Y_\mu$ to be the union of all of the blades with a fixed numerical invariant and specifying different distinguished \oneps's for each connected component. The arguments and results of this paper are essentially unmodified by allowing such disconnected strata.} It is a non-trivial fact that the iterative procedure outlined above produces strata such that
$$\bar{S_i} \setminus S_i \text{ and } \bar{Y_i} \setminus Y_i \subset \bigcup_{j > i} S_j,$$
so that the Hilbert-Mumford procedure leads to an ascending sequence of $G$-equivariant open subvarieties $X^{ss}:= X_0 \subset X_1 \subset \cdots \subset X_n = X$. It is evident that the stratification of $\bP^n \times \bA^m$ induces the stratification of $X$, and in fact the Hilbert-Mumford procedure can fail to produce such a stratification if $X$ is not projective-over-affine.

For each $i$ we define the subgroup $L_i \subset G$ to be the set of $l \in G$ which centralize $\lambda_i$ and such that $l (Z_i) \subset Z_i$.\footnote{In general any $g \in G$ which commutes with $\lambda_i$ must permute the connected components of $X^{\lambda_i}$; however if $G$ is connected, then the centralizer of any \oneps\ is connected, so the condition that $l(Z_i) \subset Z_i$ is unnecessary. In this case if $\lim_{t\to 0} \lambda_i(t) p \lambda_i(t)^{-1}$ exists, then it must be in $L_i$.} Likewise, we define the parabolic subgroup $P_i \subset G$ of all $p \in G$ such that $\lambda_i(t) p \lambda_i(t)^{-1}$ has a limit in $L_i$ as $t\to 0$. $L_i$ is a Levi component of $P_i$, so we have the semidirect product sequence
\begin{equation} \label{eqn_parabolic_def}
\xymatrix{ 1 \ar[r] & U_i \ar[r] & P_i \ar[r] & L_i \ar[r] \ar@/_/[l] & 1 }
\end{equation}
where $U_i \subset P_i$ is the unipotent radical. The locally closed subvarieties $S_i$ enjoy some special properties with respect to $\lambda_i$ (see \cite{Ki84}, \cite{DH98} and the references therein):
\begin{enumerate}[label=(S\arabic{*})]

\item $Z_i$ is fixed by $\lambda_i$ and equivariant with respect to $L_i$. $Y_i$ is $P_i$-equivariant, and the canonical projection $\pi_i : Y_i \to Z_i$ given by $$\pi_i : x \mapsto \lim_{t \to 0} \lambda_i(t) \cdot x$$ is algebraic, affine, and equivariant in the sense that it intertwines the canonical quotient homomorphism $P_i \to L_i$.\label{property_S_1}\\

\item The canonical map $G \times_{P_i} Y_i \to G \cdot Y_i =: S_i$ is an isomorphism. \label{property_S_2}\\

\item The conormal sheaf $\coNorm_{S_i} X= \sh{I}_{S_i} / \sh{I}_{S_i}^2$ restricted to $Z_i$ has positive weights with respect to $\lambda_i$. \label{property_S_3} \\
\end{enumerate}

\begin{rem}
Properties \ref{property_S_1} and \ref{property_S_3} hold for any subvariety of the form $G \cdot Y_{\lambda,Z}$, where $Y_{\lambda,Z}$ is a blade defined as in \eqref{eqn_define_attracting_set}, so \ref{property_S_2} is the only property essential to the strata arising in GIT. Note also that when $G$ is a torus, then $L_i=P_i=G$ and $Y_i = S_i$ for all $i$, so \ref{property_S_2} is automatic, and the description of the stratification is much simpler.
\end{rem}

Due to the iterative construction of the KN stratification, it will suffice to analyze a single \emph{closed} stratum $S \subset X$. Our proof of the main theorem will be a simple induction from the case of a single closed stratum. We will simplify notation by dropping the index $i$ everywhere.

Property \ref{property_S_2} implies that as stacks the natural map $Y / P \to S/G$ is an equivalence, and we can therefore identify the category of $G$-equivariant quasicoherent sheaves on $S$ with the category of $P$-equivariant quasicoherent sheaves on $Y$. Explicitly, the equivalence is given by restricting a quasicoherent sheaf to $Y$ and remembering the $P$-equivariant structure. We will also use $j$ to denote the closed immersion of stacks $Y/P \hookrightarrow X/G$.

If we let $P$ act on $Z$ via the projection $P \to L$, then \ref{property_S_1} lets us identify $Y = \inner{\op{Spec}}_{Z} (\sh{A})$ where $\sh{A} = \sh{O}_{Z} \oplus \bigoplus_{i < 0} \sh{A}_i$ is a coherently generated $P$-equivariant $\sh{O}_{Z}$-algebra, nonpositively graded with respect to the weights of $\lambda$. Thus quasicoherent sheaves on the quotient stack $\S$ can further be identified with quasicoherent $P$-equivariant $\cA$-modules on $Z$. We will return to this description and study the category $\D^b(\S)$ in detail in Section \ref{sect_baric_decomp}.

\begin{defn}[KN stratification] \label{def_KN_stratification}
Let $X$ be a quasiprojective variety with a linearizable action of a reductive group $G$. A closed \emph{Kempf-Ness (KN) stratum} is a closed subvariety $S \subset X$ such that there is a $\lambda$ and an open-and-closed subvariety $Z \subset X^\lambda$ satisfying properties \ref{property_S_1}-\ref{property_S_3}. We will introduce standard names for the morphisms
\begin{equation} \label{eqn_label_arrows}
\xymatrix{Z \ar[r]^-{\sigma} & Y \subset S \ar[r]^-{j} \ar@/^/[l]^-{\pi} & X}
\end{equation}
If $X$ is not smooth along $Z$, we make the following technical hypothesis:
\begin{itemize}
\item[($\dagger$)] There is a $G$-equivariant closed immersion $X \subset X^\prime$ and a KN stratum $S^\prime \subset X^\prime$ such that $S$ is a union of connected components of $S^\prime \cap X$ and $X^\prime$ is smooth in a neighborhood of $Z^\prime$.
\end{itemize}
Let $X^u \subset X$ be a closed equivariant subvariety. A collection of locally closed subvarieties $S_i \subset X^u$, $i=1,\ldots,n$, will be called a \emph{KN stratification} if $X^u = \bigcup_i S_i$ and $S_i \subset X - \bigcup_{j > i} S_j$ is a closed KN stratum for all $i$.
\end{defn}

\begin{rem}
The technical hypothesis is only used for the construction of Koszul systems in Section \ref{sect_koszul_systems}. It is automatically satisfied for the GIT stratification of a projective-over-affine variety.
\end{rem}

In order to state our main theorem (Theorem \ref{thm_kirwan_surj} below), we will introduce two additional hypotheses on the KN strata:
\begin{itemize}
\setlength{\itemindent}{5pt}
\setlength{\itemsep}{5pt}
\item[($A$)] $\pi : Y \to Z$ is a locally trivial bundle of affine spaces, and \label{property_A}
\item[($L+$)] The derived restriction along the closed immersion $\sigma : Z \hookrightarrow S$ of the relative cotangent complex, $L\sigma^\ast \bL^\bdot_{S / X}$, has nonnegative weights w.r.t. $\lambda$. \label{property_L_plus}
\end{itemize}
We will use the construction of the cotangent complex in characteristic $0$ as discussed in \cite{Ma09}.

\begin{ex} \label{ex:Serre}
Let $X \subset \bP^n$ be a projective variety with homogeneous coordinate ring $A$. The affine cone $\op{Spec} A$ has $\Gm$ action given by the nonnegative grading of $A$ and the unstable locus is $Z=Y=S=$ the cone point. $\cO_S$ can be resolved as a semi-free graded dg-algebra over $A$, $(A[x_1,x_2,\ldots], d) \to \cO_S$ with generators of positive weight. Thus $\bL^\bdot_{S/Z} = \cO_S \otimes \Omega^1_{A[x_1,\ldots] / A}$ has positive weights. The Property \hyperref[property_A]{(A)} is automatic, as $Y=Z$.
\end{ex}

\begin{ex} \label{ex:cotangent_fail} Consider the graded ring $k[x_1,\ldots,x_n,y_1,\ldots,y_m] / (f)$ where the $x_i$ have positive degrees, the $y_i$ have negative degrees, and $f$ is a homogeneous polynomial such that $f(0)=0$. This corresponds to a linear action of $\Gm$ on an equivariant hypersurface $X_f$ in the affine space $\bA^n_x \times \bA^m_y$. Assume that we have chosen the linearization such that $S = \{0\} \times \bA^m_y \cap X_f$. One can compute
$$\bL^\bdot_{S/X_f} =  \left\{ \begin{array}{l} (\cO_S dx_1 \oplus \cdots \oplus \cO_S dx_n) [1], \text{ if } f \notin (x_1,\ldots,x_n) \\
(\cO_S f \to \cO_S dx_1 \oplus \cdots \oplus \cO_S dx_n)[1] \text{ if }f \in (x_1,\ldots,x_n) \end{array} \right.$$
where in the latter case the map is determined by $f \mapsto df \mod dy_1,\ldots, dy_m$.

Because the $x_i$ have positive degree, one sees that Property \hyperref[property_L_plus]{(L+)} fails if and only if $f \in (x_1,\ldots,x_n)$ and $\deg(f) <0$. Furthermore, Property \hyperref[property_A]{(A)} amounts to $S$ being an affine space, which happens iff  $\deg f \geq 0$, so that $S = \bA^m_y$, or $\deg f <0$ and the reduction of $f$ modulo $(x_1,\ldots,x_n)$ is linear in the $y_i$.
\end{ex}

\begin{rem}
In Example \ref{ex:cotangent_fail}, one could flip the linearization so that $S^\prime = \bA^n_x \times \{0\} \cap X_f$ is unstable, with distinguished \oneps\ $\lambda(t) = t^{-1}$. In order for Properties \hyperref[property_A]{(A)} and \hyperref[property_L_plus]{(L+)} to hold in both linearizations, there are only two possibilities: either $\deg f = 0$ or $\deg f < 0$ (resp. $\deg f>0$) and the reduction of $f$ modulo $(x_1,\ldots,x_n)$ (resp. $(y_1,\ldots,y_m)$) is linear.
\end{rem}

As Example \ref{ex:cotangent_fail} shows, Properties \hyperref[property_A]{(A)} and \hyperref[property_L_plus]{(L+)} can be fairly restrictive. Fortunately these properties hold automatically when $X$ is smooth.
\begin{lem} \label{lem:smooth_strata}
Let $X$ be smooth in a neighborhood of $Z$. Then $Z$, $Y$, and $S$ are smooth, and $\pi :Y \to Z$ is a bundle of affine spaces as in \hyperref[property_A]{(A)}. Furthermore $X$ is smooth in a neighborhood of $S$, and Property \hyperref[property_L_plus]{(L+)} holds automatically.
\end{lem}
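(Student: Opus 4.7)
The plan is to reduce everything to two inputs: the Bia{\l}ynicki--Birula theorem applied to the $\lambda$-action on the smooth variety $X$, and the identification $S \simeq G \times_P Y$ from \ref{property_S_2}. Once $S \hookrightarrow X$ is seen to be a regular closed immersion, Property \hyperref[property_L_plus]{(L+)} will drop out of \ref{property_S_3}.

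First I would treat $Z$ and $Y \to Z$. Since the $\Gm$-action via $\lambda$ is linearly reductive and $X$ is smooth near $Z$, a standard equivariant Luna-type argument (or $\Gm$-equivariant local linearization around fixed points) shows that $X^\lambda$ is smooth near $Z$, so the open-and-closed subvariety $Z \subset X^\lambda$ is smooth. At each $z \in Z$ the tangent space decomposes into $\lambda$-weight spaces $T_z X = (T_z X)^+ \oplus (T_z X)^0 \oplus (T_z X)^-$, with $(T_z X)^0 = T_z Z$. By Bia{\l}ynicki--Birula, the attracting set $Y$ is Zariski-locally (étale-locally suffices) trivial over $Z$ as the total space of the $\lambda$-equivariant vector bundle $(TX)^+|_Z \to Z$; in particular $\pi : Y \to Z$ is a bundle of affine spaces, establishing \hyperref[property_A]{(A)}, and $Y$ itself is smooth.

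Next, using \ref{property_S_2} write $S = G \times_P Y$. The projection $G \times_P Y \to G/P$ is a Zariski-locally trivial bundle with smooth fibre $Y$ over the smooth base $G/P$, so $S$ is smooth. To see that $X$ itself is smooth in a neighbourhood of $S$, note that the smooth locus $X^{\mathrm{sm}} \subset X$ is $G$-invariant and open, and contains an open neighbourhood $U$ of $Z$ by hypothesis. For any $y \in Y$, the path $\lambda(t)\cdot y$ lies in $U$ for $t$ small since its limit lies in $Z$, so $y \in \lambda(t)^{-1} U \subset G \cdot U$. Hence $Y \subset G\cdot U$, and therefore $S = G\cdot Y \subset G\cdot U \subset X^{\mathrm{sm}}$; since $G\cdot U$ is open, $X$ is smooth in a neighbourhood of $S$.

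Finally, for Property \hyperref[property_L_plus]{(L+)}: since $X$ and $S$ are both smooth in a neighbourhood of $S$, the closed immersion $j : S \hookrightarrow X$ is a regular embedding, so $\bL^\bdot_{S/X} \simeq \coNorm_{S/X}[1]$ as a locally free sheaf placed in degree $-1$. Restricting to $Z$ along $\sigma$ we get $L\sigma^\ast \bL^\bdot_{S/X} \simeq \sigma^\ast \coNorm_{S/X}[1]$, whose $\lambda$-weights are, by \ref{property_S_3}, strictly positive (the shift is irrelevant to the weight grading), hence certainly nonnegative. The only step with any subtlety is the invocation of Bia{\l}ynicki--Birula at the start, which is the reason everything goes through smoothly in characteristic zero; the rest is formal manipulation of \ref{property_S_1}--\ref{property_S_3}.
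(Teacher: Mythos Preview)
Your proof is correct and follows essentially the same approach as the paper: invoke Bia{\l}ynicki--Birula for $Z$ and $Y$, use \ref{property_S_2} for smoothness of $S$, argue that any $G$-invariant open neighbourhood of $Z$ contains $S$ to get smoothness of $X$ near $S$, and then deduce \hyperref[property_L_plus]{(L+)} from regularity of $j$ and \ref{property_S_3}. You simply spell out in more detail the step ``any $G$-equivariant open neighborhood of $Z$ contains $S$'' via the $\lambda$-flow argument, which the paper states without justification.
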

\begin{proof}
The fact that $Z$ is smooth and $\pi : Y \to Z$ is a bundle of affine spaces (hence smooth) is Bia{\l}ynicki-Birula's theorem. $S$ is smooth by \ref{property_S_2}. Any $G$-equivariant open neighborhood of $Z$ contains $S$, hence $X$ is smooth in a neighborhood of $S$. It follows that $j : S \hookrightarrow X$ is a regular embedding, so $\bL^\bdot_{S / X} \simeq \coNorm_S X [1]$ is locally free on $S$, and \hyperref[property_L_plus]{(L+)} follows from \ref{property_S_3}.
\end{proof}

In addition to Example \ref{ex:Serre}, we will study other singular examples where \hyperref[property_A]{(A)} and \hyperref[property_L_plus]{(L+)} hold in Section \ref{sect_further_applications}, where we apply our results to hyperk\"{a}hler reductions.

\subsection{Statement and proof of the main theorem}

As discussed in the introduction, we will consider a quasiprojective variety $X$ with a linearizable action of a reductive group $G$ and an open subvariety $X^{ss} \subset X$. We will use the symbol $\X$ to denote the quotient stack $X/G$, and likewise for $\X^{ss}$. We let $\{\S_i\}_{i = 1,\ldots,N}$ be a KN stratification (Definition \ref{def_KN_stratification}) of $\X^u = \X \setminus \X^{ss}$. As the statement of Theorem \ref{thm_kirwan_surj_prelim} indicates, we will construct a splitting of $\D^b(\X) \to \D^b(\X^{ss})$ by identifying a subcategory $\G_w \subset \D^b(\X)$ that is mapped isomorphically onto $\D^b(\X^{ss})$. In fact we will identify $\G_w$ as the middle factor in a large semiorthogonal decomposition of $\D^b(\X)$.

For each KN stratum, let $\sigma_i : Z_i \hookrightarrow S_i$ and $j_i : S_i \hookrightarrow X$ denote the respective inclusions. When it is clear from context, we will use $\sigma_i$ rather than $j_i \circ \sigma_i$ to denote the inclusion $Z_i \hookrightarrow X$. Recall the shriek pullback functor $j_i^! : \D^+(\X) \to \D^+(\S_i)$ which assigns $j_i^! F^\bdot = \inner{\op{Hom}}_\fU(\cO_{\S_i},F^\bdot|_\fU)$ regarded as an $\cO_{\S_i}$ module, where $\fU$ is an open substack containing $\S_i$ as a closed substack. 


\begin{defn}
For each KN stratum, choose an integer $w_i \in \bZ$, and denote the corresponding function $w : \{0,\ldots,N\} \to \bZ$. Define the full subcategories of $\D^b(\X)$:
\begin{align*}
\D^b_{\X^u}(\X)_{\geq w} &:= \{ F^\bdot \in \D^b_{\X^u}(\X) \left| \forall i , \lambda_i \text{-weights of } \cH^\ast (\sigma_i^\ast F^\bdot) \text{ are } \geq w_i \right. \} \\
\D^b_{\X^u}(\X)_{<w} &:= \{ F^\bdot \in \D^b_{\X^u}(\X) \left| \forall i, \lambda_i \text{-weights of } \cH^\ast (\sigma_i^\ast j_i^! F^\bdot) \text{ are } < w_i \right. \} \\
\G_w &:= \left\{ F^\bdot \left| \begin{array}{l} \forall i, \lambda_i \text{-weights of } \cH^\ast (\sigma_i^\ast F^\bdot) \text{ are } \geq w_i \text{, and} \\ \lambda_i \text{-weights of } \cH^\ast(\sigma_i^\ast j_i^! F^\bdot) \text{ are } < w_i \end{array} \right. \right\}
\end{align*}
We refer to the conditions defining $\G_w$ as a grade restriction rule.\footnote{In a large class of examples, the paper \cite{HHP09} defines subcategories of $\D^b(\X)$ which are described explicitly in terms of generating sets of invertible sheaves. Our $\G_w$ agree with those studied in the examples of \cite{HHP09}, so we have adopted the terminology ``grade restriction rule'' even though our definition of $\G_w$ is different.}
\end{defn}

When $X$ is smooth in a neighborhood of each $Z_i$, one can characterize $\G_w$ and $\D^b_{\X^u}(\X)_{<w}$ in terms of $\sigma_i^\ast F^\bdot$, which avoids the reference to the stratum $\S_i$. This will be useful when we apply categorical Kirwan surjectivity to a variation of GIT quotient in Section \ref{sect_var_GIT}. In Section \ref{sect_describe_categories} we will discuss further ways to describe these categories. 

By Lemma \ref{lem:smooth_strata}, $\bL^\bdot_{\S_i/\X}[-1] \simeq \coNorm_{S_i} X$ is a locally free sheaf when $X$ is smooth in a neighborhood of each $Z_i$. In this case $\det(\coNorm_{S_i} X)$ is an equivariant invertible sheaf and its restriction to $Z_i$ is concentrated in a single nonnegative weight with respect to $\lambda_i$ (it is $0$ iff $\coNorm_{S_i} X = 0$). We define
\begin{align} \label{eqn:define_eta}
\eta_i &:= \op{weight}_{\lambda_i} \det(\coNorm_{S_i} X)|_{Z_i} \\
&= \op{weight}_{\lambda_i} \det(\coNorm_{Y_i} X)|_{Z_i} - \op{weight}_{\lambda_i} \det(\lie{g}_{\lambda_i > 0}) \nonumber
\end{align} 
The second equality follows from three facts: the conormal sequence $0 \to \coNorm_{S_i} X \to \coNorm_{Y_i} X \to \coNorm_{Y_i} S_i \to 0$; Property \ref{property_S_2} implies that $\coNorm_{Y_i} S_i \simeq (\lie{g}_{\lambda_i < 0})^\dual$; and $\det(\lie{g}_{\lambda_i < 0})^\dual$ has the same $\lambda_i$ weight as $\det(\lie{g}_{\lambda_i > 0})$ because $\det(\lie{g})$ has weight $0$.

\begin{lem} \label{lem:smooth_strata_2}
If $X$ is smooth in a neighborhood of each $Z_i$, then
\begin{align*}
\D^b_{\X^u}(\X)_{<w} &:= \{ F^\bdot \in \D^b_{\X^u}(\X) \left| \forall i, \lambda_i \text{-weights of } \cH^\ast (\sigma_i^\ast F^\bdot) \text{ are } < w_i + \eta_i \right. \} \\
\G_w &:= \left\{ F^\bdot \left| \forall i, \lambda_i \text{-weights of } \cH^\ast (\sigma_i^\ast F^\bdot) \text{ lie in } [w_i, w_i+\eta_i) \right. \right\}
\end{align*}
\end{lem}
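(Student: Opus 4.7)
The identity we need amounts to a clean relation between $L\sigma_i^\ast j_i^! F^\bdot$ and $L\sigma_i^\ast F^\bdot$. Under the smoothness hypothesis, $j_i$ is a regular closed immersion, so $j_i^!$ differs from $Lj_i^\ast$ by tensoring with the line bundle $\det(\cN_{S_i/X})$, concentrated in a single $\lambda_i$-weight. Once this is established, the two weight reformulations drop out by a single shift.

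\textbf{Step 1 (regularity of $j_i$).} Lemma~\ref{lem:smooth_strata} says that any $G$-equivariant open neighborhood of $Z_i$ contains $S_i$, so $X$ is smooth in a neighborhood of $S_i$. Hence $j_i : S_i \hookrightarrow X$ is a regular closed immersion of some codimension $c_i$, and the conormal sheaf $\coNorm_{S_i} X$ is a $G$-equivariant locally free $\cO_{S_i}$-module of rank $c_i$.

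\textbf{Step 2 (fundamental local isomorphism for $j_i^!$).} For a regular closed immersion, the Koszul resolution of $\cO_{S_i}$ together with the standard calculation of $R\shOp{Hom}_X(\cO_{S_i},\cO_X)$ gives $j_i^!\cO_\X \simeq \det(\cN_{S_i/X})[-c_i]$, where $\cN_{S_i/X} = (\coNorm_{S_i} X)^\dual$. Since this object is perfect, the projection formula for $j_i^!$ yields a natural isomorphism
\[
j_i^! F^\bdot \;\simeq\; Lj_i^\ast F^\bdot \otimes \det(\cN_{S_i/X})[-c_i] \qquad \text{for all } F^\bdot \in \D^b(\X).
\]
Pulling back along $\sigma_i : Z_i \hookrightarrow S_i$ gives
\[
L\sigma_i^\ast j_i^! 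F^\bdot \;\simeq\; L\sigma_i^\ast F^\bdot \otimes \det(\cN_{S_i/X})\big|_{Z_i}[-c_i],
\]
where we have used the convention $\sigma_i^\ast = L\sigma_i^\ast \circ Lj_i^\ast$.

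\textbf{Step 3 (weight of $\det(\cN_{S_i/X})|_{Z_i}$).} By Property~\ref{property_S_3}, the locally free sheaf $\coNorm_{S_i}X|_{Z_i}$ decomposes into strictly positive $\lambda_i$-weight pieces, so $\det(\coNorm_{S_i} X)|_{Z_i}$ is a line bundle of pure $\lambda_i$-weight $\eta_i$ (cf.~\eqref{eqn:define_eta}). Dualizing, $\det(\cN_{S_i/X})|_{Z_i}$ is a line bundle of pure $\lambda_i$-weight $-\eta_i$.

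\textbf{Step 4 (translation of weights and conclusion).} Tensoring by an equivariant line bundle of $\lambda_i$-weight $-\eta_i$ shifts every $\lambda_i$-weight appearing in cohomology by $-\eta_i$, and the cohomological shift $[-c_i]$ leaves weights unchanged. Therefore the set of $\lambda_i$-weights of $\cH^\ast(\sigma_i^\ast j_i^! F^\bdot)$ is obtained from that of $\cH^\ast(\sigma_i^\ast F^\bdot)$ by subtracting $\eta_i$. In particular,
\[
\text{all $\lambda_i$-weights of } \cH^\ast(\sigma_i^\ast j_i^! F^\bdot) \text{ are } < w_i
\;\Longleftrightarrow\;
\text{all $\lambda_i$-weights of } \cH^\ast(\sigma_i^\ast F^\bdot) \text{ are } < w_i+\eta_i.
\]
This proves the first reformulation, and combining it with the condition ``weights $\geq w_i$'' from the definition of $\G_w$ proves the second.

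\textbf{Expected obstacle.} There is no real difficulty; the content is a direct calculation once one has the formula for $j_i^!$ on a regular closed immersion. The only point requiring care is the sign: the weight of $\det(\cN_{S_i/X})|_{Z_i}$ is the \emph{negative} of $\eta_i$, because $\cN_{S_i/X}$ is dual to $\coNorm_{S_i}X$. The identification of the weight $\eta_i$ itself with the quantity in~\eqref{eqn:define_eta} is already given by the definition and does not need to be redone here.
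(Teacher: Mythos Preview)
Your proof is correct and follows essentially the same approach as the paper's own argument: both invoke Lemma~\ref{lem:smooth_strata} to deduce that $j_i$ is a regular embedding, use the resulting identification $j_i^! \simeq \det(\fN_{\S_i/\X}) \otimes j_i^\ast$ (up to shift), and then observe that $\det(\fN_{\S_i/\X})|_{\Z_i}$ has pure $\lambda_i$-weight $-\eta_i$. Your version is slightly more expansive in tracking the cohomological shift $[-c_i]$ and justifying that it does not affect weights, but the underlying idea is identical.
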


\begin{proof}
By Lemma \ref{lem:smooth_strata}, the inclusion $j_i : \S_i \hookrightarrow \X$ is a regular embedding, so $j_i^! \simeq \det (\fN_{\S_i/\X}) \otimes j_i^\ast$. Therefore, for $F^\bdot \in \D^b(\X)$, the weights of $\sigma_i^\ast j_i^! F^\bdot$ are $<w$ if and only if the weights of $\det (\fN_{\S_i/\X})|_{\Z_i} \otimes \sigma_i^\ast(F^\bdot)$ are $<w$. The Lemma now follows from the fact that the line bundle $\det(\fN_{\S_i / \X})$ must be concentrated in a single weight, which we have defined to be $- \eta_i$.
\end{proof}

We denote a \emph{semiorthogonal decomposition} of a triangulated category $\cD$ by full triangulated subcategories $\cA_i$ as $\cD = \langle \cA_n, \ldots, \cA_1 \rangle$. This means that all morphisms from objects in $\cA_i$ to objects in $\cA_j$ are zero for $i<j$, and for any object $E \in \cD$ there is a sequence $0 = E_0 \to E_1 \to \cdots \to E_n = E$ with $\op{Cone}(E_{i-1} \to E_i) \in \cA_i$, which is necessarily unique and functorial.\footnote{There are two additional equivalent ways to characterize a semiorthogonal decomposition: 1) the inclusion of the full subcategory $\cA_i \subset \langle \cA_i , \cA_{i-1}, \ldots, \cA_1 \rangle$ admits a left adjoint (is left admissible) $\forall i$, or 2) the subcategory $\cA_i \subset \langle \cA_n, \ldots, \cA_i \rangle$ is right admissible $\forall i$. In some contexts one also requires that each $\cA_i$ be admissible in $\cD$, but we will not require this here. See \cite{BO95} for further discussion of semiorthogonal decompositions.} In our applications $\cD$ will always be a pre-triangulated dg-category, in which case if $\cA_i \subset \cD$ are full pre-triangulated dg-categories then we will abuse the notation $\cD = \langle \cA_n, \ldots, \cA_1 \rangle$ to mean that there is a semiorthogonal decomposition of homotopy categories, in which case $\cD$ is uniquely identified with the gluing of the $\cA_i$. We can now state our main theorem.

\begin{thm}[derived Kirwan surjectivity]\label{thm_kirwan_surj}
Assume that each $\S_i$ satisfies Properties \hyperref[property_L_plus]{(L+)} and \hyperref[property_A]{(A)}. Then are semiorthogonal decompositions
\begin{gather}
\D^b_{\X^u}(\X) = \langle \D^b_{\X^u}(\X)_{< w}, \D^b_{\X^u}(\X)_{\geq w}\rangle \\
\D^b(\X) = \langle \D^b_{\X^u}(\X)_{< w}, \G_w , \D^b_{\X^u}(\X)_{\geq w}\rangle
\end{gather}
and the restriction functor $i^\ast : \G_w \to \D^b(\X^{ss})$ is an equivalence of categories. Furthermore we have $\op{Perf}_{\X^u}(\X)_{\geq v} \otimes^L \D^b_{\X^u}(\X)_{\geq w} \subset \D^b_{\X^u}(\X)_{\geq v+w}$.
\end{thm}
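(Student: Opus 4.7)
The approach is to reduce to the case of a single closed KN stratum by induction on the number of strata, and then to reduce that case to a baric decomposition of $\D^b(\S)$ which is pushed forward to $\D^b(\X)$ via $j_\ast$ and $j_!$. The combination of Properties \hyperref[property_A]{(A)} and \hyperref[property_L_plus]{(L+)} are exactly what makes these pushforwards interact well with the weight filtration.

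\emph{Inductive reduction.} Order the strata so that $\S_N$ is closed in $\X$, and set $\X' := \X \setminus \S_N$, which is an open substack with KN stratification $\S_1,\ldots,\S_{N-1}$ of its unstable locus. By induction, the theorem holds for $\X'$. Because $\D^b_{\X^u}(\X)$ is filtered by $\D^b_{\S_N}(\X) \subset \D^b_{\X^u}(\X) \to \D^b_{\X'^u}(\X')$, and the restriction $\X \to \X'$ admits the expected relationship between supports, it suffices to prove the three assertions when the stratification consists of a single closed stratum $\S \subset \X$ (and otherwise patch the two semiorthogonal decompositions by composing adjoints). I would then work in this local setting for the rest of the argument, dropping the index $i$.

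\emph{The single-stratum case.} Using Property \ref{property_S_2}, identify $\S$ with $Y/P$, and using Property \hyperref[property_A]{(A)}, write $Y = \inner{\Spec}_Z \cA$ where $\cA = \Sym(\cV^\vee)$ for a $P$-equivariant vector bundle $\cV$ on $Z$ of negative $\lambda$-weights (a polynomial algebra by Property \hyperref[property_A]{(A)}). From this one obtains baric truncation functors $\beta_{\geq w}, \beta_{<w}$ on $\D^b(\S)$ (constructed in Section~\ref{sect_baric_decomp}) that split the weight filtration coming from $\lambda$ on the $Z$-fiber direction. The baric pieces are characterized by the weights of $\sigma^\ast(-)$, so by adjunction the composition $j_\ast \beta_{\geq w} j^\ast$ gives the right adjoint to $\D^b_{\S}(\X)_{\geq w} \hookrightarrow \D^b_\S(\X)$, and $j_\ast \beta_{<w} j^!$ gives the left adjoint of $\D^b_\S(\X)_{<w} \hookrightarrow \D^b_\S(\X)$. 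Here Property \hyperref[property_L_plus]{(L+)} is the key input, since it controls the weights of $\sigma^\ast j^\ast j_\ast F$ and $\sigma^\ast j^\ast j_\ast G$ relative to those of $\sigma^\ast F$ and $\sigma^\ast j^! G$ respectively, ensuring that these functors land in the claimed subcategories. This yields the first semiorthogonal decomposition $\D^b_{\X^u}(\X) = \langle \D^b_{\X^u}(\X)_{<w}, \D^b_{\X^u}(\X)_{\geq w}\rangle$.

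\emph{Extending to $\D^b(\X)$ and identifying $\G_w$.} Define $\G_w$ as the intersection of the left orthogonal to $\D^b_{\X^u}(\X)_{<w}$ and the right orthogonal to $\D^b_{\X^u}(\X)_{\geq w}$ inside $\D^b(\X)$; the definition given in the paper is exactly this. Essential surjectivity of $i^\ast : \G_w \to \D^b(\X^{ss})$: every $F \in \D^b(\X^{ss})$ extends to some $\tilde F \in \D^b(\X)$, and applying the two truncation adjoints (first kill the $\geq w$ part, then kill the $<w$ part) produces a canonical object of $\G_w$ restricting to $F$. Fully faithful: for $F_1,F_2 \in \G_w$, the cone of $R\Hom_\X(F_1,F_2) \to R\Hom_{\X^{ss}}(i^\ast F_1, i^\ast F_2)$ is computed by $R\Gamma_{\X^u} R\inner{\Hom}(F_1,F_2)$, which vanishes precisely by the weight conditions defining $\G_w$ combined with the local-cohomology spectral sequence on $\S$ (this is Teleman's quantization theorem in the form used in the introduction). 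This simultaneously produces the three-term semiorthogonal decomposition of $\D^b(\X)$.

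\emph{Tensor compatibility and main obstacle.} The final inclusion $\op{Perf}_{\X^u}(\X)_{\geq v} \otimes^L \D^b_{\X^u}(\X)_{\geq w} \subset \D^b_{\X^u}(\X)_{\geq v+w}$ is a direct weight calculation: $L\sigma_i^\ast$ is symmetric monoidal, and the tensor product of complexes with weights $\geq v_i$ and $\geq w_i$ has weights $\geq v_i + w_i$; perfectness is needed only to ensure that the tensor product stays in $\D^b$. The main obstacle I anticipate is the careful construction and analysis of the baric truncations on $\D^b(\S)$ and the verification that Property \hyperref[property_L_plus]{(L+)} yields the correct weight bounds after applying $j^\ast$ and $j^!$ to complexes in $\D^b(\X)$ (as opposed to just sheaves). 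This is where Property \hyperref[property_L_plus]{(L+)} plays a genuinely derived role, controlling the higher $\op{Tor}$ terms in $L\sigma^\ast j^\ast j_\ast$ via the weights of $L\sigma^\ast \bL^\bdot_{\S/\X}$; without it the decomposition would fail in the derived category even though the underived statement might survive.
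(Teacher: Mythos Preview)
Your inductive reduction to a single closed stratum is correct and matches the paper's argument exactly. The overall architecture---baric decomposition of $\D^b(\S)$, pushed to $\D^b_\S(\X)$, then extended to a three-term decomposition of $\D^b(\X)$---is also the paper's route. However, there are two genuine gaps in your construction of the adjoints.

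First, the formula $j_\ast \beta_{\geq w} j^\ast$ does not give the right adjoint of $\D^b_\S(\X)_{\geq w} \hookrightarrow \D^b_\S(\X)$: for $F^\bdot \in \D^b_\S(\X)$, the pullback $j^\ast F^\bdot$ lies only in $\D^-(\S)$, not $\D^b(\S)$, so the baric truncation $\beta_{\geq w}$ need not produce a bounded object, and $j_\ast$ of the result need not be coherent. The paper instead defines $\D^b_\S(\X)_{\geq w}$ and $\D^b_\S(\X)_{<w}$ as the triangulated hulls of $j_\ast \D^b(\S)_{\geq w}$ and $j_\ast \D^b(\S)_{<w}$, uses Property \hyperref[property_L_plus]{(L+)} to prove semiorthogonality (via the key Lemma that $Lj^\ast j_\ast$ preserves $\D^-(\S)_{\geq w}$), and then observes that $j_\ast \D^b(\S)$ generates $\D^b_\S(\X)$, so every object decomposes.

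Second, and more seriously, you never explain how the adjoints extend from $\D^b_\S(\X)$ to all of $\D^b(\X)$. You write ``applying the two truncation adjoints'' as if a right adjoint for $\D^b_\S(\X)_{\geq w} \subset \D^b(\X)$ already exists, but this is the main technical point of the single-stratum theorem. The obstruction is that the natural candidate $R\inner{\Gamma}_\S(F^\bdot)$ leaves the coherent world. The paper's resolution is to construct a \emph{Koszul system} $\{K_i^\bdot\}$ of perfect complexes supported on $\S$ with $\varinjlim (K_i^\bdot \otimes F^\bdot) \simeq R\inner{\Gamma}_\S(F^\bdot)$, and to show (using boundedness of the weights of $\sigma^! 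F^\bdot$, which itself requires Property \hyperref[property_A]{(A)}) that the direct system $\radj{w}(K_i^\bdot \otimes F^\bdot)$ \emph{stabilizes} at a finite stage. This stabilization is what keeps the adjoint inside $\D^b_\S(\X)$. Without this argument (or an equivalent one) the three-term decomposition of $\D^b(\X)$ has not been established, and your essential surjectivity argument for $i^\ast : \G_w \to \D^b(\X^{ss})$ does not go through.
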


The technical heart of this result is Theorem \ref{thm_main_semi_decomp} below, which is the special case of Theorem \ref{thm_kirwan_surj} in which $N = 1$, so $\X^u$ consists of a single closed KN stratum $\S \subset \X$. Section \ref{sect_baric_decomp} consists of the proof of Theorem \ref{thm_main_semi_decomp}, but here we observe how the general statement follows from the case of a single stratum.

\begin{proof}
We proceed by induction on the ascending sequence of open substacks, with $\fX_0 = \fX^{ss}$ and $\fX_i := \fX_{i-1} \cup \S_i$ for $i = 1,\ldots,N$. We also define $\X^u_n := \S_1 \cup \cdots \cup \S_n \subset \X_n$. We proceed by induction on $N$, with the base case where $N=0$ and $\X^{ss} = \X$.

Consider the closed KN stratum $\S_n \subset \X_n$. Theorem \ref{thm_main_semi_decomp} provides the semiorthogonal decomposition
$$\D^b(\X_n) = \langle \D^b_{\S_n}(\X_n)_{<w}, \tilde{\G}_w, \D^b_{\S_n}(\X_n)_{\geq w} \rangle$$
where $\tilde{\G}_w \subset \D^b(\fX_n)$ consists of complexes satisfying the grade restriction rule along $\S_n$ only. The category $\tilde{\G}_w$ is mapped isomorphically onto $\D^b(\fX_{n-1})$ via restriction. Using this isomorphism and the inductive hypothesis, we obtain a $5$-term semiorthogonal decomposition of $\D^b(\fX_n)$
\begin{equation} \label{eqn:inductive_semidecomp}
\langle \D^b_{\S_n}(\X_n)_{<w}, \D^b_{\X^u_{n-1}}(\X_{n-1})_{<w}, \G^n_w, \D^b_{\X^u_{n-1}}(\X_{n-1})_{\geq w} , \D^b_{\S_n}(\X_n)_{\geq w} \rangle
\end{equation}
Where $\G^n_w$ is the subcategory defined by the grade restriction rules for the KN stratification $\S_1,\ldots,\S_n \subset \X_n$.

Note that an $F^\bdot$ is supported on $\X^u_n$ if and only if its restriction to $\fX_{n-1}$ is supported on $\fX^u_{n-1}$. It follows that the first two terms in the semiorthogonal decomposition \eqref{eqn:inductive_semidecomp} generate the subcategory $\D^b_{\X^u_n}(\X_n)_{<w}$, and the last two terms generate $\D^b_{\X^u_n}(\X_n)_{\geq w}$. Furthermore $\D^b_{\X^u_n} (\X_n)_{<w}$ and $\D^b_{\X^u_n}(\X_n)_{\geq w}$ generate $\D^b_{\X^u_n}(\X_n)$. The theorem follows by induction.
\end{proof}

The semiorthogonal decomposition in this theorem can be refined further using ideas of Kawamata \cite{Ka06}, and Ballard, Favero, Katzarkov \cite{BFK12}. Consider the stack $\Z_n := Z_n / L_n$ and the canonical projection $\pi_n : \S_n \to \Z_n$, and let $\D^b(\Z_n)_w$ denote the full subcategory of complexes whose cohomology is concentrated in weight $w$ with respect to $\lambda_n$. Corollary \ref{cor_infinite_semidecomp} implies that for each $\X_n$ appearing in the proof of Theorem \ref{thm_kirwan_surj}, $(j_n)_\ast \pi_n^\ast : \D^b(\Z_n)_w \to \D^b(\X_n)$ is a fully faithful embedding. Thus as an immediate consequence of Corollary \ref{cor_infinite_semidecomp} and the proof of Theorem \ref{thm_kirwan_surj}, we have
\begin{amplif} \label{amplif:decompose_unstable}
Assume that for each stratum, the restriction of the relative cotangent complex, $\sigma^\ast_n \bL^\bdot_{\S_n / \X}$, has strictly positive weights with respect to $\lambda_n$. Then the categories $\D^b_{\X^u}(\X)_{\geq w}$ and $\D^b_{\X^u}(\X)_{< w}$ admit semiorthogonal decompositions
\begin{align*}
\D^b_{\X^u}(\X)_{\geq w} = \langle &\D^b(\Z_1)_{w_1},\D^b(\Z_1)_{w_1+1},\ldots, \\
&\D^b(\Z_2)_{w_2}, \D^b(\Z_2)_{w_2+1}, \ldots, \ldots , \\
&\D^b(\Z_N)_{w_N}, \D^b(\Z_N)_{w_N+1} \ldots \rangle \\
\D^b_{\X^u}(\X)_{< w} = \langle &\ldots, \D^b(\Z_N)_{w_N-2}, \D^b(\Z_N)_{w_N-1}, \\
&\ldots, \D^b(\Z_2)_{w_2-2}, \D^b(\Z_2)_{w_2-1}, \ldots , \\
& \ldots,\D^b(\Z_1)_{w_1-2},\D^b(\Z_1)_{w_1-1} \rangle
\end{align*}
which can be combined with Theorem \ref{thm_kirwan_surj} to obtain an infinite semiorthogonal decomposition of $\D^b(\X)$.
\end{amplif}

\begin{rem}
By an infinite semiorthogonal decomposition we mean that the subcategories are semiorthogonal to one another, and every object can be constructed via a finite sequence of mapping cones from objects in the subcategories.
\end{rem}

\begin{rem}
If we let $L^\prime_i := L_i / \lambda_i(\Gm)$, then $\Z_i \to Z_i / L^\prime_i$ is a $\Gm$-gerbe. The pullback functor identifies $\D^b(\Z_i)_0$ with $\D^b(Z_i / L^\prime_i)$, and $\D^b(\Z_i)_w$ is the derived category of bounded coherent sheaves twisted by the $w^{th}$ power of this $\Gm$-gerbe.
\end{rem}

In the remainder of this section we discuss two example applications of Theorem \ref{thm_kirwan_surj} in situations of interest.

\begin{ex}[Derived category of a Grassmannian]
The Grassmanian of $d$ dimensional subspaces of $\bA^n$ can be obtained as a GIT quotient of the space, $V$, of $n\times d$ matrices by $GL_d$ acting by $g \cdot M := M g^{-1}$ for $g \in GL_d$ and $M \in V$. In this case
$$\lambda_i = \op{diag} (1,\ldots,1,\underbrace{t,\ldots,t}_{i\text{ times}} ), \qquad Z_i = Y_i = \left\{ [ \underbrace{\ast}_{\stackrel{n \times (n-i),}{\text{full rank}}} | 0 ] \right\},$$
and the stratum $S_i$ consists of all matrices of rank $n-i$. One can choose weights $w_i$ such that Kapranov's exceptional collection corresponds to vector bundles of the form $\cO_V(W)$, where $W$ is an irreducible representation of $GL_d$ satisfying the grade restriction rules. However, one can also choose the $w_i$ such that there are no sheaves of the form $\cO_V \otimes W$ in $\G_w$. See Example \ref{ex_grassmannian} for a closely related example and a more detailed discussion of the stratification.
\end{ex}

\begin{ex}[Elaboration of Serre's theorem]
Let $Z$ be a quasiprojective scheme and $\cA = \bigoplus_{i\geq 0} \cA_i$ a coherently generated sheaf of graded algebras over $Z$, with $\cA_0 = \cO_Z$. Letting $X = \inner{\op{Spec}}_Z(\cA)$, the grading defines a $\Gm$-action on $X$, and we take the unstable stratum to be $j : Z \hookrightarrow X$.

This is a slight generalization of Example \ref{ex:Serre}, and the argument for why \hyperref[property_A]{(A)} and \hyperref[property_L_plus]{(L+)} hold applies in this more general setting. Theorem \ref{thm_kirwan_surj} gives a precise relationship between $\D^b(X/\Gm) = \D^b(\op{gr}-\cA)$, the derived category of sheaves of coherent graded $\cA$-modules, and $\D^b(X - S / \Gm) = \D^b(\inner{\op{Proj}}_Z (\cA))$.\footnote{We should take $\inner{\op{Proj}}_Z(\cA)$ to mean the DM stack $(\inner{\op{Spec}}_Z(\cA) - Z) / \Gm$. This will only be a scheme if $\cA$ is generated in degree $1$, so that $\bG_m$ acts freely on $\inner{\op{Spec}}_Z(\cA) - Z$.} There is an infinite semiorthogonal decomposition,
$$\D^b(\op{gr}-\cA) = \langle \ldots, \D^b(Z)_{w-1}, \G_w, \D^b(Z)_{w}, \D^b(Z)_{w+1},\ldots \rangle$$
where $\D^b(Z)_w$ denotes the subcategory generated by $j_\ast \D^b(Z) \otimes \cO_X(-w)$, and
$$\G_w = \left\{ F^\bdot \in \D^b(X/\Gm) \left| \begin{array}{l} \cH^\ast(j^\ast F^\bdot) \text{ has weights } \geq w, \text{ and} \\ \cH^\ast(j^! F^\bdot) \text{ has weights } < w \end{array} \right. \right\}$$
and the restriction functor $\G_w \to \D^b(\inner{\op{Proj}}_Z \cA)$ is an equivalence.
\end{ex}

\subsection{Explicit constructions of the splitting and integral kernels}

Theorem \ref{thm_kirwan_surj} states that the restriction functor $\G_w \to \D^b(\X^{ss})$ is an equivalence of dg-categories. We now discuss the inverse functor a bit more explicitly. We start with a single closed KN stratum $\S \subset \X$, which satisfies \hyperref[property_A]{(L+)}, and we assume that the $\lambda$-weights of $\sigma^\ast \bL^\bdot_{\S/\X}$ are strictly positive, so that Amplification \ref{amplif:decompose_unstable} holds. We let $\V := \X-\S$.

Given $G^\bdot \in \D^b(\V)$, it is always possible to choose a complex $F^\bdot \in \D^b(\X)$ such that $F^\bdot |_\V \simeq G^\bdot$. By Lemma \ref{lem_bounded_weights}, there is a unique $a\leq b$ with $a$ maximal and $b$ minimal such that the $\lambda$-weights of $\sigma^\ast j^! F^\bdot$ are $< b$, and the $\lambda$-weights of $\sigma^\ast F^\bdot$ are $\geq a$. Let $E^\bdot \in \D^b(\Z)_a$ be the lowest nonvanishing weight subcomplex of $\sigma^\ast F^\bdot$,\footnote{The object $j^\ast F^\bdot$ will not be cohomologically bounded, but its lowest weight space will be.} then the semiorthogonal decomposition of Proposition \ref{prop_baric_S_flat} and Remark \ref{rem_extending_SOD} imply that there is a canonical morphism $j^\ast F^\bdot \to \pi^\ast E^\bdot$. The corresponding morphism $F^\bdot \to j_\ast \pi^\ast (E^\bdot)$ induces an isomorphism between the complexes concentrated in weight $\leq a$ after applying $\sigma^\ast$.

Assume that $a < w$, and define a new object $(F^\bdot)^\prime$ with $(F^\bdot)^\prime|_\V \simeq G^\bdot$ by the exact triangle
$$ (F^\bdot)^\prime \to F^\bdot \to j_\ast (\pi^\ast E^\bdot) \parr .$$
The $\lambda$-weights of $\sigma^\ast (F^\bdot)^\prime$ are $\geq a+1$. Furthermore, the $\lambda$-weights of $\sigma^\ast j^! j_\ast \pi^\ast (E^\bdot)$ are $\leq a$, so $\sigma^\ast j^! (F^\bdot)^\prime$ will still have $\lambda$-weights $<b$ unless $a = b$, in which case $\sigma^\ast j^! (F^\bdot)^\prime$ will at least have $\lambda$-weights $<w$. Iterating this procedure, we will eventually have an object $F^\bdot$ such that $F^\bdot|_{\V} \simeq G^\bdot$, the $\lambda$-weights of $\sigma^\ast F^\bdot$ are $\geq w$, and the $\lambda$-weights of $\sigma^\ast j^! F^\bdot$ are $< \max(w,b)$.

If $b > w$, then by an entirely dual procedure we let $E^\bdot$ be the subcomplex of $\sigma^\ast j^! F^\bdot = \sigma^\ast j^! F^\bdot$ in weight $b$ and consider the cone of a canonically defined map $(F^\bdot)^\prime = \op{Cone}(j_\ast \pi^\ast E^\bdot \to F^\bdot)$. The $\lambda$-weights of $\sigma^\ast (F^\bdot)^\prime$ will be $\geq w$ still, but $\sigma^\ast j^! (F^\bdot)^\prime$ will have $\lambda$-weights $<b-1$. We can repeat this procedure until we finally have an $F^\bdot \in \G_w$ such that $F^\bdot|_\V \simeq G^\bdot$. Theorem \ref{thm_kirwan_surj} now implies that this $F^\bdot$ is the unique lift of $G^\bdot$ lying in $\G_w$.

When there are multiple strata, we must repeat this procedure to lift the object inductively over each stratum as in the proof of Theorem \ref{thm_kirwan_surj}. This process is quite complicated, especially when there are multiple strata. Fortunately, in many examples it suffices to directly construct such a lift for a single universal example in order to obtain an integral kernel for the functor $\D^b(\X^{ss}) \to \G_w \subset \D^b(\X)$.

Let us assume for simplicity that $\X^{ss}$ is a smooth and proper stack, so that
\begin{itemize}
\item the diagonal $\X^{ss} \to \X^{ss} \times \X^{ss}$ is finite (it is affine because $\X^{ss}$ is a global quotient and proper by the assumption that $\X^{ss}$ is separated), and
\item the push forward $\pi_\ast : \D_{qc}(\X^{ss}) \to \D_{qc}(\op{Spec} k)$ preserves bounded coherent objects (this requires characteristic $0$).
\end{itemize}
Under these hypotheses the diagonal sheaf $\cO_{\Delta}$, which is the push forward of $\cO_{\X^{ss}}$ along $\X^{ss} \to \X^{ss} \times \X^{ss}$, lies in $\D^b(\X^{ss} \times \X^{ss})$. Consider the product $\X^{ss} \times \X = (X^{ss} \times X) / (G \times G)$, and the open substack $\X^{ss} \times \X^{ss}$ whose complement $\X^{ss} \times \X^u$ has the KN stratification $\X^{ss} \times \S_i$. It is immediate that Properties \hyperref[property_A]{(A)} and \hyperref[property_L_plus]{(L+)} hold for this KN stratification, so one can uniquely extend the diagonal sheaf, $\cO_{\Delta}$, to a complex, $\tilde{\cO}_\Delta$, in the subcategory $\G_w$ with respect to this stratification.

Consider the integral functor $\D^b(\X^{ss}) \to \D^b(\X)$ with kernel $\tilde{\cO}_\Delta$:
$$\Phi : F^\bdot \mapsto (p_2)_\ast (\tilde{\cO}_\Delta \otimes p_1^\ast(F^\bdot))$$
Because $\X^{ss}$ is smooth, $F^\bdot$ is perfect, and the object $\tilde{\cO}_\Delta \otimes p_1^\ast F^\bdot$ is bounded with coherent cohomology. It follows that $(p_2)_\ast (\tilde{\cO}_\Delta \otimes p_1^\ast F^\bdot) \in \D^b(\X)$.

\begin{lem}
For all $F^\bdot \in \D^b(\X^{ss})$, $\Phi(F^\bdot) \in \G_w$.
\end{lem}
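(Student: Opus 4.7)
The plan is to verify both halves of the grade restriction rule defining $\G_w$---that $\sigma_i^\ast \Phi(F^\bdot)$ has $\lambda_i$-weights $\geq w_i$ and that $\sigma_i^\ast j_i^! \Phi(F^\bdot)$ has $\lambda_i$-weights $< w_i$---for each KN stratum $\S_i \subset \X$, by transporting the corresponding properties of the kernel $\tilde{\cO}_\Delta$ across the integral transform via base change and the projection formula.

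First I would fix a stratum and consider the Cartesian squares relating the projections $p_2 : \X^{ss}\times\X \to \X$, $p_2' : \X^{ss}\times\S_i \to \S_i$, and $q_2 : \X^{ss}\times\Z_i \to \Z_i$ to the closed immersions $\sigma_i$, $j_i$ and their products with $\id_{\X^{ss}}$. Since $\X^{ss}$ is smooth and proper, $p_2$ is flat and proper, and so is $p_2'$. Proper and flat base change, together with the projection formula applied to the perfect object $p_1^\ast F^\bdot$ on $\X^{ss}$, should yield
\begin{align*}
\sigma_i^\ast \Phi(F^\bdot) &\simeq (q_2)_\ast\bigl((\id\times\sigma_i)^\ast\tilde{\cO}_\Delta \otimes q_1^\ast F^\bdot\bigr), \\
\sigma_i^\ast j_i^!\Phi(F^\bdot) &\simeq (q_2)_\ast\bigl((\id\times\sigma_i)^\ast(\id\times j_i)^!\tilde{\cO}_\Delta \otimes q_1^\ast F^\bdot\bigr).
\end{align*}
The second identity relies on the base-change isomorphism $j_i^!(p_2)_\ast \simeq (p_2')_\ast(\id\times j_i)^!$, which I would derive from the adjunction chain $\op{Hom}(A, j_i^!(p_2)_\ast B) = \op{Hom}((p_2)^\ast(j_i)_\ast A, B)$ combined with the flat base change $(p_2)^\ast(j_i)_\ast \simeq (\id\times j_i)_\ast(p_2')^\ast$.

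The weight analysis is then transparent. The distinguished one-parameter subgroup for the stratum $\X^{ss}\times\S_i \subset \X^{ss}\times\X$ is the copy of $\lambda_i$ sitting in $\{1\}\times G \subset G\times G$, so it acts trivially on the first factor. Hence $q_1^\ast F^\bdot$ is concentrated in $\lambda_i$-weight $0$, and tensoring with it preserves $\lambda_i$-weights of cohomology sheaves. Likewise $q_2$ has fibers isomorphic to $\X^{ss}$ on which $\lambda_i$ acts trivially, so $(q_2)_\ast$ preserves the $\lambda_i$-weight grading. Because $\tilde{\cO}_\Delta$ lies in $\G_w$ for the KN stratification of $\X^{ss}\times\X$---and the integer $\eta$ for $\X^{ss}\times\S_i$ equals the original $\eta_i$ since the relevant conormal bundle pulls back along $p_2$---both grade restriction inequalities for $\tilde{\cO}_\Delta$ transport directly to the required inequalities for $\Phi(F^\bdot)$.

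The hardest step will be rigorously justifying base change for the shriek functor $j_i^!$ in the derived stacky setting. I would handle this either via the adjunction argument sketched above (which needs only flat base change of $(j_i)_\ast$ along the flat representable map $p_2$, a standard result), or, when $X$ is smooth near each $Z_i$, by invoking Lemma \ref{lem:smooth_strata_2} to replace $j_i^!$ by a shift of $\det(\coNorm_{\S_i}\X)\otimes j_i^\ast$ and reduce everything to the $\sigma_i^\ast$ version, where only flat proper base change is needed. Once the base change identities are in hand, the rest is a brief bookkeeping of weights.
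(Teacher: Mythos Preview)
Your proposal is correct and follows essentially the same argument as the paper: a fiber-square base change computes $\sigma_i^\ast \Phi(F^\bdot)$ (and its shriek analogue) as a pushforward along the second projection of the restricted kernel tensored with $p_1^\ast F^\bdot$, and the weight bookkeeping then follows because $\lambda_i$ acts trivially on the first factor. Your treatment is in fact more careful than the paper's, which dispatches the $j_i^!$ half with a one-line ``a similar argument shows,'' whereas you spell out the adjunction/flat base change justification for $j_i^!(p_2)_\ast \simeq (p_2')_\ast(\id\times j_i)^!$.
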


\begin{proof}
We consider the fiber square
$$\xymatrix{\X^{ss} \times \Z_i \ar[r]^{\sigma_i^\prime} \ar[d]^{p_2^\prime} & \X^{ss} \times \X \ar[d]^{p_2} \\ \Z_i \ar[r]^{\sigma_i} & \X }$$
By base change $\sigma_i^\ast \Phi(F^\bdot) \simeq (p_2^\prime)_\ast (\sigma_i^\prime)^\ast (\tilde{\cO}_\Delta \otimes p_1^\ast F^\bdot)$. We have $p_1^\ast F^\bdot \in \D^b(\X^{ss} \times \Z_i)_0$, where the weight is with respect to the distinguished \oneps\ on the right factor, and $\tilde{\cO}_\Delta \in \D^b(\X^{ss} \times \X)_{\geq w_i}$ by hypothesis. It follows that $(\sigma_i^\prime)^\ast (\tilde{\cO}_\Delta) \otimes (\sigma^\prime_i)^\ast p_1^\ast F^\bdot \in \D^b(\X^{ss} \times \Z_i)_{\geq w_i}$ and thus $\sigma_i^\ast \Phi(F^\bdot) \in \D^b(\Z_i)_{\geq w_i}$. As similar argument shows that $\sigma_i^\ast j^! F^\bdot \in \D_{qc}(\Z_i)_{<w_i}$.
\end{proof}

Furthermore, $\Phi(F^\bdot)|_{\X^{ss}}$ is canonically isomorphic to the push forward to $\X^{ss}$ of $\cO_\Delta \otimes p_1^\ast F^\bdot$. Because the integral functor with kernel $\cO_\Delta$ is just the identity, we have a canonical isomorphism $\Phi(F^\bdot)|_{\X^{ss}} \simeq F^\bdot$. Therefore, $\Phi : \D^b(\X^{ss}) \to \G_w \subset \D^b(\X)$ is the inverse of the restriction equivalence $\G_w \to \D^b(\X^{ss})$.

\section{Homological structures on the unstable strata}
\label{sect_baric_decomp}

In this section we will study in detail the homological properties of a single closed KN stratum $\S := S/G \subset \X$ as in Definition \ref{def_KN_stratification}. We will also let $\V$ denote the open complement $\V = \X - \S$ and study the relationship between $\D^b(\X)$ and $\D^b(\V)$.

Our main theorem is Theorem \ref{thm_main_semi_decomp}, which is the key to the inductive proof of Theorem \ref{thm_kirwan_surj}. In fact, Theorem \ref{thm_main_semi_decomp} is just a summary of several results throughout this section. Before launching into the technical content, we give an overview of the ideas which follow.

Our main conceptual tool is the notion of a baric decomposition, which was introduced and used to construct `staggered' $t$-structures on equivariant derived categories of coherent sheaves \cite{AT11}.

\begin{defn}
A \emph{baric decomposition} of a triangulated category $\cD$ is a family of semiorthogonal decompositions $\cD = \langle \cD_{<w}, \cD_{\geq w} \rangle$ such that $\cD_{\geq w} \supset \cD_{\geq w+1}$, or equivalently $\cD_{<w} \subset \cD_{<w+1}$, for all $w$. The baric decomposition is \emph{bounded} if $\cD = \bigcup_{v,w} (\cD_{\geq w} \cap \cD_{<v})$. If $\cD \subset \D_{qc}(\X)$ for some stack $\X$, then we say that the baric decomposition is \emph{multiplicative} if $E^\bdot \otimes \cD_{\geq w} \subset \cD_{\geq w+v}$ whenever $E^\bdot \in \cD_{\geq v} \cap \op{Perf}(\X)$.
\end{defn}

Although the connection with GIT was not explored in the original development of the theory, baric decompositions arise naturally in this context. In Proposition \ref{prop_baric_S_flat} we establish a multiplicative baric decomposition on $\D^b(\S)$ when $Y \to Z$ is flat. Because $\lambda(\Gm)$ is central in $L$ and stabilizes $Z$, objects in $\D^b(Z/L)$ decompose canonically as a direct sum of weight eigen-complexes. This is no longer true in $\D^b(\S)$, but the baric decomposition assigns to each object a canonical sub-quotient in weight $w$.

\begin{ex}
Consider the case when $\S \simeq \op{Spec} k[x_1,\ldots,x_n] / \Gm$, where the $\Gm$ action is determined by a choice of a negative grading on each $x_i$. Then $\cD_{<w} = \D^b(\S)_{<w}$ is the triangulated category generated by graded modules whose nonzero weight spaces have weight $<w$.
\end{ex}

After some technical preparations in Sections \ref{sect_cotangent_complex} and \ref{sect_koszul_systems}, we show that when $\S \subset \X$ satisfies Property \hyperref[property_L_plus]{(L+)}, the categories generated by the pushforward of $\D^b(\S)_{\geq w}$ and $\D^b(\S)_{<w}$ in $\D^b(\X)$ remain semiorthogonal. This provides (Proposition \ref{prop_baric_support_S}) a multiplicative baric decomposition $\D^b_\S(\X) = \langle \D^b_\S(\X)_{< w}, \D^b_\S(\X)_{\geq w} \rangle$, where $\D^b_\S(\X)$ denotes the derived category of complexes of coherent sheaves on $\X$ whose restriction to $\V = \X-\S$ is acyclic. One nice consequence of this machinery is a generalization of Teleman's ``quantization commutes with reduction theorem'' (Theorem \ref{thm_quantization}).

In Section \ref{sect_main_semi_decomp} we use the baric decomposition of $\D^b_\S(\X)$ to analyze the category $\D^b(\X)$ itself. On the level of derived categories of quasicoherent sheaves, the inclusion $\D_{\S,qc}(\X) \subset \D_{qc}(X)$ always admits a right adjoint $R\inner{\op{\Gamma}}_\S$. However for $F^\bdot \in \D^b(\X)$, the object $R \inner{\op{\Gamma}}_\S(F^\bdot)$ no longer has coherent cohomology.

Our main observation, which holds assuming Property \hyperref[property_A]{(A)}, is that for $F^\bdot \in \D^b(\X)$, it is possible, informally speaking, to keep only the piece of $R\inner{\op{\Gamma}}_\S(F^\bdot)$ whose homology has weight $\geq w$, and that this new object $\radj{w} R\inner{\op{\Gamma}}_\S(F^\bdot)$ has bounded coherent cohomology. Thus we construct a right adjoint to the inclusion $\D^b_\S(\X)_{\geq w} \subset \D^b_\S(\X)$, and dually we construct a left adjoint for the inclusion of $\D^b_\S(\X)_{<w}$. 

It follows that if we \emph{define} $\G_w \subset \D^b(\X)$ to be the right orthogonal to $\D^b_\S(\X)_{\geq w}$ and left orthogonal to $\D^b_\S(\X)_{<w}$, then there is a $3$-term semiorthogonal decomposition
$$\D^b(\X) = \langle \D^b_\S(\X)_{<w},\G_w,\D^b_\S(\X)_{\geq w} \rangle .$$
The Quantization Theorem, \ref{thm_quantization}, says precisely that $\G_w$ is mapped fully-faithfully to $\D^b(\V)$ under restriction, and in fact the restriction functor gives an isomorphism $\G_w \simeq \D^b(\V)$.

\subsection{Quasicoherent sheaves on $\S$}

Recall the structure of a KN stratum \eqref{eqn_label_arrows} and the associated parabolic subgroup \eqref{eqn_parabolic_def}. By Property \ref{property_S_1}, $\S := S/G \simeq Y/P$ via the $P$-equivariant inclusion $Y \subset S$, so we will identify quasicoherent sheaves on $\S$ with $P$-equivariant quasicoherent $\cO_Y$-modules. Furthermore, we will let $P$ act on $Z$ via the projection $P \to L$. Again by Property \ref{property_S_1}, we have $Y / P = \inner{\op{Spec}}_{Z} (\sh{A}) / P$, where $\sh{A}$ is a coherently generated graded $\cO_Z$-algebra with $\sh{A}_i = 0$ for $i>0$, and $\sh{A}_0 = \sh{O}_Z$. Thus we have identified quasicoherent sheaves on $\S$ with $P$-equivariant quasicoherent $\sh{A}$-modules on $\Z^\prime := Z/P$.

\begin{rem}
The stack $\Z := Z/L$ is perhaps more natural than the stack $\Z^\prime$. The projection $\pi : Y \to Z$ intertwines the respective $P$ and $L$ actions via $P \to L$, hence we get a projection $\S \to \Z$. Unlike the map $\S \to \Z^\prime$, this projection admits a section $Z/L \to Y/P$. In other words, the projection $\cA \to \cA_0 = \cO_Z$ is $L$-equivariant, but not $P$-equivariant. We choose to work with $\Z^\prime$, however, because the map $\S \to \Z$ is not representable, so the description of quasicoherent sheaves on $\S$ in terms of ``quasicoherent sheaves on $\Z$ with additional structure'' is less straightforward.
\end{rem}

We will use the phrase $\cO_{\Z^\prime}$-module to denote a quasicoherent sheaf on the stack $\Z^\prime = Z/P$. $\lambda$ fixes $Z$, so $P$-equivariant $\sh{O}_Z$-modules have a natural grading by the weight spaces of $\lambda$, and we will use this grading often.


\begin{lem}
For any $F \in \qcoh{\Z^\prime}$ and any $w \in \bZ$, the $\cO_Z$-submodule $F_{\geq w} := \sum_{i \geq w} F_i$ of sections of weight $\geq w$ with respect to $\lambda$ is $P$ equivariant.
\end{lem}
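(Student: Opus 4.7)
The $L$-action on $F$ preserves the $\lambda$-weight decomposition because $\lambda(\Gm)$ is central in $L$, so $L \cdot F_i \subset F_i$ for every $i$; in particular $F_{\geq w}$ is $L$-stable. The real task is therefore to verify that $F_{\geq w}$ is stable under the action of the unipotent radical $U$ of $P$. Since $P$ acts on $Z$ through the projection $P \to L$, the group $U$ acts trivially on the base $Z$, so the $U$-action on $F$ restricts to a linear action on the module of sections over any $L$-invariant affine open of $Z$, and this is what we need to analyze.

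The main device is the defining property of $U$: for every $u \in U$ the one-parameter family $\phi_u(t) := \lambda(t)\, u\, \lambda(t)^{-1}$ extends to a regular morphism $\phi_u : \bA^1 \to P$ with $\phi_u(0) = 1$, by construction of $U$ as the kernel of $P \to L$ (see the discussion surrounding \eqref{eqn_parabolic_def}). Pulling back the $P$-equivariant structure on $F$ along $\phi_u$ — equivalently, composing the coaction $F \to F \otimes \cO(P)$ with $\cO(P) \to \cO(\bA^1) = k[t]$ — produces, for every local section $s$ of $F$, a polynomial expansion
\[ \phi_u(t) \cdot s = \sum_{k \geq 0} t^{k}\, s_{k}, \qquad s_0 = s, \]
with $s_k$ local sections of $F$.

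Next I would exploit the trivial group identity $u = \lambda(t)^{-1}\, \phi_u(t)\, \lambda(t)$. Applied to a section $s \in F_v$, this yields
\[ u \cdot s \;=\; \lambda(t)^{-1} \cdot \bigl( \phi_u(t) \cdot (t^{v} s) \bigr) \;=\; \sum_{k \geq 0} t^{v+k}\, \lambda(t)^{-1} \cdot s_k. \]
Since the left-hand side is independent of $t$, each $s_k$ must lie in the weight-$(v+k)$ summand $F_{v+k}$, and we conclude $u \cdot s = \sum_k s_k \in \bigoplus_{k \geq 0} F_{v+k} \subset F_{\geq v}$. Thus $F_{\geq w}$ is $U$-stable, and combining with $L$-stability via $P = U \rtimes L$ gives the desired $P$-equivariance.

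The only conceptual step is recognizing the right device, namely the extension of $t \mapsto \lambda(t) u \lambda(t)^{-1}$ to $\bA^1$; once that is in hand the argument is a direct weight count, so I anticipate no real obstacle.
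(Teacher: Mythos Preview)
Your proof is correct and rests on the same underlying fact as the paper's: the unipotent radical $U$ is contracted to the identity under conjugation by $\lambda(t)$ as $t \to 0$. The paper phrases this dually via the coaction $a : F \to k[U] \otimes F$, observing that $k[U]$ is non-positively graded with respect to $\lambda$, so that $a(F_{\geq w}) \subset \bigoplus_{i+j \geq w} k[U]_i \otimes F_j \subset k[U] \otimes F_{\geq w}$; this is a one-line computation once the grading on $k[U]$ is recorded. Your action-side argument unpacks exactly the same content element by element via the family $\phi_u(t)$.

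One small imprecision: the inference ``since $u\cdot s$ is $t$-independent, each $s_k$ lies in $F_{v+k}$'' does not follow from $t$-independence alone, since in principle contributions from different $k$ could cancel. It \emph{is} true, and follows cleanly from the $\Gm$-equivariance of $\phi_u$ (i.e.\ $\lambda(r)\phi_u(t)\lambda(r)^{-1}=\phi_u(rt)$, which forces $\lambda(r)\cdot s_k = r^{v+k}s_k$). Alternatively, you only need the $t^0$ coefficient $u\cdot s = \sum_k (s_k)_{v+k} \in F_{\geq v}$ to reach the conclusion, so this is easily patched either way.
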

\begin{proof}
$\lambda(\Gm)$ commutes with $L$, so $F_{\geq w}$ is an equivariant submodule with respect to the $L$ action. Because $U \subset P$ acts trivially on $Z$, the $U$-equivariant structure on $F$ is determined by a coaction $a : F \to k[U] \otimes F$, which is equivariant for the $\Gm$-action. We have
$$a(F_{\geq w}) \subset \left( k[U] \otimes F \right)_{\geq w} = \bigoplus_{i + j \geq w} k[U]_i \otimes F_j \subset k[U] \otimes F_{\geq w}$$
The last inclusion is due to the fact that $k[U]$ is non-positively graded, and it implies that $F_{\geq w}$ is equivariant with respect to the $U$ action as well. Because we have a semidirect product decomposition $P = U L$, it follows that $F_{\geq p}$ is an equivariant submodule with respect to the $P$ action.
\end{proof}

\begin{rem}
This lemma is a global version of the observation that for any $P$-module $M$, the subspace $M_{\geq w}$ with weights $\geq w$ with respect to $\lambda$ is a $P$-submodule, which can be seen from the coaction $M \to k[P] \otimes M$ and the fact that $k[P]$ is nonnegatively graded with respect to $\lambda$.
\end{rem}

It follows that any $F \in \qcoh{\Z^\prime}$ has a functorial factorization $F_{\geq w} \hookrightarrow F \twoheadrightarrow F_{<w}$. Note that as $\Gm$-equivariant instead of $P$-equivariant $\sh{O}_Z$-modules there is a natural isomorphism $F \simeq F_{\geq w} \oplus F_{<w}$. Thus the functors $(\bullet)_{\geq w}$ and $(\bullet)_{<w}$ are exact, and if $F$ is locally free, then $F_{\geq w}$ and $F_{<w}$ are locally free as well.

We define $\qcoh{\Z^\prime}_{\geq w}$ and $\qcoh{\Z^\prime}_{<w}$ to be the full subcategories of $\qcoh{\Z^\prime}$ consisting of sheaves supported in weight $\geq w$ and weight $<w$ respectively. They are both Serre subcategories, they are orthogonal to one another, $(\bullet)_{\geq w}$ is right adjoint to the inclusion $\qcoh{\Z^\prime}_{\geq w} \subset \qcoh{\Z^\prime}$, and $(\bullet)_{<w}$ is left adjoint to the inclusion $\qcoh{\Z^\prime}_{<w} \subset \qcoh{\Z^\prime}$.

\begin{lem} \label{lem_special_resolutions}
Any $F \in \qcoh{\Z^\prime}_{<w}$ admits an injective resolution $F \to \sh{I}^0 \to \sh{I}^1 \to \cdots$ such that $\sh{I}^i \in \qcoh{\Z^\prime}_{<w}$. Likewise any $F \in \coh{\Z^\prime}_{\geq w}$ admits a locally free resolution $\cdots \to E_1 \to E_0 \to F$ such that $E_i \in \coh{\Z^\prime}_{\geq w}$.
\end{lem}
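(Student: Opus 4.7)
The plan is to construct both resolutions by the standard iterative procedure, the only substantive issue being that each stage must remain in the weight-bounded subcategory. The inputs, both of which are recorded in the paragraphs preceding the lemma, are: the truncation functors $(\bullet)_{<w}$ and $(\bullet)_{\geq w}$ are exact; they preserve local freeness of finite rank; and $\qcoh{\Z^\prime}_{<w}$, $\qcoh{\Z^\prime}_{\geq w}$ are Serre subcategories of $\qcoh{\Z^\prime}$.

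For the injective resolution, I would first verify that $\qcoh{\Z^\prime}_{<w}$ is itself a Grothendieck abelian category: it is cocomplete (closed under arbitrary direct sums inside $\qcoh{\Z^\prime}$), filtered colimits are exact (inherited from $\qcoh{\Z^\prime}$), and a set of generators is obtained by applying the exact left adjoint $(\bullet)_{<w}$ to a set of generators of $\qcoh{\Z^\prime}$. Hence $\qcoh{\Z^\prime}_{<w}$ has enough injectives. Now the adjunction $(\bullet)_{<w} \dashv i_{<w}$ with exact left adjoint forces the right adjoint $i_{<w}$ to preserve injectives, so any $\sh{I}$ that is injective in $\qcoh{\Z^\prime}_{<w}$ is also injective as an object of the ambient $\qcoh{\Z^\prime}$. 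Embed $F \hookrightarrow \sh{I}^0$ into such an injective; the cokernel again lies in $\qcoh{\Z^\prime}_{<w}$ since the subcategory is Serre, and iterating produces the desired resolution whose terms are injective in $\qcoh{\Z^\prime}$ and belong to $\qcoh{\Z^\prime}_{<w}$.

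For the locally free resolution of $F \in \coh{\Z^\prime}_{\geq w}$, I would invoke the resolution property of $\Z^\prime$, which holds in the present GIT setting because $Z$ is a quasiprojective scheme with a linearized $P$-action, and $P$-equivariant coherent sheaves on $Z$ are quotients of finite-rank $P$-equivariant locally free sheaves (obtained from twists of $\cO_Z$ by finite-dimensional $P$-representations). Choose a surjection $E_0 \twoheadrightarrow F$ with $E_0$ locally free of finite rank. Apply the exact functor $(\bullet)_{\geq w}$ and use $F = F_{\geq w}$ to obtain a surjection $(E_0)_{\geq w} \twoheadrightarrow F$; the source is still locally free of finite rank by the statement recorded just before the lemma, which follows because locally $E_0 \simeq \cO_Z \otimes V$ for a finite-dimensional $P$-representation $V$ that decomposes into finitely many $\lambda$-weights. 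The kernel is coherent (kernel of a surjection of coherent sheaves on the Noetherian stack $\Z^\prime$) and, being a subsheaf of $(E_0)_{\geq w}$, lies in $\coh{\Z^\prime}_{\geq w}$. Iterate.

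I do not anticipate any real obstacle: once one has the recorded properties of the truncation functors, both halves reduce to routine homological algebra. The only mildly delicate point is the preservation of finite rank under $(\bullet)_{\geq w}$, which is already taken care of in the preceding discussion.
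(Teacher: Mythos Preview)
Your argument is correct. The locally-free half is essentially identical to the paper's: surject from a locally free $E$, replace $E$ by $E_{\geq w}$, iterate.

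For the injective half you take a genuinely different (though equally short) route. The paper works in the ambient category: it takes the injective hull $F \hookrightarrow \sh{I}^0$ in $\qcoh{\Z^\prime}$ and argues directly that $\sh{I}^0_{\geq w}=0$, because $\sh{I}^0_{\geq w} \cap F = 0$ (disjoint weight supports) while $\sh{I}^0$ is an essential extension of $F$. You instead establish that $\qcoh{\Z^\prime}_{<w}$ is Grothendieck and then use the adjunction $(\bullet)_{<w}\dashv i_{<w}$ with exact left adjoint to see that injectives of the subcategory are already injective in the ambient one. The paper's version is a bit more economical (no Grothendieck axioms to check), while yours is more structural and avoids the essential-extension step; either is fine.
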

\begin{proof}
First assume $F \in \qcoh{\Z^\prime}_{< w}$, and let $F \to \sh{I}^0$ be the injective hull of $F$ in $\qcoh{\Z^\prime}$.\footnote{The injective hull exists because $\qcoh{\Z^\prime}$ is cocomplete and taking filtered colimits is exact.} Then $\sh{I}^0_{\geq w} \cap F_{<w} = 0$, hence $\sh{I}^0_{\geq w} = 0$ because $\sh{I}^0$ is an essential extension of $F$. $\qcoh{\Z^\prime}_{<w}$ is a Serre subcategory, so $\sh{I}^0 / F \in \qcoh{\Z^\prime}_{<w}$ as well, and we can inductively build an injective resolution with $\sh{I}^i \in \qcoh{\Z^\prime}_{<w}$.

Next assume $F \in \coh{\Z^\prime}_{\geq w}$. Choose a surjection $E \to F$ where $E$ is locally free. Then $E_0 := E_{\geq w}$ is still locally free, and $E_{\geq w} \to F$ is still surjective. Because $\coh{\Z^\prime}_{\geq w}$ is a Serre subcategory, $\ker (E_0 \to F) \in \coh{\Z^\prime}_{\geq w}$ as well, so we can inductively build a locally free resolution with $E_i \in \coh{\Z^\prime}_{\geq w}$.
\end{proof}

We will use this lemma to study the subcategories of $\D^b(\Z^\prime)$ generated by $\coh{\Z^\prime}_{\geq w}$ and $\coh{\Z^\prime}_{<w}$. Define the full triangulated subcategories, where $?$ can denote either $-,+,b,$ or blank.
\begin{gather*}
\D^?(\Z^\prime)_{\geq w} = \{ F^\bdot \in \D^?(\Z^\prime) | \cH^i(F^\bdot) \in \qcoh{\Z^\prime}_{\geq w} \text{ for all } i \} \\
\D^?(\Z^\prime)_{< w} = \{ F^\bdot \in \D^?(\Z^\prime) | \cH^i(F^\bdot) \in \qcoh{\Z^\prime}_{< w} \text{ for all }i\}
\end{gather*}

For any complex $F^\bdot$ we have the canonical short exact sequence
\begin{equation} \label{eqn_canonical_sequence_Z}
0 \to F^\bdot_{\geq w} \to F^\bdot \to F^\bdot_{<w} \to 0
\end{equation}
If $F^\bdot \in \D^b(\Z^\prime)_{\geq w}$ then the first arrow is a quasi-isomorphism, because $(\bullet)_{\geq w}$ is exact. Likewise for the second arrow if $F^\bdot \in \D^b(\Z^\prime)_{<w}$. Thus $F^\bdot \in \D^b(\Z^\prime)_{\geq w}$ iff it is quasi-isomorphic to a complex of sheaves in $\coh{\Z^\prime}_{\geq w}$ and likewise for $\D^b(\Z^\prime)_{< w}$.

\begin{prop} \label{prop_baric_Z}
These subcategories constitute a \emph{baric decomposition}
$$\D^b(\Z^\prime) = \langle \D^b(\Z^\prime)_{< w}, \D^b(\Z^\prime)_{\geq w} \rangle$$
This baric decomposition is \emph{multiplicative} in the sense that
$$\op{Perf}(\Z^\prime)_{\geq w} \otimes \D^b(\Z^\prime)_{\geq v} \subset \D^b(\Z^\prime)_{\geq v+w}.$$
It is \emph{bounded}, meaning that every object lies in $\sh{D}_{\geq w} \cap \sh{D}_{< v}$ for some $w,v$. The baric truncation functors, the adjoints of the inclusions $\sh{D}_{\geq w},\sh{D}_{<w} \subset \D^b(\Z^\prime)$, are exact.
\end{prop}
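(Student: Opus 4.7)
The plan is to reduce every assertion to the exactness of the functors $(-)_{\geq w}$ and $(-)_{<w}$ on $\qcoh{\Z^\prime}$ together with the weight-adapted resolutions from Lemma \ref{lem_special_resolutions}. For semiorthogonality, I claim $\op{Hom}_{\D^b(\Z^\prime)}(A^\bdot, B^\bdot[n]) = 0$ whenever $A^\bdot \in \D^b(\Z^\prime)_{\geq w}$ and $B^\bdot \in \D^b(\Z^\prime)_{<w}$. Applying $(-)_{\geq w}$ termwise to any representative of $A^\bdot$ yields a quasi-isomorphism, since $(-)_{\geq w}$ is exact and $A^\bdot$ has cohomology in weight $\geq w$; thus I may assume each $A^i \in \qcoh{\Z^\prime}_{\geq w}$, and similarly each $B^j \in \qcoh{\Z^\prime}_{<w}$. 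A Cartan--Eilenberg style construction, together with Lemma \ref{lem_special_resolutions} applied termwise, produces a bounded-below resolution $B^\bdot \to I^\bdot$ whose entries are injective in $\qcoh{\Z^\prime}$ and lie in $\qcoh{\Z^\prime}_{<w}$. Because $\op{Hom}_{\qcoh{\Z^\prime}}(\qcoh{\Z^\prime}_{\geq w}, \qcoh{\Z^\prime}_{<w}) = 0$ by the weight grading, every cochain map $A^\bdot \to I^\bdot[n]$ vanishes component-wise, so the derived $\op{Hom}$ is zero.

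For the truncation triangle, I apply $(-)_{\geq w}$ and $(-)_{<w}$ termwise to a representative of $F^\bdot \in \D^b(\Z^\prime)$ to obtain the short exact sequence of complexes
$$0 \to (F^\bdot)_{\geq w} \to F^\bdot \to (F^\bdot)_{<w} \to 0,$$
and exactness of the two functors places the outer terms in $\D^b(\Z^\prime)_{\geq w}$ and $\D^b(\Z^\prime)_{<w}$ respectively. Combined with the Hom vanishing, this gives the semiorthogonal decomposition in the claimed order. Exactness also ensures that this termwise construction descends to well-defined baric truncation functors $\beta_{\geq w}$ and $\beta_{<w}$ on $\D^b(\Z^\prime)$, and the identity $\cH^i(\beta_{\geq w} F^\bdot) = (\cH^i F^\bdot)_{\geq w}$ shows they are $t$-exact with respect to the standard $t$-structure. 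The nestedness $\D^b(\Z^\prime)_{\geq w+1} \subset \D^b(\Z^\prime)_{\geq w}$ is immediate from the definitions.

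For multiplicativity, I represent $E^\bdot \in \op{Perf}(\Z^\prime)_{\geq w}$ by a bounded complex of locally free sheaves and apply $(-)_{\geq w}$ termwise; the functor preserves local freeness (the weight eigenspaces split off as direct summands), and since $E^\bdot$ already has cohomology in weight $\geq w$ the result is quasi-isomorphic to $E^\bdot$, with all terms locally free in weight $\geq w$. Tensoring with a termwise weight-$\geq v$ representative of $F^\bdot \in \D^b(\Z^\prime)_{\geq v}$ (obtained analogously) gives a bounded complex in which every entry is a tensor product of sheaves of weights $\geq w$ and $\geq v$, hence of weight $\geq v+w$; this computes $E^\bdot \otimes^L F^\bdot$ because the chosen representative of $E^\bdot$ is flat.

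Finally, for boundedness, observe that a coherent sheaf on $\Z^\prime$ splits as $\bigoplus_n F_n$ under the trivial $\lambda(\Gm)$-action on $Z$, with each $F_n$ coherent; a finite local generating set meets only finitely many weight eigenspaces, so $F_n = 0$ outside a bounded range. Applying this to the finitely many nonzero cohomology sheaves of any $F^\bdot \in \D^b(\Z^\prime)$ yields integers $v \leq w$ with $F^\bdot \in \D^b(\Z^\prime)_{\geq v} \cap \D^b(\Z^\prime)_{<w}$. The most delicate step is the construction of the weight-respecting injective resolution underlying the Hom vanishing; the remaining verifications are routine once one exploits the exactness of the weight truncations on $\qcoh{\Z^\prime}$.
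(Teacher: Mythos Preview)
Your proof is correct and follows essentially the same route as the paper's: both arguments establish semiorthogonality via the weight-respecting injective resolution of Lemma \ref{lem_special_resolutions}, obtain the truncation triangle by applying the exact functors $(\bullet)_{\geq w}$ and $(\bullet)_{<w}$ termwise to the canonical sequence \eqref{eqn_canonical_sequence_Z}, and deduce boundedness from the finiteness of weights on coherent sheaves. Your version is simply more explicit where the paper is terse---you spell out the Cartan--Eilenberg passage from sheaves to complexes and give the multiplicativity argument in full (the paper dismisses it as ``straightforward to verify'')---but there is no genuine difference in strategy.
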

\begin{proof}
If $A \in \coh{\Z^\prime}_{\geq w}$ and $B \in \coh{\Z^\prime}_{<w}$, then by Lemma \ref{lem_special_resolutions} we resolve $B$ by injectives in $\qcoh{\Z^\prime}_{<w}$, and thus $R\op{Hom}(A,B) \simeq 0$. It follows that $\D^b(\Z^\prime)_{\geq w}$ is left orthogonal to $\D^b(\Z^\prime)_{<w}$. $\qcoh{\Z^\prime}_{\geq w}$ and $\qcoh{\Z^\prime}_{< w}$ are Serre subcategories, so $F^\bdot_{\geq w} \in \D^b(\Z^\prime)_{\geq w}$ and $F^\bdot_{<w} \in \D^b(\Z^\prime)_{<w}$ for any $F^\bdot \in \D^b(\Z^\prime)$. Thus the natural sequence \eqref{eqn_canonical_sequence_Z} shows that we have a baric decomposition, and that the right and left truncation functors are the exact functors $(\bullet)_{\geq w}$ and $(\bullet)_{<w}$ respectively. Boundedness follows from the fact that coherent equivariant $\sh{O}_Z$-modules must be supported in finitely many $\lambda$ weights. Multiplicativity is also straightforward to verify.
\end{proof}

\begin{rem}
A completely analogous baric decomposition holds for $\Z$ as well. In fact, for $\Z$ the two factors are mutually orthogonal.
\end{rem}

\begin{center} $\star \star \star$ \end{center}
\vskip 5 pt

Next we turn to the derived category of $\S$. The closed immersion $\sigma : Z \hookrightarrow Y$ is $L$ equivariant, hence it defines a map of stacks $\sigma : \Z \to \S$. Recall also that because $\pi : \S \to \Z^\prime$ is affine, the derived pushforward $R\pi_\ast = \pi_\ast$ is just the functor which forgets the $\cA$-module structure. Define the thick triangulated subcategories
\begin{equation*}
\begin{array}{ll}
\D^?(\S)_{<w} = \{ F^\bdot \in \D^?(\S) | \pi_\ast F^\bdot \in \D(\Z^\prime)_{<w} \}, & ? = -,+,b, \text{ or blank} \\[8pt]
\D^?(\S)_{\geq w} = \{ F^\bdot \in \D^?(\S) | L\sigma^\ast F^\bdot \in \D^-(\Z)_{\geq w} \}, & ? = -,b
\end{array}
\end{equation*}
In the rest of this subsection we will analyze these two categories and show that they constitute a multiplicative baric decomposition.

\begin{prop} \label{prop_baric_S_flat}
Let $\S$ be a KN stratum such that $\pi : Y \to Z$ is flat. Then the categories $\D^b(\S) = \langle \D^b(\S)_{< w}, \D^b(\S)_{\geq w} \rangle$ constitute a multiplicative baric decomposition, and the truncation functors satisfy
$$\sigma^\ast (\radj{w} F^\bdot) \simeq (\sigma^\ast F^\bdot)_{\geq w} \text{ and } \sigma^\ast(\ladj{w} F^\bdot) \simeq (\sigma^\ast F^\bdot)_{<w}.$$
The baric decomposition restricts to a \emph{bounded} multiplicative baric decomposition of $\op{Perf}(\S)$, and if $Z \hookrightarrow Y$ has finite tor dimension then the baric decomposition on $\D^b(\S)$ is bounded as well.
\end{prop}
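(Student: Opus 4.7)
I will work with the equivalent description of $\qcoh(\S)$ as $P$-equivariant $\cA$-modules on $Z$ via the identification $\pi_\ast$; flatness of $\pi$ means $\cA$ is $\cO_Z$-flat, so modules of the form $\cA \otimes_{\cO_Z} E$ with $E$ locally free over $\cO_{\Z^\prime}$ are flat as $\cA$-modules. The key combinatorial observation will be: for $\cO_{\Z^\prime}$-locally-free $E, E'$ concentrated in pure $\lambda$-weights $v, v'$, an $\cA$-linear map $\cA \otimes E \to \cA \otimes E'$ corresponds under the adjunction $(\pi^\ast, \pi_\ast)$ to an $\cO_{\Z^\prime}$-linear map $E \to \cA \otimes E'$, whose image lies in the weight-$v$ component $\cA_{v-v'} \otimes E'$. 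Because $\cA$ is supported in weights $\leq 0$, this can be nonzero only if $v \leq v'$. Consequently, in a complex $\tilde F^i = \cA \otimes_{\cO_Z} E^i$ with $E^i = \bigoplus_v E^i_v$ decomposed by $\lambda$-weight, the differentials can only preserve or increase the generator weight.

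\textbf{Construction of the truncation.} Any $F^\bdot \in \D^b(\S)$ admits a bounded-above flat resolution $\tilde F^\bdot \to F^\bdot$ with terms $\tilde F^i = \cA \otimes_{\cO_Z} E^i$ for locally free $E^i \in \coh{\Z^\prime}$. By the lemma preceding Proposition~\ref{prop_baric_Z}, each $E^i_{\geq w}$ is a $P$-equivariant subsheaf fitting into a short exact sequence $0 \to E^i_{\geq w} \to E^i \to E^i_{<w} \to 0$; tensoring with $\cA$ preserves exactness, and the combinatorial observation guarantees that $\cA \otimes E^\bdot_{\geq w}$ is an $\cA$-module sub-complex of $\tilde F^\bdot$. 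I will therefore define
\[\radj{w} F^\bdot := \cA \otimes_{\cO_Z} E^\bdot_{\geq w}, \qquad \ladj{w} F^\bdot := \cA \otimes_{\cO_Z} E^\bdot_{<w},\]
yielding a distinguished triangle $\radj{w} F^\bdot \to F^\bdot \to \ladj{w} F^\bdot \parr$. Since each term is $\cA$-flat, $L\sigma^\ast \radj{w} F^\bdot \simeq E^\bdot_{\geq w} \simeq (L\sigma^\ast F^\bdot)_{\geq w}$, so $\radj{w} F^\bdot \in \D^b(\S)_{\geq w}$ and the compatibility formula $\sigma^\ast \radj{w} F^\bdot \simeq (\sigma^\ast F^\bdot)_{\geq w}$ is immediate; symmetrically, the $\cO_{\Z^\prime}$-module $\cA \otimes E^\bdot_{<w}$ has all weights $<w$ (since $\cA$ has weights $\leq 0$), so $\ladj{w} F^\bdot \in \D^b(\S)_{<w}$.

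\textbf{Semiorthogonality.} For $B^\bdot \in \D^b(\S)_{<w}$, flatness of each $\cA \otimes E^i_{\geq w}$ and the $(\pi^\ast, \pi_\ast)$ adjunction give
\[R\op{Hom}_\cA\bigl(\radj{w} F^\bdot, B^\bdot\bigr) \simeq R\op{Hom}_{\cO_{\Z^\prime}}\bigl(E^\bdot_{\geq w}, \pi_\ast B^\bdot\bigr) = 0,\]
where vanishing uses the baric semiorthogonality of Proposition~\ref{prop_baric_Z} (its proof via injective resolutions staying in $\qcoh{\Z^\prime}_{<w}$ works for the quasi-coherent $\pi_\ast B^\bdot$ as well). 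To extend this to \emph{all} $A^\bdot \in \D^b(\S)_{\geq w}$, I will show the natural map $\radj{w} A^\bdot \to A^\bdot$ is an isomorphism: its cone $\ladj{w} A^\bdot$ lies in $\D^b(\S)_{<w}$ and satisfies $L\sigma^\ast \ladj{w} A^\bdot = (L\sigma^\ast A^\bdot)_{<w} = 0$, so it suffices to prove $L\sigma^\ast$ is conservative on $\D^b(\S)_{<w}$. This follows from a top-weight argument: if $B^\bdot \in \D^b(\S)_{<w}$ is nonzero, let $v_{\max} < w$ be the highest weight appearing in its cohomology; the quotient $B^\bdot_{=v_{\max}} = B^\bdot/B^\bdot_{<v_{\max}}$ has $\cA_{<0}$ acting by zero, so $L\sigma^\ast B^\bdot_{=v_{\max}} \simeq B^\bdot_{=v_{\max}} \otimes^L_{\cO_Z} L\sigma^\ast \cO_Z$ retains $B^\bdot_{=v_{\max}}$ in its weight-$v_{\max}$ component, which cannot be cancelled by $L\sigma^\ast B^\bdot_{<v_{\max}}$ (whose weights are strictly $<v_{\max}$) in the long exact sequence, contradicting $L\sigma^\ast B^\bdot = 0$.

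\textbf{Multiplicativity and boundedness.} Multiplicativity $\op{Perf}(\S)_{\geq v} \otimes^L \D^b(\S)_{\geq w} \subset \D^b(\S)_{\geq v+w}$ follows because $L\sigma^\ast$ is monoidal and the baric decomposition on $\D^b(\Z)$ is multiplicative (weights add under tensor). For $\op{Perf}(\S)$, every perfect complex admits a strictly bounded resolution by finitely generated $\cA$-free modules involving only finitely many generator weights, so both halves of the truncation remain perfect, yielding the bounded baric decomposition on $\op{Perf}(\S)$. When $Z \hookrightarrow Y$ has finite Tor-dimension, $L\sigma^\ast$ has finite cohomological amplitude; combined with the characterization $L\sigma^\ast\radj{w} F^\bdot \simeq (L\sigma^\ast F^\bdot)_{\geq w}$ together with the semiorthogonality established above, this allows bounded flat resolutions to suffice and extends the bounded baric decomposition to all of $\D^b(\S)$. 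The principal obstacle will be the conservativity of $L\sigma^\ast$ on $\D^b(\S)_{<w}$ in Step 3---the top-weight survival argument depends delicately on the flatness of $\pi$, which ensures $H^0(L\sigma^\ast \cO_Z) = \cO_Z$ is nonzero in weight $0$ so that the highest weight of $B^\bdot$ genuinely appears in $L\sigma^\ast B^\bdot_{=v_{\max}}$.
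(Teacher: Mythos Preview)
Your construction of the truncation triangle and the semiorthogonality argument are essentially the paper's approach (the paper packages the adjunction step as Lemma~\ref{lem_left_orthog_S}, and the conservativity of $L\sigma^\ast$ you invoke is its Nakayama lemma, Lemma~\ref{lem:Nakayama}). However, there is a genuine gap: you never verify that $\radj{w} F^\bdot = \cA \otimes E^\bdot_{\geq w}$ and $\ladj{w} F^\bdot = \cA \otimes E^\bdot_{<w}$ have \emph{bounded} cohomology. Your resolution $\cA \otimes E^\bdot$ is only bounded above, so a priori the truncations live in $\D^-(\S)$, not $\D^b(\S)$; the exact triangle does not help, since the two outer terms could have cancelling unbounded tails. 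Your final paragraph conflates this issue with \emph{boundedness of the baric decomposition} (the statement that every object lies in some $\D_{\geq w} \cap \D_{<v}$), which is a separate and weaker claim.

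The paper addresses this in Lemma~\ref{lem_truncation_S} by induction on $w$: first one shows $\cA \otimes E^\bdot_{\geq W} \simeq 0$ for $W$ sufficiently large (Nakayama plus the observation that any coherent $\cA$-module has a highest weight), then descends one weight at a time via the triangle $\cA \otimes E^\bdot_{\geq w+1} \to \cA \otimes E^\bdot_{\geq w} \to \cA \otimes E^\bdot_w$. The key point is that the single-weight piece $E^\bdot_w$ is genuinely a subcomplex of $P$-equivariant $\cO_Z$-modules (the differential restricted to the highest-weight subsheaf of $\cA \otimes E^\bdot_{<w+1}$ must land back in weight $w$), and as a non-equivariant $\cO_Z$-summand of $\cA \otimes E^\bdot$ it has bounded cohomology; flatness of $\pi$ then guarantees $\cA \otimes E^\bdot_w = L\pi^\ast(E^\bdot_w)$ is bounded. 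This is precisely where the flatness hypothesis enters, and your proposal omits it.

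A secondary issue: your claim that every perfect complex on $\S$ admits a \emph{strictly bounded} resolution by objects of the form $\cA \otimes E$ with $E$ locally free is not justified. The paper instead argues (at the end of Lemma~\ref{lem_truncation_S}) that $F^\bdot \in \D^b(\S)$ is perfect if and only if $\sigma^\ast F^\bdot$ is perfect, and then uses that the baric truncations on $\D^b(\Z)$ preserve perfect complexes.
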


Note that when Property \hyperref[property_A]{(A)} holds, then $\pi : \S \to \Z$ is flat and $\sigma : Z \hookrightarrow Y$ has finite tor dimension. We will prove the proposition after collecting several key lemmas on the structure of the category $\D^-(\S)$.

\begin{lem}
Any object $F^\bdot \in \D^-(\S)$ admits a presentation as a right-bounded complex of the form $\cA \otimes E^\bdot$, where $E^i \in \coh{\Z^\prime}$ are locally free.
\end{lem}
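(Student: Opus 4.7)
The strategy is the familiar one of building a locally free (here, $\cA$-free) resolution term by term, and then iterating to the left, so the only substantive issue is whether a single coherent sheaf $F\in\coh{\S}$ admits a surjection of the form $\cA\otimes E\twoheadrightarrow F$ with $E$ a locally free coherent $\cO_{\Z^\prime}$-module.

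First I would reduce the lemma to this case. Given $F^\bdot\in\D^-(\S)$, one may assume $F^i=0$ for $i>0$; I will produce a quasi-isomorphism $\cA\otimes E^\bdot\to F^\bdot$ of right-bounded complexes by descending induction on degree. At each step, having built the truncation $\cA\otimes E^{\geq -n}\to\tau^{\geq -n}F^\bdot$, the homological kernel is again a coherent sheaf on $\S$ (or more precisely, a coherent $\cA$-module on $\Z^\prime$ that can be realized in degree $-n-1$), and a surjection $\cA\otimes E^{-n-1}\twoheadrightarrow\ker$ by an $\cA$-free object extends the resolution one further step. Standard homological algebra packages this inductive construction into an honest map of complexes.

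Second, and this is the main content, I need to establish the one-step surjection. The tool is the adjunction $\pi^\ast\dashv\pi_\ast$ on $\Z^\prime$-modules vs.\ $\cA$-modules: since $\pi$ is affine, giving a morphism $\cA\otimes E\to F$ is the same as giving an $\cO_{\Z^\prime}$-linear morphism $E\to\pi_\ast F$, and the former is surjective iff the image of the latter generates $\pi_\ast F$ as an $\cA$-module. So it suffices to find a coherent $P$-equivariant locally free sheaf $E$ on $Z$ together with a map $E\to\pi_\ast F$ whose image $\cA$-generates $\pi_\ast F$. Because $F$ is coherent over $\cA$ and $\cA$ is coherently generated over $\cO_Z$, the $\cA$-module $\pi_\ast F$ admits a coherent $\cO_{\Z^\prime}$-submodule $G\subset\pi_\ast F$ that generates it; the composition $E\twoheadrightarrow G\hookrightarrow\pi_\ast F$ coming from any locally free surjection $E\twoheadrightarrow G$ then has the required property.

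The remaining point, and what I expect to be the main obstacle, is producing $E\twoheadrightarrow G$, i.e.\ verifying that $\Z^\prime=Z/P$ has the resolution property for coherent $P$-equivariant sheaves by locally free ones. Here I would exploit the semidirect product decomposition $P=U\rtimes L$ together with the fact that $Z$ is $G$-quasiprojective (and hence $L$-quasiprojective with a chosen $L$-linearized ample bundle). Because the $P$-action on $Z$ factors through $L$, the same $L$-equivariant ample line bundle $\cO_Z(1)$, viewed as a $P$-equivariant sheaf via the projection $P\to L$, is $P$-equivariantly ample; tensoring with a large enough power and choosing a surjection from a finite direct sum of $L$-equivariant line bundles will globally generate $G$ as an $L$-equivariant sheaf. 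The $P$-equivariance of this surjection then follows from the naturality of the coaction map, and one lifts it, if necessary, using the argument already employed in the proof establishing that $(\bullet)_{\geq w}$ preserves $P$-equivariance (the nonnegative grading of $k[U]$ under $\lambda$, combined with coherence, keeps everything under control). This gives the desired locally free $E$, completing the construction.
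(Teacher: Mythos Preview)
Your proposal is correct and follows essentially the same approach as the paper: reduce to producing a surjection $\cA\otimes E\twoheadrightarrow F$ for a single coherent $F$, find a coherent $\cO_{\Z'}$-submodule that generates $F$ as an $\cA$-module, and then surject onto it by a locally free sheaf on $\Z'$. The paper takes the resolution property for $\Z'=Z/P$ for granted (your final paragraph), so your attempt to justify it via the $L$-to-$P$ upgrade is more than the paper provides---though the cleaner argument is simply that $Z$ is quasiprojective with a linearizable $P$-action, which directly gives enough $P$-equivariant locally free sheaves.
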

\begin{proof}
By the standard method of constructing resolutions by vector bundles, it suffices to show that every object $F \in \coh{\S}$ admits a surjection from an object of the form $\cA \otimes E$ with $E \in \coh{\Z^\prime}$ locally free. Regarding $F$ as a coherent $\cA$-module on $Z$ and forgetting the $\cA$-module structure, $F$ is a union of its coherent $\cO_{\Z^\prime}$-submodules $F^\prime \subset F$ \cite{AB10}. Thus $F$ is a union of the coherent $\cA$-submodules $\cA \cdot F^\prime$, and because $F$ is coherent and $\cA$ is Noetherian, one must have $F = \cA \cdot F^\prime$ for some coherent $\cO_{\Z^\prime}$-submodule $F^\prime$.  Finally we choose a surjection $E \to F^\prime$ from a locally free sheaf on $\Z^\prime$, and it follows that $\cA \otimes E \to F$ is surjective.
\end{proof}

Complexes on $\S$ of the form $\sh{A} \otimes E^\bdot$, where each $E^i$ is a locally free sheaf on $\Z^\prime$, will be of prime importance. Note that the differential $d^i:\sh{A} \otimes E^i \to \sh{A} \otimes E^{i+1}$ is not necessarily induced from a differential $E^i \to E^{i+1}$. However we observe
\begin{lem} \label{lem_radj_of_pullback_S}
If $E \in \qcoh{\Z^\prime}$, then $\cA \cdot (\sh{A} \otimes E)_{\geq w} = \sh{A} \otimes E_{\geq w}$, where the left side denotes the smallest $\cA$-submodule containing the $\cO_Z$-submodule $(\cA\otimes E)_{\geq w}$.
\end{lem}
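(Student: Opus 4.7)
The plan is to exploit the bigrading coming from the $\lambda$-weight decomposition together with the key hypothesis that $\cA$ is nonpositively graded with $\cA_0 = \cO_Z$. Both containments reduce to tracking weights in the $\cO_Z$-module decomposition.

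For the containment $\cA \cdot (\cA \otimes E)_{\geq w} \subset \cA \otimes E_{\geq w}$, I would first note that as $\cO_Z$-modules with $\lambda$-action we have a bigraded decomposition
$$\cA \otimes_{\cO_Z} E \;=\; \bigoplus_{i \leq 0,\, j \in \bZ} \cA_i \otimes_{\cO_Z} E_j,$$
and hence
$$(\cA \otimes E)_{\geq w} \;=\; \bigoplus_{\substack{i \leq 0,\; j \\ i+j \geq w}} \cA_i \otimes E_j.$$
Since $i \leq 0$, the inequality $i+j \geq w$ forces $j \geq w$. Therefore $(\cA \otimes E)_{\geq w} \subset \cA \otimes E_{\geq w}$ already as $\cO_Z$-submodules, and since the right-hand side is an $\cA$-submodule of $\cA \otimes E$ (the $E$-factor is preserved by multiplication in $\cA$), applying $\cA\cdot(-)$ preserves this inclusion.

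For the reverse containment, I would use that the $\cO_Z$-submodule $\cA_0 \otimes E_{\geq w} = E_{\geq w}$ is concentrated in $\lambda$-weight $\geq w$, hence lies in $(\cA \otimes E)_{\geq w}$. Since $\cA \otimes E_{\geq w}$ is generated as an $\cA$-module by $E_{\geq w} = \cA_0 \otimes E_{\geq w}$, it follows that
$$\cA \otimes E_{\geq w} \;=\; \cA \cdot (\cA_0 \otimes E_{\geq w}) \;\subset\; \cA \cdot (\cA \otimes E)_{\geq w},$$
which completes the proof.

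There is no real obstacle here: the statement is a direct bookkeeping consequence of the nonpositivity of the grading on $\cA$ (i.e.\ $\cA_i = 0$ for $i>0$) together with $\cA_0 = \cO_Z$. The only place to be careful is to remember to distinguish between the $\cO_Z$-submodule $(\cA \otimes E)_{\geq w}$ and the $\cA$-submodule it generates, which is exactly what the lemma is computing.
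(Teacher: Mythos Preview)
Your proof is correct and follows essentially the same approach as the paper's: both identify $(\cA\otimes E)_{\geq w} = \bigoplus_{i+j\geq w}\cA_i\otimes E_j$ and $\cA\otimes E_{\geq w}$ as the $\cA$-module generated by $\bigoplus_{j\geq w}\cA_0\otimes E_j$, and observe these generate the same $\cA$-submodule because $i\leq 0$. Your version simply spells out the two containments more explicitly than the paper's one-line sketch.
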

\begin{proof}
By definition the left-hand side is the $\sh{A}$-submodule generated by $\bigoplus_{i+j \geq w} \sh{A}_i \otimes E_j$ and the right-hand side is generated by $\bigoplus_{j \geq w} \sh{A}_0 \otimes E_j \subset \sh{A} \otimes E_{\geq w}$. These $\cO_Z$-submodules clearly generate the same $\cA$-submodule.
\end{proof}

This guarantees that $d^i(\cA \otimes E^i_{\geq w}) \subset \cA \otimes E^{i+1}_{\geq w}$, so $\sh{A} \otimes E^\bdot_{\geq w}$ is a subcomplex, and $E_{\geq w}$ is a direct summand as a non-equivariant $\sh{O}_Z$-module, so we have a canonical short exact sequence of complexes in $\qcoh\S$
\begin{equation} \label{eqn_canonical_sequence_S}
0 \to \sh{A} \otimes E^\bdot_{\geq w} \to \sh{A} \otimes E^\bdot \to \sh{A} \otimes E^\bdot_{<w} \to 0.
\end{equation}
We observe the following extension of Nakayama's lemma to the derived category
\begin{lem}[Nakayama] \label{lem:Nakayama}
If $F^\bdot \in \D^-(\S)$ and $L\sigma^\ast F^\bdot \simeq 0$, then $F^\bdot \simeq 0$.
\end{lem}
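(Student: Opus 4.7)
The plan is to argue by contradiction. Since $F^\bdot\in\D^-(\S)$ has coherent cohomology bounded above, if $F^\bdot\not\simeq 0$ then there is a largest integer $n$ with $M:=\cH^n(F^\bdot)\neq 0$. Under the equivalence $\qcoh{\S}\simeq$ ($P$-equivariant quasicoherent $\cA$-modules on $Z$), $M$ is a coherent $\cA$-module on $Z$, and the goal is to derive a contradiction from $L\sigma^\ast F^\bdot\simeq 0$.

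First I would isolate $M$ using the hypercohomology spectral sequence
$$E_2^{p,q}=\cH^p\bigl(L\sigma^\ast\cH^q(F^\bdot)\bigr)\Longrightarrow \cH^{p+q}(L\sigma^\ast F^\bdot).$$
The derived pullback $L\sigma^\ast$ has nonvanishing cohomology only in degrees $p\leq 0$ (being $\op{Tor}_{-p}^{\cO_Y}(\cO_Z,-)$), and by choice of $n$ we have $\cH^q(F^\bdot)=0$ for $q>n$. So the only term of total degree $n$ is the corner $E_2^{0,n}=\sigma^\ast M$. A direct check confirms that all differentials at this corner vanish for degree reasons: outgoing $d_r$ target $p=r>0$, and incoming $d_r$ come from $q=n+r-1>n$. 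Hence $\cH^n(L\sigma^\ast F^\bdot)=\sigma^\ast M$, and the assumed vanishing of $L\sigma^\ast F^\bdot$ forces $\sigma^\ast M=0$.

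Next I would finish via a graded Nakayama argument. The underived pullback of $M$ along $\sigma$ is $\sigma^\ast M=M/\cA_{<0}M$, using $\cO_Z=\cA/\cA_{<0}$. Because $\cA$ is coherently generated over $\cO_Z$ and concentrated in weights $\leq 0$, any coherent $\cA$-module admits, on each affine open of $Z$, finitely many weight-homogeneous generators; since the action of $\cA_{<0}$ strictly decreases $\lambda$-weight, the weights of the generators bound the weights of $M$ from above locally, and quasi-compactness of $Z$ gives a global upper bound $w_{\max}$ with $M_{w_{\max}}\neq 0$. Then $(\cA_{<0}M)_{w_{\max}}=\sum_{i<0}\cA_i\cdot M_{w_{\max}-i}=0$, since $w_{\max}-i>w_{\max}$ forces $M_{w_{\max}-i}=0$. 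Hence $(M/\cA_{<0}M)_{w_{\max}}=M_{w_{\max}}\neq 0$, contradicting $\sigma^\ast M=0$. This contradicts the existence of $n$, and so $F^\bdot \simeq 0$.

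The main potential obstacle is verifying the two routine ingredients: (i) the claimed collapse at the $(0,n)$-corner, which is pure bookkeeping once one notes that $E_2^{p,q}$ is nonzero only in the quadrant $p\leq 0$, $q\leq n$; and (ii) the bounded-above weights property of a coherent $\cA$-module, which rests on the fact that $\cA$ is non-positively graded and coherently generated over $\cO_Z$, together with quasi-compactness. With these in hand the proof is simply the classical graded Nakayama lemma applied to the top cohomology sheaf, packaged through the spectral sequence.
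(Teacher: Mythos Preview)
Your argument is correct and follows the same overall architecture as the paper's proof: isolate the top cohomology sheaf $M=\cH^n(F^\bdot)$, observe that $\cH^n(L\sigma^\ast F^\bdot)\simeq\sigma^\ast M$, and then deduce $M=0$ from $\sigma^\ast M=0$ via an abelian Nakayama lemma. The paper states the top-degree identification directly rather than invoking a spectral sequence, but this is a cosmetic difference.

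Where you genuinely diverge is in the abelian step. The paper argues geometrically: the support of a coherent sheaf on $\S$ is closed, and every nonempty closed substack of $\S$ meets $\Z$ (since $Z$ is the limit locus of the $\lambda$-flow on $Y$), so $\sigma^\ast M=0$ forces $\op{supp}(M)=\emptyset$. You instead argue algebraically via the grading: a coherent $\cA$-module has weights bounded above, and the top weight space injects into $M/\cA_{<0}M=\sigma^\ast M$. Both are valid; the paper's support argument is shorter and more conceptual, while your graded argument is more explicit and in fact anticipates the ``highest weight subsheaf'' device the paper introduces just a few paragraphs later.
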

\begin{proof}
The natural extension of Nakayama's lemma to stacks is the statement that the support of a coherent sheaf is closed. In our setting this means that if $G \in \coh\S$ and $G \otimes \sh{O}_Z = 0$ then $G=0$, because $\op{supp}(G) \cap Z = \emptyset$ and every nonempty closed substack of $S$ intersects $Z$ nontrivially. 

If $H^r (F^\bdot)$ is the highest nonvanishing cohomology group of a right bounded complex, then $H^r (L\sigma^\ast F^\bdot) \simeq \sigma^\ast H^r(F^\bdot)$. By Nakayama's lemma $\sigma^\ast H^r(F^\bdot) = 0 \Rightarrow H^r(F^\bdot) = 0$, so we must have $\sigma^\ast H^r(F^\bdot) \neq 0$ as well.
\end{proof}

With this we can explicitly characterize the category $\D^-(\S)_{\geq w}$:

\begin{lem} \label{lem_characterize_radj_S}
$F^\bdot \in \D^-(\S)_{\geq w}$ iff it is quasi-isomorphic to a right-bounded complex of sheaves of the form $\sh{A} \otimes E^i$ with $E^i \in \coh{\Z^\prime}_{\geq w}$ locally free.
\end{lem}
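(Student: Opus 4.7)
The proof proceeds in two directions, and the main tools are already in place: the canonical short exact sequence \eqref{eqn_canonical_sequence_S}, the presentation lemma immediately preceding, and the derived Nakayama lemma (Lemma \ref{lem:Nakayama}).

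For the ``if'' direction, I would compute $L\sigma^\ast(\cA \otimes E^\bdot)$ directly when each $E^i \in \coh{\Z^\prime}_{\geq w}$ is locally free. Since $E^i$ is locally free on $Z$, each term $\cA \otimes E^i$ is locally a finite direct sum of copies of $\cA$, hence $\cA$-flat, so $L\sigma^\ast$ coincides with the termwise classical pullback. Using $\pi \circ \sigma = \id_Z$ and $\sigma^\ast \cA = \cA_0 = \cO_Z$, this reduces to $E^\bdot$ itself. Since every term (and therefore every cohomology sheaf) lives in $\qcoh{\Z}_{\geq w}$, we get $F^\bdot \in \D^-(\S)_{\geq w}$.

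For the ``only if'' direction, I would start with arbitrary $F^\bdot \in \D^-(\S)_{\geq w}$ and invoke the preceding presentation lemma to write $F^\bdot \simeq \cA \otimes E^\bdot$ for some right-bounded complex of locally free sheaves $E^i \in \coh{\Z^\prime}$, with no a priori weight restriction. The plan is to excise the ``weight $<w$ part'' via the canonical short exact sequence \eqref{eqn_canonical_sequence_S}
$$0 \to \cA \otimes E^\bdot_{\geq w} \to \cA \otimes E^\bdot \to \cA \otimes E^\bdot_{<w} \to 0.$$
Applying $L\sigma^\ast$ (termwise, using the same flatness argument) yields the canonical weight-decomposition triangle $E^\bdot_{\geq w} \to E^\bdot \to E^\bdot_{<w}$ on $\Z$. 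Because weight splitting on $\qcoh{\Z}$ is an exact direct sum decomposition and $L\sigma^\ast F^\bdot \simeq E^\bdot$ lies in $\D^-(\Z)_{\geq w}$ by hypothesis, the quotient $E^\bdot_{<w}$ must be acyclic.

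The remaining step is to conclude from $L\sigma^\ast(\cA \otimes E^\bdot_{<w}) \simeq E^\bdot_{<w} \simeq 0$ that $\cA \otimes E^\bdot_{<w} \simeq 0$ in $\D^-(\S)$ itself; this is precisely Lemma \ref{lem:Nakayama}. The short exact sequence then collapses to a quasi-isomorphism $\cA \otimes E^\bdot_{\geq w} \xrightarrow{\sim} F^\bdot$, and each $E^i_{\geq w}$ is locally free because it is a direct $\cO_Z$-summand of the locally free $E^i$ (and is $P$-equivariant by the lemma preceding Proposition \ref{prop_baric_Z}). The main conceptual obstacle is this last vanishing step: the complex $\cA \otimes E^\bdot_{<w}$ is \emph{not} concentrated in weights $<w$ on $\S$, because tensoring with $\cA$ shifts weights by arbitrary nonpositive amounts; derived Nakayama is exactly what converts the vanishing of the ``fiber'' at $Z$ into global vanishing on $\S$.
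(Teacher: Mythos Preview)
Your proof is correct and follows essentially the same approach as the paper: present $F^\bdot$ as $\cA \otimes E^\bdot$, restrict the canonical sequence \eqref{eqn_canonical_sequence_S} to $\Z$ to see that $E^\bdot_{<w}$ is acyclic, and invoke Nakayama to kill $\cA \otimes E^\bdot_{<w}$. Your write-up is in fact slightly more thorough than the paper's, spelling out the easy ``if'' direction and the local freeness of $E^i_{\geq w}$ explicitly.
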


\begin{proof}
We assume that $L\sigma^\ast F^\bdot \in \D^-(\Z)_{\geq w}$. Choose a right-bounded presentation by locally frees $\sh{A} \otimes E^\bdot \simeq F^\bdot$ and consider the canonical sequence \eqref{eqn_canonical_sequence_S}.

Restricting to $\Z$ gives a short exact sequence $0 \to E^\bdot_{\geq w} \to E^\bdot \to E^\bdot_{<w} \to 0$. The first and second terms have homology in $\coh\Z_{\geq w}$, and the third has homology in $\coh\Z_{<w}$. These two categories are orthogonal, so it follows from the long exact homology sequence that $E^\bdot_{<w}$ is acyclic. Thus by Nakayama's lemma $\sh{A} \otimes E^\bdot_{<w}$ is acyclic and $F^\bdot \simeq \sh{A} \otimes E^\bdot_{\geq w}$.
\end{proof}

Using this characterization of $\D^-(\S)_{\geq w}$ we have semiorthogonality:
\begin{lem} \label{lem_left_orthog_S}
For all $F^\bdot \in \D^-(\S)_{\geq w}$ and $G^\bdot \in \D^+_{qc}(\S)_{<w}$, we have $R \op{Hom}_\S (F^\bdot, G^\bdot) = 0$.
\end{lem}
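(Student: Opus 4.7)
The strategy is to reduce the vanishing statement on $\S$ to the orthogonality of the weight subcategories of $\qcoh{\Z^\prime}$ (implicit in Proposition \ref{prop_baric_Z} and Lemma \ref{lem_special_resolutions}), using the $(\pi^\ast,\pi_\ast)$-adjunction together with the fact that $\pi : \S \to \Z^\prime$ is affine.

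First, by Lemma \ref{lem_characterize_radj_S} I may replace $F^\bdot$ with a right-bounded complex of the form $\cA \otimes E^\bdot = \pi^\ast E^\bdot$, where each $E^i$ is a locally free object of $\coh{\Z^\prime}_{\geq w}$. Since each $E^i$ is flat over $\cO_{\Z^\prime}$, this complex already represents the derived pullback $L\pi^\ast E^\bdot$. Standard adjunction then yields
$$R\op{Hom}_\S(\pi^\ast E^\bdot, G^\bdot) \simeq R\op{Hom}_{\Z^\prime}(E^\bdot, R\pi_\ast G^\bdot).$$
Because $\pi$ is affine, $R\pi_\ast$ is just the (exact) forgetful functor on quasicoherent modules, so $R\pi_\ast G^\bdot = \pi_\ast G^\bdot$, and by the very definition of $\D^+_{qc}(\S)_{<w}$ this object lies in $\D^+(\Z^\prime)_{<w}$.

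It therefore remains to show $R\op{Hom}_{\Z^\prime}(E^\bdot, H^\bdot) = 0$ whenever each $E^i \in \coh{\Z^\prime}_{\geq w}$ and $H^\bdot \in \D^+(\Z^\prime)_{<w}$. Using Lemma \ref{lem_special_resolutions} together with a Cartan--Eilenberg construction, I can resolve $H^\bdot$ by a bounded-below complex of injectives $I^\bdot$ with each $I^j \in \qcoh{\Z^\prime}_{<w}$. Every entry $\op{Hom}_{\Z^\prime}(E^i, I^j)$ then vanishes, because any $P$-equivariant $\cO_Z$-linear map preserves $\lambda$-weights and the abelian subcategories $\qcoh{\Z^\prime}_{\geq w}$ and $\qcoh{\Z^\prime}_{<w}$ are mutually orthogonal. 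The total Hom complex is thus zero, and boundedness-above of $E^\bdot$ together with boundedness-below of $I^\bdot$ guarantees that in each cohomological degree the product defining the Hom complex reduces to a finite product, so there is no convergence issue.

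I expect this proof to be essentially formal, with no real obstacle beyond the mild bookkeeping of boundedness needed to justify that the Hom double complex genuinely computes $R\op{Hom}_{\Z^\prime}(E^\bdot, H^\bdot)$.
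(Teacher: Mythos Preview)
Your proof is correct and follows essentially the same approach as the paper: reduce via Lemma \ref{lem_characterize_radj_S} to $\pi^\ast$ of objects in $\coh{\Z^\prime}_{\geq w}$, apply the $(L\pi^\ast,R\pi_\ast)$-adjunction with $\pi$ affine, and invoke the orthogonality of $\qcoh{\Z^\prime}_{\geq w}$ and $\D^+_{qc}(\Z^\prime)_{<w}$. The only cosmetic difference is that the paper reduces immediately to a single generator $F^\bdot = \cA \otimes E$ (so no convergence bookkeeping is needed), whereas you carry the full complex $\cA \otimes E^\bdot$ through and handle the double-complex issues explicitly.
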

\begin{proof}
By Lemma \ref{lem_characterize_radj_S} if suffices to prove the claim for $F^\bdot = \cA \otimes E$ with $E \in \coh{\Z^\prime}_{\geq w}$ locally free. Then $\sh{A} \otimes E \simeq L\pi^\ast E$, and the derived adjunction gives $R\op{Hom}_\S(L\pi^\ast E,G^\bdot) \simeq R\op{Hom}_{\Z^\prime}(E,R\pi_\ast G^\bdot)$. $\pi$ is affine, so $R\pi_\ast G^\bdot \simeq \pi_\ast G^\bdot \in \D^+_{qc}(\Z^\prime)_{<w}$. The claim follows from the fact that $\qcoh{\Z^\prime}_{\geq w}$ is left orthogonal to $\D^+_{qc}(\Z^\prime)_{<w}$.
\end{proof}

\begin{rem}
The category of coherent $\cO_\S$-modules whose weights are $<w$ is a Serre subcategory of $\coh\S$ generating $\D^b(\S)_{<w}$, but there is no analogue for $\D^b(\S)_{\geq w}$. For instance, when $G$ is abelian there is a short exact sequence $0 \to \sh{A}_{<0} \to \sh{A} \to \sh{O}_Z \to 0$. This nontrivial extension shows that $R\op{Hom}_\S(\sh{O}_Z, \sh{A}_{<0}) \neq 0$ even though $\cO_Z$ has nonnegative weights.
\end{rem}

Every $F \in \coh\S$ has a highest weight subsheaf, $F_{\geq h}$, when regarded as an equivariant $\sh{O}_Z$-module, where by definition $F_{\geq h} \neq 0$ but $F_{\geq w} = 0$ for $w > h$. Furthermore, because $\sh{A}_{<0}$ has strictly negative weights the map $(F)_{\geq h} \to (F\otimes \sh{O}_Z)_{\geq h}$ is an isomorphism of $L$-equivariant $\cO_Z$-modules. Using the notion of highest weight submodule we prove

\begin{lem} \label{lem_truncation_S}

Let $\cA \otimes E^\bdot \in \D^b(\S)$ be a right-bounded complex as above. Assume that either of the following holds:
\begin{itemize}
\item $\Z$ is smooth;
\item $\pi : \S \to \Z^\prime$ is flat.
\end{itemize}
Then $\cA \otimes E^\bdot_{\geq w}$ and $\cA \otimes E^\bdot_{<w}$ have bounded cohomology. If $\cA \otimes E^\bdot$ is perfect, then so are $\cA \otimes E^\bdot_{\geq w}$ and $\cA \otimes E^\bdot_{< w}$.
\end{lem}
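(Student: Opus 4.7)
The plan is to exploit the short exact sequence of complexes
\[
0 \to \cA \otimes E^\bdot_{\geq w} \to \cA \otimes E^\bdot \to \cA \otimes E^\bdot_{<w} \to 0,
\]
which is a valid sequence of subcomplex and quotient by Lemma \ref{lem_radj_of_pullback_S}. Since the middle term has bounded cohomology by hypothesis, the long exact cohomology sequence reduces the bounded-cohomology claim to establishing it for just one of the flanking complexes, say $\cA \otimes E^\bdot_{<w}$.

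The common tool in both cases is the filtration $F^p := \cA \otimes E^\bdot_{\geq p}$ of $\cA \otimes E^\bdot$. I would first verify that the $\cA$-linear differential preserves this filtration: the $\lambda$-equivariance of the differential combined with the nonpositive grading of $\cA$ implies that the differential can only preserve or raise the $E$-weight of a local section. I would then identify the associated graded $F^p / F^{p+1}$ with $L\pi^\ast E^\bdot_p = \cA \otimes_{\cO_Z} E^\bdot_p$, with differential induced by the weight-$p$ piece $d_{E,p}$ of the naturally occurring differential on $E^\bdot$ (obtained by composing the differential on $\cA \otimes E^\bdot$ with the projection $\cA \twoheadrightarrow \cA_0 = \cO_Z$). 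Here $E^\bdot_p$ is the $\lambda$-weight-$p$ eigenspace of $E^\bdot$, a complex of locally free $\cO_Z$-modules.

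In the flat case, the affine projection $\pi$ is faithfully flat since $\pi \circ \sigma = \id_Z$ forces surjectivity, so bounded cohomology of $\cA \otimes E^\bdot$ on $\S$ descends to bounded cohomology of the underlying $\cO_{\Z^\prime}$-complex. Exactness of $\pi^\ast$ gives $H^i(F^p / F^{p+1}) = \pi^\ast H^i(E^\bdot_p)$, and using the weight-eigenspace decomposition $E^\bdot = \bigoplus_p E^\bdot_p$ as complexes of $\lambda$-equivariant $\cO_Z$-modules, the totality $\pi^\ast H^i(E^\bdot) = \bigoplus_p \pi^\ast H^i(E^\bdot_p)$ is controlled by $H^i(\cA \otimes E^\bdot)$ via the spectral sequence of the filtration. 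Truncating to weights $< w$ yields the bounded cohomology of $\cA \otimes E^\bdot_{<w}$. In the smooth case the argument is more delicate, since $\pi$ need not be flat; instead, I would use that $\Z$ smooth implies every object in $\D^b(\Z^\prime)$ is perfect, so the derived pullback $L\pi^\ast E^\bdot_p$ admits uniform control in cohomological amplitude once we exploit that each coherent $E^i$ has only finitely many $\lambda$-weights locally on $Z$, effectively reducing the (a priori infinite) filtration to a locally finite one.

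For the perfectness claim, once boundedness of the flanking complexes is established, the distinguished triangle together with closure of perfect complexes under cones and fibers transfers perfectness between the three terms: one verifies perfectness of $\cA \otimes E^\bdot_{\geq w}$ using the filtration spectral sequence in bounded amplitude and deduces the perfectness of $\cA \otimes E^\bdot_{<w}$. I expect the main obstacle to be the smooth case: the filtration $F^p$ is only locally finite on $Z$, not globally, and one must argue that the contributions from the infinitely many graded pieces do not cause cohomology to propagate unboundedly. Smoothness of $\Z$ is used essentially to close off this argument, either by a Koszul-type presentation of $\cO_Z$ over $\cA$ in a smooth ambient space (via hypothesis $(\dagger)$) or by exploiting that bounded complexes on smooth stacks have finite tor dimension.
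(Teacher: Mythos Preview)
Your approach shares the right skeleton with the paper---filter by the $\lambda$-weight of $E^\bdot$ and identify the graded pieces with $L\pi^\ast E^\bdot_p$---but you are missing the key first step, and this gap is what makes your subsequent arguments in both the flat and smooth cases incomplete.

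The paper begins by proving that $\cA\otimes E^\bdot_{\geq W}\simeq 0$ for $W\gg 0$. This uses Nakayama's lemma: since $\cA\otimes E^\bdot$ has only finitely many cohomology sheaves, and for any coherent $F\in\coh\S$ one has $(L\sigma^\ast F)_{\geq W}\simeq 0$ for large $W$ (resolve $F$ by vector bundles of weight bounded above by the highest weight of $F$), one gets $(\sigma^\ast(\cA\otimes E^\bdot))_{\geq W}=(E^\bdot)_{\geq W}\simeq 0$, hence $\cA\otimes E^\bdot_{\geq W}\simeq 0$. With this in hand, the filtration $F^p=\cA\otimes E^\bdot_{\geq p}$ has only \emph{finitely many} nonzero steps between $p=w$ and $p=W$, so a downward induction using the short exact sequence
\[
0\to \cA\otimes E^\bdot_{\geq p+1}\to \cA\otimes E^\bdot_{\geq p}\to \cA\otimes E^\bdot_{p}\to 0
\]
reduces everything to showing each $\cA\otimes E^\bdot_p=L\pi^\ast E^\bdot_p$ has bounded cohomology. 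The paper then observes that $E^\bdot_p$ is a direct summand of $\cA\otimes E^\bdot$ as a complex of (non-equivariant) $\cO_Z$-modules, hence cohomologically bounded; flatness of $\pi$, or perfectness of $E^\bdot_p$ when $\Z$ is smooth, finishes the argument.

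By contrast, you try to bound $\cA\otimes E^\bdot_{<w}$ directly via its filtration by $E$-weight, but that filtration is \emph{infinite from below}: the $E^i$ for $i\to-\infty$ may carry arbitrarily negative weights. Your spectral-sequence sentence in the flat case relates $\bigoplus_p \pi^\ast H^i(E^\bdot_p)$ to $H^i(\cA\otimes E^\bdot)$, but this does not control $H^i(\cA\otimes E^\bdot_{<w})$ without a convergence or finiteness statement that you never supply. Your worry that the filtration is ``only locally finite'' is a symptom of having chosen the wrong end to induct from; once you know termination above, the induction is globally finite. The perfectness claim in the paper is also handled differently: rather than closure under cones, the paper uses that $F^\bdot\in\D^b(\S)$ is perfect iff $\sigma^\ast F^\bdot$ is perfect, together with $\sigma^\ast(\cA\otimes E^\bdot_{\geq w})\simeq E^\bdot_{\geq w}$.
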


\begin{proof}

First we show that for $W$ sufficiently large, $\cA \otimes E^\bdot_{\geq W} \simeq 0$. By Nakayama's lemma and the fact that $\cA \otimes E^\bdot$ has bounded cohomology, it suffices to show $(L\sigma^\ast F)_{\geq W} \simeq 0$ for any $F \in \coh\S$, and this follows by constructing a resolution of $F$ by vector bundles whose weights are $\leq$ the highest weight of $F$.

Now assume that $\cA \otimes E^\bdot_{\geq w+1} \in \D^b(\S)$. It follows from the sequence \eqref{eqn_canonical_sequence_S} that $\cA \otimes E^\bdot_{<w+1}$ has bounded cohomology. Applying \eqref{eqn_canonical_sequence_S} to the complex $\cA \otimes E^\bdot_{\geq w}$ gives
$$0 \to \cA \otimes E^\bdot_{\geq w+1} \to \cA \otimes E^\bdot_{\geq w} \to \cA \otimes E^\bdot_{w} \to 0,$$
where $E^\bdot_{w}$ denotes the subquotient of $E^\bdot$ concentrated in weight $w$. In order to show that $\cA \otimes E^\bdot_{\geq w} \in \D^b(\S)$, it suffices to show that $\cA \otimes E^\bdot_{w} \in \D^b(\S)$.

The differential for the complex $\cA \otimes E^\bdot_w$ is uniquely determined by the $P$-equivariant maps of $\cO_Z$-modules $d^i : E^i_w \to \cA \otimes E^{i+1}_w$, which must factor through $E^{i+1}_w \subset \cA \otimes E^{i+1}_w$ because that is precisely the subsheaf of weight $w$. It follows that $E_w^\bdot$, the highest weight subsheaves of $\cA \otimes E^\bdot_{<w+1}$, are a subcomplex as $P$-equivariant $\cO_Z$-modules, and $\cA \otimes E^\bdot_w = L\pi^\ast (E^\bdot_w)$. Furthermore $E^\bdot_w$ is a summand of $\cA \otimes E^\bdot$ as non-equivariant $\cO_Z$-modules and thus has bounded cohomology. If we assume that $\pi : \S \to \Z^\prime$ is flat, then it follows that $L\pi^\ast (E^\bdot_w) \in \D^b(\S)$ as well.

Under the hypothesis that $\Z$ is smooth, we modify the previous argument slightly. If $\Z$ is smooth then so is $\Z^\prime$, so the complex $E^\bdot_w$ is actually in $\op{Perf}(\Z^\prime)$. It follows that $\cA \otimes E^\bdot_w = L\pi^\ast (E^\bdot_w)$ is perfect as well, and in particular has bounded cohomology.

If $\cA \otimes E^\bdot$ is a right bounded complex, then $E^\bdot = \cO_Z \otimes_{\cA} (\cA \otimes E^\bdot)$, regarded as a complex of $L$-equivariant $\cO_Z$-modules, is precisely $\sigma^\ast (\cA \otimes E^\bdot)$. The final claim in the lemma regarding perfect complexes follows from two observations: 1) the baric truncation functors on $\D^b(\Z)$ preserve perfect complexes, and 2) $F^\bdot \in \D^b(\S)$ is perfect if and only if $\sigma^\ast F^\bdot$ is perfect. To prove the second claim, note that an object in $\D^-(\S)$ is perfect if and only if its pullback to $Y/L$ is perfect. Furthermore, if $F^\bdot \in \D^b(Y/L)$ and $F^\bdot|_{Z}$ is perfect, then $F^\bdot$ is perfect in an $L$-equivariant open neighborhood of $Z$, and the only such open subset is all of $Y$.

 
\end{proof}

\begin{proof}[Proof of Proposition \ref{prop_baric_S_flat}]
Lemma \ref{lem_left_orthog_S} implies $\D^b(\S)_{\geq w}$ is left orthogonal to $G^\bdot \in \D^b(\S)_{<w}$. In order to obtain baric truncations for $F^\bdot \in \D^b(\S)$, we choose a presentation of the form $\cA \otimes E^\bdot$ with $E^\bdot \in \coh{\Z^\prime}$ locally free. The canonical short exact sequence \eqref{eqn_canonical_sequence_S} gives an exact triangle $\sh{A}\otimes E^\bdot_{\geq w} \to F^\bdot \to \sh{A}\otimes E^\bdot_{<w} \parr$. By Lemma \ref{lem_truncation_S} all three terms have bounded cohomology, thus our truncations are $\radj{w} F^\bdot = \cA \otimes E^\bdot_{\geq w}$ and $\ladj{w} F^\bdot = \cA \otimes E^\bdot_{< w}$. The claim about the restrictions $\sigma^\ast \radj{w}$ and $\sigma^\ast \ladj{w}$ follows from the observation that applying the functor $\sigma^\ast \simeq \cO_Z \otimes_\cA (\bullet)$ to the sequence \eqref{eqn_canonical_sequence_S} gives the sequence \eqref{eqn_canonical_sequence_Z} for $\sigma^\ast F^\bdot$.

If $F^\bdot \in \op{Perf}(\S)$, then by Lemma \ref{lem_truncation_S} so are $\radj{w} F^\bdot$ and $\ladj{w}F^\bdot$. The multiplicativity of $\D^b(\S)_{\geq w}$ follows from the fact that $\D^b(\Z)_{\geq w}$ is multiplicative and the fact that $L\sigma^\ast$ respects derived tensor products.

Every $M \in \coh\S$ has a highest weight space, so $M \in \D^b(\S)_{<w}$ for some $w$. This implies that any $F^\bdot \in \D^b(\S)$ lies in $\D^b(\S)_{<w}$ for some $w$. The analogous statement for $\D^b(\S)_{\geq w}$ is false in general, but if $F^\bdot \in \D^b(\S)$ is such that $\sigma^\ast F^\bdot$ is cohomologically bounded, then $F^\bdot \in \D^b(\S)_{\geq w}$ for some $w$. The boundedness properties follow from this observation.
\end{proof}


Combining the fact that $\sigma^\ast \radj{w} \simeq (\sigma^\ast(\bullet))_{\geq w}$ and $\sigma^\ast \ladj{w} \simeq (\sigma^\ast(\bullet))_{< w}$ with Nakayama's lemma \ref{lem:Nakayama} gives the following:
\begin{cor} \label{cor_characterize_baric_S}
Let $\S$ be a KN stratum such that $\pi : Y \to Z$ is flat, and let $F^\bdot \in \D^b(\S)$. Then $F^\bdot \in \D^b(\S)_{\geq w}$ if and only if $\sigma^\ast(F^\bdot) \in \D^-(\Z)_{\geq w}$, and $F^\bdot \in \D^b(\S)_{<w}$ if and only if $\sigma^\ast \D^-(\Z)_{<w}$.
\end{cor}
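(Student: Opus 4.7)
The plan is to deduce both equivalences from Proposition \ref{prop_baric_S_flat} combined with the Nakayama-type vanishing of Lemma \ref{lem:Nakayama}. The first equivalence is really just unpacking the definition: $\D^b(\S)_{\geq w}$ is \emph{defined} as the full subcategory of $F^\bdot$ with $L\sigma^\ast F^\bdot \in \D^-(\Z)_{\geq w}$, so there is nothing to prove there.

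The substance of the corollary is the second equivalence: for $F^\bdot \in \D^b(\S)$, the condition $\pi_\ast F^\bdot \in \D(\Z^\prime)_{<w}$ is equivalent to $L\sigma^\ast F^\bdot \in \D^-(\Z)_{<w}$. For the ``only if'' direction, I would start from the baric triangle $\radj{w} F^\bdot \to F^\bdot \to \ladj{w} F^\bdot$ produced by Proposition \ref{prop_baric_S_flat}. Since $F^\bdot$ already lies in $\D^b(\S)_{<w}$, uniqueness of the semiorthogonal decomposition $\D^b(\S) = \langle \D^b(\S)_{<w}, \D^b(\S)_{\geq w}\rangle$ forces the $\D^b(\S)_{\geq w}$-piece to vanish, i.e.\ $\radj{w} F^\bdot \simeq 0$. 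Applying $\sigma^\ast$ and invoking the compatibility $\sigma^\ast \radj{w} F^\bdot \simeq (\sigma^\ast F^\bdot)_{\geq w}$ from the same proposition yields $(\sigma^\ast F^\bdot)_{\geq w} \simeq 0$, hence $\sigma^\ast F^\bdot \in \D^-(\Z)_{<w}$.

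For the ``if'' direction, suppose $\sigma^\ast F^\bdot \in \D^-(\Z)_{<w}$, i.e.\ $(\sigma^\ast F^\bdot)_{\geq w} = 0$. Using the same compatibility in reverse, $\sigma^\ast \radj{w} F^\bdot \simeq 0$. Since $\radj{w} F^\bdot \in \D^b(\S) \subset \D^-(\S)$, Nakayama's lemma (Lemma \ref{lem:Nakayama}) applies and gives $\radj{w} F^\bdot \simeq 0$. The baric triangle then collapses to $F^\bdot \simeq \ladj{w} F^\bdot \in \D^b(\S)_{<w}$, as desired.

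The only nontrivial input is the use of Nakayama's lemma to pass from vanishing of $\sigma^\ast \radj{w} F^\bdot$ back to vanishing of $\radj{w} F^\bdot$ itself; everything else is formal manipulation of the baric triangle and the compatibility $\sigma^\ast \radj{w} \simeq (\sigma^\ast(\bullet))_{\geq w}$. I do not expect any genuine obstacle beyond this, since both ingredients have already been established in the previous subsection.
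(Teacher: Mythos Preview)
Your argument is correct and is essentially identical to the paper's: the paper's entire proof is the one-line remark preceding the corollary, namely to combine the compatibilities $\sigma^\ast \radj{w} \simeq (\sigma^\ast(\bullet))_{\geq w}$ and $\sigma^\ast \ladj{w} \simeq (\sigma^\ast(\bullet))_{<w}$ from Proposition~\ref{prop_baric_S_flat} with Nakayama's Lemma~\ref{lem:Nakayama}. You have simply written out those steps explicitly, including the observation that the first equivalence is the definition and that Nakayama is only needed for the ``if'' direction of the second.
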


It is useful to have a more flexible method of computing the truncations $\radj{w}$ and $\ladj{w}$, as well as a more explicit description of the category
$$\D^b(\S)_w := \D^b(\S)_{\geq w} \cap \D^b(\S)_{<w+1}.$$

\begin{amplif} \label{amplif_KN_stratum}
Let $\S$ be a KN stratum such that $\pi : Y \to Z$ is flat. For any $F^\bdot \in \D^b(\S)$, $\radj{w} F^\bdot$ and $\ladj{w} F^\bdot$ can be computed from a presentation $F^\bdot \simeq \cA \otimes E^\bdot$ with $E^i \in \coh{\Z^\prime}$ coherent but not necessarily locally free. Furthermore regarding $\pi$ as the morphism $\S \to \Z$, the pullback functor $\pi^\ast : \D^b(\Z)_w \to \D^b(\S)_w$ is an equivalence. Thus if $\sigma : Z \hookrightarrow Y$ has finite tor dimension, we have an infinite semiorthogonal decomposition
$$\D^b(\S) = \langle \ldots, \D^b(\Z)_w, \D^b(\Z)_{w+1}, \ldots \rangle.$$
\end{amplif}

\begin{proof}
For $E \in \coh{\Z^\prime}$, Corollary \ref{cor_characterize_baric_S} implies that $\cA \otimes E \in \D^b(\S)_{\geq w}$ if and only if $E \in \coh{\Z^\prime}_{\geq w}$, and likewise for $<w$. Thus
$$\cA \otimes E^\bdot_{\geq w} \to F^\bdot \to \cA \otimes E^\bdot_{<w} \parr$$
is an exact triangle exhibiting $F^\bdot$ as an extension of an object in $\D^-(\S)_{<w}$ by an object of $\D^-(\S)_{\geq w}$. It follows that the morphism $\radj{w} F^\bdot \to F^\bdot$ factors uniquely through $\cA \otimes E^\bdot_{\geq w}$. Furthermore $\sigma^\ast \radj{w} F^\bdot \to \sigma^\ast (\cA \otimes E^\bdot_{\geq w})$ is an equivalence, so it it follows from Lemma \ref{lem:Nakayama} that $\radj{w} F^\bdot \to \cA \otimes E^\bdot_{\geq w}$ is an equivalence and hence $\ladj{w} F^\bdot \simeq \cA \otimes E^\bdot_{< w}$.

In fact for any coherent $\cA$-module $M$ there is a coherent $E \in \coh{\Z^\prime}$ and a surjection $\cA \otimes E \twoheadrightarrow M$ which is an isomorphism on highest weight subsheaves, and one can use this fact to construct a presentation of this form in which $E^i_{\geq w} = 0$ for $i\ll 0$. So in fact $\radj{w} F^\bdot$ is equivalent to a \emph{finite} complex of the form $\cA \otimes E^\bdot_{\geq w}$.

Regarding $\sigma$ as a morphism $\Z \to \S$, the functor $(\sigma^\ast(\bullet))_{\geq w}$ is the inverse to $\pi^\ast : \D^b(\Z)_w \to \D^b(\S)_w$. Because $\sigma$ is a section of $\pi$, we have a canonical map $(\sigma^\ast \pi^\ast (E^\bdot))_{\geq w} \to E^\bdot$ which is an isomorphism for $E^\bdot \in \D^b(\Z)_w$. Furthermore for any object of the form $F = \cA \otimes E$ with $E \in \coh\Z_w$, we have
$$\pi^\ast ( (\sigma^\ast F^\bdot)_{\geq w}) \simeq \pi^\ast E \simeq F^\bdot.$$
It follows that $\pi^\ast$ is fully faithful on $\coh\Z_w$. By the previous paragraph, objects of the form $\cA \otimes E$ with $E \in \coh\Z_w$ generate $\D^b(\S)_w$ as a triangulated category, so $\pi^\ast$ is an equivalence. The existence of an infinite semiorthogonal decomposition follows formally from the existence of a bounded baric decomposition.
\end{proof}

\begin{rem} \label{rem_extending_SOD}
The baric decomposition of Proposition \ref{prop_baric_S_flat} extends uniquely to a baric decomposition
$$\D_{qc}(\S) = \langle \D_{qc}(\S)_{<w}, \D_{qc}(\S)_{\geq w} \rangle$$
The category $\D_{qc}(\S)_{<w}$ can still be described as those $F^\bdot \in \D_{qc}(\S)$ such that $\cH^p(F^\bdot)$ has weight $<w$ for all $i$. However, there are objects $F^\bdot \neq 0$ for which $\sigma^\ast F^\bdot = 0$, so $\sigma^\ast$ can no longer be used to characterize $\D_{qc}(\S)_{\geq w}$ and $\D_{qc}(\S)_{<w}$.
\end{rem}

The baric decomposition of $\D_{qc}(\S)$ follows from the fact that the stack $\S$ is perfect in the sense of \cite{BFN10}, so we can apply a general fact about compactly generated categories.

\begin{lem} \label{lem_extending_SOD_general}
Let $\cT$ be a cocomplete triangulated category which is the homotopy category of a pretriangulated dg-category, and let $\cC \subset \cT^c$ be a triangulated subcategory of compact objects which generates $\cT$. If $\cC = \langle \cA, \cB \rangle$, then $\cT = \langle \bar{\cA}, \bar{\cB} \rangle$, where $\bar{\cA}$ (resp. $\bar{\cB}$) denotes the smallest cocomplete triangulated subcategory containing $\cA$ (resp. $\cB$). The projection functors onto $\bar{\cA}$ and $\bar{\cB}$ commute with colimits.
\end{lem}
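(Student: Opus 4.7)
The plan is to verify the two defining properties of a semiorthogonal decomposition $\cT = \langle \bar{\cA}, \bar{\cB} \rangle$: semiorthogonality $\op{Hom}_\cT(\bar{\cB}, \bar{\cA}[n]) = 0$ for all $n$, and the existence of truncation triangles. The semiorthogonality on $\cC$ is propagated to $\bar{\cA}$, $\bar{\cB}$ using that objects of $\cC$ are compact in $\cT$. The truncation triangles on $\cC$ are propagated to all of $\cT$ using that $\cC$ generates $\cT$, so that $\cT$ itself coincides with the smallest cocomplete triangulated subcategory containing $\cC$.

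\textbf{Step 1 (Semiorthogonality).} Fix $B_0 \in \cB$. Since $B_0 \in \cT^c$, the functor $\op{Hom}(B_0, -)$ commutes with arbitrary coproducts, so the full subcategory of $A' \in \cT$ with $\op{Hom}(B_0, A'[n]) = 0$ for all $n$ is thick and closed under coproducts; it contains $\cA$, hence contains $\bar{\cA}$. Now fix $A' \in \bar{\cA}$: the contravariant functor $\op{Hom}(-, A')$ sends coproducts to products, so the full subcategory of $B' \in \cT$ with $\op{Hom}(B', A'[n]) = 0$ for all $n$ is closed under coproducts; by the previous step it contains $\cB$, hence contains $\bar{\cB}$.

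\textbf{Step 2 (Truncation triangles).} Let $S \subset \cT$ be the class of $E$ admitting a distinguished triangle $E_{\bar{\cB}} \to E \to E_{\bar{\cA}} \parr$ with $E_{\bar{\cB}} \in \bar{\cB}$ and $E_{\bar{\cA}} \in \bar{\cA}$. By hypothesis $\cC \subseteq S$, and $S$ is closed under coproducts since coproducts of distinguished triangles are distinguished and $\bar{\cA}, \bar{\cB}$ are each closed under coproducts. For closure under cones, let $f \colon E_1 \to E_2$ with both $E_i \in S$: the composite $E_{1,\bar{\cB}} \to E_1 \xrightarrow{f} E_2 \to E_{2,\bar{\cA}}$ vanishes by Step 1, so $E_{1,\bar{\cB}} \to E_2$ lifts uniquely (again by Step 1) through $E_{2,\bar{\cB}}$. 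The octahedral axiom applied to the resulting morphism of triangles yields a distinguished triangle $\op{Cone}(E_{1,\bar{\cB}} \to E_{2,\bar{\cB}}) \to \op{Cone}(f) \to \op{Cone}(E_{1,\bar{\cA}} \to E_{2,\bar{\cA}}) \parr$ whose first and third terms lie in $\bar{\cB}$ and $\bar{\cA}$ respectively, exhibiting $\op{Cone}(f) \in S$. Thus $S$ is a cocomplete triangulated subcategory of $\cT$ containing the compact generating subcategory $\cC$, so $S = \cT$.

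\textbf{Step 3 and main difficulty.} The projection $E \mapsto E_{\bar{\cA}}$ is the left adjoint of the inclusion $\bar{\cA} \hookrightarrow \cT$ and hence preserves all colimits automatically. The projection $E \mapsto E_{\bar{\cB}}$ is the right adjoint of $\bar{\cB} \hookrightarrow \cT$; it preserves coproducts because its left adjoint sends compact objects to compact objects (compact objects of $\bar{\cB}$ remain compact in $\cT$ since $\bar{\cB}$ is closed under coproducts in $\cT$), and combined with exactness this yields preservation of all colimits. The most delicate step is the cone-closure in Step 2, where the semiorthogonality of Step 1 is essential both to produce the comparison map between truncation triangles and to guarantee its uniqueness; the dg-enhancement hypothesis ensures that arbitrary coproducts and the octahedral axiom behave exactly as required.
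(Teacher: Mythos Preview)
Your argument is correct and takes a different route from the paper's. The paper invokes Brown--Neeman representability to obtain at once that $\bar{\cB} \subset \cT$ is right admissible, so $\cT = \langle \bar{\cB}^\perp, \bar{\cB} \rangle$, and then identifies $\bar{\cB}^\perp$ with $\bar{\cA}$ by observing that the colimit-preserving projection $\cT \to \bar{\cB}^\perp$ carries the generators $\cA \cup \cB$ onto $\cA$. For the colimit statement, the paper takes the homotopy colimit of the truncation triangles directly (using the dg-enhancement to know this is again a triangle) and notes that the result, having outer terms in $\bar{\cB}$ and $\bar{\cA}$, must be the truncation triangle of the colimit. Your approach instead builds the truncation triangles by hand, showing via semiorthogonality and the $3\times 3$ lemma that the class $S$ of objects admitting them is closed under coproducts and cones; this is more elementary in that it avoids Brown representability, and in fact your Steps~1--2 do not use the dg-enhancement at all, since coproducts of triangles and the $3\times 3$ lemma hold in any triangulated category with coproducts.

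One small point in Step~3: your parenthetical justification ``compact objects of $\bar{\cB}$ remain compact in $\cT$ since $\bar{\cB}$ is closed under coproducts in $\cT$'' runs in the wrong direction --- closure under coproducts gives compact-in-$\cT$ $\Rightarrow$ compact-in-$\bar{\cB}$, not the converse you need. The claim is nonetheless true, because $\bar{\cB}$ is compactly generated by $\cB \subset \cT^c$, so $(\bar{\cB})^c$ is the thick closure of $\cB$ and hence lies in $\cT^c$. A cleaner route, and the one the paper takes, is to bypass the adjoint-functor criterion entirely: the coproduct (or homotopy colimit) of the truncation triangles is again a triangle with outer terms in $\bar{\cB}$ and $\bar{\cA}$, and uniqueness of semiorthogonal truncations then identifies it as the truncation triangle for the coproduct.
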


\begin{proof}
By Brown-Neeman representability $\bar{\cB}$ is right admissible, so we have $\cT = \langle \bar{\cB}^\perp, \bar{\cB} \rangle$. $\cA$ lies in $\bar{\cB}^\perp$ because $\cA \subset \cT^c$, and it generates because $\cA$ and $\cB$ together generate $\cT$.

If we let $M = \colim M_\alpha$, then by the functoriality of homotopy colimits we can form the exact triangle
$$\varinjlim i^R_{\bar{\cB}}M_\alpha \to M \to \varinjlim i^L_{\bar{\cA}} M_\alpha,$$
where $i^R_{\bar{\cB}}$ and $i^L_{\bar{\cA}}$ denote the right and left adjoints of the respective inclusions of subcategories. The first term lies in $\bar{\cB}$ and the third term lies in $\bar{\cA}$, so using the semiorthogonal decomposition of $\cT$ we have canonical isomorphisms $i^L_{\bar{\cA}}M \simeq \varinjlim i^L_{\bar{\cA}}M_\alpha$ and likewise for $i^R_{\bar{\cB}} M$.
\end{proof}

\subsection{The cotangent complex and Property \hyperref[property_L_plus]{(L+)}}
\label{sect_cotangent_complex}

We review the construction of the cotangent complex and prove the main implication of the positivity Property \hyperref[property_L_plus]{(L+)}:
\begin{lem} \label{lem_positivity}
If $\S \hookrightarrow \X$ satisfies Property \hyperref[property_L_plus]{(L+)} and $F^\bdot \in \D^b(\S)_{\geq w}$, then $Lj^\ast j_\ast F^\bdot \in \D^-(\S)_{\geq w}$ as well.
\end{lem}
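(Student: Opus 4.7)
The plan is to verify the conclusion after applying $L\sigma^\ast$. By the very definition of $\D^-(\S)_{\geq w}$ given at the start of Section~\ref{sect_baric_decomp}, it suffices to show that $L\sigma^\ast Lj^\ast j_\ast F^\bdot$ lies in $\D^-(\Z)_{\geq w}$. Since $j$ is a closed immersion (so $j_\ast$ is exact) and $F^\bdot$ is cohomologically bounded, $Lj^\ast j_\ast F^\bdot$ is automatically in $\D^-(\S)$; the only content is the weight estimate.

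To control $Lj^\ast j_\ast F^\bdot$ I would invoke the derived self-intersection formula. For the closed immersion $j\colon \S \hookrightarrow \X$ with ideal sheaf $I$, the $I$-adic filtration on $\cO_\X$, when derived-tensored with $j_\ast F^\bdot$ over $\cO_\X$, induces a complete, separated filtration on $Lj^\ast j_\ast F^\bdot \simeq j_\ast F^\bdot \otimes^L_{\cO_\X} \cO_\S$ whose $n$-th associated graded piece is canonically quasi-isomorphic to $F^\bdot \otimes^L_{\cO_\S} \mathrm{LSym}^n(\bL^\bdot_{\S/\X}[1])$, where $\mathrm{LSym}^n$ denotes the $n$-th derived symmetric power (which is legitimate since we are in characteristic $0$). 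In the case of a regular embedding this recovers the classical Koszul/HKR decomposition
\begin{equation*}
Lj^\ast j_\ast F^\bdot \;\simeq\; \bigoplus_{n\geq 0} F^\bdot \otimes^L_{\cO_\S} \wedge^n \coNorm_\S\X\,[n].
\end{equation*}

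Now applying $L\sigma^\ast$, which commutes with derived tensor products and with the derived symmetric power functors, the $n$-th graded piece of the resulting filtration on $L\sigma^\ast Lj^\ast j_\ast F^\bdot$ becomes
\begin{equation*}
L\sigma^\ast F^\bdot \;\otimes^L_{\cO_\Z}\; \mathrm{LSym}^n\bigl(L\sigma^\ast \bL^\bdot_{\S/\X}[1]\bigr).
\end{equation*}
By Property \hyperref[property_L_plus]{(L+)}, $L\sigma^\ast \bL^\bdot_{\S/\X}$ has nonnegative $\lambda$-weights, hence so do all of its derived symmetric powers (these are built as quotients of iterated tensor products). Since $L\sigma^\ast F^\bdot \in \D^-(\Z)_{\geq w}$ by hypothesis, the multiplicativity of the baric decomposition on $\D^-(\Z)$ established in Proposition~\ref{prop_baric_Z} guarantees that each graded piece lies in $\D^-(\Z)_{\geq w}$. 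A standard spectral-sequence/convergence argument (using that the filtration is exhaustive and bounded above in cohomological degree when restricted to any fixed truncation) then yields $L\sigma^\ast Lj^\ast j_\ast F^\bdot \in \D^-(\Z)_{\geq w}$, completing the proof.

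The main technical obstacle is the rigorous construction and convergence of this filtration when $\S \hookrightarrow \X$ is not a regular embedding, so that $\bL^\bdot_{\S/\X}$ has contributions outside cohomological degree $-1$. In the regular-embedding case the Koszul resolution gives everything concretely; in general one needs the Illusie--Quillen theory of the cotangent complex and derived symmetric powers, which is precisely the framework of \cite{Ma09} alluded to in the paper. Once the graded-piece description is in hand, the weight estimate follows formally from Property~\hyperref[property_L_plus]{(L+)} and the multiplicativity of the baric decomposition on $\Z$.
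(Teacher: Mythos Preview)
Your approach is correct and rests on the same underlying idea as the paper: $Lj^\ast j_\ast F^\bdot$ is a ``twisted'' version of $F^\bdot \otimes \mathbb{S}(\bL^\bdot_{\S/\X})$, so Property \hyperref[property_L_plus]{(L+)} transfers the weight bound. The implementations differ, however, and the paper's is more self-contained.

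You invoke a black-box filtration on $Lj^\ast j_\ast F^\bdot$ with associated graded $F^\bdot \otimes \mathrm{LSym}^n(\bL^\bdot_{\S/\X}[1])$ and then run a spectral-sequence/convergence argument. The paper instead builds this structure by hand: it chooses a semifree cdga resolution $\cB^\bdot = (\mathbb{S}(E^\bdot),d) \twoheadrightarrow \cO_\S$ over $\cO_\X$, identifies $Lj^\ast j_\ast F^\bdot$ with $\mathbb{S}(E^\bdot)|_\S \otimes F^\bdot|_\S$ carrying an explicit differential, and then rescales that differential to produce an $\bA^1$-family $P_z^\bdot$ with $P_1^\bdot \simeq Lj^\ast j_\ast F^\bdot$ and $P_0^\bdot \simeq \mathbb{S}(\bL^\bdot_{\S/\X}) \otimes F^\bdot$. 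This is exactly the Rees construction of your filtration. The punchline is a semicontinuity step: $(P_0^\bdot|_\Z)_{<w} \simeq 0$ by \hyperref[property_L_plus]{(L+)}, and since vanishing of $(\bullet|_\Z)_{<w}$ is a closed condition in the family, it propagates to all $z$.

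The practical advantage of the paper's route is that the semicontinuity argument replaces the convergence issue you flag at the end. Your filtration is infinite, and one must justify that an object in $\D^-(\Z)$ with an exhaustive filtration whose graded pieces all lie in $\D^-(\Z)_{\geq w}$ itself lies there; this is true (the functor $(\bullet)_{<w}$ on $\Z$ is exact and commutes with the relevant limits because the weight decomposition is a genuine direct sum), but it needs to be said. The deformation argument packages this neatly. Conversely, your formulation makes the role of derived symmetric powers of $\bL^\bdot_{\S/\X}$ more transparent and connects directly to the general HKR-type literature.
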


We can inductively construct a cofibrant replacement $\cO_\S$ as an $\cO_\X$ module: a surjective weak equivalence $\varphi : \cB^\bdot \twoheadrightarrow \cO_\S$ from a sheaf of commutative dg-$\cO_\X$ algebras such that $\cB^\bdot \simeq ( \mathbb{S} (E^\bdot), d)$, where $\mathbb{S}(E^\bdot)$ is the free graded commutative sheaf of algebras on a graded sheaf of $\cO_\X$-modules, $E^\bdot$, with $E^i$ locally free and $E^i = 0$ for $i \geq 0$. Note that the differential is uniquely determined by its restriction to $E^\bdot$, and letting $e$ be a local section of $E^\bdot$ we decompose $d_\cB(e) = d_{\cB,-1} (e) + d_{\cB,0} (e) + \cdots$ where $d_{\cB,i}(e) \in \mathbb{S}^{i+1}(E^\bdot)$.

We let $\D_{qc}(\cB^\bdot)$ and $\D_{qc}(\cO_\S)$ denote the homotopy categories of sheaves of quasicoherent dg-modules over the respective sheaves of commutative dg-$\cO_\X$-algebras. Because $\varphi$ is a quasi-iso\-morphism, the pair of adjoint functors
\begin{equation} \label{eqn_morita_equivalence}
\xymatrix{ (\bullet) \otimes_{\cB^\bdot} \cO_\S : \D_{qc}(\cB^\bdot ) \ar@<.5ex>[r] &  \D_{qc}(\cO_\S) : \varphi_\ast \ar@<.5ex>[l] }
\end{equation}
are mutually inverse equivalences of categories, where $\varphi_\ast$ denotes the functor which simply regards a dg-$\cO_\S$-module as a dg-$\cB^\bdot$-module via $\varphi$. It is evident that this equivalence restricts to an equivalence of the full subcategories $\D^-$ consisting of complexes with coherent cohomology sheaves, vanishing in sufficiently high cohomological degree.

The $\cB^\bdot$-module of K\"{a}hler differentials is
$$\cB^\bdot \xrightarrow{\delta} \Omega^1_{\cB^\bdot / \cO_\X} = \mathbb{S}(E^\bdot) \otimes_{\cO_\X} E^\bdot$$
with the universal closed degree 0 derivation over $\cO_\X$ defined by $\delta(e) = 1 \otimes e$ and extended to all of $\cB^\bdot$ by the Leibniz rule. The differential on $\Omega^1_{\cB^\bdot / \cO_\X}$ is uniquely determined by its commutation with $\delta$
$$d(1 \otimes e) = \delta (de) = 1 \otimes d_{\cB,0}(e) + \delta (d_{\cB,2}(e) + d_{\cB,3}(e) + \cdots)$$
By definition
$$\bL^\bdot (\S \hookrightarrow \X) := \cO_\S \otimes_{\cB^\bdot} \Omega^1_{\cB^\bdot / \cO_\X} \simeq \cO_\S \otimes E^\bdot$$
where the differential is the restriction of $d_0$.

\begin{proof}[Proof of Lemma \ref{lem_positivity}]
For any $M^\bdot \in \D^-(\S)$, the equivalence \eqref{eqn_morita_equivalence} implies that we can find a unique (up to weak equivalence) $\cB^\bdot$-module of the form $\cB^\bdot \otimes_{\cO_\X} F^\bdot$ such that $\cO_S \otimes_{\cB^\bdot} (\cB^\bdot \otimes F^\bdot) \simeq M^\bdot$. Here we are assuming that each $F^i$ is a locally free $\cO_\X$ module and $F^i = 0$ for $i\gg 0$. The differential on $\cB^\bdot \otimes F^\bdot = \mathbb{S}(E^\bdot) \otimes F^\bdot$ is determined by the Leibniz rule and the homomorphism of $\cO_\X$-modules $d: F^\bdot \to \cB^\bdot \otimes F^\bdot$.

Locally we can choose a trivialization of $F^i$ given by sections $\{f_\alpha\}$ and trivializations of $F^j$ for $j > i$ given by sections $\{f_\beta\}$. We define the structure constants $g_\alpha^\beta \in \mathbb{S}(E^\bdot)$ by the formula
\begin{equation} \label{eqn_differential_formula}
d(1 \otimes f_\alpha) = \sum g_\alpha^\beta \otimes f_\beta,
\end{equation}
where $g_\alpha^\beta \in \cB^{i-j}$ for $f_\beta \in F^j$. From this we see that $\cO_\S \otimes_{\cB^\bdot} (\cB^\bdot \otimes F^\bdot)$ is precisely the complex $F^\bdot|_\S$, where the differential is $d(f_\alpha) = \sum \varphi( g_\alpha^\beta) f_\beta$. Thus up to quasi-isomorphism we may assume that $M^\bdot = F^\bdot|_\S$ with this differential.

We can regard $j_\ast (M^\bdot)$ as the complex $\cO_\S \otimes_{\cO_\X} F^\bdot$ with an appropriate differential, and the canonical map $\cB^\bdot \otimes F^\bdot \to \cO_\S \otimes F^\bdot$ is a right-bounded resolution of the latter by locally free $\cO_\X$-modules (this expresses the fact that the unit of adjunction of \eqref{eqn_morita_equivalence} is a quasi-isomorphism). It follows that
$$Lj^\ast j_\ast (M^\bdot) \simeq \cO_S \otimes_{\cO_\X} (\cB^\bdot \otimes F^\bdot) \simeq \mathbb{S}(E^\bdot) \otimes F^\bdot|_\S.$$
As before the differential is determined, via the Leibniz rule, from its restriction to $F^\bdot = \mathbb{S}^0(E^\bdot) \otimes F^\bdot$. Choosing local trivializations as above, the differential is given by the same formula \eqref{eqn_differential_formula}, where the right hand side is interpreted as an element of $\mathbb{S}(E^\bdot) \otimes F^\bdot|_\S$. We let $P^\bdot = \mathbb{S}(E^\bdot) \otimes F^\bdot|_\S \in \D^-(\S)$ denote this complex.

Note that $d_{\cB,-1} = 0$ after restricting to $\S$, so the differential on $P^\bdot$ satisfies
$$d(\mathbb{S}^i(E^\bdot) \otimes F^\bdot|_\S) \subset \bigoplus_{j \geq i} \mathbb{S}^j(E^\bdot) \otimes F^\bdot|_\S.$$
Thus if we let $T_z$ denote the endomorphism of $P^\bdot$ which scales $\mathbb{S}^i(E^\bdot)\otimes F^\bdot$ by $z^i$, the rescaled differential $d_z = T_z \circ d \circ T_{z^{-1}}$ is polynomial in $z$, and thus defines a family, $P^\bdot_z$, of complexes of $\cO_\S$-modules over $\bA^1$ which is trivial over $\bA^1 - \{0\}$, by which we mean an element of $\D^-(\S \times \bA^1)$ such that all fibers away from $0 \in \bA^1$ are isomorphic to $P^\bdot$.


In order to prove the lemma, we must show that if $M^\bdot|_\Z$ has $\lambda$-weights $\geq w$, then so does $P^\bdot|_\Z = P^\bdot_1|_\Z$. We observe that setting $z=0$, the differential on $P^\bdot_z$ is precisely the differential on the tensor product complex, so
$$P^\bdot_0 \simeq \mathbb{S}(\bL^\bdot_{\S/\X}) \otimes M^\bdot.$$
By hypothesis $\bL^\bdot_{\S/\X}|_\Z \to (\bL^\bdot_{\S/\X}|_\Z)_{\geq 0}$ is a weak equivalence, so $\mathbb{S}_\Z(\bL^\bdot_{\S/\X}|_\Z) \to \mathbb{S}_\Z((\bL^\bdot_{\S/\X}|_\Z)_{\geq 0})$ is a weak equivalence with a complex of locally frees generated in nonnegative weights. Thus assuming $(M^\bdot|_\Z)_{<w} = 0$ we have
$$(P^\bdot_0|_\Z)_{<w} \simeq (\mathbb{S}(\bL^\bdot_{\S/\X}|_\Z)_{\geq 0} \otimes (M^\bdot|_{\Z}))_{<w} \sim 0.$$
Because $P^\bdot_z|_\Z \in \D^-(\bA^1 \times \Z)$, semicontinuity implies that $(P^\bdot_z|_\Z)_{<w} = 0$ for all $z\in \bA^1$, and the lemma follows.

\end{proof}

\subsection{Koszul systems and cohomology with supports} \label{sect_koszul_systems}

We recall some properties of the right derived functor of the subsheaf with supports functor $R \inner{\op{\Gamma}}_\S(\bullet)$. It can be defined by the exact triangle $R \inner{\op{\Gamma}}_\S(F^\bdot) \to F^\bdot \to i_\ast (F^\bdot|_{\V}) \parr$, and it is the right adjoint of the inclusion $\D_{\S,qc}(\X) \subset \D_{qc}(\X)$. It is evident from this exact triangle that if $F^\bdot \in \D^b(\X)$, then $R\inner{\op{\Gamma}}_\S(F^\bdot)$ is still bounded, but no longer has coherent cohomology. On the other hand the formula
\begin{equation} \label{eqn_local_cohomology}
R\inner{\op{\Gamma}}_\S(F^\bdot) = \varinjlim \inner{\op{Hom}}(\cO_\X / \cI_\S^i , F^\bdot)
\end{equation}
shows that the subsheaf with supports is canonically a colimit of coherent complexes.

We will use a more general method of computing the subsheaf with supports similar to the Koszul complexes which can be used in the affine case.
\begin{lem} \label{lem_koszul_system}
Let $\X = X/G$, where $X$ is quasiprojective scheme with a linearizable action of an algebraic group $G$, and let $S \subset X$ be a KN stratum. Then there is a direct system $K_0^\bdot \to K_1^\bdot \to \cdots$ in $\op{Perf}(\X)^{[0,N]}$ along with compatible maps $K_i^\bdot \to \cO_\X$ such that
\begin{enumerate}
\item $\cH^\ast(K^\bdot_i)$ is supported on $\S$
\item $\varinjlim ( K^\bdot_i \otimes F^\bdot ) \to F^\bdot$ induces an isomorphism $\varinjlim (K^\bdot_i \otimes F^\bdot) \simeq R\inner{\op{\Gamma}}_\S(F^\bdot)$.
\item $\op{Cone}(K_i^\bdot \to K_{i+1}^\bdot)|_{\Z} \in \D^b(\Z)_{<w_i}$ where $w_i \to -\infty$ as $i \to \infty$.
\end{enumerate}
We will call such a direct system a \emph{Koszul system} for $\S \subset \X$
\end{lem}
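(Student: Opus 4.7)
The plan is to construct $\{K_i^\bdot\}$ by first reducing to the case where $X$ is smooth in a neighborhood of $Z$ via hypothesis $(\dagger)$, then building an explicit Koszul-type system from $G$-equivariant generators of $\cI_\S$. Hypothesis $(\dagger)$ provides a $G$-equivariant closed immersion $\iota : X \hookrightarrow X'$ with $X'$ smooth near $Z'$ and $S$ equal to a union of connected components of $S' \cap X$. Since these components are separated from the remaining ones of $S' \cap X$ by a $G$-invariant open neighborhood, one can construct a Koszul system for $\S' \subset \X'$ in the smooth case and transfer it to $\S \subset \X$ by pulling back along $\iota$ (localizing on a $G$-invariant open substack if necessary to isolate the relevant components): pullback preserves perfection and the support/weight conditions, and $R\inner{\op{\Gamma}}_\S$ is Zariski-local. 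Thus it suffices to treat the case where $X$ is smooth in a neighborhood of $Z$.

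In the smooth case, the linearizability of the action gives a $G$-equivariant ample line bundle $\cL$ on $X$. By equivariant Serre vanishing, for $m \gg 0$ the sheaf $\cI_\S \otimes \cL^m$ is globally generated, yielding a $G$-equivariant surjection $\phi : \cE := V \otimes_k \cL^{-m} \twoheadrightarrow \cI_\S$ from an equivariant vector bundle, where $V$ is a finite-dimensional $G$-representation. I will model the Koszul system on the tensor product
$$K_i \simeq \bigotimes_{\ell=1}^{n} \bigl[\,\cO_\X \xrightarrow{\,s_\ell^{k_i}\,} \cL^{m k_i}\,\bigr] \qquad \text{in cohomological degrees } [0,n],$$
where $s_1,\ldots,s_n$ is a local basis of $V$ and $n = \dim V$. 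The augmentation $K_i^\bdot \to \cO_\X$ is the identity on the degree-$0$ term and zero elsewhere; this is a chain map since $\cO_\X$ has no outgoing differential. The transitions $K_i^\bdot \to K_{i+1}^\bdot$ are identity in degree $0$ and, on each factor, multiplication by $s_\ell^{k_{i+1}-k_i} : \cL^{mk_i} \to \cL^{mk_{i+1}}$ in degree $1$, satisfying the chain condition $s_\ell^{k_{i+1}-k_i} \cdot s_\ell^{k_i} = s_\ell^{k_{i+1}}$ and commuting with the augmentations. Globally the construction is carried out through $\cE$ rather than through any chosen basis, which keeps the amplitude $N = n$ fixed and ensures $G$-equivariance.

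I then verify the three conditions. (1) $\cH^\ast(K_i^\bdot)$ is supported on $V(s_1^{k_i},\ldots,s_n^{k_i}) = \S$, since $(s_\ell)$ generate $\cI_\S$. (2) The colimit is the classical stable Koszul/\v{C}ech complex, which satisfies $\varinjlim(K_i^\bdot \otimes F^\bdot) \simeq R\inner{\op{\Gamma}}_\S(F^\bdot)$ by the standard computation of local cohomology. (3) Each $s_\ell \in \cI_\S$ vanishes on $\Z \subset \S$, so the internal differentials of $K_i|_\Z$ above degree $0$ and the degree-$\geq 1$ components of the transition maps all vanish on $\Z$, and $K_i|_\Z$ splits as a direct sum of shifted tensor products $\bigotimes_{\ell \in I}\cL^{mk_i}|_\Z$ placed in degree $|I|$. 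The KN inequality $\mu(\lambda, Z) = -\op{weight}_\lambda(\cL|_Z)/|\lambda| > 0$ forces $\op{weight}_\lambda(\cL|_Z) < 0$, so every nonzero-degree $\lambda$-weight appearing in $K_i|_\Z$ is bounded above by $mk_i \cdot \op{weight}_\lambda(\cL|_Z)$, which tends to $-\infty$. Hence $\op{Cone}(K_i^\bdot \to K_{i+1}^\bdot)|_\Z \in \D^b(\Z)_{<w_i}$ for a sequence $w_i \to -\infty$.

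The main technical obstacle is maintaining $G$-equivariance while keeping the cohomological amplitude uniformly bounded in $i$: a naive Koszul complex of $\op{Sym}^{k_i}\cE$ has amplitude growing with $k_i$, violating the $\op{Perf}^{[0,N]}$ requirement. The tensor-product model above instead raises only the sections $s_\ell$ to the $k_i$-th power while keeping $n$ tensor factors; performing this globally through $\cE$ (rather than through an arbitrary basis) makes the construction manifestly $G$-equivariant, and the reduction via $(\dagger)$ is precisely what allows this smooth-case construction to extend to singular $X$.
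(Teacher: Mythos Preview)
Your reduction to the smooth case via $(\dagger)$ matches the paper, but there is a genuine gap in the smooth-case construction. The tensor product $\bigotimes_{\ell=1}^{n} [\cO_\X \xrightarrow{s_\ell^{k_i}} \cL^{m k_i}]$ depends on the chosen basis $\{s_\ell\}$ of $V$: the operation ``raise each basis section to the $k_i$-th power'' is not functorial in $\cE$ and hence not $G$-equivariant unless each $s_\ell$ spans a one-dimensional $G$-subrepresentation. You assert that the construction can be ``carried out through $\cE$,'' but do not say how; there is no $G$-equivariant map $\cE \to \cL^{-mk_i}$ realizing $s_\ell \mapsto s_\ell^{k_i}$. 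For a torus this would work after decomposing $V$ into characters, but not for general reductive $G$. You correctly identify the tension (the equivariant Koszul complex on $\op{Sym}^{k_i}\cE$ has amplitude growing with $k_i$) yet do not resolve it.

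There is also a gap in your argument for property (3): the inequality $\op{weight}_\lambda(\cL|_Z) < 0$ invokes the GIT numerical invariant $\mu(\lambda,Z) > 0$, which requires $\cL$ to be a linearization destabilizing $\S$. The lemma, however, is stated for an arbitrary KN stratum in the sense of Definition~\ref{def_KN_stratification}, which assumes only the conormal-weight condition~\ref{property_S_3} and need not arise from any GIT problem; the ample bundle you pick for global generation has no a priori relation to $\lambda$.

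The paper avoids both problems by setting $K_i^\bdot := (\cO_\X/\cI_\S^i)^\dual$ in the smooth case. This is manifestly $G$-equivariant and perfect of amplitude bounded by $\op{codim}\S$, since $\cO_\X/\cI_\S^i$ has finite projective dimension on the smooth $\X$. The cone $(\cI_\S^i/\cI_\S^{i+1})^\dual \simeq (j_\ast \mathbb{S}^i(\coNorm_\S \X))^\dual$ then has $\lambda$-weights $\leq -i$ on $\Z$ by Property~\ref{property_S_3} together with Lemma~\ref{lem_positivity} (using that \hyperref[property_L_plus]{(L+)} is automatic when $\X$ is smooth). This route uses the intrinsic conormal positivity of a KN stratum rather than any ambient linearization, and requires no choice of generators.
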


\begin{proof}
First assume $\X$ is smooth in a neighborhood of $\S$. Then $\cO_\X / \cI_\S^i$ is perfect, so \eqref{eqn_local_cohomology} implies that the derived duals $K^\bdot_i = (\cO_\X / \cI_\S^i) ^\dual$ satisfy properties (1) and (2) with $K^\bdot_i \to \cO_\X$ the dual of the map $\cO_\X \to \cO_\X / \cI_\S^i$. We compute the mapping cone
$$\op{Cone}(K_i^\bdot \to K_{i+1}^\bdot) = \left( \cI_\S^i / \cI_\S^{i+1} \right)^\dual = \left( j_\ast ( \mathbb{S}^i(\coNorm_\S \X ) ) \right)^\dual$$
Where the last equality uses the smoothness of $\X$ and $\S$. Because Property \hyperref[property_L_plus]{(L+)} is automatic for smooth $\X$, it follows from Lemma \ref{lem_positivity} that $Lj^\ast j_\ast ( \mathbb{S}^i ( \coNorm_\S \X ) ) \in \D^b(\S)_{\geq i}$, hence $\op{Cone}(K_i^\bdot \to K_{i+1}^\bdot)$ has weights $\leq -i$, and the third property follows.

If $X$ is not smooth in a neighborhood of $S$, then by hypothesis we have a $G$-equivariant closed immersion $\phi : X \hookrightarrow X^\prime$ and closed KN stratum $S^\prime \subset X^\prime$ such that $S$ is a connected component of $S^\prime \cap X$ and $X^\prime$ is smooth in a neighborhood of $S^\prime$. Then we let $K_i^\bdot \in \op{Perf}(\X)$ be the restriction of $L\phi^\ast(\cO_{\X^\prime} / \cI_{\S^\prime}^i)^\dual$. These $K_i^\bdot$ still satisfy the third property. The canonical morphism $\varinjlim (K_i^\bdot \otimes F^\bdot) \to R \inner{\op{\Gamma}}_{\S^\prime \cap \X} F^\bdot$ is an isomorphism because its push forward to $\X^\prime$, $\varinjlim \phi_\ast (K_i^\bdot \otimes F^\bdot) \to \phi_\ast R \inner{\op{\Gamma}}_{\S^\prime \cap \X} (F^\bdot) = R \inner{\op{\Gamma}}_{\S^\prime} \phi_\ast F^\bdot$, is an isomorphism. Thus the $K_i^\bdot$ form a Koszul system for $\S^\prime \cap \X$. Because $\S$ is a connected component of $\S^\prime \cap \X$, the complexes $R\inner{\op{\Gamma}}_{\S} K_i^\bdot$ form a Koszul system for $\S$.
\end{proof}

We note an alternative definition of a Koszul system, which will be useful below
\begin{lem} \label{lem:alternate_Koszul_property}
Property (3) of a Koszul system is equivalent to the property that for all $w$,
$$\op{Cone}(K_i^\bdot \to \cO_\X)|_\Z \in \D^b(\Z)_{<w} \text{ for all } i \gg 0$$
\end{lem}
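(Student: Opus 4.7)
The plan is to introduce the auxiliary objects $L_i := \op{Cone}(K_i^\bdot \to \cO_\X)$ and relate the two kinds of cones appearing in properties (3) and its alternative via the octahedral axiom. Applied to the composition $K_i^\bdot \to K_{i+1}^\bdot \to \cO_\X$, the octahedron yields a distinguished triangle
$$ C_i \to L_i \to L_{i+1} \parr, $$
where $C_i := \op{Cone}(K_i^\bdot \to K_{i+1}^\bdot)$. Restricting to $\Z$, I would then exploit that $\D^b(\Z)_{<w}$ is a triangulated subcategory (Proposition \ref{prop_baric_Z}), hence closed under shifts and cones.

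The easy direction, namely the alternative condition $\Rightarrow$ (3), follows immediately: choose $w_i$ such that $L_j|_\Z \in \D^b(\Z)_{<w_i}$ for every $j \geq i$; such $w_i$ exist by the alternative condition and tend to $-\infty$. The triangle above then forces $C_i|_\Z \in \D^b(\Z)_{<w_i}$, verifying (3).

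For the converse (3)$\,\Rightarrow\,$alternative condition, I plan to pass to the filtered colimit and exploit that it vanishes on $\Z$. Property (2) of the Koszul system applied to $\cO_\X$ gives $\varinjlim K_i^\bdot \simeq R\inner{\op{\Gamma}}_\S(\cO_\X)$, and taking cones yields $\varinjlim L_i \simeq i_\ast \cO_\V$. Since $j : \S \hookrightarrow \X$ and $i : \V \hookrightarrow \X$ are complementary closed and open immersions, $Lj^\ast(i_\ast \cO_\V) \simeq 0$; because $L\sigma^\ast j^\ast$ is a left adjoint and preserves filtered colimits, this gives $\varinjlim(L_i|_\Z) \simeq 0$ in $\D_{qc}(\Z)$. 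WLOG take $w_i$ decreasing. For fixed $i$, the cone of $L_i \to L_j$ is an iterated extension of $C_i[1], \ldots, C_{j-1}[1]$, whose restrictions to $\Z$ all lie in $\D^b(\Z)_{<w_i}$ by (3). Passing to the filtered homotopy colimit in $j$ identifies $L_i|_\Z[1]$ with a homotopy colimit of objects in $\D^b(\Z)_{<w_i}$. Appealing to Remark \ref{rem_extending_SOD} and Lemma \ref{lem_extending_SOD_general}, the extended subcategory $\D_{qc}(\Z)_{<w_i}$ is closed under such colimits, so $L_i|_\Z[1] \in \D_{qc}(\Z)_{<w_i}$. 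Since each $K_i^\bdot$ is perfect, so is $L_i$, hence $L_i|_\Z$ is cohomologically bounded, and therefore $L_i|_\Z \in \D^b(\Z)_{<w_i}$. Given any $w$, choosing $i_0$ with $w_{i_0} < w$ yields the alternative condition for all $i \geq i_0$.

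The main obstacle will be the second direction, specifically the homotopy-colimit step. I will need to combine three separate facts — vanishing of $Lj^\ast(i_\ast\cO_\V)$, compatibility of derived restriction with sequential colimits, and closure of the extended baric category $\D_{qc}(\Z)_{<w}$ under filtered colimits — and then invoke that bounded objects inside $\D_{qc}(\Z)_{<w}$ automatically lie in $\D^b(\Z)_{<w}$ (since the baric condition is characterized by weights of cohomology sheaves). Each ingredient is standard, but the composition has to be executed cleanly in the stacky derived setting.
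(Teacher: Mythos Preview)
Your proof is correct and follows essentially the same route as the paper: the octahedral triangle relating $\op{Cone}(K_i^\bdot \to K_{i+1}^\bdot)$ with the two cones to $\cO_\X$, the identification $\varinjlim L_i \simeq i_\ast \cO_\V$, and the colimit-closure argument via Lemma~\ref{lem_extending_SOD_general}. The only notable difference is organizational. The paper works on $\S$ rather than $\Z$, applies the baric truncation $\radj{w}$ to $j^\ast C_i^\bdot$ (paper's $C_i^\bdot$ is your $L_i$), and observes directly that the maps $\radj{w} j^\ast C_i^\bdot \to \radj{w} j^\ast C_{i+1}^\bdot$ are isomorphisms for $i \gg 0$; since the colimit vanishes, each term eventually vanishes. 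Your version instead realizes $L_i|_\Z[1]$ as a filtered colimit of objects already in $\D^b(\Z)_{<w_i}$ and then descends back to the bounded category. Both arguments rest on the same ingredients and the same vanishing $Lj^\ast (i_\ast \cO_\V) \simeq 0$; the paper's ``stabilizing system with zero colimit'' formulation is marginally more direct. One small remark: Remark~\ref{rem_extending_SOD} concerns $\D_{qc}(\S)$, not $\D_{qc}(\Z)$; what you actually need is Lemma~\ref{lem_extending_SOD_general} applied to the baric decomposition of Proposition~\ref{prop_baric_Z}, which gives the required colimit-closure of $\D_{qc}(\Z)_{<w}$.
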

\begin{proof}
Let us denote $C_i^\bdot := \op{Cone}(K_i^\bdot \to \cO_\X)$. By the octahedral axiom we have an exact triangle
\begin{equation} \label{eqn_two_defs_Koszul}
C_i^\bdot [-1] \to C_{i+1}^\bdot [-1] \to \op{Cone}(K_i^\bdot \to K_{i+1}^\bdot) \parr
\end{equation}
So the property stated in this Lemma implies property (3) of the definition of a Koszul system.

Conversely, let $K_i^\bdot$ be a Koszul system for $\S \subset \X$. We have an exact triangle $\varinjlim K_i^\bdot \to \cO_\X \to \varinjlim C_i^\bdot \parr$, which in light of the isomorphism $R\inner{\op{\Gamma}}_\S \cO_\X \simeq \varinjlim K_i^\bdot$ implies that $\varinjlim C^\bdot_i \simeq i_\ast \cO_\V$. Combining this with Remark \ref{rem_extending_SOD} and Lemma \ref{lem_extending_SOD_general} we have
$$0 = \radj{w} j^\ast \varinjlim C_i^\bdot \simeq \varinjlim \radj{w} j^\ast C_i^\bdot.$$
On the other hand the exact triangle \eqref{eqn_two_defs_Koszul} shows that $\radj{w} j^\ast C_i^\bdot \to \radj{w} j^\ast C_{i+1}^\bdot$ is an isomorphism for $i\gg 0$. It follows that for $i \gg0$, $\radj{w} j^\ast C_i^\bdot = 0$ and hence $j^\ast C_i^\bdot \in \D^b(\S)_{<w}$.
\end{proof}

\subsection{Quasicoherent sheaves with support on $\S$, and the quantization theorem}
We turn to the derived category $\D^b_\S(\X)$ of coherent sheaves on $\X$ with set-theoretic support on $\S$. We will extend the baric decomposition of $\D^b(\S)$ to a baric decomposition of $\D^b_\S(\X)$. Using this baric decomposition we will prove a generalization of the quantization commutes with reduction theorem, one of the results which motivated this work.

\begin{defn}
We define the thick triangulated subcategories of $\D_{qc}(\X)$
\begin{equation*}
\begin{array}{ll}
\D^?(\X)_{\geq w} := \{ F^\bdot \in \D^?(\X) | Lj^\ast F^\bdot \in \D^-(\S)_{\geq w} \}, & ?=b \text{ or }- \\[8pt]
\D^?(\X)_{<w} := \{ F^\bdot \in \D^?(\X) | Rj^! F^\bdot \in \D^+(\S)_{<w} \}, & ? = b \text{ or } +
\end{array}
\end{equation*}
Futhermore we define $\D^?_\S(\X)_{\geq w} := \D_\S(\X) \cap \D^?(\X)_{\geq w}$ and $\D^?_\S(\X)_{<w} := \D_\S(\X) \cap \D^?(\X)_{<w}$.
\end{defn}

\begin{prop} \label{prop_baric_support_S}
Let $\S \subset \X$ be a KN stratum satisfying Properties \hyperref[property_A]{(A)} and \hyperref[property_L_plus]{(L+)}. There is a bounded multiplicative baric decomposition $\D^b_\S(\X) = \langle \D^b_\S(\X)_{<w}, \D^b_\S(\X)_{\geq w} \rangle$, which is compatible with the baric decomposition of $\D^b(\S)$ in the sense that $j_\ast \radj{w} \simeq \radj{w} j_\ast$ and $j_\ast \ladj{w} \simeq \ladj{w} j_\ast$.
\end{prop}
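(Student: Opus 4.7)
The plan is to transfer the bounded multiplicative baric decomposition on $\D^b(\S)$ supplied by Proposition \ref{prop_baric_S_flat} (which applies thanks to Property \hyperref[property_A]{(A)}) along the closed immersion $j : \S \hookrightarrow \X$, with Property \hyperref[property_L_plus]{(L+)} entering through Lemma \ref{lem_positivity} as the essential positivity input. I would begin by constructing the baric truncations on objects in the image of $j_\ast$. For $M^\bdot \in \D^b(\S)$, applying $j_\ast$ to the baric triangle $\radj{w} M^\bdot \to M^\bdot \to \ladj{w} M^\bdot \parr$ on $\S$ produces a candidate triangle in $\D^b_\S(\X)$. Lemma \ref{lem_positivity} immediately places $j_\ast \radj{w} M^\bdot \in \D^b_\S(\X)_{\geq w}$. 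The dual containment $j_\ast \ladj{w} M^\bdot \in \D^b_\S(\X)_{<w}$, i.e.\ that $Rj^! j_\ast \ladj{w} M^\bdot \in \D^+(\S)_{<w}$, I would establish by an argument parallel to the proof of Lemma \ref{lem_positivity}: representing $Rj^! j_\ast N^\bdot$ as $\inner{\op{Hom}}_{\cO_\X}(\cB^\bdot, j_\ast N^\bdot)$ using the cofibrant replacement $\cB^\bdot = \mathbb{S}(E^\bdot) \to \cO_\S$, and interpolating the differential by a $\Gm$-scaling on $\mathbb{S}(E^\bdot)$, one finds that the complex degenerates at $z=0$ to a tensor product involving $\mathbb{S}(\bL^\bdot_{\S/\X})^\dual$, which has weights $\leq 0$ by Property \hyperref[property_L_plus]{(L+)}; semicontinuity over $\bA^1$ then gives the desired weight bound at $z=1$.

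With both inclusions in hand, the $(j_\ast, Rj^!)$ adjunction yields semiorthogonality on generators: for $M^\bdot \in \D^b(\S)_{\geq w}$ and $N^\bdot \in \D^b(\S)_{<w}$,
$$R\op{Hom}_\X(j_\ast M^\bdot, j_\ast N^\bdot) \simeq R\op{Hom}_\S(M^\bdot, Rj^! j_\ast N^\bdot) = 0$$
by Lemma \ref{lem_left_orthog_S}. I would then pass to general $F^\bdot \in \D^b_\S(\X)$ by dévissage: every coherent sheaf set-theoretically supported on $\S$ admits a finite $\cI_\S$-adic filtration whose subquotients are $\cO_\S$-modules, and a bounded coherent complex is an iterated extension of finitely many shifted cohomology sheaves, so $\D^b_\S(\X)$ is generated as a triangulated subcategory by $j_\ast \coh(\S)$. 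The truncation triangles for the $j_\ast$-pieces glue along the dévissage via the octahedral axiom, using the semiorthogonality of the generators to show that the glued truncations lie in the correct subcategories and are functorial; this simultaneously produces the baric triangle for $F^\bdot$ and extends semiorthogonality to the full subcategories $\D^b_\S(\X)_{\geq w}$ and $\D^b_\S(\X)_{<w}$. By construction, the truncation functors satisfy $j_\ast \radj{w} \simeq \radj{w} j_\ast$ and $j_\ast \ladj{w} \simeq \ladj{w} j_\ast$.

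Multiplicativity $\op{Perf}_\S(\X)_{\geq v} \otimes^L \D^b_\S(\X)_{\geq w} \subset \D^b_\S(\X)_{\geq v+w}$ follows from the analogous multiplicativity on $\D^b(\S)$ (Proposition \ref{prop_baric_S_flat}) together with the compatibility of $Lj^\ast$ with derived tensor products, and boundedness transfers from the bounded baric decomposition on $\D^b(\S)$ through the finite dévissage (Property \hyperref[property_A]{(A)} supplies the finite tor dimension of $\sigma$ needed for boundedness on $\S$). The principal technical obstacle I anticipate is the dual positivity statement for $Rj^! j_\ast$ in the first step: Property \hyperref[property_L_plus]{(L+)} is naturally formulated for $\bL^\bdot_{\S/\X}$ and enters Lemma \ref{lem_positivity} through $Lj^\ast$, so transferring it to the $Rj^!$ side requires carefully dualizing the rescaling construction — working with $\inner{\op{Hom}}_{\cO_\X}(\cB^\bdot, -)$ rather than $\cB^\bdot \otimes (-)$ — and verifying that the $z=0$ degeneration identifies with a derived tensor product controlled by $\mathbb{S}(\bL^\bdot_{\S/\X})^\dual$.
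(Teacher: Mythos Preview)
Your overall architecture---push forward the baric triangles from $\D^b(\S)$, establish semiorthogonality, and extend to all of $\D^b_\S(\X)$ by d\'evissage on $j_\ast \D^b(\S)$---is exactly the paper's strategy. The difference is in how semiorthogonality is obtained and, correspondingly, where the identification of the two pieces with the categories $\D^b_\S(\X)_{\geq w}$ and $\D^b_\S(\X)_{<w}$ happens.

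You route semiorthogonality through the adjunction $(j_\ast, Rj^!)$, which forces you to prove the ``dual positivity'' statement that $Rj^! j_\ast N^\bdot \in \D^+(\S)_{<w}$ for $N^\bdot \in \D^b(\S)_{<w}$; you correctly flag this as the main technical obstacle, since dualizing the rescaling argument of Lemma~\ref{lem_positivity} replaces $\mathbb{S}(E^\bdot)\otimes(-)$ by $\inner{\op{Hom}}(\mathbb{S}(E^\bdot),-)$, an infinite product, and the semicontinuity step over $\bA^1$ becomes genuinely more delicate. The paper avoids this entirely by using the \emph{other} adjunction: for $M^\bdot \in \D^b(\S)_{\geq w}$ and $N^\bdot \in \D^b(\S)_{<w}$,
\[
R\op{Hom}_\X(j_\ast M^\bdot, j_\ast N^\bdot) \simeq R\op{Hom}_\S(Lj^\ast j_\ast M^\bdot, N^\bdot) = 0
\]
by Lemma~\ref{lem_positivity} and Lemma~\ref{lem_left_orthog_S} directly. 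With semiorthogonality in hand, the paper defines $\cA$ and $\cB$ as the triangulated subcategories generated by $j_\ast \D^b(\S)_{\geq w}$ and $j_\ast \D^b(\S)_{<w}$, notes that $\cA \star \cB$ is triangulated and contains the generators $j_\ast \D^b(\S)$, and hence obtains the semiorthogonal decomposition $\D^b_\S(\X) = \langle \cB, \cA \rangle$. The identification $\cB = \D^b_\S(\X)_{<w}$ is then read off \emph{a posteriori} from $\cB = \cA^\perp$ via $(j_\ast, Rj^!)$---so the dual positivity statement you were aiming to prove directly is instead a free consequence of the decomposition. The upshot: your route should work, but the paper's ordering of the logic eliminates precisely the step you identified as hardest.
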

\begin{proof}
Let $\cA, \cB \subset \D^b(\X)$ be the full subcategories generated under cones and shifts by $j_\ast (\D^b(\S)_{\geq w})$ and $j_\ast (\D^b(\S)_{<w})$ respectively. By Lemma \ref{lem_positivity}, $Lj^\ast j_\ast (\D^b(\S)_{\geq w}) \subset \D^-(\S)_{\geq w}$, and so Lemma \ref{lem_left_orthog_S} implies that $\cB \subset \cA^\perp$. Consider the full subcategory $\sh{A} \star \sh{B} \subset \D^b_\S(\X)$ consisting of those $F^\bdot$ which admit triangles $A^\bdot \to F^\bdot \to B^\bdot \parr$ with $A^\bdot \in \sh{A}$ and $B^\bdot \in \sh{B}$. The right orthogonality $\cB \subset \cA^\perp$ implies that $\sh{A} \star \sh{B}$ is triangulated as well.

For any $F^\bdot \in \D^b(\S)$ we have the exact triangle $j_\ast \radj{w} F^\bdot \to j_\ast F^\bdot \to j_\ast \ladj{w} F^\bdot \parr$, so $j_\ast \D^b(\S) \subset \cA \star \cB$. But the smallest full triangulated subcategory containing $j_\ast \D^b(\S)$ is $\D^b_\S(\X)$, so we have a semiorthogonal decomposition $\D^b_\S(\X) = \langle \cB,\cA \rangle$. Finally, using the adjunctions $Lj^\ast \dashv j_\ast$ and $j_\ast \dashv Rj^!$ we can give alternate characterizations:
\begin{align*}
F^\bdot \in \cA &\Leftrightarrow R\op{Hom}^\bdot_\X (F^\bdot , j_\ast G^\bdot) = 0, \quad \forall G^\bdot \in \D^b(\S)_{<w} \\
&\Leftrightarrow R\op{Hom}^\bdot_\S (Lj^\ast F^\bdot, G^\bdot) = 0, \quad \forall G^\bdot \in \D^b(\S)_{<w} \\
&\Leftrightarrow Lj^\ast F^\bdot \in \D^-(\S)_{\geq w}
\end{align*}
A similar computation shows that $\cB = \D^b_\S(\X)_{<w}$.

The fact that the baric decomposition is multiplicative follows from the description of $\D^b_\S(\X)_{\geq w}$ in terms of the $\lambda$-weights of $L\sigma^\ast F^\bdot \in \D^-(\Z)$. Boundedness follows from the boundedness of the baric decomposition of Proposition \ref{prop_baric_S_flat} and the fact that $j_\ast \D^b(\S)$ generates $\D^b_\S(\X)$ under shifts and cones.
\end{proof}

\begin{rem}[Baric truncation functors are right $t$-exact] \label{rem_exact_truncation}
By construction the baric truncation functors on $\D^b(\S)$ are right $t$-exact. It follows that for $F^\bdot \in \D^b(\S)^{\leq 0}$, $$\radj{w} j_\ast F^\bdot := j_\ast \radj{w} F^\bdot \in \D^b_\S(\X)^{\leq 0},$$
and $\ladj{w} j_\ast F^\bdot \in \D^b_\S(\X)^{\leq 0}$ as well. Furthermore, $\D^b_\S(\X)^{\leq 0}$ is the smallest subcategory of $\D^b(\X)$ which contains $j_\ast ( \D^b(\S)^{\leq 0} )$ and is closed under extensions. It follows that $\radj{w}$ and $\ladj{w}$ are right $t$-exact on the category $\D^b_\S(\X)$.
\end{rem}

The following is an extension to our setting of an observation which appeared in \cite{BFK12}, following ideas of Kawamata \cite{Ka06}. There the authors described semiorthogonal factors appearing under VGIT in terms of the quotient $Z/L^\prime$.

\begin{amplif} \label{amplif_infinite_semidecomp}
Define $\D^b_\S(\X)_w := \D^b_\S(\X)_{\geq w} \cap \D^b_\S(\X)_{<w+1}$. If the weights of $\bL^\bdot_{S/X}|_Z$ are \emph{strictly positive}, then $j_\ast : \D^b(\S)_w \to \D^b_\S(\X)_w$ is an equivalence with inverse $\ladj{w+1} Lj^\ast (F^\bdot)$.
\end{amplif}

\begin{proof}
This is a consequence of the proof of Lemma \ref{lem_positivity}, which can be used to show that for $F\in \D^b(\S)_w$ the cone of the canonical morphism $Lj^\ast j_\ast F^\bdot \to F^\bdot$ lies in $\D^-(\S)_{\geq w+1}$. In that proof we showed that $Lj^\ast j_\ast F^\bdot$ is a deformation of $\mathbb{S}(\bL^\bdot_{\S/\X}) \otimes F^\bdot$. Using the same construction one can check that the counit of adjunction, $Lj^\ast j_\ast M \to M$, is a deformation of the augmentation map, $\mathbb{S}(\bL^\bdot_{\S / \X}) \otimes F^\bdot \to \cO_\S \otimes_{\cO_\S} F^\bdot = F^\bdot$. By hypothesis, $\op{Cone}(\mathbb{S}(\bL^\bdot_{\S / \X}) \to \cO_\S) \in \D^-(\S)_{\geq 1}$, so $\op{Cone}(\mathbb{S}(\bL^\bdot_{\S / \X}) \otimes F^\bdot \to F^\bdot) \in \D^-(\S)_{\geq w+1}$ and the claim follows from semicontinuity as in the proof of Lemma \ref{lem_positivity}.
\end{proof}

\begin{cor} \label{cor_infinite_semidecomp}
If $\bL^\bdot_{S/X}|_Z$ has strictly positive weights, then the baric decomposition of Proposition \ref{prop_baric_support_S} can be refined to an infinite semiorthogonal decomposition
$$\D^b_\S(\X) = \langle \ldots, \D^b(\Z)_{w-1},\D^b(\Z)_{w}, \D^b(\Z)_{w+1},\D^b(\Z)_{w+2}, \ldots \rangle$$
where the factors are the essential images of the fully faithful embeddings $j_\ast \pi^\ast : \D^b(\Z)_w \to \D^b_\S(\X)$.
\end{cor}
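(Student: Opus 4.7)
The plan is to combine the two amplifications already at hand: Amplification \ref{amplif_KN_stratum} gives an equivalence $\pi^\ast : \D^b(\Z)_w \xrightarrow{\sim} \D^b(\S)_w$, and Amplification \ref{amplif_infinite_semidecomp} (which applies precisely under the stated hypothesis that the weights of $\bL^\bdot_{S/X}|_Z$ are strictly positive) gives an equivalence $j_\ast : \D^b(\S)_w \xrightarrow{\sim} \D^b_\S(\X)_w$. Composing, $j_\ast \pi^\ast$ is an equivalence of $\D^b(\Z)_w$ with $\D^b_\S(\X)_w := \D^b_\S(\X)_{\geq w} \cap \D^b_\S(\X)_{<w+1}$, and in particular is fully faithful as a functor into $\D^b_\S(\X)$. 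It then remains to show that the family $\{\D^b_\S(\X)_w\}_{w \in \bZ}$ forms an infinite semiorthogonal decomposition of $\D^b_\S(\X)$ in the sense of Remark \ref{rem_extending_SOD}: that it is semiorthogonal in the ordering $\langle \ldots, \D^b(\Z)_{w-1}, \D^b(\Z)_w, \D^b(\Z)_{w+1}, \ldots \rangle$ and that every object of $\D^b_\S(\X)$ is a finite iterated extension of objects in these factors.

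Semiorthogonality is immediate from Proposition \ref{prop_baric_support_S}: if $F^\bdot \in \D^b_\S(\X)_v$ and $G^\bdot \in \D^b_\S(\X)_{v'}$ with $v > v'$, then $F^\bdot \in \D^b_\S(\X)_{\geq v} \subset \D^b_\S(\X)_{\geq v'+1}$ while $G^\bdot \in \D^b_\S(\X)_{<v'+1}$, and the baric semiorthogonality at weight $v'+1$ forces $R\op{Hom}^\bdot_\X(F^\bdot, G^\bdot) = 0$.

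For generation, I would use the boundedness of the baric decomposition in Proposition \ref{prop_baric_support_S}: any $F^\bdot \in \D^b_\S(\X)$ lies in $\D^b_\S(\X)_{\geq a} \cap \D^b_\S(\X)_{<b}$ for some $a \leq b$. Setting $F_b := F^\bdot$, iteratively form the exact triangle $\radj{w-1} F_w \to F_w \to \ladj{w-1} F_w \parr$ and let $F_{w-1} := \ladj{w-1} F_w$. The middle term $\radj{w-1} F_w$ lies in $\D^b_\S(\X)_{\geq w-1} \cap \D^b_\S(\X)_{<w} = \D^b_\S(\X)_{w-1}$, while $F_{w-1}$ remains in $\D^b_\S(\X)_{\geq a}$ because that subcategory is the right factor of a semiorthogonal decomposition, hence is a triangulated subcategory closed under cones of its own members (and $\radj{w-1} F_w \in \D^b_\S(\X)_{\geq w-1} \subset \D^b_\S(\X)_{\geq a}$). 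After $b-a$ iterations we reach $F_a \in \D^b_\S(\X)_{\geq a} \cap \D^b_\S(\X)_{<a} = 0$, and $F^\bdot$ is exhibited as a finite tower of extensions by objects in $j_\ast \pi^\ast \D^b(\Z)_w$ for $a \leq w \leq b-1$. The substantive work has already been done in Proposition \ref{prop_baric_support_S} and Amplifications \ref{amplif_KN_stratum} and \ref{amplif_infinite_semidecomp}; the only place where the strict positivity hypothesis enters this corollary is in the identification of the pure weight piece $\D^b_\S(\X)_w$ with $j_\ast \pi^\ast \D^b(\Z)_w$, and the generation argument is then purely formal bookkeeping with the bounded baric filtration, with no further obstacle to overcome.
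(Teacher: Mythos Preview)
Your proposal is correct and follows the same approach implicit in the paper: the corollary is a formal consequence of the bounded baric decomposition of Proposition \ref{prop_baric_support_S} together with the identifications $\D^b(\Z)_w \xrightarrow{\pi^\ast} \D^b(\S)_w \xrightarrow{j_\ast} \D^b_\S(\X)_w$ from Amplifications \ref{amplif_KN_stratum} and \ref{amplif_infinite_semidecomp}. Two cosmetic points: the object $\radj{w-1} F_w$ is the \emph{first} term of your triangle, not the middle term; and the meaning of ``infinite semiorthogonal decomposition'' is explained in the remark following Amplification \ref{amplif:decompose_unstable}, not in Remark \ref{rem_extending_SOD}.
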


\medskip

Next we will use the baric decomposition of Proposition \ref{prop_baric_support_S} to generalize a theorem of Teleman \cite{Te00}, which (for smooth $X$) identifies a weight condition on an equivariant vector bundle $\cV$ which implies that $H^i(X,\cV)^G \simeq H^i(X^{ss},\cV)^G$.

\begin{thm}[Quantization Theorem]\label{thm_quantization}
Let $\S \subset \X$ be a KN stratum satisfying Properties \hyperref[property_A]{(A)} and \hyperref[property_L_plus]{(L+)}. Let $F^\bdot \in \D^-(\X)_{\geq w}$ and $G^\bdot \in \D^+(\X)_{<v}$ with $w \geq v$, then the restriction map
$$R \op{Hom}_\X (F^\bdot, G^\bdot) \to R \op{Hom}_\V (F^\bdot|_\V, G^\bdot|_\V)$$
to the open substack $\V = \X \setminus \S$ is an isomorphism.
\end{thm}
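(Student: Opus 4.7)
The plan is to reduce the statement to a vanishing assertion via the local cohomology triangle, then unwind $R\inner{\op{\Gamma}}_\S G^\bdot$ through a Koszul system into pieces that can each be handled by Lemma \ref{lem_left_orthog_S}. Let $\iota \colon \V \hookrightarrow \X$ denote the open immersion. First, apply $R\op{Hom}_\X(F^\bdot, -)$ to the canonical triangle
$$R\inner{\op{\Gamma}}_\S G^\bdot \to G^\bdot \to R\iota_\ast(G^\bdot|_\V) \parr.$$
The third term identifies with $R\op{Hom}_\V(F^\bdot|_\V, G^\bdot|_\V)$ via the adjunction $\iota^\ast \dashv R\iota_\ast$, so it suffices to establish
$$R\op{Hom}_\X(F^\bdot, R\inner{\op{\Gamma}}_\S G^\bdot) = 0. \qquad (\ast)$$

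To attack $(\ast)$, invoke Lemma \ref{lem_koszul_system} to choose a Koszul system $\{K_i^\bdot\}$ for $\S \subset \X$. Property~(2) of that lemma furnishes an isomorphism $R\inner{\op{\Gamma}}_\S G^\bdot \simeq \varinjlim_i (K_i^\bdot \otimes G^\bdot)$. Since $F^\bdot \in \D^-(\X)$ has coherent cohomology and each $K_i^\bdot \otimes G^\bdot$ lies in $\D^+(\X)$ with coherent cohomology, the functor $R\op{Hom}_\X(F^\bdot,-)$ commutes with this filtered colimit (a standard consequence of coherence of $F^\bdot$ and Noetherian-type finiteness). Hence $(\ast)$ reduces to showing $R\op{Hom}_\X(F^\bdot, K_i^\bdot \otimes G^\bdot) = 0$ for every individual $i$.

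I would now analyze $K_i^\bdot$ via the construction in Lemma \ref{lem_koszul_system}: dualizing the filtration of $\cO_\X / \cI_\S^i$ by powers of $\cI_\S$ (using the smooth ambient embedding guaranteed by ($\dagger$) if $\X$ is not smooth near $\S$) equips $K_i^\bdot$ with a finite filtration whose successive quotients are of the form $j_\ast N_p$ with $L\sigma^\ast N_p$ supported in \emph{nonpositive} $\lambda$-weights. This is precisely where Property \hyperref[property_L_plus]{(L+)} enters: since $L\sigma^\ast \bL^\bdot_{\S/\X}$ has nonnegative weights, the duals appearing in the Koszul construction---morally symmetric powers of $\fN_\S$ twisted by the relative dualizing complex---have nonpositive weights on $\Z$. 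Tensoring with $G^\bdot$ and using the projection formula $Rj^!(K_i^\bdot \otimes G^\bdot) \simeq Lj^\ast K_i^\bdot \otimes Rj^! G^\bdot$, the induced filtration on $K_i^\bdot \otimes G^\bdot$ has successive quotients $j_\ast(N_p \otimes Rj^! G^\bdot)$, whose $L\sigma^\ast$ has weights strictly less than $v$ (nonpositive plus $<v$).

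To conclude, treat each graded piece $j_\ast H$ with $H \in \D^+_{qc}(\S)_{<v}$: the adjunction $Lj^\ast \dashv j_\ast$ gives
$$R\op{Hom}_\X(F^\bdot, j_\ast H) \simeq R\op{Hom}_\S(Lj^\ast F^\bdot, H),$$
and since $Lj^\ast F^\bdot \in \D^-(\S)_{\geq w}$ while $H \in \D^+_{qc}(\S)_{<v} \subseteq \D^+_{qc}(\S)_{<w}$ (using $v \leq w$), Lemma \ref{lem_left_orthog_S} yields vanishing. Induction on the length of the filtration then produces $R\op{Hom}_\X(F^\bdot, K_i^\bdot \otimes G^\bdot) = 0$, establishing $(\ast)$. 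The main technical obstacle is the weight analysis in the third paragraph: ensuring that the successive quotients of the Koszul system carry the asserted nonpositive-weight behaviour in the general (possibly singular) setting is where Property~(L+) does essential work, and where one must carefully combine the Koszul construction with the projection formula to track weights through tensor products.
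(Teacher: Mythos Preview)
Your reduction to the vanishing of $R\op{Hom}_\X(F^\bdot, R\inner{\op{\Gamma}}_\S G^\bdot)$ and the use of a Koszul system are exactly right, and this matches the paper's setup. However, the argument breaks at the point where you identify the graded pieces of $K_i^\bdot \otimes G^\bdot$. If $K_i^\bdot$ is filtered with subquotients $j_\ast N_p$, then tensoring with $G^\bdot$ gives subquotients $(j_\ast N_p)\otimes G^\bdot$, and the projection formula yields
\[
(j_\ast N_p)\otimes G^\bdot \;\simeq\; j_\ast\bigl(N_p \otimes Lj^\ast G^\bdot\bigr),
\]
\emph{not} $j_\ast(N_p \otimes Rj^! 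G^\bdot)$. The formula $Rj^!(K_i^\bdot \otimes G^\bdot)\simeq Lj^\ast K_i^\bdot \otimes Rj^! G^\bdot$ that you quote is correct, but it computes $j^!$ of the tensor product, not the graded pieces of a filtration on the tensor product itself. This matters because the hypothesis $G^\bdot \in \D^+(\X)_{<v}$ constrains $Rj^! G^\bdot$, while it says nothing about the $\lambda$-weights of $Lj^\ast G^\bdot$. Consequently the input to Lemma~\ref{lem_left_orthog_S} is not available, and the vanishing of $R\op{Hom}_\X(F^\bdot, K_i^\bdot\otimes G^\bdot)$ does not follow from your filtration argument.

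The paper repairs this in two steps that your sketch bypasses. First, it reduces to $F^\bdot=\cO_\X$ via the identity $Rj^!\,\inner{\op{Hom}}_\X(F^\bdot,G^\bdot)\simeq \inner{\op{Hom}}_\S(Lj^\ast F^\bdot, Rj^! G^\bdot)$, which absorbs $F^\bdot$ into the weight condition on $G^\bdot$; this also makes the colimit step honest, since $\cO_\X$ is compact, whereas your claim that $R\op{Hom}_\X(F^\bdot,-)$ commutes with the filtered colimit for general $F^\bdot\in\D^-(\X)$ is not the routine fact you suggest and would need a separate argument exploiting the uniform lower bound on the $K_i^\bdot\otimes G^\bdot$. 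Second, rather than filtering $K_i^\bdot$, the paper uses the \emph{correct} appearance of $j^!$, namely $j^!(K_i^\bdot\otimes G^\bdot)\simeq j^\ast K_i^\bdot\otimes j^! G^\bdot$, to place $K_i^\bdot\otimes G^\bdot$ in $\D^+_\S(\X)_{<0}$; it then invokes Lemma~\ref{lem_homology_weights} to pass to the bounded truncations $\tau^{\leq m}(K_i^\bdot\otimes G^\bdot)\in \D^b_\S(\X)_{<0}$, where the vanishing of $R\Gamma$ follows from the baric decomposition. Your attempt to work piecewise via $j_\ast(-)$ is the step that cannot be made to see $j^! G^\bdot$; the paper's route through the global weight condition on $K_i^\bdot\otimes G^\bdot$ and the truncation lemma is what makes the hypothesis on $j^! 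G^\bdot$ usable.
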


First we observe that the $t$-structure on $\D^+_\S(\X)$ preserves the subcategory $\D_\S^+(\X)_{<w}$.

\begin{lem} \label{lem_homology_weights}
Let $F^\bdot \in \D_\S^+(\X)$, then the following are equivalent:
\begin{enumerate}
\item $F^\bdot \in \D^+_\S(\X)_{<w}$,
\item $\tau^{\leq m} F^\bdot \in \D^b_\S(\X)_{<w}$ for all $m$, and
\item $\cH^m(F^\bdot) \in \D^b_\S(\X)_{<w}$ for all $m$.
\end{enumerate}
\end{lem}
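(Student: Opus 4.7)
The plan is to handle $(2)\Leftrightarrow(3)$ and $(2)\Rightarrow(1)$ by routine truncation arguments, and to save the nontrivial content for $(1)\Rightarrow(2)$, where the baric decomposition of $\D^b_\S(\X)$ from Proposition~\ref{prop_baric_support_S} does the real work.

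For $(2)\Leftrightarrow(3)$: since $F^\bdot \in \D^+_\S(\X)$ is bounded below there is some $k$ with $\tau^{\leq k-1}F^\bdot = 0$, and induction in $m$ along the triangles $\tau^{\leq m-1}F^\bdot \to \tau^{\leq m}F^\bdot \to \cH^m(F^\bdot)[-m] \parr$ converts back and forth between the two conditions, using only that $\D^b_\S(\X)_{<w}$ is a triangulated subcategory. For $(2)\Rightarrow(1)$, I would apply $Rj^!$ to $\tau^{\leq m}F^\bdot \to F^\bdot \to \tau^{>m}F^\bdot \parr$; since $j$ is a closed immersion, $Rj^!$ is left $t$-exact, so $Rj^!\tau^{>m}F^\bdot \in \op{D}^{\geq m+1}$ and the long exact sequence of cohomology gives $\cH^n(Rj^!F^\bdot) \cong \cH^n(Rj^!\tau^{\leq m}F^\bdot)$ for $n \leq m$. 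The right side has $\lambda$-weights $<w$ by (2), so letting $m\to\infty$ yields $Rj^!F^\bdot \in \D^+(\S)_{<w}$.

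For the main direction $(1)\Rightarrow(2)$, I would first use (1) to show $F^\bdot$ is right-orthogonal to all of $\D^b_\S(\X)_{\geq w}$: by the proof of Proposition~\ref{prop_baric_support_S} this category is triangulatedly generated by $j_\ast\D^b(\S)_{\geq w}$, and for such a generator $j_\ast H^\bdot$ the adjunction $j_\ast\dashv Rj^!$ together with Lemma~\ref{lem_left_orthog_S} gives
$$R\op{Hom}_\X(j_\ast H^\bdot, F^\bdot) \simeq R\op{Hom}_\S(H^\bdot, Rj^!F^\bdot) = 0,$$
and the orthogonality extends to the whole category by d\'evissage. Fixing $m$ and forming the baric triangle of $\tau^{\leq m}F^\bdot \in \D^b_\S(\X)$,
$$\radj{w}\tau^{\leq m}F^\bdot \xrightarrow{\alpha} \tau^{\leq m}F^\bdot \to \ladj{w}\tau^{\leq m}F^\bdot \parr,$$
the composite $\radj{w}\tau^{\leq m}F^\bdot \to F^\bdot$ vanishes by orthogonality, so $\alpha$ factors through $\tau^{>m}F^\bdot[-1]$. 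However $\radj{w}$ is right $t$-exact on $\D^b_\S(\X)$ by Remark~\ref{rem_exact_truncation}, which places $\radj{w}\tau^{\leq m}F^\bdot$ in $\op{D}^{\leq m}$, while $\tau^{>m}F^\bdot[-1]\in \op{D}^{\geq m+2}$; the $t$-structure axiom then forces the factoring map, and hence $\alpha$, to vanish. A triangle with zero first arrow splits, yielding $\ladj{w}\tau^{\leq m}F^\bdot \simeq \tau^{\leq m}F^\bdot \oplus \radj{w}\tau^{\leq m}F^\bdot[1]$; semiorthogonality of the two baric factors then forces $\radj{w}\tau^{\leq m}F^\bdot = 0$, so $\tau^{\leq m}F^\bdot \in \D^b_\S(\X)_{<w}$.

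The main obstacle is this last implication, specifically the synchronization of the standard $t$-structure with the baric decomposition of $\D^b_\S(\X)$; the right $t$-exactness of $\radj{w}$ recorded in Remark~\ref{rem_exact_truncation} is exactly the ingredient that makes the synchronization go through.
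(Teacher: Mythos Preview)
Your proof is correct and uses essentially the same mechanism as the paper: the adjunction $j_\ast \dashv Rj^!$ to produce orthogonality of $F^\bdot$ against $\D^b_\S(\X)_{\geq w}$, combined with the right $t$-exactness of $\radj{w}$ (Remark~\ref{rem_exact_truncation}) to force the baric counit to vanish via a $t$-structure argument. The only packaging difference is that the paper proves $(1)\Rightarrow(3)$ by applying this mechanism to the lowest nonvanishing cohomology sheaf $\cH^p(F^\bdot)$ and then inducting upward, whereas you prove $(1)\Rightarrow(2)$ by applying it directly to the whole truncation $\tau^{\leq m}F^\bdot$; both routes hinge on exactly the same ingredients.
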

\begin{proof}
It is clear that $(3) \Rightarrow (2) \Rightarrow (1)$, so we must show that $(1) \Rightarrow (3)$. If $\cH^p(F^\bdot)$ is the lowest non-vanishing homology sheaf, then we have an exact triangle
$$\cH^p(F^\bdot)[-p] \to F^\bdot \to \tau^{>p} F^\bdot \parr,$$
so by an inductive argument it suffices to show that $\cH^p(F^\bdot) \in \D^b_\S(\X)_{<w}$. By Remark \ref{rem_exact_truncation}, we have that $\radj{w} \cH^p(F^\bdot)[-p] \in \D^b_\S(\X)^{\leq p}$. It follows that
$$\op{Hom}(\radj{w} \cH^p(F^\bdot)[-p],\cH^p(F^\bdot)[-p]) \to \op{Hom}(\radj{w} \cH^p(F^\bdot)[-p], F^\bdot)$$
is an isomorphism.

The object $\radj{w} \cH^p(F^\bdot)[-p]$ lies in the category generated by $j_\ast \D^b(\S)_{\geq w}$, so the adjunction $j_\ast \dashv j^!$ and the hypothesis that $j^! F^\bdot \in \D^+(\S)_{<w}$ implies that $\op{Hom}(\radj{w} \cH^p(F^\bdot)[-p], F^\bdot) = 0$. This in turn implies that the canonical map $\radj{w} \cH^p(F^\bdot)[-p] \to \cH^p(F^\bdot)[-p]$ is the zero map. This cannot happen unless $\radj{w} \cH^p(F^\bdot) = 0$, or in other words $\cH^p(F^\bdot) \in \D^b_\S(\X)_{<w}$.
\end{proof}

\begin{proof}[Proof of Theorem \ref{thm_quantization}]
This is equivalent to the vanishing of $R\op{\Gamma}_\S (R \inner{\op{Hom}}_\X (F^\bdot,G^\bdot))$. By the formula
$$Rj^! \inner{\op{Hom}}_\X (F^\bdot, G^\bdot) \simeq \inner{\op{Hom}}_\S(Lj^\ast F^\bdot, Rj^! G^\bdot)$$
it suffices to prove the case where $F^\bdot = \sh{O}_X$, i.e. showing that $R\op{\Gamma}_\S (G^\bdot) = 0$ whenever $G^\bdot \in \D^+(\X)_{<0}$.

Lemma \ref{lem_koszul_system} provides a system $K_1 \to K_2 \to \cdots $ of perfect complexes in $\D^b_\S(\X)_{<1}$ such that $$R\op{\Gamma}_\S (G^\bdot) = \colim  R\Gamma ( K_i^\bdot \otimes G^\bdot ) \simeq \varinjlim \limits_{i,m} R\Gamma( \tau^{\leq m}(K_i^\bdot \otimes G^\bdot)),$$
so it suffices to prove the vanishing for each term in the colimit. We have $j^! ( K^\bdot_i \otimes G^\bdot ) = j^\ast (K^\bdot_i) \otimes j^! G^\bdot$, so $K_i^\bdot \otimes G^\bdot \in \D^+_\S(\X)_{<0}$. Lemma \ref{lem_homology_weights} implies that $\tau^{\leq m} (K_i \otimes G^\bdot) \in \D^b_\S(\X)_{<0}$ for all $m$. Finally, the  category $\D^b_\S(\X)_{<0}$ is generated by objects of the form $j_\ast F^\bdot$ with $F^\bdot \in \D^b(\S)_{<0}$, and thus $R\Gamma (F^\bdot) = 0$ for all $F^\bdot \in \D^b_\S(\X)_{<0}$.

%
\end{proof}

\subsection{Alternative characterizations of $\D^b(\X)_{<w}$}
\label{sect_describe_categories}

Both the categories $\D^b(\X)_{<w}$ and $\D^b_\S(\X)_{<w}$ involve the condition $j^! F^\bdot \in \D^+(\S)_{<w}$. In practice, it is convenient to have alternative ways of describing this condition which explicitly only depend on $F^\bdot$ in a neighborhood of $Z$.

When $\S$ satisfies Property \hyperref[property_A]{(A)}, $\sigma : \Z \to \S$ has finite tor dimension, so $\sigma^\ast j^!$ maps $\D^b(\X)$ to the full subcategory $\D^+(\Z) \subset \D_{qc}(\Z)$. If we use $j$ to also denote the closed immersion $j : S / L \to X/L$, then the diagram
\begin{equation} \label{eqn:forgetful_diagram}
\xymatrix{D^+(X/G) \ar[r]^{j^!} \ar[d] & D^+(S/G) \ar[d] \ar[dr]^{\sigma^\ast} &  \\ D^+(X/L) \ar[r]^{j^!} & D^+(S/L) \ar[r]^{\sigma^\ast} & D_{qc} (Z/L) }
\end{equation}
canonically commutes. Thus $\sigma^\ast j^!$ canonically factors through the pullback functor $D^b(\X) \to D^b(X/L)$, which is the forgetful functor regarding a $G$-equivariant complex as an $L$-equivariant complex.

We will introduce a slight abuse of notation and use $\sigma^!$ to denote the composition
\begin{equation}
\sigma^! : \D^b(\X) \to \D^b(X/L) \xrightarrow{\inner{\op{Hom}}(\cO_\Z,\bullet)} \D^+(\Z).
\end{equation}
Note that this is not the right adjoint of $\sigma_\ast : D^+(\Z) \to D^+(\X)$, as $\sigma : \Z \to \X$ is not a closed immersion. However, it has many of the same formal properties, such as the formula $\sigma^! (K^\bdot \otimes F^\bdot) \simeq \sigma^\ast(K^\bdot) \otimes \sigma^! (F^\bdot)$ for $K^\bdot \in \op{Perf}(\X)$.

\begin{prop} \label{prop:describe_categories}
Let $F^\bdot \in \D^b(\X)$ and assume that $\S$ satisfies Property \hyperref[property_A]{(A)}. Then the following are equivalent
\begin{enumerate}
\item $\sigma^! F^\bdot \in \D^+(\Z)_{<w+a}$
\item $\sigma^\ast j^! F^\bdot \in \D^+(\Z)_{<w}$
\item $j^! F^\bdot \in \D^+(\S)_{<w}$
\end{enumerate}
Where we define the integer
\begin{equation} \label{eqn:define_a}
a := \op{weight}_\lambda \det(\fN_Z Y) + \op{weight}_\lambda \det(\lie{g}_{\lambda<0}).
\end{equation}
\end{prop}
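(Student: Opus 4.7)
The proof splits into two essentially independent equivalences: $(1)\Leftrightarrow(2)$, which is a comparison of $\sigma^!$ and $\sigma^\ast j^!$ via a regular-closed-immersion formula, and $(2)\Leftrightarrow(3)$, which is the $\D^+$-version of the coherent-sheaf characterization in Corollary~\ref{cor_characterize_baric_S}.

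For $(1)\Leftrightarrow(2)$, I would work entirely in the $L$-equivariant picture. Since $!$-pullback along a closed immersion commutes with forgetting the $G$-equivariant structure (as used implicitly in the commutative diagram~\eqref{eqn:forgetful_diagram}), it suffices to establish a functorial identity of the form
\[
\sigma^! F^\bdot \;\simeq\; \sigma^\ast j^! F^\bdot \,\otimes\, \det(\fN_Z Y) \otimes \det(\lie{g}_{\lambda<0})\,[-r]
\]
for an appropriate integer $r\ge 0$, since the twisting line bundle has $\lambda$-weight exactly $a$ and a cohomological shift does not affect which weights appear in cohomology. The key geometric input is that, combining Properties~\ref{property_S_2} and~\hyperref[property_A]{(A)}, the inclusion $\sigma\colon Z\hookrightarrow S$ factors as $Z\hookrightarrow Y\hookrightarrow S$ in the $L$-equivariant category: Property~\ref{property_S_2} identifies $S=G\times_P Y$ fibered over $G/P$ with $Y$ the closed fiber over $eP$, while Property~\hyperref[property_A]{(A)} realizes $Z\hookrightarrow Y$ as the zero section of an affine bundle. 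Consequently $\sigma$ is a regular closed immersion of codimension $r:=\mathrm{rk}\,\fN_{Z/S}$ with
\[
\fN_{Z/S}\,\simeq\,\fN_{Z/Y}\oplus T_{G/P}|_{eP}\,=\,\fN_{Z/Y}\oplus\lie{g}_{\lambda<0},
\]
and the standard duality formula $\sigma^!_{Z\to S}(\bullet)\simeq\sigma^\ast_{Z\to S}(\bullet)\otimes\det\fN_{Z/S}[-r]$ applies without any smoothness assumption on $X$. Composing with $j^!$ yields the displayed identity, completing $(1)\Leftrightarrow(2)$.

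For $(2)\Leftrightarrow(3)$: since $\pi\colon\S\to\Z^\prime$ is affine, $\pi_\ast$ is exact and condition~(3) is equivalent to each coherent cohomology sheaf $\cH^m(j^!F^\bdot)$ having $\lambda$-weights strictly less than $w$. Property~\hyperref[property_A]{(A)} realizes $\pi\colon Y\to Z$ locally as $\inner{\op{Spec}}_Z\op{Sym}(\cE^\dual)$ with $\cE^\dual$ of strictly negative $\lambda$-weights, so that $L\sigma^\ast(\bullet)$ is computed by the Koszul complex $(\bullet)\otimes_{\cO_Z}\Lambda^\bullet\cE^\dual$. The direction $(3)\Rightarrow(2)$ is immediate from the hyper-Tor spectral sequence $E_2^{p,q}=L^{-q}\sigma^\ast\cH^p(j^!F^\bdot)\Rightarrow\cH^{p+q}(L\sigma^\ast j^!F^\bdot)$: the $q=0$ row inherits the weight bound from (3), and the $q<0$ rows have strictly smaller weights since tensoring against $\Lambda^{\ge 1}\cE^\dual$ shifts weights strictly downward. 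For $(2)\Rightarrow(3)$, the key observation is that if $G^\bdot\in\D^+(\S)$ has its $\lambda$-weights bounded above by $h$ across all cohomology sheaves, then the weight-$h$ component of $L\sigma^\ast G^\bdot$ coincides with $G^\bdot_{=h}$ as a complex of $\cO_Z$-modules, because the $\Lambda^{\ge 1}\cE^\dual$ contributions would demand input weights strictly above $h$, which are absent by hypothesis. Hence a nonzero weight-$h$ class in $\cH^p(G^\bdot)$ produces a nonzero weight-$h$ class in $\cH^p(L\sigma^\ast G^\bdot)$, contradicting (2) whenever $h\ge w$.

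The main technical obstacle is that the cohomology sheaves $\cH^p(j^!F^\bdot)$ need not have $\lambda$-weights uniformly bounded in $p$ (each is coherent and hence has bounded weights, but the suprema may grow with $p$ when $X$ is singular), so a literal maximal weight may fail to exist. I would finesse this by observing that for each fixed $h$, only cohomology sheaves whose weights lie in a bounded interval $[h,\,h+C]$ contribute to the weight-$h$ component of $L\sigma^\ast j^!F^\bdot$, where $C$ is determined by the weights of $\Lambda^\bullet\cE^\dual$. This localization reduces the argument to a finite-weight-range truncation of $j^!F^\bdot$ for which a genuine maximal weight exists, so the highest-weight-survives observation of the previous paragraph applies and delivers the required contradiction.
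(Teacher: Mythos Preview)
Your treatment of $(1)\Leftrightarrow(2)$ is correct and matches the paper's argument: both rely on the formula $\sigma^!(F^\bdot)\simeq\sigma^\ast j^!(F^\bdot)\otimes\det(\fN_ZY)\otimes\det(\lie{g}_{\lambda<0})[-c]$, which follows because Property~\hyperref[property_A]{(A)} together with \ref{property_S_2} makes $Z\hookrightarrow S$ a regular immersion with the indicated normal bundle. Your $(3)\Rightarrow(2)$ via the hyper-Tor spectral sequence is also fine; the paper instead writes $\cH^p(\sigma^\ast G^\bdot)$ as a colimit over cohomological truncations of $G^\bdot$, but the content is the same.

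The gap is in $(2)\Rightarrow(3)$. Your ``highest weight survives'' argument is valid exactly when the $\lambda$-weights of all the $\cH^p(G^\bdot)$ are bounded above by a common $h$: then the weight-$h$ piece of the Koszul model for $L\sigma^\ast G^\bdot$ receives no contribution from $\Lambda^{\geq 1}\cE^\dual$ and coincides with $G^\bdot_{=h}$. But your proposed finesse for the unbounded case does not close. The ``finite-weight-range truncation'' to $[h,h+C]$ is not a complex of $\cO_\S$-modules (the $\cA$-action does not preserve such a band), and even interpreting it spectrally, the maximal weight actually present in that band may be some $h'>h$; the weight-$h'$ piece of $L\sigma^\ast G^\bdot$ then depends on weights in $[h',h'+C]$ of the $\cH^p(G^\bdot)$, which extend beyond your window. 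So you never arrive at a situation where the survives-at-the-top argument applies to $G^\bdot$ itself. Concretely, in the spectral sequence the class you produce at $(E_2^{p_0,0})_h$ has no \emph{incoming} differentials, but the \emph{outgoing} differentials land in $(L^{r-1}\sigma^\ast\cH^{p_0+r}(G^\bdot))_h$, which depends on weights strictly above $h$ in later cohomology sheaves, over which you have no control.

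The paper handles this direction by a different mechanism. It truncates cohomologically rather than by weight: for $G^\bdot=j^!F^\bdot\in\D^+(\S)$ one takes $\tau^{\leq m}G^\bdot\in\D^b(\S)$, uses the compatibility $\sigma^\ast\radj{w}\simeq(\sigma^\ast)_{\geq w}$ from Proposition~\ref{prop_baric_S_flat} together with the finite tor-dimension of $\sigma^\ast$ to force $\sigma^\ast\radj{w}\tau^{\leq m}G^\bdot$ into arbitrarily high cohomological degree as $m\to\infty$, and then invokes the dual Nakayama Lemma~\ref{lem:dual_Nakayama} to conclude the same for $\radj{w}\tau^{\leq m}G^\bdot$ itself. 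This yields $\op{Hom}(E^\bdot,G^\bdot)=0$ for every $E^\bdot\in\D^b(\S)_{\geq w}$, and an easy induction on the lowest cohomology sheaf then gives $G^\bdot\in\D^+(\S)_{<w}$. The essential extra ingredients you are missing are the baric truncation $\radj{w}$ on $\D^b(\S)$ and the dual Nakayama lemma; a purely weight-based localization does not seem to suffice.
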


\noindent Before we prove this proposition, we prove the following
\begin{lem}
If $\S$ satisfies Property \hyperref[property_A]{(A)}, then for $F^\bdot \in \D^b(\X)$,
\begin{equation} \label{eqn_compare_shriek_ast}
\sigma^! (F^\bdot) \simeq \sigma^\ast j^! (F^\bdot) \otimes \det(\fN_Z Y) \otimes \det(\lie{g}_{\lambda<0})[-c],
\end{equation}
where $c$ is the codimension of $Z \hookrightarrow S$.
\end{lem}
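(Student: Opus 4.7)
The plan is to factor the $L$-equivariant closed immersion $Z \hookrightarrow X$ through $Y$ and $S$, yielding three closed immersions $\alpha : Z \hookrightarrow Y$, $\beta : Y \hookrightarrow S$, and $\gamma : S \hookrightarrow X$, and to compute $\sigma^!$ by functoriality of right adjoints along this factorization. The first two maps will turn out to be regular closed embeddings with explicit normal bundles, so the standard formula $i^!(G^\bdot) \simeq i^\ast(G^\bdot) \otimes \det(\fN)[-r]$ for a regular closed immersion $i$ of codimension $r$ and normal bundle $\fN$ applies to each.

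First I identify the normal bundles. Property \hyperref[property_A]{(A)} says $\pi : Y \to Z$ is a bundle of affine spaces of rank $c_\alpha := \dim Y - \dim Z$, so $\alpha$ is a regular closed immersion with normal bundle $\fN_Z Y$. Property \ref{property_S_2} gives the identification $S \simeq G \times_P Y$, under which $\beta$ becomes the inclusion of the fiber over the identity coset of the natural projection $S \to G/P$. Hence $\beta$ is a regular closed immersion of codimension $c_\beta := \dim(G/P) = \dim \lie{g}_{\lambda<0}$ whose normal bundle $\fN_Y S \simeq (\lie{g}/\lie{p}) \otimes \cO_Y \simeq \lie{g}_{\lambda<0} \otimes \cO_Y$ is $L$-equivariantly trivial. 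Note that $c = c_\alpha + c_\beta$.

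By definition $\sigma^!$ is the composition of the forgetful functor $\D^b(\X) \to \D^b(X/L)$ with the right adjoint to pushforward along $Z/L \hookrightarrow X/L$. Functoriality of right adjoints along $Z/L \hookrightarrow Y/L \hookrightarrow S/L \hookrightarrow X/L$ then gives $\sigma^!(F^\bdot) \simeq \alpha^! \beta^! \gamma^!(F^\bdot|_{X/L})$, where $\gamma^!$ denotes the right adjoint to pushforward along the $L$-equivariant closed immersion $S/L \hookrightarrow X/L$. Applying the regular-embedding formula first to $\beta^!$ and then to $\alpha^!$ yields
\begin{align*}
\sigma^!(F^\bdot) &\simeq \alpha^!\bigl(\beta^\ast \gamma^!(F^\bdot|_{X/L}) \otimes \det(\lie{g}_{\lambda<0})[-c_\beta]\bigr) \\
&\simeq \alpha^\ast \beta^\ast \gamma^!(F^\bdot|_{X/L}) \otimes \det(\fN_Z Y) \otimes \det(\lie{g}_{\lambda<0})[-c].
\end{align*}
Finally, the commuting diagram \eqref{eqn:forgetful_diagram} identifies $\alpha^\ast \beta^\ast \gamma^!(F^\bdot|_{X/L})$ with $\sigma^\ast j^! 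F^\bdot$, yielding the claimed formula. The essential technical input is the trivialization $\fN_Y S \simeq \lie{g}_{\lambda<0}$ provided by Property \ref{property_S_2}; this is what accounts for the appearance of $\det(\lie{g}_{\lambda<0})$ in the final expression.
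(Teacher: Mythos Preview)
Your proof is correct and follows essentially the same approach as the paper: pass to $X/L$ via the forgetful functor, use Property \hyperref[property_A]{(A)} to make $Z \hookrightarrow Y$ regular with normal bundle $\fN_Z Y$, use Property \ref{property_S_2} to make $Y \hookrightarrow S$ regular with normal bundle $\cO_Y \otimes \lie{g}_{\lambda<0}$, apply the standard formula for $(\bullet)^!$ along a regular embedding to each, and then invoke the commutative diagram \eqref{eqn:forgetful_diagram} to identify the remaining factor with $\sigma^\ast j^!$. The paper phrases this slightly more tersely---establishing the formula first as an isomorphism of functors $\D^+(S/L) \to \D^+(Z/L)$ and then precomposing with $j^!$---but the content is the same.
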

\begin{proof}
From the commutative diagram \eqref{eqn:forgetful_diagram}, it suffices to pass to $X/L$ via the forgetful functor, and thus we may regard $\sigma^!$ as the usual shriek-pullback for the closed immersion $Z/L \to X/L$.

Property \hyperref[property_A]{(A)} implies that $Z \hookrightarrow Y$ is a regular embedding. Furthermore Property \ref{property_S_2} guarantees that $Y \simeq P \times_P Y \hookrightarrow S \simeq G \times_P Y$ is a regular embedding with normal bundle $\cO_Y \otimes \lie{g}_{\lambda<0}$, where the $P$-equivariant structure is given by the adjoint action of $P$ on $\lie{g}_{\lambda<0} = \lie{g} / \lie{g}_{\geq 0}$. It follows that \eqref{eqn_compare_shriek_ast} holds, with $\sigma^\ast$ instead of $\sigma^\ast j^!$, as an isomorphism of functors $\D^+(S/L) \to \D^+(Z/L)$. The claim follows by pre-composing with $j^! : \D^b(X/L) \to \D^+(S/L)$. 
\end{proof}

We also observe a dual form of Nakayama's lemma:
\begin{lem} \label{lem:dual_Nakayama}
If $F^\bdot \in \D^+(\S)$ and $\sigma^! F^\bdot \simeq 0$, then $F^\bdot \simeq 0$.
\end{lem}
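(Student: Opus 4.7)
The plan is to reduce the statement to a sheaf-level Nakayama argument by combining the formula from the preceding lemma with a careful look at the lowest cohomology of $F^\bdot$. Property \hyperref[property_A]{(A)} ensures that $\sigma$ factors (after the forgetful passage to $L$-equivariant complexes) as a regular closed embedding $Z \hookrightarrow Y$ of codimension $c = \op{rank}(\fN_Z Y)$, and the preceding lemma identifies $\sigma^! F^\bdot \simeq L\sigma^\ast F^\bdot \otimes \det(\fN_Z Y)[-c]$. Hence $\sigma^! F^\bdot \simeq 0$ is equivalent to $L\sigma^\ast F^\bdot \simeq 0$, and it suffices to prove the dual Nakayama statement for $L\sigma^\ast$ on $\D^+(\S)$.

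I will argue by contradiction. Assume $F^\bdot \neq 0$ and let $p_0$ be the smallest integer with $\cH^{p_0}(F^\bdot) \neq 0$. The truncation triangle $\cH^{p_0}(F^\bdot)[-p_0] \to F^\bdot \to \tau^{>p_0}F^\bdot \parr$, combined with $L\sigma^\ast F^\bdot \simeq 0$, produces a canonical equivalence
\[
L\sigma^\ast\bigl(\cH^{p_0}(F^\bdot)\bigr)[-p_0] \;\simeq\; L\sigma^\ast(\tau^{>p_0}F^\bdot)[-1].
\]
Since $\sigma$ is a regular embedding of codimension $c$, the functor $L\sigma^\ast$ has cohomological amplitude $[-c,0]$, so the LHS is supported in cohomological degrees $[p_0-c,p_0]$ while the RHS is supported in degrees $\geq p_0 + 2 - c$. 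At the extreme corner of the hyper-Tor spectral sequence there are no incoming or outgoing differentials, so $\cH^{p_0}$ of the LHS equals $\sigma^\ast \cH^{p_0}(F^\bdot)$, which is nonzero by the support version of Nakayama invoked in Lemma \ref{lem:Nakayama} — every closed substack of $\S$ meets $\Z$ because the projection $\pi$ flows every point to $Z$. When $c=1$, the two cohomological ranges are disjoint at degree $p_0$, immediately forcing $\sigma^\ast\cH^{p_0}(F^\bdot) = 0$ and producing the desired contradiction.

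For general $c$, the one-step argument fails because the two ranges overlap and differentials in the hyper-Tor spectral sequence can in principle cancel the extreme-corner term. The plan here is to factor $\sigma$ locally on $\Z$ as a composition of $c$ codimension-one regular embeddings. Property \hyperref[property_A]{(A)} identifies $\cO_Y \simeq \op{Sym}_{\cO_Z}(\cE)$ for a locally free $\cE$ of negative $\lambda$-weight, so on any Zariski-open $U \subset Z$ trivializing $\cE$ a choice of basis gives a regular sequence realizing $\sigma|_U$ as $\sigma_1 \circ \cdots \circ \sigma_c$, each $\sigma_i$ a codimension-one regular embedding. The identity $\sigma^! \simeq \sigma_1^! \cdots \sigma_c^!$ lets us apply the $c=1$ case to $\sigma_c$ first (whose hypothesis still lies in the appropriate $\D^+$), and iterating gives $F^\bdot|_{\pi^{-1}(U)} \simeq 0$; since vanishing is local on $\Z$, the global conclusion follows.

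The main obstacle is running this iteration cleanly in the $c\geq 2$ case — keeping track that the intermediate object after applying $\sigma_i^!$ still lies in $\D^+$ of the relevant intermediate stack, so that the next instance of the codimension-one argument is applicable, and handling the possibility that the cohomology of $F^\bdot$ is unbounded above (so that in particular the dual argument cannot be replaced by a naive reduction to $\D^b(\S)$ and the bounded Nakayama of Lemma \ref{lem:Nakayama}).
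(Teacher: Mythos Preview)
Your approach works but differs from the paper's, which is considerably shorter. The paper applies Grothendieck duality: working $L$-equivariantly on $S$ with dualizing complexes $\omega_S$ and $\omega_Z = \sigma^!\omega_S$, the identity $\sigma^! F^\bdot \simeq \gd_Z(L\sigma^\ast \gd_S F^\bdot)$ converts the hypothesis into $L\sigma^\ast(\gd_S F^\bdot) \simeq 0$ with $\gd_S F^\bdot \in \D^-(S/L)$, whereupon the $\D^-$ Nakayama of Lemma~\ref{lem:Nakayama} (plus $G$-equivariance of the cohomology sheaves to pass from a neighborhood of $Z$ to all of $S$) finishes immediately. This is precisely the ``dual argument'' you dismiss in your last sentence: Grothendieck duality genuinely reduces the $\D^+$ statement to the $\D^-$ one already proved.

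Your direct codimension-one slicing avoids dualizing complexes entirely, which is a real gain in elementariness, but there is a point that must be made explicit for the iteration to go through. The sheaf-level Nakayama \emph{fails} for the intermediate embeddings $W_i \hookrightarrow W_{i-1}$ without equivariance: an arbitrary closed subset of an $\bA^{c-1}$-bundle need not meet a given hyperplane subbundle. You must choose the local factorization using a $\lambda$-homogeneous frame of $\cE$ so that each $W_i$ is $\lambda(\Gm)$-invariant; then every intermediate object is $\lambda$-equivariant, the support of each cohomology sheaf is $\lambda$-invariant and hence contracts into $Z \subset W_i$, and the codimension-one step applies at each stage. (Also, the composition of shrieks reads $\sigma^! \simeq \sigma_c^! \cdots \sigma_1^!$, not $\sigma_1^! \cdots \sigma_c^!$.) With that clarification the argument is complete, though more laborious than the paper's duality trick.
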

\begin{proof}
For the moment consider $F^\bdot$ to be an $L$-equivariant complex on $S$ via the forgetful functor. Let $\omega_S$ be an  $L$-equivariant dualizing complex on $S$, and $\omega_Z = \sigma^! \omega_S$ the corresponding dualizing complex on $Z$. We have that $F^\bdot \in \D^+(S/L)$ is zero if and only if $\gd (F^\bdot) \in \D^-(S/L)$ is zero. Furthermore $\sigma^! F^\bdot \simeq \gd (\sigma^\ast \gd (F^\bdot))$, so if $\sigma^! F^\bdot = 0$, then $\sigma^\ast \gd(F^\bdot) = 0$, and by Nakayama's Lemma, \ref{lem:Nakayama}, each homology sheaf of $F^\bdot \in \D^b(S/L)$ vanishes in a neighborhood of $Z$. Because each $\cH^i(F^\bdot)$ is actually a $G$-quivariant sheaf, and $S$ is the only $G$-equivariant open subset containing $Z$, $\cH^i(F^\bdot) = 0$ for all $i$.
\end{proof}

\begin{proof}[Proof of Proposition \ref{prop:describe_categories}]
The equivalence between (1) and (2) follows immediately from the formula \eqref{eqn_compare_shriek_ast} and the fact that the expression \eqref{eqn:define_a} is the $\lambda$-weight of the invertible sheaf $\det(\fN_Z Y) \otimes \det(\lie{g}_{\lambda<0})$. To show that (2) is equivalent to (3) it suffices to show that $G^\bdot \in \D^+(\S)$ lies in $\D^+(\S)_{<w}$ if and only if $\sigma^\ast G^\bdot \in \D_{qc}(\Z)_{<w}$.

\medskip
\noindent $(3) \Rightarrow (2):$
\medskip

If $G^\bdot \in \D^+(\S)_{<w}$, then
$$\cH^p(\sigma^\ast G^\bdot) \simeq \cH^p(\sigma^\ast \tau^{\geq p} G^\bdot) \simeq \varinjlim_m \cH^p(\sigma^\ast \tau^{\leq m} \tau^{\geq p} G^\bdot).$$
This lies in $\qcoh{\Z}_{<w}$ because each $\tau^{\leq m} \tau^{\geq p} G^\bdot \in \D^b(\S)_{<w}$ and $\D_{qc}(\Z)_{<w}$ is closed under colimits. As remarked above, Property \hyperref[property_A]{(A)} implies that $\sigma^\ast G^\bdot \in \D^+(\Z)$ as well.

\medskip
\noindent $(2) \Rightarrow (3):$
\medskip

Let $G^\bdot \in \D^+(\S)$ and assume that $\sigma^\ast G^\bdot \in \D_{qc}(\Z)_{<w}$. Property \hyperref[property_A]{(A)} implies that $\sigma^\ast$ has finite tor dimension, so $\sigma^\ast \tau^{\leq m} G^\bdot$ agrees with $\sigma^\ast G^\bdot$ in low cohomological degree relative to $m$. It follows that for any $l > 0$, we can choose $m \gg l$ such that
$$\sigma^\ast \radj{w} \tau^{\leq m} G^\bdot \simeq \radj{w} \sigma^\ast \tau^{\leq m} G^\bdot \in \D^b(\S)^{\geq l}$$
Consequently $\sigma^! \radj{w} \tau^{\leq m} G^\bdot \in \D^b(\Z)^{\geq l-c}$, where $c$ is the codimension of $Z \hookrightarrow Y$. Lemma \ref{lem:dual_Nakayama} implies that $\radj{w} \tau^{\leq m} G^\bdot \in \D^b(\S)^{\geq l-c}$.

Because we could have chosen $l$ and $m$ arbitrarily large, we have that for any $E^\bdot \in \D^b(\S)_{\geq w}$,
$$\op{Hom}(E^\bdot,G^\bdot) \simeq \op{Hom}(E^\bdot,\tau^{\leq m} G^\bdot) \simeq \op{Hom} (E^\bdot, \radj{w} \tau^{\leq m} G^\bdot) \simeq 0.$$
Finally, if $G^\bdot \in \D^+(\S)$ and $\op{Hom}(E^\bdot,G^\bdot) = 0$ for all $E^\bdot \in \D^b(\S)_{\geq w}$, then $G^\bdot \in \D^+(\S)_{<w}$. To show this one proceeds inductively by showing that the lowest homology sheaf of $G^\bdot$ must lie in $\coh{\S}_{<w}$, or else it would receive a non-zero map from $\cA \otimes (\cH^{min}(G^\bdot))_h$.

\end{proof}

\begin{rem}
The functors $\sigma^\ast,\sigma^! : \D^b(X/L) \to \D^\pm(Z/L)$ commute with the forgetful functors $\D^b(X/L) \to \D^b(X/\lambda(\Gm))$ and $\D^\pm (Z/L) \to \D^\pm(Z/\lambda(\Gm))$. Therefore assuming \hyperref[property_A]{(A)}, an object $F^\bdot \in \D^b(\X)$ lies in $\D^b(\X)_{<w}$ (respectively $\D^b(\X)_{\geq w}$) if and only if when we forget all but the equivariance with respect to $\Gm$, we have $\sigma^\ast F^\bdot \in \D^-(Z/\lambda(\Gm))_{\geq w}$ (respectively $\sigma^!F^\bdot \in \D^+(Z/\lambda(\Gm))_{<w+a}$).

One can even refine this to a point-wise criterion. Each point of $Z$ defines a morphism of stacks $p : \pt / \Gm \to \X$. Using Nakayama's Lemma, \eqref{lem:Nakayama}, one can show that $F^\bdot \in \D^b(\X)_{\geq w}$ if and only if $p^\ast F^\bdot \in \D^-(\pt / \Gm)_{\geq w}$ for all points in $Z$. Likewise we can define
$$p^!(F^\bdot) := \op{Hom}^\bdot_X (p_\ast k, F^\bdot |_{X/\lambda(\Gm)}) \in \D^+(\pt / \Gm)$$
for each point of $Z$, and using the dual form of Nakayama's lemma, one can show that $F^\bdot \in \D^b(\X)$ lies in $\D^b(\X)_{<w}$ if and only if $p^!(F^\bdot) \in \D^+(\pt / \Gm)_{<w+a}$ for all points in $Z$. We omit the proofs of these facts, as we will not explicitly use them here.
\end{rem}

\subsection{Semiorthogonal decomposition of $\D^b(\X)$}
\label{sect_main_semi_decomp}

In this section we construct the semiorthogonal decomposition of $\D^b(\X)$ used to prove the categorical Kirwan surjectivity theorem. We will prove:

\begin{thm} \label{thm_main_semi_decomp}
Let $\S \subset \X$ be a closed KN stratum (Definition \ref{def_KN_stratification}) satisfying Properties \hyperref[property_L_plus]{(L+)} and \hyperref[property_A]{(A)}. Let $\G_w = \D^b(\X)_{\geq w} \cap \D^b(\X)_{<w}$, then
$$\G_w = \left\{ F^\bdot \in \D^b(\X) \left| \begin{array}{c}  \lambda \text{-weights of } \cH^\ast(\sigma^\ast F^\bdot) \text{ are } \geq w,\text{ and} \\ \lambda \text{-weights of } \cH^\ast(\sigma^! F^\bdot) \text{ are } <w+a. \end{array} \right. \right\}$$
where $a$ is defined as in \eqref{eqn:define_a}. There are semiorthogonal decompositions
$$\D^b(\X) = \langle \D^b_\S(\X)_{<w}, \G_w , \D^b_\S(\X)_{\geq w} \rangle$$
And the restriction functor $i^\ast : \D^b(\X) \to \D^b(\V)$ induces an equivalence $\G_w \simeq \D^b(\V)$, where $\V = \X - \S$.
\end{thm}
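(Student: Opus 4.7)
The plan is to decompose Theorem \ref{thm_main_semi_decomp} into three assertions---the weight-theoretic description of $\G_w$, the three-term semiorthogonal decomposition, and the restriction equivalence $\G_w \simeq \D^b(\V)$---and handle them in turn, using as black boxes the baric decompositions of Propositions \ref{prop_baric_S_flat} and \ref{prop_baric_support_S}, the Koszul systems of Lemmas \ref{lem_koszul_system} and \ref{lem:alternate_Koszul_property}, the Quantization Theorem \ref{thm_quantization}, and the reformulation of the $j^!$ weight condition in Proposition \ref{prop:describe_categories}. The description of $\G_w$ is bookkeeping: $F^\bdot \in \D^b(\X)_{\geq w}$ means $Lj^\ast F^\bdot \in \D^-(\S)_{\geq w}$, equivalent by Corollary \ref{cor_characterize_baric_S} to $\sigma^\ast F^\bdot \in \D^-(\Z)_{\geq w}$; and $F^\bdot \in \D^b(\X)_{<w}$ unpacks as $Rj^! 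F^\bdot \in \D^+(\S)_{<w}$, equivalent under Property \hyperref[property_A]{(A)} to $\sigma^! F^\bdot \in \D^+(\Z)_{<w+a}$ via Proposition \ref{prop:describe_categories}. Intersecting gives the stated form of $\G_w$.

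For the semiorthogonal decomposition I first check three $R\op{Hom}$-vanishings. The vanishing $R\op{Hom}(\D^b_\S(\X)_{\geq w}, \D^b_\S(\X)_{<w}) = 0$ is Proposition \ref{prop_baric_support_S}, while $R\op{Hom}(\D^b_\S(\X)_{\geq w}, \G_w) = 0$ and $R\op{Hom}(\G_w, \D^b_\S(\X)_{<w}) = 0$ follow at once from the Quantization Theorem: in both cases one factor is in $\D^b(\X)_{\geq w}$, the other in $\D^b(\X)_{<w}$, and one of them is supported on $\S$ and so restricts to zero on $\V$. For generation I construct a right adjoint $r: \D^b(\X) \to \D^b_\S(\X)_{\geq w}$ to the inclusion: given $F^\bdot$ and a Koszul system $\{K_i^\bdot\}$, each $K_i^\bdot \otimes F^\bdot$ lies in $\D^b_\S(\X)$, and by Lemma \ref{lem:alternate_Koszul_property} plus multiplicativity of the baric decomposition, $\op{Cone}(K_i^\bdot \otimes F^\bdot \to K_{i+1}^\bdot \otimes F^\bdot) \in \D^b_\S(\X)_{<w}$ for $i \gg 0$, so the tower $\{\radj{w}(K_i^\bdot \otimes F^\bdot)\}$ stabilizes in $\D^b_\S(\X)_{\geq w}$; I define $rF^\bdot$ to be this stable value. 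The adjunction $R\op{Hom}(G^\bdot, F^\bdot) \simeq R\op{Hom}(G^\bdot, R\inner{\op{\Gamma}}_\S F^\bdot) \simeq R\op{Hom}(G^\bdot, K_i^\bdot \otimes F^\bdot) \simeq R\op{Hom}(G^\bdot, rF^\bdot)$ (for $G^\bdot \in \D^b_\S(\X)_{\geq w}$ and $i \gg 0$) is verified by baric semiorthogonality at the final step. The counit triangle $rF^\bdot \to F^\bdot \to F_1^\bdot \parr$ places $F_1^\bdot$ in ${}^\perp\D^b_\S(\X)_{\geq w}$, and a dual construction of a left adjoint $l: \D^b(\X) \to \D^b_\S(\X)_{<w}$ splits $F_1^\bdot$ further and completes the three-step filtration.

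The equivalence $\G_w \simeq \D^b(\V)$ is proven by full faithfulness and essential surjectivity of $i^\ast$. Full faithfulness is immediate from the Quantization Theorem applied to two objects in $\G_w$, one in $\D^b(\X)_{\geq w}$ and one in $\D^b(\X)_{<w}$, yielding $R\op{Hom}_\X \simeq R\op{Hom}_\V$ after restriction. Essential surjectivity uses the iterative lifting procedure outlined in Section \ref{sect_main_result}: given $H^\bdot \in \D^b(\V)$, take any bounded coherent extension $F^\bdot \in \D^b(\X)$, and let $a \leq b$ be integers with $\sigma^\ast F^\bdot$ having weights $\geq a$ and $\sigma^! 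F^\bdot$ having weights $<b$. While $a<w$, replace $F^\bdot$ by the fiber of the canonical morphism $F^\bdot \to j_\ast \pi^\ast E^\bdot$, where $E^\bdot$ is the lowest weight subcomplex of $\sigma^\ast F^\bdot$; this raises the lower weight bound by one without disturbing $F^\bdot|_\V$. Whenever $b>w$, dually form the cone of $j_\ast \pi^\ast E^\bdot \to F^\bdot$ using the highest weight piece of $\sigma^! F^\bdot$, lowering the upper bound by one. After finitely many iterations the resulting object lies in $\G_w$ and still restricts to $H^\bdot$.

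The hard part will be constructing the left adjoint $l$: the naive dual $\ladj{w}(K_i^\bdot \otimes F^\bdot)$ does not stabilize, because the Koszul cones contribute $\D^b_\S(\X)_{<w}$-summands that accumulate under $\ladj{w}$ rather than being absorbed, so the analogous colimit escapes $\D^b$. Overcoming this obstruction requires either a dualized Koszul construction---for example via the co-objects $\op{Cone}(K_i^\bdot \to \cO_\X)$ whose cone behavior is controlled by Lemma \ref{lem:alternate_Koszul_property}---or sidestepping the adjoint by extracting the $\D^b_\S(\X)_{<w}$-piece of $F_1^\bdot$ from essential surjectivity of $i^\ast|_{\G_w}$ applied to $F_1^\bdot|_\V$. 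Verifying boundedness, functoriality, and the adjunction identity for this second truncation is the principal technical content beyond the right-adjoint side.
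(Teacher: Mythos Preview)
Your overall architecture matches the paper's: the description of $\G_w$ via Proposition \ref{prop:describe_categories}, the right adjoint $r$ built from the stabilizing system $\radj{w}(K_i^\bdot \otimes F^\bdot)$, and full faithfulness from the Quantization Theorem are exactly what the paper does (your $r$ is Lemma \ref{lem_radj}).

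Where you diverge is on the second truncation and on essential surjectivity, and in both places the paper's route is simpler than yours.

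\textbf{The second adjoint.} You look for a \emph{left} adjoint to $\D^b_\S(\X)_{<w} \hookrightarrow \D^b(\X)$ and correctly observe that the naive dual construction does not stabilize. Your proposed fix (a), using the cones $C_i^\bdot = \op{Cone}(K_i^\bdot \to \cO_\X)$, does not work directly: the $C_i^\bdot$ are not supported on $\S$ (indeed $C_i^\bdot|_\V \simeq \cO_\V$), so $C_i^\bdot \otimes F^\bdot \notin \D^b_\S(\X)$ and the baric truncation $\ladj{w}$ is unavailable. The paper instead constructs a \emph{right} adjoint to $\D^b(\X)_{\geq w} \hookrightarrow \D^b(\X)$ (Lemma \ref{lem_radj2}) by passing to the \emph{derived duals} $(K_i^\bdot)^\dual$. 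These are still perfect and still set-theoretically supported on $\S$, but their weight behavior is reversed: $\op{Cone}(K_i^\bdot \to K_{i+1}^\bdot)^\dual \otimes F^\bdot \in \D^b_\S(\X)_{\geq w}$ for $i \gg 0$ by Lemma \ref{lem_bounded_weights}, so the system $\ladj{w}((K_i^\bdot)^\dual \otimes F^\bdot)$ stabilizes in $\D^b_\S(\X)_{<w}$, and the fiber of $F^\bdot \to \ladj{w}((K_i^\bdot)^\dual \otimes F^\bdot)$ lands in $\D^b(\X)_{\geq w}$. This one move replaces your entire ``hard part''.

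\textbf{Essential surjectivity.} Your iterative lifting argument can be made to work, but it is unnecessary and, as written in the paper (Section 2.3), is presented under the extra hypothesis that $\sigma^\ast \bL^\bdot_{\S/\X}$ has strictly positive weights. Once the three-term semiorthogonal decomposition is in hand, essential surjectivity is one line: extend $H^\bdot \in \D^b(\V)$ arbitrarily to $F^\bdot \in \D^b(\X)$ and take the $\G_w$-component of $F^\bdot$; the outer components are supported on $\S$, so this component still restricts to $H^\bdot$. Your fix (b), bootstrapping the left adjoint from essential surjectivity, is therefore logically sound but reverses the natural order of the argument and costs you the clean inductive structure.
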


The first key observation is that if $\S \subset \X$ is a KN stratum satisfying Property \hyperref[property_A]{(A)}, then for any $F^\bdot \in \D^b(\X)$, the weights of $\sigma^\ast F^\bdot$ are bounded below, and the weights of $\sigma^! F^\bdot$ are bounded above.

\begin{lem} \label{lem_bounded_weights}
Let $\S \subset \X$ be a KN stratum satisfying Property \hyperref[property_A]{(A)}. Then
$$\D^b(\X) = \bigcup_{v<w} \left( \D^b(\X)_{\geq v} \cap \D^b(\X)_{<w} \right).$$
\end{lem}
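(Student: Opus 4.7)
By the definition of $\D^-(\S)_{\geq v}$ given just before Proposition \ref{prop_baric_S_flat} together with Proposition \ref{prop:describe_categories}, the containment $F^\bdot \in \D^b(\X)_{\geq v} \cap \D^b(\X)_{<w}$ is equivalent to the $\lambda$-weights of $\cH^\ast(L(j\sigma)^\ast F^\bdot)$ lying in $[v,\infty)$ and those of $\cH^\ast(\sigma^! F^\bdot)$ lying in $(-\infty,w+a)$. Each such cohomology sheaf is a coherent sheaf on the $\lambda$-fixed scheme $Z$, and therefore decomposes as a finite direct sum of its $\lambda$-weight eigensheaves. The lemma thus reduces to showing that for any $F^\bdot \in \D^b(\X)$, the union of these nonzero $\lambda$-weights over all cohomological degrees is bounded below for $L(j\sigma)^\ast F^\bdot$ and bounded above for $\sigma^! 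F^\bdot$.

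First I treat the case where $X$ is smooth in a neighborhood of $Z$. Lemma \ref{lem:smooth_strata} then implies that $j\sigma : Z \hookrightarrow X$ is a regular closed immersion, so that both $L(j\sigma)^\ast F^\bdot$ and $\sigma^! F^\bdot \simeq L(j\sigma)^\ast F^\bdot \otimes \det(\fN_{Z/X})[-c]$ lie in $\D^b(\Z)$. They have only finitely many nonvanishing cohomology sheaves, each carrying only finitely many $\lambda$-weights, so the weight sets in question are automatically finite.

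For general $X$, I invoke the technical hypothesis ($\dagger$) of Definition \ref{def_KN_stratification}: there is a $G$-equivariant closed immersion $\phi : X \inj X'$ together with a closed KN stratum $S' \subset X'$ such that $X'$ is smooth in a neighborhood of $Z'$ and $S$ is a union of connected components of $S' \cap X$. Replacing $X$ and $X'$ by $G$-invariant open neighborhoods of $Z$ and $Z'$ respectively, I may arrange that $Z = Z' \cap X$ scheme-theoretically. Applying the smooth case to $\phi_\ast F^\bdot \in \D^b(X'/G)$ with its KN stratum $S'$ produces a lower bound on the $\lambda$-weights of $L\sigma'^\ast \phi_\ast F^\bdot$ and an upper bound on those of $\sigma'^! \phi_\ast F^\bdot$. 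I then transfer these bounds by working locally on affine opens $\op{Spec}(R') \supset \op{Spec}(R)$ with $R = R'/J$ and comparing the Tors computed over $R$ and $R'$ via the change-of-rings spectral sequence
\[
E^2_{p,q} = \op{Tor}^R_p\bigl(F^\bdot,\ \op{Tor}^{R'}_q(R,\cO_Z)\bigr) \Rightarrow \op{Tor}^{R'}_{p+q}(F^\bdot, \cO_Z),
\]
together with the dual Ext spectral sequence for $\sigma^!$. Because $X'$ is smooth near $Z'$, the correction terms $\op{Tor}^{R'}_q(R,\cO_Z)$ are computed by a finite Koszul resolution on $X'$ and thus carry only bounded $\lambda$-weight support, and the abutment has bounded weights.

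The main obstacle will be this last comparison: in the singular case $L(j\sigma)^\ast F^\bdot$ may carry infinitely many nonvanishing cohomology sheaves, so one must carefully combine the bounded $\lambda$-weights of the abutment with the finite $q$-range and bounded weight support of the correction terms to force the bottom-row weights $\op{Tor}^R_\ast(F^\bdot,\cO_Z) = \cH^\ast(L(j\sigma)^\ast F^\bdot)$ into a single bounded-below set, and analogously for $\sigma^! F^\bdot$ from above.
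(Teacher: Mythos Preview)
Your smooth case is fine and is indeed the heart of the matter. The singular reduction, however, has a genuine gap.

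The spectral sequence
\[
E^2_{p,q} = \op{Tor}^R_p\bigl(F^\bdot,\ \op{Tor}^{R'}_q(R,\cO_Z)\bigr) \ \Rightarrow\ \op{Tor}^{R'}_{p+q}(F^\bdot,\cO_Z)
\]
does have a bounded abutment (since $\phi_\ast F^\bdot$ is perfect on the smooth $X'$) and a finite $q$-range. But this does \emph{not} force the bottom row $E^2_{p,0}=\op{Tor}^R_p(F^\bdot,\cO_Z)$ to have bounded-below $\lambda$-weights. The differentials $d_r$ leave the bottom row (so $E^\infty_{p,0}\subset E^2_{p,0}$), and there is nothing preventing very negative weight pieces of $E^2_{p,0}$ from being killed by maps into $E^r_{p-r,r-1}$. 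To control those targets you would need weight bounds on $\op{Tor}^R_p(F^\bdot,G_q)$ for $q>0$, and since each $G_q=\op{Tor}^{R'}_q(R,\cO_Z)$ is a coherent $\cO_Z$-module (not locally free), the identity $F^\bdot\otimes^L_{\cO_X} G_q \simeq (F^\bdot\otimes^L_{\cO_X}\cO_Z)\otimes^L_{\cO_Z} G_q$ only relates those weights back to the very quantity you are trying to bound. The argument is circular, and I do not see a way to break the circle with the data you have. The ``careful combination'' you allude to in the last paragraph would need a new idea.

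The paper's proof takes a completely different route that sidesteps this difficulty. It chooses a Koszul system $K_i^\bdot \in \op{Perf}(\X)$ for $\S$ (Lemma \ref{lem_koszul_system}) and observes, via Lemma \ref{lem:alternate_Koszul_property} and the weight splitting on $\D^b(\Z)$, that for $i\gg 0$ the structure sheaf $\cO_\Z$ is a direct summand of $\sigma^\ast K_i^\bdot$. Since $K_i^\bdot\otimes F^\bdot$ is set-theoretically supported on $\S$, the boundedness of the baric decomposition of $\D^b_\S(\X)$ gives weight bounds on $\sigma^\ast(K_i^\bdot\otimes F^\bdot)\simeq \sigma^\ast K_i^\bdot\otimes \sigma^\ast F^\bdot$ and on $\sigma^!(K_i^\bdot\otimes F^\bdot)\simeq \sigma^\ast K_i^\bdot\otimes \sigma^! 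F^\bdot$. The direct-summand trick then peels off the bounds for $\sigma^\ast F^\bdot$ and $\sigma^! F^\bdot$ themselves. This avoids any comparison with $X'$ beyond what is already packaged into the existence of the Koszul system.
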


\begin{proof}
Let $K_i^\bdot$ be a Koszul system for $\S$. Lemma \ref{lem:alternate_Koszul_property} implies that for $i \gg 0$ we have that $\op{Cone}(\sigma^\ast K^\bdot_i \to \cO_\Z)| \in \D^b(\Z)_{<0}$. Because objects of $\D^b(\Z)$ decompose canonically into a direct sum of weight spaces under the $\lambda$-action, it follows that $\cO_\Z$ is actually a direct summand of $\sigma^\ast K^\bdot_i$.

Now let $F^\bdot \in \D^b(\X)$. Because $K^\bdot_i \otimes F^\bdot$ is supported on $\S$, it lies in $\D^b_\S(\X)_{\geq a} \cap \D^b_\S(\X)_{<b}$ for some $a$ and $b$ because the baric decomposition of $\D^b_\S(\X)$ is bounded. This is equivalent to the weights of $\sigma^\ast (K^\bdot_i \otimes F^\bdot) \simeq \sigma^\ast K^\bdot_i \otimes \sigma^\ast F^\bdot$ being bounded below and, by Proposition \ref{prop:describe_categories}, the weights of $\sigma^!(K_i^\bdot \otimes F^\bdot) \simeq \sigma^\ast K^\bdot_i \otimes \sigma^! F^\bdot$ being bounded above. Because $\cO_\Z$ is a direct summand of $\sigma^\ast K^\bdot_i$, it follows that the weights of $\sigma^\ast F^\bdot$ are bounded below and the weights of $\sigma^! F^\bdot$ are bounded above.

\end{proof}

%

Using this result, we explicitly construct right adjoints for each of the inclusions $\D^b_\S(\X)_{\geq w} \subset \D^b(\X)_{\geq w} \subset \D^b(\X)$.

\begin{lem} \label{lem_radj}
Let $\S \subset \X$ be a KN stratum satisfying Property \hyperref[property_A]{(A)}, and let $K_i^\bdot$ be a Koszul system for $\S$. Let $F^\bdot \in \D^b(\X)$. Then for sufficiently large $i$ the canonical map
$$\radj{w} (K_i^\bdot \otimes F^\bdot) \to  \radj{w} (K^\bdot_{i+1} \otimes F^\bdot)$$
is an equivalence. It follows that the complex
\begin{equation} \label{eqn_define_radj}
\radj{w} \underline{\Gamma}_\S (F^\bdot) := \varinjlim_i \radj{w} \left( K^\bdot_i \otimes F^\bdot \right)
\end{equation}
lies in $\D^b_\S(\X)_{\geq w}$. The functor $\radj{w} \inner{\Gamma}_\S$, defined by \eqref{eqn_define_radj}, is a right adjoint to the inclusions $\D^b_\S(\X)_{\geq w} \subset \D^b(\X)_{\geq w}$ and $\D^b_\S(\X)_{\geq w} \subset \D^b(\X)$.
\end{lem}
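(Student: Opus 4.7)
My plan is to proceed in two main stages: first establish the stabilization claim, which immediately forces the colimit in \eqref{eqn_define_radj} into $\D^b_\S(\X)_{\geq w}$; then verify the adjunction by reducing to the standard adjunction between $R\inner{\op{\Gamma}}_\S$ and the inclusion $\D_{\S,qc}(\X) \subset \D_{qc}(\X)$, combined with the extension of the baric decomposition to these quasicoherent categories.

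For the stabilization, set $C_i^\bdot := \op{Cone}(K_i^\bdot \to K_{i+1}^\bdot)$. Since $K_i^\bdot, K_{i+1}^\bdot \in \op{Perf}(\X)$ have cohomology supported on $\S$, so does $C_i^\bdot$, and therefore $C_i^\bdot \otimes F^\bdot \in \D^b_\S(\X)$ for any $F^\bdot \in \D^b(\X)$. Property (3) of the Koszul system together with Corollary \ref{cor_characterize_baric_S} gives $j^\ast C_i^\bdot \in \op{Perf}(\S)_{<w_i}$ with $w_i \to -\infty$. Meanwhile Lemma \ref{lem_bounded_weights}, whose proof crucially uses Property \hyperref[property_A]{(A)}, guarantees that the $\lambda$-weights of $j^! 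F^\bdot$ are bounded above by some integer $w'$. The projection formula for the closed immersion $j$ and the perfect complex $C_i^\bdot$ gives $j^!(C_i^\bdot \otimes F^\bdot) \simeq j^\ast C_i^\bdot \otimes j^! F^\bdot$, and a direct weight-space analysis of the tensor product places this object in $\D^+(\S)_{<w_i+w'}$. Once $i$ is large enough that $w_i + w' \leq w$, we conclude $C_i^\bdot \otimes F^\bdot \in \D^b_\S(\X)_{<w}$, so $\radj{w}$ annihilates it. Applying $\radj{w}$ to the exact triangle $K_i^\bdot \otimes F^\bdot \to K_{i+1}^\bdot \otimes F^\bdot \to C_i^\bdot \otimes F^\bdot \parr$ then yields the desired equivalence for all such $i$, so the colimit in \eqref{eqn_define_radj} stabilizes and lies in $\D^b_\S(\X)_{\geq w}$ by Proposition \ref{prop_baric_support_S}.

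For the adjunction, I use the identification $R\inner{\op{\Gamma}}_\S(F^\bdot) \simeq \varinjlim(K_i^\bdot \otimes F^\bdot)$ in $\D_{qc}(\X)$. Remark \ref{rem_extending_SOD} and Lemma \ref{lem_extending_SOD_general} extend the baric decomposition to $\D_{\S,qc}(\X)$ with truncation functors commuting with filtered colimits, so
\begin{equation*}
\radj{w} R\inner{\op{\Gamma}}_\S(F^\bdot) \simeq \varinjlim \radj{w}(K_i^\bdot \otimes F^\bdot) \simeq \radj{w}\inner{\op{\Gamma}}_\S(F^\bdot).
\end{equation*}
Then for $G^\bdot \in \D^b_\S(\X)_{\geq w}$ and $F^\bdot \in \D^b(\X)$ I chain
\begin{equation*}
\op{Hom}(G^\bdot, F^\bdot) \simeq \op{Hom}(G^\bdot, R\inner{\op{\Gamma}}_\S F^\bdot) \simeq \op{Hom}(G^\bdot, \radj{w} R\inner{\op{\Gamma}}_\S F^\bdot) \simeq \op{Hom}(G^\bdot, \radj{w}\inner{\op{\Gamma}}_\S F^\bdot),
\end{equation*}
using the $R\inner{\op{\Gamma}}_\S$-adjunction followed by semiorthogonality in the extended baric decomposition of $\D_{\S,qc}(\X)$. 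Restricting $F^\bdot$ to $\D^b(\X)_{\geq w}$ yields the first adjunction in the statement.

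The main obstacle is the stabilization step, and specifically the upper bound on the $\lambda$-weights of $j^! F^\bdot$ for a general $F^\bdot \in \D^b(\X)$. This is precisely where Property \hyperref[property_A]{(A)} enters through Lemma \ref{lem_bounded_weights}: without such an upper bound the shrinking Koszul cone weights $w_i \to -\infty$ would not suffice to push $j^\ast C_i^\bdot \otimes j^! F^\bdot$ below any fixed threshold $w$, and the colimit in \eqref{eqn_define_radj} would not be controllable.
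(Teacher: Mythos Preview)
Your proof is correct and follows essentially the same strategy as the paper. The stabilization argument is identical: both you and the paper use Lemma \ref{lem_bounded_weights} to bound $F^\bdot \in \D^b(\X)_{<N}$, combine this with the Koszul property $C_i^\bdot|_\Z \in \D^b(\Z)_{<w_i}$ via the projection formula $j^!(C_i^\bdot \otimes F^\bdot) \simeq j^\ast C_i^\bdot \otimes j^! F^\bdot$, and conclude $\radj{w}(C_i^\bdot \otimes F^\bdot) = 0$ for $i \gg 0$. For the adjunction, the paper argues directly via compactness of $G^\bdot \in \D^b_\S(\X)$ in $\D^+_{\S,qc}(\X)$, passing $R\op{Hom}(G^\bdot,-)$ through the colimit; your route through the extended baric decomposition on $\D_{\S,qc}(\X)$ is a repackaging of the same compactness input (Lemma \ref{lem_extending_SOD_general}). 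One small imprecision: Remark \ref{rem_extending_SOD} literally concerns $\D_{qc}(\S)$, so you should invoke Lemma \ref{lem_extending_SOD_general} applied directly to the baric decomposition of Proposition \ref{prop_baric_support_S} to get the extension to $\D_{\S,qc}(\X)$.
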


\begin{proof}
By hypothesis the $C^\bdot_i := \op{Cone}(K^\bdot_i \to K^\bdot_{i+1})$ is a perfect complex in $\D^b_\S(\X)_{<w_i}$, where $w_i \to -\infty$ as $i \to \infty$. By Lemma \ref{lem_bounded_weights}, we have $F^\bdot \in \D^b(\X)_{<N}$ for some $N$, so if $w_i + N < w$ we have $C_i^\bdot \otimes F^\bdot \in \D^b_\S(\X)_{<w}$ and
$$\op{Cone}\left( \radj{w} (K_i^\bdot \otimes F^\bdot) \to \radj{w} (K_{i+1}^\bdot \otimes F^\bdot) \right) = \radj{w} (C_i^\bdot \otimes F^\bdot) = 0.$$
Thus the direct system $\radj{w} (K_i^\bdot \otimes F^\bdot)$ stabilizes, and the expression \eqref{eqn_define_radj} defines a functor $\D^b(\X) \to \D^b_\S(\X)_{\geq w}$.

The fact that $\radj{w} \inner{\Gamma}_\S$ is the right adjoint of the inclusion follows from the fact that elements of $\D^b_\S(\X)$ are compact in $\D^+_{\S,qc}(\X)$ \cite{Ro08}. For $G^\bdot \in \D^b_\S(\X)_{\geq w}$, we compute
$$R\op{Hom}(G^\bdot,\radj{w} \inner{\op{\Gamma}}_\S F^\bdot) \simeq \varinjlim_i R \op{Hom} (G^\bdot, K_i^\bdot \otimes F^\bdot) \simeq R \op{Hom}(G^\bdot, F^\bdot).$$
\end{proof}

Because $\D^b_\S(\X)_{\geq w}$ is generated by $j_\ast \D^b(\S)_{\geq w}$, the right orthogonal to $\D^b_\S(\X)_{\geq w}$ is $\D^b(\X)_{<w}$. Lemma \ref{lem_radj} gives semiorthogonal decompositions
$$\D^b(\X)_{\geq w} = \langle \G_w, \D^b_\S(\X)_{\geq w} \rangle \qquad \D^b(\X) = \langle \D^b(\X)_{<w}, \D^b_\S(\X)_{\geq w} \rangle$$
where $\G_w := \D^b(\X)_{\geq w} \cap \D^b(\X)_{<w}$. What remains is to show that $\D^b(\X)_{\geq w} \subset \D^b(\X)$ is right admissible.


\begin{lem} \label{lem_radj2}
The inclusion of the subcategory $\D^b(\X)_{\geq w} \subset \D^b(\X)$ admits a right adjoint $\radj{w}(\bullet)$ defined by the exact triangle
$$\radj{w} F^\bdot \to F^\bdot \to \ladj{w} \left( (K^\bdot_i)^\dual \otimes F^\bdot \right) \parr \qquad \text{for } i \gg 0$$
\end{lem}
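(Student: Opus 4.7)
The plan is to verify three ingredients: well-definedness of the formula, membership of the result in $\D^b(\X)_{\geq w}$, and the universal property. Note first that $(K_i^\bdot)^\dual$ is a perfect complex on $\X$ whose cohomology is set-theoretically supported on $\S$, so $(K_i^\bdot)^\dual \otimes F^\bdot$ lies in $\D^b_\S(\X)$ and $\ladj{w}$ applies by Proposition \ref{prop_baric_support_S}. To show the inverse system $i \mapsto \ladj{w}((K_i^\bdot)^\dual \otimes F^\bdot)$ is eventually constant, I would dualize the triangle $K_i^\bdot \to K_{i+1}^\bdot \to \op{Cone}(K_i^\bdot \to K_{i+1}^\bdot) \parr$. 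By Lemma \ref{lem_koszul_system}, $\op{Cone}(K_i^\bdot \to K_{i+1}^\bdot)|_\Z \in \D^b(\Z)_{<w_i}$ with $w_i \to -\infty$, so its dual has $\lambda$-weights $\geq -w_i+1$. Using $F^\bdot \in \D^b(\X)_{\geq v}$ for some $v$ (Lemma \ref{lem_bounded_weights}) together with multiplicativity of the baric decomposition on $\D^b_\S(\X)$ (Proposition \ref{prop_baric_support_S}), the cone of $(K_{i+1}^\bdot)^\dual \otimes F^\bdot \to (K_i^\bdot)^\dual \otimes F^\bdot$ lies in $\D^b_\S(\X)_{\geq w}$ for all $i \gg 0$, and $\ladj{w}$ annihilates it.

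Define $\radj{w} F^\bdot$ by the triangle for some (and hence any) $i \gg 0$; it is an object of $\D^b(\X)$ since the other two terms are. To see that $\radj{w} F^\bdot \in \D^b(\X)_{\geq w}$, let $C_i^\bdot := \op{Cone}(K_i^\bdot \to \cO_\X)$ and dualize the defining triangle to obtain $(C_i^\bdot)^\dual \to \cO_\X \to (K_i^\bdot)^\dual \parr$. Tensoring with $F^\bdot$ and applying the octahedral axiom to the factorization $F^\bdot \to (K_i^\bdot)^\dual \otimes F^\bdot \to \ladj{w}((K_i^\bdot)^\dual \otimes F^\bdot)$ yields an exact triangle
$$(C_i^\bdot)^\dual \otimes F^\bdot \to \radj{w} F^\bdot \to \radj{w}((K_i^\bdot)^\dual \otimes F^\bdot) \parr.$$
The rightmost term tautologically lies in $\D^b_\S(\X)_{\geq w} \subset \D^b(\X)_{\geq w}$. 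For the leftmost, Lemma \ref{lem:alternate_Koszul_property} and dualization show that $(C_i^\bdot)^\dual|_\Z$ has weights $\geq N$ for any prescribed $N$, provided $i$ is taken large enough; combined with multiplicativity against $F^\bdot \in \D^b(\X)_{\geq v}$ this forces $(C_i^\bdot)^\dual \otimes F^\bdot \in \D^b_\S(\X)_{\geq w}$ for such $i$. Since $\D^b(\X)_{\geq w}$ is closed under extensions, $\radj{w} F^\bdot \in \D^b(\X)_{\geq w}$.

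Finally, to verify the universal property, let $G^\bdot \in \D^b(\X)_{\geq w}$ and apply $R\op{Hom}(G^\bdot, -)$ to the defining triangle for $\radj{w} F^\bdot$. It suffices to show $R\op{Hom}_\X(G^\bdot, H^\bdot) = 0$ for every $H^\bdot \in \D^b_\S(\X)_{<w}$. Since $H^\bdot \simeq j_\ast j^! H^\bdot$, the $(Lj^\ast, j_\ast)$ adjunction gives $R\op{Hom}_\X(G^\bdot, H^\bdot) \simeq R\op{Hom}_\S(Lj^\ast G^\bdot, j^! H^\bdot)$, and the hypotheses $Lj^\ast G^\bdot \in \D^-(\S)_{\geq w}$ and $j^! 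H^\bdot \in \D^+(\S)_{<w}$ make this vanish by Lemma \ref{lem_left_orthog_S}. I expect the main obstacle to be the coordination of choices in the membership step: one must pick $i$ large enough to simultaneously stabilize the inverse system and to force the secondary term $(C_i^\bdot)^\dual \otimes F^\bdot$ into $\D^b(\X)_{\geq w}$, both of which rely essentially on bounding the weights of $F^\bdot$ from below via Lemma \ref{lem_bounded_weights}.
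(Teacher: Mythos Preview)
Your argument follows the paper's approach closely and is essentially correct, but there is one false step in the verification of the universal property. You write that for $H^\bdot \in \D^b_\S(\X)_{<w}$ one has $H^\bdot \simeq j_\ast j^! H^\bdot$. This is not true: objects of $\D^b_\S(\X)$ are only \emph{set}-theoretically supported on $\S$, and for such objects the counit $j_\ast j^! H^\bdot \to H^\bdot$ need not be an isomorphism (e.g.\ $H = \cO_\X / \cI_\S^2$). Consequently your displayed adjunction identity $R\op{Hom}_\X(G^\bdot, H^\bdot) \simeq R\op{Hom}_\S(Lj^\ast G^\bdot, j^! H^\bdot)$ is unjustified.

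The fix is immediate and already implicit in the proof of Proposition~\ref{prop_baric_support_S}: the category $\D^b_\S(\X)_{<w}$ is generated under cones and shifts by $j_\ast(\D^b(\S)_{<w})$, so it suffices to check vanishing on those generators. For $E^\bdot \in \D^b(\S)_{<w}$ one has $R\op{Hom}_\X(G^\bdot, j_\ast E^\bdot) \simeq R\op{Hom}_\S(Lj^\ast G^\bdot, E^\bdot)$, which vanishes by Lemma~\ref{lem_left_orthog_S} since $Lj^\ast G^\bdot \in \D^-(\S)_{\geq w}$. With this correction your proof is complete and matches the paper's. (One minor remark: the ``multiplicativity'' you invoke is not literally the statement in Proposition~\ref{prop_baric_support_S}, since $F^\bdot$ need not be supported on $\S$; what you actually use is that $\sigma^\ast$ is monoidal on perfect objects, so $\sigma^\ast(P^\bdot \otimes F^\bdot) \simeq \sigma^\ast P^\bdot \otimes \sigma^\ast F^\bdot$ and weights add. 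This is harmless.)
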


\begin{proof}
First note that for $F^\bdot \in \D^b(\X)$ and for $i\gg 0$, $\op{Cone}(K_i^\bdot \to K_{i+1}^\bdot)^\dual \otimes F^\bdot \in \D^b(\X)_{\geq w}$. It follows that the inverse system $(K^\bdot_i)^\dual \otimes F^\bdot$ stabilizes, as in Lemma \ref{lem_radj}.

Consider the composition $F^\bdot \to (K_i^\bdot)^\dual \otimes F^\bdot \to \ladj{w}((K_i^\bdot)^\dual \otimes F^\bdot)$. With $\radj{w} F^\bdot$ defined as above, the octahedral axiom gives an exact triangle
$$\op{Cone}(F^\bdot \to (K_i^\bdot)^\dual \otimes F^\bdot) \to \radj{w} F^\bdot [1] \to \radj{w}( (K_i^\bdot)^\dual \otimes F^\bdot ) \parr .$$
Lemmas \ref{lem:alternate_Koszul_property} and \ref{lem_bounded_weights} imply that for $i\gg 0$, $\radj{w} F^\bdot \in \D^b(\X)_{\geq w}$ and is thus right orthogonal to $\D^b_\S(\X)_{<w}$. It follows that $\radj{w} F^\bdot$ is functorial in $F^\bdot$ and this functor is a right adjoint to the inclusion $\D^b(\X)_{\geq w} \subset \D^b(\X)$.
\end{proof}

\begin{proof}[Proof of Theorem \ref{thm_main_semi_decomp}]
The existence of the semiorthogonal decomposition follows formally from the adjoint functors constructed in Lemmas \ref{lem_radj} and \ref{lem_radj2}. The fully-faithfulness of $i^\ast : \G_w \to \D^b(\V)$ is Theorem \ref{thm_quantization}. Any $F^\bdot \in \D^b(\V)$ admits a lift to $\D^b(\X)$, and the component of this lift lying in $\G_w$ under the semiorthogonal decomposition also restricts to $F^\bdot$, hence $i^\ast : \G_w \to \D^b(\V)$ is essentially surjective.
\end{proof}

Recall from Lemma \ref{lem:smooth_strata_2} that when $\X$ is smooth in a neighborhood of $\Z$, the condition that $\sigma^\ast j^! F^\bdot \in \D_{qc}(\Z)_{<w}$ is equivalent to the condition that $\sigma^\ast F^\bdot \in \D_{qc}(\Z)_{<w+\eta}$, where we define $\eta := \op{weight}_\lambda \det(\coNorm_{\S} \X)|_\Z$. This allows us to restate our main theorem in a form which will be convenient for the applications in Section \ref{sect_var_GIT}.

\begin{cor} \label{thm_main_semi_decomp_smooth}
Let $\S \subset \X$ be a KN stratum such that $\X$ is smooth in a neighborhood of $\Z$. Let $\G_w = \D^b(\X)_{\geq w} \cap \D^b(\X)_{<w}$, then
$$\G_w = \{ F^\bdot \in \D^b(\X) | \lambda \text{-weights of } \cH^\ast ( \sigma^\ast F^\bdot) \text{ lie in } [w,w+\eta). \}$$
where $\eta$ is the weight of $\det (\coNorm_{\S} \X)$. There are semiorthogonal decompositions
$$\D^b(\X) = \langle \D^b_\S(\X)_{<w}, \G_w , \D^b_\S(\X)_{\geq w} \rangle$$
And the restriction functor $i^\ast : \D^b(\X) \to \D^b(\V)$ induces an equivalence $\G_w \simeq \D^b(\V)$.
\end{cor}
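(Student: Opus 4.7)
The plan is to deduce this corollary directly from Theorem \ref{thm_main_semi_decomp}, so the task reduces to two small steps: (a) verifying the hypotheses of that theorem under the smoothness assumption, and (b) translating the characterization of $\G_w$ given there into the simpler form appearing in the corollary, which involves only $\sigma^\ast$.

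For step (a), Lemma \ref{lem:smooth_strata} already does all the work. It shows that smoothness of $\X$ in a neighborhood of $\Z$ forces $Z$, $Y$, and $S$ to be smooth, identifies $\pi : Y \to Z$ as a locally trivial bundle of affine spaces (which is precisely Property \hyperref[property_A]{(A)}), and exhibits $j : \S \hookrightarrow \X$ as a regular embedding with locally free conormal sheaf. Combined with Property \ref{property_S_3}, this regular embedding has $\coNorm_\S \X$ with positive $\lambda$-weights, so $\bL^\bdot_{\S/\X}[-1] \simeq \coNorm_\S \X$ has nonnegative weights, i.e.\ Property \hyperref[property_L_plus]{(L+)} holds. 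Thus Theorem \ref{thm_main_semi_decomp} applies verbatim and immediately yields both the semiorthogonal decomposition $\D^b(\X) = \langle \D^b_\S(\X)_{<w}, \G_w, \D^b_\S(\X)_{\geq w}\rangle$ and the equivalence $i^\ast : \G_w \simeq \D^b(\V)$.

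For step (b), Theorem \ref{thm_main_semi_decomp} characterizes $\G_w$ as complexes $F^\bdot$ with $\sigma^\ast F^\bdot$ having $\lambda$-weights $\geq w$ and $\sigma^! F^\bdot$ having $\lambda$-weights $<w+a$. By Proposition \ref{prop:describe_categories}, the latter condition is equivalent to $\sigma^\ast j^! F^\bdot$ having weights $<w$. Since $j$ is a regular embedding of codimension $c$, we have a canonical isomorphism $j^!(\bullet) \simeq j^\ast(\bullet) \otimes \det(\fN_{\S/\X})[-c]$, so restricting to $\Z$ yields $\sigma^\ast j^! F^\bdot \simeq \sigma^\ast F^\bdot \otimes \det(\fN_{\S/\X})|_\Z[-c]$. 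The invertible sheaf $\det(\fN_{\S/\X})|_\Z$ is concentrated in a single $\lambda$-weight equal to $-\eta$ (since it is dual to $\det(\coNorm_\S \X)|_\Z$, which has weight $\eta$ by definition). Therefore the weights of $\sigma^\ast j^! F^\bdot$ are obtained from those of $\sigma^\ast F^\bdot$ by subtracting $\eta$, and the condition ``weights of $\sigma^\ast j^! F^\bdot$ are $<w$'' translates to ``weights of $\sigma^\ast F^\bdot$ are $<w+\eta$''. Combining this with the lower bound yields the interval $[w,w+\eta)$ claimed in the corollary.

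There is essentially no conceptual obstacle here, since all the real work was carried out in the proof of Theorem \ref{thm_main_semi_decomp}; the corollary is simply a repackaging of that result in a form more convenient for the smooth applications later in the paper, such as the variation-of-GIT comparisons in Section \ref{sect_var_GIT}. The only minor point to keep track of is the sign discrepancy between the weights of $\det(\fN_{\S/\X})|_\Z$ and $\det(\coNorm_\S \X)|_\Z$, which is why the upper endpoint of the window appears as $w+\eta$ rather than $w-\eta$.
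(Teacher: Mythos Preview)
Your proposal is correct and follows essentially the same route as the paper. The paper's argument is compressed into the sentence preceding the corollary, which invokes Lemma \ref{lem:smooth_strata_2} for the translation step; your step (b) simply re-derives the content of that lemma (and its proof) via Proposition \ref{prop:describe_categories} and the regular-embedding formula for $j^!$, which is exactly how Lemma \ref{lem:smooth_strata_2} is proved.
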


\section{Derived equivalences and variation of GIT}
\label{sect_var_GIT}

We apply Theorem \ref{thm_kirwan_surj} to the derived categories of birational varieties obtained by a variation of GIT quotient. First we study the case where $G=\Gm$, in which the KN stratification is particularly easy to describe. Next we generalize this analysis to arbitrary variations of GIT, one consequence of which is the observation that if a smooth projective-over-affine variety $X$ is equivariantly Calabi-Yau for the action of a torus, then the GIT quotient of $X$ with respect to any two generic linearizations are derived-equivalent.

A normal projective variety $X$ with linearized $\Gm$ action is sometimes referred to as a birational cobordism between $X//_{\sh{L}} G$ and $X//_{\sh{L}(r)} G$ where $\sh{L}(m)$ denotes the twist of $\sh{L}$ by the character $t \mapsto t^r$. A priori this seems like a highly restrictive type of VGIT, but by Thaddeus' master space construction \cite{Th96}, any two spaces that are related by a general VGIT are related by a birational cobordism. We also have the weak converse due to Hu \& Keel:
\begin{thm}[Hu \& Keel]
Let $Y_1$ and $Y_2$ be two birational projective varieties, then there is a birational cobordism $X/\Gm$ between $Y_1$ and $Y_2$. If $Y_1$ and $Y_2$ are smooth, then by equivariant resolution of singularities $X$ can be chosen to be smooth.
\end{thm}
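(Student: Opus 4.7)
The plan is to construct the birational cobordism $X$ by combining an elementary local cobordism for each simple birational modification with a global gluing procedure, and then to apply equivariant resolution of singularities to achieve the smooth case. In outline, I would first invoke Wlodarczyk's Weak Factorization Theorem (Abramovich--Karu--Matsuki--Wlodarczyk) to factor the birational map $\phi \colon Y_1 \dashrightarrow Y_2$ as a composition $Y_1 = W_0 \dashrightarrow W_1 \dashrightarrow \cdots \dashrightarrow W_n = Y_2$, in which each arrow is a blowup or blowdown along a smooth center when $Y_1$ and $Y_2$ are smooth. In the general (possibly singular) case one can instead work directly with the closure of the graph of $\phi$ inside $Y_1 \times Y_2 \times \mathbb{P}^1$, using the $\Gm$-action on the $\mathbb{P}^1$-factor.

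For the local step, given a blowup $\tilde W = \mathrm{Bl}_Z W$ along a smooth center $Z \subset W$, I would use the deformation-to-the-normal-cone construction $M := \mathrm{Bl}_{Z\times\{0\}}(W \times \mathbb{P}^1)$, with $\Gm$ acting through the standard action on $\mathbb{P}^1$ fixing $0$ and $\infty$. A general fiber of $M \to \mathbb{P}^1$ is $W$, the special fiber decomposes as $\tilde W$ glued to the projective completion of the normal bundle $N_{Z/W}$, and a direct Hilbert--Mumford analysis shows that two adjacent chambers of the $\Gm$-ample cone on $M$ yield GIT quotients isomorphic to $W$ and to $\tilde W$ respectively. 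Splicing the successive $M_i$ along their common chambers (where two consecutive cobordisms both produce $W_i$) gives a single projective $\Gm$-variety $X$, which by construction is a birational cobordism from $Y_1$ to $Y_2$.

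Finally, when $Y_1$ and $Y_2$ are smooth, I would apply $\Gm$-equivariant resolution of singularities (Bierstone--Milman or Encinas--Villamayor) to the constructed cobordism $X$ to obtain a smooth $\Gm$-equivariant $X'$. Because each $Y_j$ already appears as a smooth open GIT quotient of $X$, the resolution procedure can be arranged to be an isomorphism over the two semistable loci in question, so $X'$ remains a birational cobordism between $Y_1$ and $Y_2$.

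The hardest part is the chamber bookkeeping in the splicing step: after gluing local cobordisms and after resolving, one must verify that the resulting projective $\Gm$-variety admits two ample linearizations $\mathcal{L}$ and $\mathcal{L}(r)$ that lie in distinct chambers of $NS^1_{\Gm}(X) \otimes \mathbb{R}$ whose GIT quotients are \emph{exactly} $Y_1$ and $Y_2$, with no additional walls unexpectedly inserted between them. This requires tracking, at each splicing and each resolution, the effect on the wall-and-chamber decomposition of the $\Gm$-ample cone, and making careful choices so that the chambers for $Y_1$ and $Y_2$ remain adjacent to a single wall, or at least connected by a path of chambers that can be composed into a single cobordism in the sense of the statement.
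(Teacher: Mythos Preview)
The paper does not prove this theorem at all: it is stated as a known result attributed to Hu \& Keel and is simply quoted as background for the discussion of birational cobordisms in Section~\ref{sect_var_GIT}. There is no proof in the paper to compare against.

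That said, your proposed strategy has a structural problem worth flagging. Invoking the Weak Factorization Theorem of Abramovich--Karu--Matsuki--W{\l}odarczyk to produce the cobordism is circular: the proof of WFT is built on W{\l}odarczyk's theory of birational cobordisms, so you would be assuming a strictly stronger result than the one you are trying to establish. The direct route---which you allude to but do not develop---is to take a common resolution $W \to Y_1$, $W \to Y_2$ (or the graph closure) and build the cobordism from $W$ and the two contractions in one step, rather than splicing together a chain of elementary cobordisms. This avoids both the circularity and the ``chamber bookkeeping'' difficulty you correctly identify as the hardest part of your outline; the splicing step is genuinely delicate and is not needed if one constructs a single global cobordism from the start. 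Your remark about equivariant resolution of singularities for the smooth case is fine and matches what the paper's one-line justification says.
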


The GIT stratification for $G=\Gm$ is very simple. If $\sh{L}$ is chosen so that the GIT quotient is an orbifold, then the $Z_i$ are the connected components of the fixed locus $X^G$, and $S_i$ is either the ascending or descending manifold of $Z_i$, depending on the weight of $\sh{L}$ along $Z_i$.

We will denote the tautological choice of \oneps\ as $\lambda^+$, and we refer to ``the weights'' of a coherent sheaf at point in $X^G$ as the weights with respect to this \oneps. We define $\mu_i \in \bZ$ to be the weight of $\sh{L}|_{Z_i}$. If $\mu_i <0$ (respectively $\mu_i >0$) then the maximal destabilizing \oneps\ of $Z_i$ is $\lambda^+$ (respectively $\lambda^-$). Thus we have
$$S_i = \left\{ x \in X \left| \begin{array}{c} \lim \limits_{t \to 0} t \cdot x \in Z_i \text{ if } \mu_i < 0 \\ \lim \limits_{t\to 0} t^{-1} \cdot x \in Z_i \text{ if } \mu_i > 0 \end{array} \right. \right\}$$
Next observe the weight decomposition under $\lambda^+$:
\begin{equation} \label{eqn_cotangent_weight_decomp}
\Omega_X^1|_{Z_i} \simeq \Omega^1_{Z_i} \oplus \sh{N}^+ \oplus \sh{N}^-
\end{equation}
Then $\Omega^1_{S_i}|_{\Z_i} = \Omega^1_{Z_i} \oplus \sh{N}^-$ if $\mu_i < 0$ and $\Omega^1_{S_i}|_{\Z_i} = \Omega^1_{Z_i} \oplus \sh{N}^+$ if $\mu_i>0$, so we have
\begin{equation} \label{eqn_window_width}
\eta_i = \left\{ \begin{array}{c} \text{weight of } \det \sh{N}^+|_{Z_i} \text{ if } \mu_i < 0 \\ -\text{weight of } \det \sh{N}^-|_{Z_i} \text{ if } \mu_i > 0 \end{array} \right.
\end{equation}

\begin{figure}
\centering
\includegraphics[scale=0.7]{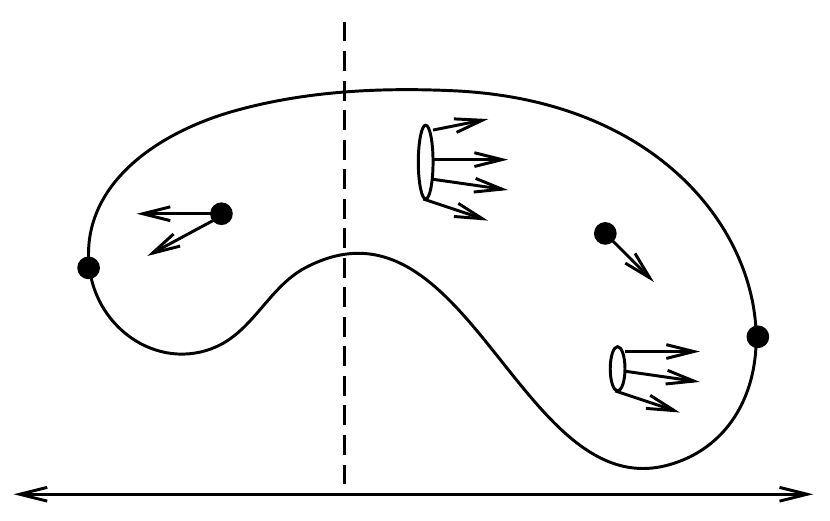}
\quad
\includegraphics[scale=0.7]{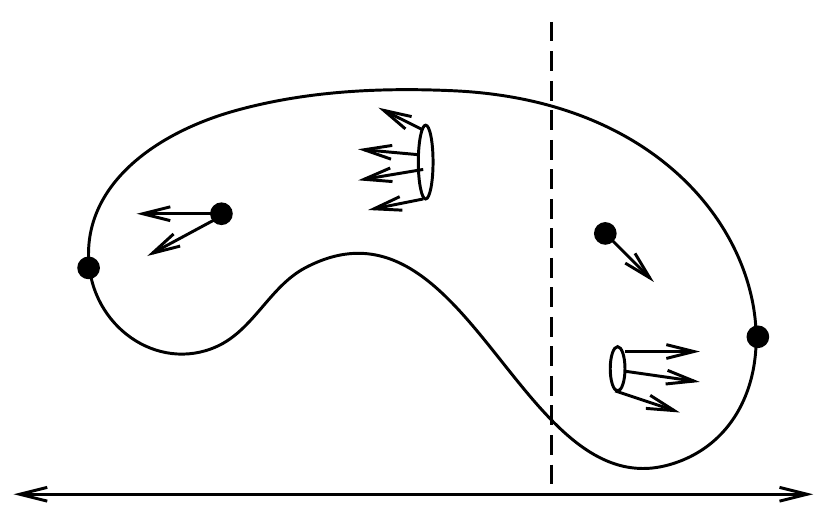}
\caption{Schematic diagram for the fixed loci $Z_i$. $S_i$ is the ascending or descending manifold of $Z_i$ depending on the sign of $\mu_i$. As the moment fiber varies, the unstable strata $S_i$ flip over the critical sets $Z_i$.}
\end{figure}

There is a parallel interpretation of this in the symplectic category when the base field $k = \bC$. A sufficiently large power of $\sh{L}$ induces a equivariant projective embedding and thus a moment map $\mu : X \to \bR$ for the action of $S^1 \subset \bC^\ast$. The semistable locus is the orbit of the zero fiber $X^{ss} = \bC^\ast \cdot \mu^{-1} (0)$. The reason for the collision of notation is that the fixed loci $Z_i$ are precisely the critical manifolds of $\mu$, and the number $\mu_i$ is the value of the moment map on the critical set $Z_i$.

Varying the linearization $\sh{L}(r)$ by twisting by the character $t \mapsto t^{-r}$ corresponds to shifting the moment map by $r$, so the new zero fiber corresponds to what was previously the fiber $\mu^{-1}(-r)$. For non-critical moment fibers the GIT quotient will be a DM stack, and the critical values of $r$ are those for which the weight of $\sh{L}(r)|_{Z_i} = 0$ for some $i$.

Now we return to a general base field $k$. Say that as $r$ increases it crosses a critical value for which $\cL(r)|_{Z_i}$ has weight $0$, so $\mu_i > 0 > \mu_i^\prime$. The maximal destabilizing \oneps, $\lambda_i$, flips from $\lambda^-$ to $\lambda^+$, and the unstable stratum, $S_i$, flips from the descending manifold of $Z_i$ to the ascending manifold of $Z_i$. We apply \eqref{eqn_window_width} to compute
\begin{equation} \label{eqn_wall_crossing}
\eta_i^\prime - \eta_i = \text{weight of } \omega_X|_{Z_i}
\end{equation}

Thus if $\omega_X$ has weight $0$ along $Z_i$, the integer $\eta_i$ does not change as we cross the wall. The grade restriction window of Theorem \ref{thm_kirwan_surj} has the same width for the GIT quotient on either side of the wall, and it follows that the two GIT quotients are derived equivalent because they are identified with the \emph{same} subcategory $\G_w$ of the equivariant derived category $\D^b(X/G)$. We summarize this with the following
\begin{prop} \label{prop_birat_cobord}
Let $\sh{L}$ be a critical linearization of $X/\Gm$, and assume that $Z_i$ is the only critical set for which $\mu_i = 0$. Let $a$ be the weight of $\omega_X|_{Z_i}$, and let $\epsilon > 0$ be a small rational number.
\begin{enumerate}
\item If $a<0$, then there is a fully faithful embedding
$$\D^b(X //_{\sh{L}(\epsilon)} G) \subseteq \D^b(X //_{\sh{L}(-\epsilon)} G)$$
\item If $a=0$, then there is an equivalence
$$\D^b(X //_{\sh{L}(\epsilon)} G) \simeq \D^b(X //_{\sh{L}(-\epsilon)} G)$$
\item If $a>0$, then there is a fully faithful embedding
$$\D^b(X //_{\sh{L}(-\epsilon)} G) \subseteq \D^b(X //_{\sh{L}(\epsilon)} G)$$
\end{enumerate}
\end{prop}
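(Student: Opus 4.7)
The plan is to apply Corollary \ref{thm_main_semi_decomp_smooth} (equivalently Theorem \ref{thm_kirwan_surj_prelim}) on both sides of the wall, identifying each of $\D^b(X //_{\cL(\epsilon)} G)$ and $\D^b(X //_{\cL(-\epsilon)} G)$ with a grade-restricted subcategory $\G_{w_+}$, $\G_{w_-}$ of the common ambient category $\D^b(X/G)$. The key observation is that for $\epsilon>0$ small, the only KN stratum that changes across the wall is the one associated with $Z_i$: the fixed locus $Z_i$ itself does not change, but as described in the text preceding \eqref{eqn_wall_crossing}, the maximal destabilizing one-parameter subgroup flips between $\lambda^+$ and $\lambda^-$, and $S_i$ flips between the ascending and descending manifolds of $Z_i$. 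All other strata (including their distinguished $\lambda_j$, $Z_j$, and widths $\eta_j$) are unchanged, so we take the same weights $w_j$ for $j \ne i$ on both sides, and the proposition reduces to comparing the grade restriction condition at $S_i$ alone.

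Since $\cH^\ast(\sigma_i^\ast F^\bdot)$ decomposes canonically into a $\bZ$-graded sum of $\lambda^+$-weight eigen-sheaves on $Z_i$, and $\lambda^-$-weights are the negatives of $\lambda^+$-weights, the grade restriction rule on each side may be re-expressed uniformly as the condition that the $\lambda^+$-weights of $\cH^\ast(\sigma_i^\ast F^\bdot)$ lie in a half-open integer interval. By \eqref{eqn_window_width} this interval has length $\eta(\epsilon)$ on the $\cL(\epsilon)$ side and $\eta(-\epsilon)$ on the $\cL(-\epsilon)$ side; these values are computed in terms of $\det \cN^\pm$ using the weight decomposition \eqref{eqn_cotangent_weight_decomp}. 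The wall-crossing computation \eqref{eqn_wall_crossing} then gives, up to the fixed sign convention identifying which of $\cL(\pm\epsilon)$ corresponds to which sign of $\mu_i$, the identity $\eta(-\epsilon) - \eta(\epsilon) = a$.

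It remains to match the three cases. When $a=0$ the two windows have equal length, and the remaining freedom in the choice of $w_\pm$ allows the two integer intervals to be made equal as sets, so that $\G_{w_+} = \G_{w_-}$ as subcategories of $\D^b(X/G)$; composing the two restriction equivalences of Corollary \ref{thm_main_semi_decomp_smooth} yields the desired equivalence. When $a \ne 0$ the windows have different lengths; choosing $w_+, w_-$ so that the shorter integer interval is contained in the longer one produces a strict inclusion $\G_{w_+} \subseteq \G_{w_-}$ (or vice versa), which after composing with the two restriction equivalences becomes the claimed fully faithful embedding. The only real subtlety, and the main point to be careful about, is bookkeeping the sign conventions so that the direction of the embedding comes out as stated: one must check that the side with $\mu_i > 0$ correctly matches the convention used for $\eta^\prime - \eta = a$, so that $a<0$ yields the smaller window on the $\cL(\epsilon)$ side and hence the embedding $\D^b(X//_{\cL(\epsilon)} G) \hookrightarrow \D^b(X//_{\cL(-\epsilon)} G)$. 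Once the conventions are pinned down, the argument is a pure comparison of half-open integer intervals, requiring no additional geometric input beyond the main theorem.
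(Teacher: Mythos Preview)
Your proposal is correct and follows essentially the same approach as the paper: the paper's argument is the discussion immediately preceding the proposition, which compares the window widths $\eta_i$ and $\eta_i'$ via \eqref{eqn_wall_crossing} and concludes that both GIT quotients can be identified with (nested or equal) grade-restricted subcategories of $\D^b(X/G)$. Your write-up is in fact more careful than the paper's, explicitly noting that the strata for $j\neq i$ and their $\eta_j$ are unchanged so that one may fix common $w_j$ there and reduce to the single window comparison at $Z_i$.
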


The analytic local model for a birational cobordism is the following
\begin{ex}\label{ex_local_model}
Let $Z$ be a smooth variety and let $\cN = \bigoplus \cN_i$ be a $\bZ$-graded locally free sheaf on $Z$ with $\cN_0 = 0$. Let $X$ be the total space of $\cN$ -- it has a $\Gm$ action induced by the grading. Because the only fixed locus is $Z$ the underlying line bundle of the linearization is irrelevant, so we take the linearization $\cO_X (r)$.
\end{ex}

If $r < 0$ then the unstable locus is $\cN_- \subset X$ where $\cN_-$ is the sum of negative weight spaces of $\cN$, and if $r>0$ then the unstable locus is $\cN_+$ (we are abusing notation slightly by using the same notation for the sheaf and its total space). We will borrow the notation of Thaddeus \cite{Th96} and write $X/\pm = (X \setminus \cN_\pm) / \Gm$.

Inside $X/\pm$ we have $\cN_\mp / \pm \simeq \bP(\cN_\mp)$, where we are still working with quotient stacks, so the notation $\bP(\cN_\mp)$ denotes the weighted projective bundle associated to the graded locally free sheaf $\cN_\mp$. If $\pi_\mp : \bP(\cN_\mp) \to Z$ is the projection, then $X/\pm$ is the total space of the vector bundle $\pi_\mp^\ast \cN_\mp (-1)$. We have the common resolution
$$\xymatrix@C=10pt{ & \cO_{\bP(\cN_-) \times_S \bP(\cN_+)}(-1,-1) \ar[dl] \ar[dr] & \\ \pi_+^\ast \cN_-(-1) & & \pi_-^\ast \cN_+(-1) }$$

Let $\pi : X \to Z$ be the projection, then the canonical bundle is $\omega_X = \pi^\ast ( \omega_Z \otimes \det (\cN_+)^\dual \otimes \det (\cN_-)^\dual )$, so the weight of $\omega_X|_Z$ is $\sum i \op{rank} (\cN_i)$. In the special case of a flop, Proposition \ref{prop_birat_cobord} says:
$$\text{if } \sum i \op{rank} (\cN_i) = 0,  \text{ then } \D^b(\pi_+^\ast \cN_-(-1)) \simeq \D^b(\pi_-^\ast \cN_+(-1)).$$

\subsection{General variation of GIT quotient}

We will generalize the analysis of a birational cobordism to a special kind of variation of GIT quotient which we will call \emph{balanced}. Until this point we have taken the KN stratification as given, but now we must recall its definition and basic properties as described in \cite{DH98}.

Let $\op{NS}^G(X)_\bR := \op{NS}^G(X) \otimes \bR$ denote the extension of scalars of the group of equivariant invertible sheaves modulo homological equivalence. For any $\cL \in \op{NS}^G(X)_\bR$ one defines a stability function on $X$
$$M^\cL (x) := \max \left\{ \left. \frac{-\op{weight}_\lambda \cL_y }{|\lambda|} \right| \lambda \text{ s.t. } y = \lim_{t \to 0} \lambda(t)\cdot x 
\text{ exists} \right\}$$
$M^\cL(\bullet)$ is upper semi-continuous, and $M^\bullet(x)$ is lower convex and thus continuous on $\op{NS}^G(X)_\bR$ for a fixed $x$. A point $x \in X$ is semistable if $M^\cL(x) \leq 0$, stable if $M^\cL(x) < 0$ and $\op{Stab}(x)$ is finite, strictly semistable if $M^\cL(x) = 0$, and unstable if $M^\cL(x) > 0$.

The $G$-ample cone $\cC^G(X) \subset \op{NS}^G(X)_\bR$ has a finite decomposition into convex conical chambers separated by hyperplanes -- the interior of a chamber is where $M^\cL(x) \neq 0$ for all $x \in X$, so $\X^{ss} (\cL) = \X^{s} (\cL)$. We will focus on a single wall-crossing: $\cL_0$ will be a $G$-ample invertible sheaf lying on a wall such that for $\epsilon$ sufficiently small $\cL_\pm := \cL_0 \pm \epsilon \cL^\prime$ both lie in the interior of chambers.

Because the function $M^\bullet(x)$ is continuous on $\op{NS}^G(X)_\bR$, all of the stable and unstable points of $\X^s(\cL_0)$ will remain so for $\cL_\pm$. Only points in the strictly semistable locus, $\X^{sss}(\cL_0) = \{ x \in \X | M^\cL(x) = 0 \} \subset \X$, change from being stable to unstable as one crosses the wall.

In fact $\X^{us}(\cL_0)$ is a union of KN strata for $\X^{us}(\cL_+)$, and symmetrically it can be written as a union of KN strata for $\X^{us}(\cL_-)$ \cite{DH98}. Thus we can write $\X^{ss}(\cL_0)$ in two ways
\begin{equation} \label{eqn_sss_decomp}
\X^{ss}(\cL_0) = \S^\pm_1 \cup \cdots \cup \S^\pm_{m_\pm} \cup \X^{ss}(\cL_\pm)
\end{equation}
Where $\S^\pm_i$ are the KN strata of $\X^{us}(\cL_\pm)$ lying in $\X^{ss}(\cL_0)$.

\begin{defn} \label{def_balanced_wall_crossing}
A wall crossing determined by $\cL_\pm = \cL_0 \pm \epsilon \cL^\prime$ will be called \emph{balanced} if $m_+ = m_-$ and $\Z^+_i \simeq \Z^-_i$ under the decomposition \eqref{eqn_sss_decomp}.
\end{defn}

By the construction of the KN stratification in GIT, we can further rigidify this picture. One can find $\lambda_i$ and locally closed $Z_i \subset X^{\lambda_i}$ such that the $\lambda_i^{\pm}$ are distinguished \oneps's for the KN strata $\S_i^\pm$, and $\Z_i^+ = Z_i / L_i = \Z_i^-$.

\begin{prop} \label{prop_var_git}
Let a reductive $G$ act on a projective-over-affine variety $X$. Let $\cL_0$ be a $G$-ample line bundle on a wall, and define $\cL_\pm = \cL_0 \pm \epsilon \cL^\prime$ for some other line bundle $\cL^\prime$. Assume that
\begin{itemize}
\item for $\epsilon$ sufficiently small, $\X^{ss}(\cL_\pm) = \X^s (\cL_\pm) \neq \emptyset$,
\item the wall crossing $\cL_\pm$ is balanced, and
\item for all $Z_i$ in $\X^{ss}(\cL_0)$, $(\omega_\X)|_{Z_i}$ has weight $0$ with respect to $\lambda_i$.
\end{itemize}
Then $\D^b(\X^{ss}(\cL_+)) \simeq \D^b(\X^{ss}(\cL_-))$.
\end{prop}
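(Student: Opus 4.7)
The plan is to apply derived Kirwan surjectivity (Theorem \ref{thm_kirwan_surj} together with Lemma \ref{lem:smooth_strata_2}) simultaneously to both open inclusions $\X^{ss}(\cL_\pm) \hookrightarrow \X^{ss}(\cL_0)$, producing two subcategories $\G_{w^\pm}^\pm \subset \D^b(\X^{ss}(\cL_0))$ each mapped equivalently onto $\D^b(\X^{ss}(\cL_\pm))$ by restriction, and to choose the weight vectors $w^\pm$ so that $\G_{w^+}^+ = \G_{w^-}^-$ as subcategories of $\D^b(\X^{ss}(\cL_0))$. Composing the two restriction equivalences will then yield the desired equivalence $\D^b(\X^{ss}(\cL_+)) \simeq \D^b(\X^{ss}(\cL_-))$.

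First I would verify hypotheses. By \eqref{eqn_sss_decomp} the complement of $\X^{ss}(\cL_\pm)$ inside $\X^{ss}(\cL_0)$ is a union of KN strata $\S_i^\pm$, and smoothness of $X$ in a neighborhood of each $Z_i$ (implicit in the sensible appearance of $\omega_\X|_{Z_i}$) guarantees Properties \hyperref[property_A]{(A)} and \hyperref[property_L_plus]{(L+)} via Lemma \ref{lem:smooth_strata}. The balanced hypothesis identifies the centers $Z_i^+ = Z_i^- =: Z_i$ and the Levi subgroups $L_i^+ = L_i^-$; since the two chambers lie on opposite sides of the wall, the destabilizing \oneps's can be taken to satisfy $\lambda_i^- = (\lambda_i^+)^{-1}$, the natural generalization of the flipping of attracting and descending manifolds in the $\Gm$ discussion at the start of this section. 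Lemma \ref{lem:smooth_strata_2} then identifies
\begin{equation*}
\G_{w^\pm}^\pm = \left\{ F^\bdot \in \D^b(\X^{ss}(\cL_0)) \,\left|\, \forall i,\ \lambda_i^\pm\text{-weights of } \cH^\ast(\sigma_i^\ast F^\bdot) \in [w_i^\pm,\, w_i^\pm + \eta_i^\pm) \right. \right\}.
\end{equation*}

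The crux is the wall-crossing identity $\eta_i^+ = \eta_i^-$. Using the second formula of \eqref{eqn:define_eta}, together with the fact that $\coNorm_{Y_i^+} X|_{Z_i}$ is dual to the strictly-negative-$\lambda_i^+$-weight part of $T_X|_{Z_i}$ while $\coNorm_{Y_i^-} X|_{Z_i}$ is dual to the strictly-positive part, and that $\det\lie{g}$ has $\lambda_i^\pm$-weight zero (so the two contributions $\op{weight}_{\lambda_i^\pm}\!\det\lie{g}_{\lambda_i^\pm > 0}$ are equal and cancel), a direct calculation gives
\begin{equation*}
\eta_i^- - \eta_i^+ \;=\; \op{weight}_{\lambda_i^+}\!\det(T_X|_{Z_i}) \;=\; -\op{weight}_{\lambda_i^+}\omega_X|_{Z_i} \;=\; -\op{weight}_{\lambda_i^+}\omega_\X|_{Z_i} \;=\; 0,
\end{equation*}
the final equality being the Calabi-Yau hypothesis. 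This is the natural generalization of \eqref{eqn_wall_crossing} to arbitrary reductive $G$.

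Given $\eta_i^+ = \eta_i^-$, set $w_i^- := -w_i^+ - \eta_i^+ + 1$. Since $\lambda_i^-$-weights are the negatives of $\lambda_i^+$-weights, the interval $[w_i^-, w_i^- + \eta_i^-)$ on $\lambda_i^-$-weights translates to $(-w_i^- - \eta_i^-, -w_i^-]$ on $\lambda_i^+$-weights, which contains exactly the integers $w_i^+, w_i^+ + 1, \ldots, w_i^+ + \eta_i^+ - 1$ — precisely the integer $\lambda_i^+$-weights captured by $[w_i^+, w_i^+ + \eta_i^+)$. Hence $\G_{w^+}^+ = \G_{w^-}^-$ inside $\D^b(\X^{ss}(\cL_0))$ and the result follows. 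I expect the only real obstacle to be the sign bookkeeping in the computation of $\eta_i^- - \eta_i^+$, in particular the cancellation of the unipotent contributions between the two sides; once that is in hand, the matching of grade restriction windows and the conclusion are formal consequences of Theorem \ref{thm_kirwan_surj}.
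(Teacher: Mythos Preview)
Your proposal is correct and follows essentially the same approach as the paper's proof: apply Theorem~\ref{thm_kirwan_surj} to both inclusions $\X^{ss}(\cL_\pm) \subset \X^{ss}(\cL_0)$, use the balanced hypothesis to identify $Z_i^+ = Z_i^-$ with $\lambda_i^- = (\lambda_i^+)^{-1}$, deduce $\eta_i^+ = \eta_i^-$ from the weight condition on $\omega_\X|_{Z_i}$, and conclude that a single grade restriction window $\G_w$ is identified with both $\D^b(\X^{ss}(\cL_\pm))$. The paper's proof is a terse three sentences that simply asserts $\eta_i^+ = \eta_i^-$ follows from the hypothesis; your version spells out this computation via \eqref{eqn:define_eta} and makes the matching of weight windows explicit, which is a welcome elaboration rather than a departure.
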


\begin{rem}
Full embeddings analagous to those of Proposition \ref{prop_birat_cobord} apply when the weights of $(\omega_\X)|_{Z_i}$ with respect to $\lambda_i$ are either all negative or all positive.
\end{rem}

\begin{proof}
This is an immediate application of Theorem \ref{thm_kirwan_surj} to the open substack $\X^s(\cL_\pm) \subset \X^{ss}(\cL_0)$ whose complement admits the KN stratification \eqref{eqn_sss_decomp}. Because the wall crossing is balanced, $Z^+_i = Z^-_i$ and $\lambda_i^-(t) = \lambda_i^+(t^{-1})$, and the condition on $\omega_\X$ implies that $\eta^+_i = \eta^-_i$. So Theorem \ref{thm_kirwan_surj} identifies the category $\G_w \subset \D^b(\X^{ss}(\cL_0))$ with both $\D^b(\X^s(\cL_-))$ and $\D^b(\X^s(\cL_+))$.
\end{proof}

\begin{ex}
Dolgachev and Hu study wall crossings which they call \emph{truly faithful}, meaning that the identity component of the stabilizer of a point with closed orbit in $\X^{ss}(\cL_0)$ is $\bC^\ast$. They show that every truly faithful wall crossing is balanced \cite[Lemma 4.2.3]{DH98}.
\end{ex}

Dolgachev and Hu also show that for the action of a torus $T$, there are no codimension $0$ walls and all codimension 1 walls are truly faithful. Thus any two chambers in $\cC^T(X)$ can be connected by a finite sequence of balanced wall crossings, and we have

\begin{cor} \label{cor_all_quotients_equivalent}
Let $X$ be a projective-over-affine variety with an action of a torus $T$. Assume $X$ is equivariantly Calabi-Yau in the sense that $\omega_X \simeq \cO_X$ as an equivariant $\cO_X$-module. If $\cL_0$ and $\cL_1$ are $G$-ample invertible sheaves such that $\X^s(\cL_i) = \X^{ss}(\cL_i)$, then $\D^b(\X^s(\cL_0)) \simeq \D^b(\X^s(\cL_1))$.
\end{cor}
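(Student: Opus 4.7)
The plan is to reduce the statement to finitely many balanced wall crossings and then apply Proposition \ref{prop_var_git} at each one. First I would choose a path $\gamma : [0,1] \to \cC^T(X)$ from $\cL_0$ to $\cL_1$ meeting the walls of the chamber decomposition transversely, at interior points of codimension-one walls, and avoiding all walls of codimension $\geq 2$. Since Dolgachev and Hu show that the chamber decomposition of $\cC^T(X)$ is a finite rational polyhedral fan with no codimension-zero walls, a small generic perturbation of any path between the chambers of $\cL_0$ and $\cL_1$ achieves this, so $\gamma$ meets only finitely many walls at points $\cL_0^{(1)}, \ldots, \cL_0^{(N)}$.

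At each such wall-crossing point I write $\cL_\pm^{(k)} = \cL_0^{(k)} \pm \epsilon \cL^{\prime(k)}$ for small $\epsilon>0$, where $\cL_\pm^{(k)}$ lie in the two adjacent chambers along $\gamma$. Since $\cL_\pm^{(k)}$ lies in the interior of a chamber, $\X^s(\cL_\pm^{(k)}) = \X^{ss}(\cL_\pm^{(k)})$. Because the wall has codimension one, it is truly faithful in the sense of Dolgachev and Hu, and hence balanced by \cite[Lemma 4.2.3]{DH98}; this verifies the first two hypotheses of Proposition \ref{prop_var_git}. For the third hypothesis, the equivariant Calabi--Yau assumption $\omega_X \simeq \cO_X$ as an equivariant invertible sheaf means $\omega_\X$ is trivial as an equivariant line bundle on $\X$, so its restriction to every fixed locus $Z_i$ carries the trivial character of $T$; in particular the $\lambda_i$-weight of $\omega_\X|_{Z_i}$ is zero for every $Z_i$ arising in the decomposition \eqref{eqn_sss_decomp}. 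Proposition \ref{prop_var_git} therefore provides a derived equivalence $\D^b(\X^s(\cL_-^{(k)})) \simeq \D^b(\X^s(\cL_+^{(k)}))$.

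Finally, because the semistable locus is constant on the interior of each chamber, the chambers traversed by $\gamma$ between two consecutive wall crossings share the same GIT quotient, and composing the $N$ equivalences produced above yields the desired equivalence $\D^b(\X^s(\cL_0)) \simeq \D^b(\X^s(\cL_1))$. The main technical point is the construction of the generic path $\gamma$ and the verification that every codimension-one wall encountered is balanced; once these are in hand, the equivariant Calabi--Yau hypothesis makes the grade-restriction window widths $\eta_i^+$ and $\eta_i^-$ agree at every wall by \eqref{eqn_wall_crossing}, so the same subcategory $\G_w \subset \D^b(\X^{ss}(\cL_0^{(k)}))$ is identified with both sides under the equivalence of Theorem \ref{thm_kirwan_surj}.
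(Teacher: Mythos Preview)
Your proposal is correct and follows essentially the same route as the paper: the paper observes (immediately before the corollary) that for a torus action Dolgachev--Hu show there are no codimension-$0$ walls and all codimension-$1$ walls are truly faithful, hence balanced, so any two chambers are connected by a finite sequence of balanced wall crossings to which Proposition~\ref{prop_var_git} applies under the equivariant Calabi--Yau hypothesis. Your argument spells out the path-choosing step and the verification of the hypotheses of Proposition~\ref{prop_var_git} in slightly more detail, but the strategy is the same.
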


A compact projective manifold with a non-trivial $\bC^\ast$ action is never equivariantly Calabi-Yau, but Corollary \ref{cor_all_quotients_equivalent} applies to a large class of non compact examples. The simplest are linear representations $V$ of $T$ such that $\det V$ is trivial. More generally we have

\begin{ex}
Let $T$ act on a smooth projective Fano variety $X$, and let $\cE$ be an equivariant ample locally free sheaf such that $\det \cE \simeq \omega_X^\dual$. Then the total space of the dual vector bundle $Y = \op{Spec}_X (S^\ast \cE)$ is equivariantly Calabi-Yau and the canonical map $Y \to \op{Spec}(\Gamma(X,S^\ast \cE))$ is projective, so $Y$ is projective over affine and by Corollary \ref{cor_all_quotients_equivalent} any two generic GIT quotients $Y//T$ are derived-equivalent.
\end{ex}

When $G$ is non-abelian, the chamber structure of $\cC^G(X)$ can be more complicated. There can be walls of codimension 0, meaning open regions in the interior of $\cC^G(X)$ where $\X^s \neq \X^{ss}$, and not all walls are truly faithful \cite{DH98}. Still, there are examples where derived Kirwan surjectivity can give derived equivalences under wall crossings which are not balanced.

\begin{defn}
A wall crossing, determined by $\cL_\pm = \cL_0 \pm \epsilon \cL^\prime$, will be called \emph{almost balanced} if under the decomposition \eqref{eqn_sss_decomp}, $m_+ = m_-$ and one can choose maximal destabilizers such that $\lambda_i^- = (\lambda_i^+)^{-1}$ and the closures of $Z_i^+$ and $Z_i^-$ agree.
\end{defn}

In an almost balanced wall crossing for which $\omega_X|_{Z_i}$ has weight $0$ for all $i$, we have the following general principal for establishing a derived equivalence:
\begin{ansatz} \label{ansatz_flop}
One can choose $w$ and $w^\prime$ such that $\G^+_w = \G^-_{w^\prime}$ as subcategories of $\D^b(X^{ss}(\cL_0)/G)$, where $\G^\pm_\bullet$ is the category identified with $\D^b(X^{ss}(\cL_\pm) / G)$ under restriction.
\end{ansatz}
Note that even when $\omega_X|_{X^{ss}(\cL_0)}$ is equivariantly trivial, the ansatz does not follow tautologically as it does for a balanced wall crossing, because the $Z^\pm_i$ are not identical but merely birational. Still one can verify the ansatz in some examples. For instance, one can recover a result of Segal \& Donovan \cite{DS12}:

\begin{ex}[Grassmannian flop] \label{ex_grassmannian}
Choose $k < N$ and let $V$ be a $k$-dimensional vector space. Consider the action of $G = GL(V)$ on $X = T^\ast \op{Hom}(V,\bC^N) = \op{Hom}(V,\bC^N) \times \op{Hom}(\bC^N,V)$ via $g \cdot(a,b) = (a g^{-1},gb)$. A \oneps\ $\lambda : \Gm \to G$ corresponds to a choice of weight decomposition $V \simeq \bigoplus V_\alpha$ under $\lambda$. A point $(a,b)$ has a limit under $\lambda$ iff
$$V_{>0} \subset \op{ker} (a) \quad \text{and} \quad \op{im} (b) \subset V_{\geq 0}$$
in which case the limit $(a_0,b_0)$ is the projection onto $V_0 \subset V$. There are only two nontrivial linearizations up to rational equivalence, $\cO_X(\det^{\pm})$. A point $(a,b)$ is semistable iff any \oneps\ for which $\lambda(t) \cdot (a,b)$ has a limit as $t\to 0$ has nonnegative pairing with the chosen character, $\det^\pm$.

In order to determine the stratification, it suffices to fix a maximal torus of $GL(V)$, i.e. and isomorphism $V \simeq \bC^k$, and to consider diagonal one parameter subgroups $(t^{w_1},\ldots,t^{w_k})$ with $w_1 \leq \cdots \leq w_k$. If we linearize with respect to $\det^{-1}$, then the KN stratification is indexed by $i = 0,\ldots,k-1$:
\begin{gather*}
\lambda_i = (1,\ldots,1,t,\ldots,t) \text{ with } i \text{ ones} \\
Z_i = \left\{ \left( \left[ \begin{array}{c|c} \square & 0 \end{array} \right], \begin{bmatrix} \ast \\ \hline 0 \end{bmatrix} \right) , \begin{array}{l} \text{with } \ast \in M_{i \times N}, \\ \text{and } \square \in M_{N\times i} \text{ full rank}  \end{array} \right\} \\
Y_i = \left\{ \left( \left[ \begin{array}{c|c} \square & 0 \end{array} \right], b \right) , \begin{array}{l} \text{with } b \in M_{k \times N} \text{ arbitrary}, \\ \text{and } \square \in M_{N\times i} \text{ full rank}  \end{array} \right\} \\
S_i = \{ (a,b) | b \text{ arbitrary, } \op{rank} a = i \}
\end{gather*}
So $(a,b) \in X$ is semistable iff $a$ is injective. If instead we linearize with respect to $\det$, then $(a,b)$ is semistable iff $b$ is surjective, the $\lambda_i$ flip, and the critical loci $Z_i$ are the same \emph{except that the role of $\square$ and $\ast$ reverse in the description of $Z_i$}. So this is an almost balanced wall crossing with $\cL_0 = \cO_X$ and $\cL^\prime = \cO_X(\det)$.

Let $\bG(k,N)$ be the Grassmannian parametrizing $k$-dimensional subspaces $V \subset \bC^N$, and let $0 \to U(k,N) \to \cO^N \to Q(k,N) \to 0$ be the tautological sequence of vector bundles on $\bG(k,N)$. Then $\X^{ss}( \det^{-1} )$ is the total space of $U(k,N)^N$, and $\X^{ss}(\det)$ is the total space of $(Q(N-k,N)^\dual)^N$ over $\bG(N-k,N)$.

In order to verify that $\G^+_w = \G^-_{w^\prime}$ for some $w^\prime$, one observes that the representations of $GL_k$ which form the Kapranov exceptional collection \cite{Ka84} lie in the weight windows for $G^+_0 \simeq \D^b(\X^{ss}(\det^{-1})) = \D^b(U(k,N)^N)$. Because $U(k,N)^N$ is a vector bundle over $\bG(k,N)$, these objects generate the derived category. One then verifies that these object lie in the weight windows for $\X^{ss}(\det)$ and generate this category for the same reason. Thus by verifying Ansatz \ref{ansatz_flop} we have established an equivalence of derived categories,
$$\D^b(U(k,N)^N) \simeq \D^b((Q(N-k,N)^\dual)^N).$$

The astute reader will observe that these two varieties are in fact isomorphic, but the derived equivalences we have constructed are natural in the sense that they generalize to families. Specifically, if $\cE$ is an $N$-dimensional vector bundle over a smooth variety $Y$, then the two GIT quotients of the total space of $\inner{\op{Hom}}(\cO_Y \otimes V , \cE) \oplus \inner{\op{Hom}}(\cE, \cO_Y \otimes V)$ by $GL(V)$ will have equivalent derived categories.
\end{ex}

The key to verifying Ansatz \ref{ansatz_flop} in this example was the simple geometry of the GIT quotients $\X^{ss}(\det^\pm)$ and the fact that we have explicit generators for the derived category of each. With a more detailed analysis, one can verify Ansatz \ref{ansatz_flop} for more examples of balanced wall crossings, and we will describe this in a future paper.
 
\begin{rem}
This example is similar to the generalized Mukai flops of \cite{CKL10}. The difference is that we are not restricting to the hyperk\"{a}hler moment fiber $\{ba = 0\}$. We will see in the next section that categorical Kirwan surjectivity applies in this example, but it is harder to verify Ansatz \ref{ansatz_flop}.
\end{rem}

\section{Applications to complete intersections: matrix factorizations and hyperk\"{a}hler reductions}
\label{sect_further_applications}

In the example of a projective variety, where we identified $\D^b(Y)$ with a full subcategory of the derived category of finitely generated graded modules over the homogeneous coordinate ring of $Y$, the vertex of the affine cone satisfied Property \hyperref[property_L_plus]{(L+)} ``for free." In more complicated examples, the cotangent positivity property \hyperref[property_L_plus]{(L+)} can be difficult to verify.

Here we discuss several techniques for extending categorical Kirwan surjectivity to stacks $X/G$ where $X$ is a local complete intersection. First we provide a geometric criterion for Property \hyperref[property_L_plus]{(L+)} to hold, which allows us to apply Theorem \ref{thm_kirwan_surj} to some hyperk\"{a}hler quotients. We also discuss two different approaches to categorical Kirwan surjectivity for LCI quotients, using Morita theory and derived categories of singularities.

\subsection{A criterion for Property \hyperref[property_L_plus]{(L+)} and non-abelian Hyperk\"{a}hler reduction}

In this section we study a particular setting in which Property \hyperref[property_L_plus]{(L+)} holds for the KN stratification of a singular quotient stack. This will allow us to address some hyperk\"{a}hler reductions by non-abelian groups. Throughout this section, if $V$ is a representation of a group $G$, and $\X = X/G$, then we will use $\cO_\X(V) \in \qcoh{\X}$ to denote the pullback of the quasicoherent sheaf on $\pt / G$ corresponding to $V$. Concretely, $\cO_{\X}(V)$ is the sheaf $\cO_X \otimes_k V$ with $G$-equivariant structure induced by the representation $V$.

Let $X^\prime$ be a smooth quasiprojective variety with a linearizable action of a reductive group $G$, and let $S^\prime = G \cdot Y^\prime \subset X^\prime$ be a closed KN stratum (Definition \ref{def_KN_stratification}). Because $X^\prime$ is smooth, $Y^\prime$ is a $P$-equivariant bundle of affine spaces over $Z^\prime$. Let $V$ be a linear representation of $G$, and $s : X^\prime \to V$ an equivariant map. Alternatively, we think of $s$ as an invariant global section of the locally free sheaf $\cO_{\X^\prime} (V)$. Note that if we decompose $V = V_+ \oplus V_0 \oplus V_-$ under the weights of $\lambda$, then $\Gamma(\S^\prime, \cO_{\S^\prime} ( V_- )) = 0$, so $s|_{\S^\prime}$ is a section of $\cO_{\S^\prime} ( V_0 \oplus V_+ )$.

\begin{prop} \label{prop_hyperkaehler_L_plus}
Let $X^\prime$, $S^\prime$, and $s$ be as above. Define $X = s^{-1}(0)$, $S = S^\prime \cap X$, and $Z = Z^\prime \cap X$, and assume that $X$ has codimension $\dim V$. If for all $z \in Z$, $(ds)_z : T_z X^\prime \to V$ is surjective in positive weights w.r.t. $\lambda$, then Property \hyperref[property_L_plus]{(L+)} holds for $S / G \hookrightarrow X / G$.
\end{prop}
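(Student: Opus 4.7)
The plan is to exhibit a cofibrant replacement $\cB^\bdot \to \cO_S$ of $\cO_S$ as a commutative dg-$\cO_X$-algebra whose sheaf of generators $E^\bdot$ has only non-negative $\lambda$-weights when restricted to $Z$. By the construction of the cotangent complex recalled in Section~\ref{sect_cotangent_complex}, this will give $L\sigma^\ast \bL^\bdot_{S/X} \simeq \cO_Z \otimes E^\bdot$ in non-negative weights, establishing Property~\hyperref[property_L_plus]{(L+)}.

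The first step is to observe that $s|_{Y'}$ lands in the $\lambda$-equivariant subsheaf $\cO_{Y'}(V_0 \oplus V_+)$: the $V_-$-component $s_-|_{Y'}$ would be a $G$-invariant section of $\cO_{Y'}(V_-)$, and such an invariant lies in the weight-$0$ piece of $\bigoplus_{i\leq 0}(\cO_{Y'})_i \otimes V_-$, which is zero because $V_-$ has strictly negative weights while $(\cO_{Y'})_j=0$ for $j>0$ by Property~\ref{property_S_1}. Working $\lambda$-equivariantly in a neighborhood of $Z$, take the Koszul semi-free model $\cB_0 := \cO_X[\cF^\vee[1]]$, where $\cF^\vee$ is a locally free sheaf of positive $\lambda$-weights (lifting $\coNorm_{S'/X'}$ by Property~\ref{property_S_3}) and the differential is induced by local equations $\bar{f}_i$ of $S'\subset X'$ restricted to $X$.

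The cohomology of $\cB_0$ is the derived intersection $\cO_{S'}\otimes^{L}_{\cO_{X'}}\cO_X$, which I would compute using the Koszul resolution $(\cO_{X'}[V^\vee[1]],d_s)\xrightarrow{\sim}\cO_X$. Tensoring with $\cO_{S'}$ yields $(\cO_{S'}\otimes \Lambda^\bdot V^\vee,\, d_{s|_{S'}})$. Because $s|_{S'}$ annihilates $V_-^\vee$ by the first step, this factors $\lambda$-equivariantly as
\[
\Lambda^\bdot V_-^\vee \,\otimes_{\cO_{S'}}\, \op{Kosz}(s_+|_{S'};V_+) \,\otimes_{\cO_{S'}}\, \op{Kosz}(s_0|_{S'};V_0).
\]
The hypothesis enters here: since $T_z S' = (T_z X')_{\geq 0}\oplus \lie{g}_{\lambda<0}$ and $\lie{g}_{\lambda<0}$ carries no positive weights, one has $(T_z S')_+ = (T_z X')_+$, so the map $(ds_+|_{S'})_z : T_zS'\to V_+$ factors through $(T_zS')_+ = (T_zX')_+$ and is surjective by assumption. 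Hence $s_+|_{S'}$ is a regular section of $\cO_{S'}(V_+)$ near $Z$, and the middle Koszul factor is quasi-isomorphic to $\cO_{S'\cap V(s_+)}$ concentrated in degree $0$. The remaining factors --- $\Lambda^\bdot V_-^\vee$ of positive $\lambda$-weights and the Koszul complex on $V_0^\vee$ of weight $0$ --- contribute only non-negative weights, so $H^\bdot(\cB_0)|_Z$ lies in non-negative weights.

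The final step is to extend $\cB_0$ to a full semi-free resolution $\cB^\bdot = \cO_X[E^\bdot]\xrightarrow{\sim}\cO_S$ by iteratively attaching free generators in successively lower cohomological degrees to kill residual higher cohomology. At each stage the classes to be killed are built from products and symmetric powers of the existing generators and of the non-negative-weight cohomology computed above, so each new batch of generators can be chosen of non-negative $\lambda$-weight at $Z$; in the limit, $E^\bdot|_Z$ lies in non-negative weights. The main technical obstacle is the Koszul analysis in the middle step: the identification $(T_z S')_+ = (T_zX')_+$, together with the surjectivity hypothesis, is what converts ``surjectivity in positive weights'' at the level of $X'$ into local regularity of $s_+|_{S'}$ at the level of $S'$, which is the essential geometric input of the argument.
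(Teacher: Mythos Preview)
Your middle step is correct and captures the same geometric input as the paper's Lemma~\ref{lem_cotangent_complex_stratum}: the surjectivity hypothesis makes $s_+$ a regular section of $\cO(V_+)$ near $Z$, and the resulting Koszul factorization gives $H^\ast(\cB_0)$ non-negative weights at $Z$. But the final step has a genuine gap. You assert that at each stage the cohomology to be killed is ``built from products and symmetric powers of the existing generators and of the non-negative-weight cohomology,'' and conclude that new generators can be chosen with non-negative weight. This does not follow: the intermediate cohomology $H^{-n}(\cB_{n-1})$ is an $\cO_X$-module, not merely a combination of the previous generators, and $\cO_X$ contains elements of arbitrary $\lambda$-weight near $Z$. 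A toy model already shows the issue: over $R=k[x,y]$ with $\op{wt}(x)=1$, $\op{wt}(y)=-1$, the dg-algebra $R[e_1,e_2]$ with $\op{wt}(e_1)=1$, $\op{wt}(e_2)=0$, $de_1=x$, $de_2=xy$ has $H^{-1}\cong k[y]\cdot[e_2-ye_1]$, which carries arbitrarily negative weights even though both generators are non-negative. More fundamentally, the existence of a semi-free resolution with non-negative-weight generators at $Z$ is \emph{equivalent} to $(\sigma^\ast\bL^\bdot_{S/X})_{<0}\simeq 0$, so invoking the former to prove the latter is circular unless you supply an independent inductive control on the weights of $H^{-n}(\cB_{n-1})$, which you have not.

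The paper sidesteps this entirely. Rather than building a resolution, it uses the transitivity triangle
\[
j^\ast\bL^\bdot_{\X}\to\bL^\bdot_{\S}\to\bL^\bdot_{\S/\X}\parr
\]
and computes the negative-weight part of each of the first two terms explicitly: the first is $[\cO_Z(V^\vee)\to\Omega^1_{X'}|_Z]$ because $X\subset X'$ is a complete intersection, and the second is handled by Lemma~\ref{lem_cotangent_complex_stratum}, whose proof uses the intermediate variety $Y_0=\pi^{-1}(Z)$ and the regularity of $s_+|_{Y_0}$ (your middle step, essentially) to exhibit $Z\subset Y\subset Y_0$ as a chain of regular embeddings. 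One then checks directly that the map between these two-term complexes is an isomorphism on negative-weight parts, so the cone $(\sigma^\ast\bL^\bdot_{\S/\X})_{<0}$ vanishes. This bypasses any inductive resolution-building and uses only finite, explicit complexes.
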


Before proving the proposition, we compute the cotangent complex of $\S$.

\begin{lem} \label{lem_cotangent_complex_stratum}
If for all $z \in Z \subset Z^\prime$, $(ds)_z : T_z X^\prime \to V$ is surjective in positive weights w.r.t. $\lambda$, then
$$(\sigma^\ast \bL^\bdot_{\S})_{<0} \simeq [ \cO_\Z (V_+^\dual) \xrightarrow{(ds_+)^\dual} (\Omega_{Y^\prime}|_Z)_{<0} ]$$
and is thus a locally free sheaf concentrated in cohomological degree $0$.
\end{lem}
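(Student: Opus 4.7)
The plan is to view $\S$ as the zero locus of $s$ inside the ambient KN stratum $\S^\prime$ (which is smooth by Lemma~\ref{lem:smooth_strata} since $X^\prime$ is smooth), compute $\bL^\bdot_\S$ from the conormal sequence of this closed immersion, and then extract the weight-$(<0)$ piece.

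First I would observe that $s|_{\S^\prime}$ is an invariant section of $\cO_{\S^\prime}(V_0\oplus V_+)$, since its $V_-$-component vanishes by the remark that $\Gamma(\S^\prime,\cO_{\S^\prime}(V_-))=0$. Next I verify that $S\hookrightarrow S^\prime$ is a regular closed immersion of codimension $\dim V_0+\dim V_+$, at least in a neighbourhood of $Z$ (which is all that is needed, since $\sigma^\ast \bL^\bdot_\S$ only depends on the behaviour of $\bL^\bdot_\S$ near $Z$). The codimension hypothesis on $X\subset X^\prime$ forces $(ds)_z: T_z X^\prime\to V$ to be surjective for each $z\in Z$, and decomposing by $\lambda$-weights gives in particular surjectivity of $(ds_0)_z: T_zZ^\prime\to V_0$. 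Combined with the hypothesis that $(ds_+)_z$ is surjective, and with the $\lambda$-weight decomposition $T_zS^\prime \simeq T_zZ^\prime \oplus \fN_{Z^\prime/Y^\prime}|_z \oplus \lie{g}_{<0}$ (Property~\ref{property_S_2} together with smoothness of $X^\prime$), this yields surjectivity of $(ds|_{S^\prime})_z: T_zS^\prime\twoheadrightarrow V_0\oplus V_+$.

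Consequently $\bL^\bdot_{\S/\S^\prime}\simeq \cO_\S(V_0^\dual\oplus V_+^\dual)[1]$ near $Z$, and combining with smoothness of $\S^\prime$ via the standard cotangent triangle represents $\bL^\bdot_\S$ there as $[\cO_\S(V_0^\dual\oplus V_+^\dual)\xrightarrow{(ds)^\dual}\Omega^1_{\S^\prime}|_\S]$ in cohomological degrees $[-1,0]$. Restricting to $\Z$ term-by-term (valid since each term is locally free) and then passing to weights $<0$, Property~\ref{property_S_2} gives the splitting $\Omega^1_{S^\prime}|_Z \simeq \Omega^1_{Y^\prime}|_Z \oplus \lie{g}_{<0}^\dual\otimes\cO_Z$, whose second summand lies in strictly positive weights and thus does not contribute; together with $(\cO_\Z(V_0^\dual\oplus V_+^\dual))_{<0}=\cO_\Z(V_+^\dual)$, this yields precisely the two-term complex $[\cO_\Z(V_+^\dual)\xrightarrow{(ds_+)^\dual}(\Omega^1_{Y^\prime}|_Z)_{<0}]$ asserted in the claim. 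Finally, the hypothesis that $(ds_+)_z$ is surjective dualizes to a pointwise injection of locally free sheaves, hence a sub-bundle inclusion whose cokernel is locally free and concentrated in cohomological degree $0$.

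The main obstacle is the verification that $S\subset S^\prime$ is a regular closed immersion of the expected codimension, which hinges on the surjectivity of $(ds_+)_z$ transported against the Property~\ref{property_S_2} splitting of $T_zS^\prime$; once this is in hand, the two-term shape of $\bL^\bdot_\S$ and the subsequent weight decomposition reduce to bookkeeping of $\lambda$-weights in the relevant locally free sheaves.
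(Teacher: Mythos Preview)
Your argument contains a genuine gap. The assertion ``the codimension hypothesis on $X\subset X'$ forces $(ds)_z: T_z X'\to V$ to be surjective for each $z\in Z$'' is false: a complete intersection can be singular, and $(ds)_z$ fails to surject exactly at the singular points of $X$, which may well lie on $Z$. For a concrete counterexample in the setup at hand, take $X'=\bA^2$ with $\Gm$ acting by $t\cdot(x,y)=(tx,t^{-1}y)$, $V=k$ with trivial action, and $s(x,y)=xy$; then $X=\{xy=0\}$ has codimension $1$, $Z=\{(0,0)\}$, and $(ds)_{(0,0)}=0$. In this example $V_+=0$ so the positive-weight hypothesis is vacuous, $Y'=\{y=0\}$, and $s|_{Y'}\equiv 0$, so $S=S'$ and your claimed regular immersion $S\hookrightarrow S'$ of codimension $\dim V_0+\dim V_+=1$ is simply wrong. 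Without regularity of $\S\hookrightarrow\S'$ you cannot write $\bL^\bdot_\S$ as the two-term complex you propose.

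The paper avoids this by a different factorisation. Rather than comparing $\S$ with the smooth ambient stratum $\S'$, it uses the chain $\Z\hookrightarrow\S\hookrightarrow\S_0$ where $\S_0=\pi^{-1}(Z)/P$ is the restriction of the affine bundle $Y'\to Z'$ to $Z$. The point is that the weight-$0$ equation $s_0$ has already been used to cut $Z$ out of $Z'$, so $\S_0$ is automatically an affine bundle over $Z$ (hence $\Z\hookrightarrow\S_0$ is regular with conormal $(\Omega^1_{Y'}|_Z)_{<0}$), while $\S\hookrightarrow\S_0$ is cut out by $s_+$ alone, for which the given positive-weight surjectivity hypothesis is exactly what is needed to conclude regularity. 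One then reads off $(\sigma^\ast\bL^\bdot_\S)_{<0}\simeq(\bL^\bdot_{\Z/\S})_{<0}[-1]$ from the transitivity triangle for $\Z\hookrightarrow\S$ together with $(\bL^\bdot_\Z)_{<0}=0$. Your approach could perhaps be salvaged by arguing separately that the failure of regularity in the $V_0$-direction contributes only weight-$0$ terms to $\sigma^\ast\bL^\bdot_{\S/\S'}$, but that requires a further argument you have not given.
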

\begin{proof}
First of all note that from the inclusion $\sigma: \Z \hookrightarrow \S$ we have
$$(\sigma^\ast \bL^\bdot_\S)_{<0} \to (\bL^\bdot_\Z)_{<0} \to (\bL^\bdot_{\Z / \S})_{<0} \parr$$
The cotangent complex $\bL^\bdot_\Z$ is supported in weight $0$ because $\lambda$ acts trivially on $Z$, so the middle term vanishes, and we get $(\sigma^\ast \bL^\bdot_\S)_{<0} \simeq (\bL^\bdot_{\Z/\S})_{<0}[-1]$, so it suffices to consider the latter.

By definition $Y$ is the zero fiber of $s : Y^\prime \to V_0 \oplus V_+$. Denote by $s_0$ the section of $V_0$ induced by the projection of $P$-modules $V_+ \oplus V_0 \to V_0$. We consider the intermediate variety $Y \subset Y_0 := s_0^{-1} (0) \subset Y^\prime$. Note that $Y_0 = \pi^{-1} (Z_0)$, where $\pi :Y^\prime \to Z^\prime$ is the projection.

Note that $Y_0 \to Z$ is a bundle of affine spaces with section $\sigma$, so in particular $\Z \subset \S_0$ is a regular embedding with conormal bundle $(\Omega^1_{Y^\prime}|_Z)_{<0} = (\Omega^1_{X^\prime}|_Z)_{<0}$. Furthermore, on $Y_0$ the section $s_0$ vanishes by construction, so $Y \subset Y_0$, which by definition is the vanishing locus of $s|_{Y_0}$, is actually the vanishing locus of the map $s_+ : Y_0 \to V_+$. The surjectivity of $(ds)_z$ for $z \in Z$ in positive weights implies that $s_+^{-1}(0)$ has expected codimension in every fiber over $Z$ and thus $\S \subset \S_0$ is a regular embedding with conormal bundle $\cO_{\S} (V_+^\dual)$.

It now follows from the canonical triangle for $\Z \subset \S \subset \S_0$ that
$$\bL^\bdot_{\Z/\S} \simeq \op{Cone}(\sigma^\ast \bL_{\S / \S_0} \to \bL_{\Z/\S_0}) \simeq [\cO_{\Z} (V_+^\dual) \xrightarrow{ds_+} (\Omega^1_{Y^\prime}|_Z)_{<0}]$$
with terms concentrated in cohomological degree $-2$ and $-1$. The result follows.
\end{proof}

\begin{proof}[Proof of Proposition \ref{prop_hyperkaehler_L_plus}]

We will use Lemma \ref{lem_cotangent_complex_stratum} to compute the restriction of the relative cotangent complex, $(\sigma^\ast \bL^\bdot_{\S / \X})_{<0}$. We consider the canonical diagram
$$\xymatrix{ [\cO_Y(V^\dual) \to \Omega^1_{X^\prime}|_Y] \ar[r] \ar[d]^a & [\cO_Y(V_{\geq 0})^\dual \to \Omega^1_{Y^\prime}|_{Y}] \ar[d]^b & & \\ j^\ast \bL^\bdot_{\X} \ar[r] & \bL^\bdot_{\S} \ar[r] & \bL^\bdot_{\S / \X} \ar@{-->}[r] & }$$
where the bottom row is an exact triangle and we have used the identification $\S^\prime \simeq Y^\prime / P$ and $\S \simeq Y/P$. Because $X \subset X^\prime$ has the expected codimension, it is a complete intersection and the morphism $a$ is a quasi-isomorphism. Lemma \ref{lem_cotangent_complex_stratum} implies that $b$ is a quasi-isomorphism after applying the functor $(\sigma^\ast (\bullet))_{<0}$.

Thus we have a quasi-isomorphism
\begin{align*}
(\sigma^\ast \bL^\bdot_{\S/\X})_{<0} &\simeq \op{Cone} \left( [\cO_Z(V^\dual) \to \Omega^1_{X^\prime}|_Z] \to [\cO_Z(V_{\geq 0})^\dual \to \Omega^1_{Y^\prime}|_{Z}] \right)_{<0}\\
&\simeq \op{Cone} \left( (\Omega^1_{X^\prime}|_Z)_{<0} \to (\Omega^1_{Y^\prime}|_{Z})_{<0} \right) \simeq 0
\end{align*}
The last isomorphism follows because $\Omega^1_{Y^\prime}|_Z$ is the negative weight eigenspace of $\Omega^1_{X^\prime}|_Z$ by construction.
\end{proof}

Now let $(M,\omega)$ be an algebraic symplectic manifold with a Hamiltonian $G$ action, i.e. there is a $G$-equivariant algebraic map $\mu : M \to \lie{g}^\dual$ such that for any $\xi \in \lie{g}$, $d \langle \xi , \mu \rangle = - \omega (\partial_\xi, \bullet) \in \Gamma(M,\Omega^1_M)$, where $\partial_\xi$ is the vector field corresponding to $\xi \in \lie{g}$.

For any point $x \in M$, let $G_x$ be the stabilizer of $x$. We have an exact sequence
\begin{equation} \label{eqn_moment_sequence}
0 \to \op{Lie} G_x \to \lie{g} \xrightarrow{d\mu} T^\ast_x M \overset{\omega}{\simeq} T_x M \to (\fN_{G \cdot x} M)_x \to 0,
\end{equation}
showing that $X := \mu^{-1}(0)$ is regular at any point with finite stabilizer groups. Thus if the set of such points is dense in $X$, then $X \subset M$ is a complete intersection cut out by $\mu$.
\begin{cor}
Let $(M,\omega)$ be a projective-over-affine algebraic symplectic manifold with a Hamiltonian action of a reductive group $G$, with a choice of linearization, and let $X = \mu^{-1}(0) \subset M$. If $X^{s}$ is dense in $X$, then Property \hyperref[property_L_plus]{(L+)} holds for the GIT stratification of $X$.
\end{cor}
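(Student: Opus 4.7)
The plan is to apply Proposition \ref{prop_hyperkaehler_L_plus} with ambient smooth variety $X' = M$, linear representation $V = \lie{g}^\dual$ (equipped with the coadjoint action), and equivariant section $s = \mu$. For each closed KN stratum $S_i$ of the unstable locus of $X$, hypothesis $(\dagger)$ of Definition \ref{def_KN_stratification} is realized by taking the KN stratum $S'_i \subset M$ associated with the same distinguished one-parameter subgroup $\lambda_i$ and the component $Z'_i$ of $M^{\lambda_i}$ containing $Z_i$. Because $M$ is smooth, the resulting blade $Y'_i = \pi^{-1}(Z'_i)$ automatically satisfies properties \ref{property_S_1}--\ref{property_S_3}, and since limits along $\lambda_i$ in $X$ coincide with limits in $M$, we have $Y_i = Y'_i \cap X$ and hence $S_i$ is a union of connected components of $S'_i \cap X$.

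The codimension hypothesis of Proposition \ref{prop_hyperkaehler_L_plus} is immediate from the density of stable points. At any $x \in X^s$ the stabilizer $G_x$ has finite identity component, so the sequence \eqref{eqn_moment_sequence} forces $d\mu_x : T_xM \to \lie{g}^\dual$ to be surjective. Thus $X = \mu^{-1}(0)$ is smooth of codimension $\dim\lie{g}$ along the open dense subset $X^s$. Every irreducible component of $X$ meets the open dense $X^s$, so every component has codimension $\dim \lie{g} = \dim V$ in $M$.

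The central step is to verify the surjectivity hypothesis: for each $z \in Z_i = Z'_i \cap X$, the map $(d\mu)_z : T_zM \to \lie{g}^\dual$ is surjective in positive $\lambda_i$-weights. Dualizing \eqref{eqn_moment_sequence} through the symplectic form gives, as $\lambda_i$-representations,
\[
\op{coker}\bigl((d\mu)_z\bigr) \;\simeq\; \op{Lie}(G_z)^{\dual}.
\]
Consequently the required surjectivity is equivalent to the Lie algebra containment $\op{Lie}(G_z) \subset \lie{g}_{\geq 0} = \op{Lie}(P_i)$, i.e.\ to the vanishing of the negative $\lambda_i$-weight part of $\op{Lie}(G_z)$.

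The main obstacle is establishing this containment, which I expect to follow from Kempf's theorem on optimal destabilizers. By the Hilbert--Mumford construction of the KN stratification, $\lambda_i$ is a representative of the optimal destabilizing \oneps\ class for each $z \in Z_i$, so the canonical Kempf parabolic $P(z)$ of $z$ coincides with $P_i$. Kempf's theorem then asserts that $G_z \subset P(z) = P_i$, yielding the desired inclusion on Lie algebras. With this verification, Proposition \ref{prop_hyperkaehler_L_plus} applies stratum-by-stratum and produces Property \hyperref[property_L_plus]{(L+)} for the full GIT stratification of $X$.
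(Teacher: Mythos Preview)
Your proposal is correct and follows the same approach as the paper: apply Proposition \ref{prop_hyperkaehler_L_plus} with $X'=M$, $V=\lie{g}^\dual$, $s=\mu$, use density of $X^s$ for the codimension hypothesis, and identify $\op{coker}\bigl((d\mu)_z\bigr)\simeq\op{Lie}(G_z)^\dual$ so that surjectivity in positive weights reduces to $\op{Lie}(G_z)\subset\op{Lie}(P_i)$.

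The only difference is in how you justify this last inclusion. You appeal to Kempf's theorem on optimal destabilizers, arguing that $P(z)=P_i$ and hence $G_z\subset P_i$. The paper instead observes that this is immediate from Property \ref{property_S_2} of the KN stratum $S'_i\subset M$: the isomorphism $G\times_{P_i}Y'_i\to S'_i$ forces the $G$-stabilizer of any point of $Y'_i$ (in particular any $z\in Z_i\subset Z'_i$) to lie in $P_i$. Since \ref{property_S_2} is part of Definition \ref{def_KN_stratification} and is itself proved via Kempf's theorem, the two routes are equivalent; citing \ref{property_S_2} is simply more direct and avoids re-deriving what is already built into the definition of a KN stratum.
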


\begin{proof}
The exact sequence \eqref{eqn_moment_sequence} shows that $d\mu$ is injective at any point of $X$ which has a finite stabilizer, and in particular any point of $X^s$. Hence $X$ has the expected codimension. Furthermore, for any $z \in Z$, we have $$\op{Lie}(G_z)^\dual = \op{coker}( (d\mu)_z : T_z M \to \lie{g}^\dual).$$
For any point in a KN stratum $p \in S \subset M$, Property \ref{property_S_2} implies that $\op{Lie}(G_p) \subset \op{Lie}(P_\lambda)$, which has nonnegative weights with respect to the adjoint action of $\lambda$. It follows that $\mu : M \to \lie{g}^\dual$ satisfies the hypotheses of Proposition \ref{prop_hyperkaehler_L_plus}.
\end{proof}

\begin{ex}[stratified Mukai flop]
In Example \ref{ex_grassmannian} we considered the GIT stratification for the action of $GL(V)$ on $M := \op{Hom}(V,\bC^N) \times \op{Hom}(\bC^N,V)$. This representation is symplectic, and it has an algebraic moment map $\mu(a,b) = ba \in \lie{gl}(V)$. The KN stratification of $X = \mu^{-1}(0)$ is induced by the stratification of $M$. Thus $Y_i \subset X$ consists of
$$Y_i = \left\{ \left( \left[ \begin{array}{c|c} a_1 & 0 \end{array} \right], \begin{bmatrix} b_1 \\ \hline b_2 \end{bmatrix} \right) , \begin{array}{l} \text{with } b_1 a_1 = 0, b_2 a_1 = 0 , \\ \text{and } a_1 \in M_{N\times i} \text{ full rank}  \end{array} \right\} \\$$
and $Z_i \subset Y_i$ are those points where $b_2=0$. Note that over a point in $Z_i$, the condition $b_2 a_1 = 0$ is linear in the fiber, and so $Y_i \to Z_i$ satisfies Property \hyperref[property_A]{(A)}.

The GIT quotient $X^{ss}/GL(V)$ is the cotangent bundle $T^\ast \bG(k,N)$. Property \hyperref[property_A]{(A)} holds in this example, and Property \hyperref[property_L_plus]{(L+)} holds by Proposition \ref{prop_hyperkaehler_L_plus}, so Theorem \ref{thm_kirwan_surj} gives a fully faithful embedding $\D^b(T^\ast \bG(k,N)) \subset \D^b(X / GL(V))$ for any choice of integers $w_i$. The derived category $\D^b(T^\ast \bG(k,N))$ has been intensely studied by Cautis, Kamnitzer, and Licata from the perspective of categorical $\lie{sl}_2$ actions. We will discuss the connection between their results and categorical Kirwan surjectivity in future work.
\end{ex}

\subsection{Applications to derived categories of singularities and abelian hyperk\"{a}hler reductions, via Morita theory}
In this section we discuss an alternative method of extending Theorem \ref{thm_kirwan_surj_prelim} to singular stacks which are complete intersections in the sense that they arise as the derived fibers of maps between smooth stacks. We use derived Morita theory as developed in \cite{BFN10}, which applies in our context because all of our stacks are global quotients by linearizable group actions in characteristic $0$, and hence they are perfect.

If $f : \X \to \fB$ is a morphism between perfect stacks and $b \in \fB(k)$ a closed point, the fiber $\X_b := \X \times^L_\fB \{b\}$ is a derived stack and should be understood in the context of derived algebraic geometry. However, one can describe the categories $\D^b(\X_b)$ and $\op{Perf}(\X_b)$ using less technology. If $\cA^\bdot \to \cO_{b}$ is a resolution of $\cO_b$ by a commutative dg-$\cO_{\fB}$-algebra which is locally freely generated as a graded commutative algebra, then $f^\ast \cA^\bdot$ is a sheaf of commutative dg-algebras over $\X$. $\D^b(\X_b)$ is equivalent to the derived category of sheaves of dg-modules over $f^\ast \cA^\bdot$ whose cohomology sheaves are coherent over $\X$, and $\op{Perf}(\X)$ is the category of perfect complexes of $f^\ast \cA^\bdot$-modules.

\begin{prop} \label{prop_derived_fibers}
Let $\X = X/G$ be a smooth quotient stack, and let $\X^{ss} \subset \X$ be an open substack whose complement admits a KN stratification. Let $f : \X \to \fB$ be a morphism to a perfect stack $\fB$, and let $b \in \fB(k)$ be a closed $k$-point. Assume that for each $Z_i$ and $\lambda_i$ in the KN stratification of $\X^{us}$ and each point $x \in Z_i$, the homomorphism $\Gm \to \op{Aut}_{\X} (z) \to \op{Aut}_{\fB} (f(z))$ is trivial. Then splitting of Theorem \ref{thm_kirwan_surj_prelim} induces splittings of the natural restriction functors:
$$\xymatrix@R=5pt{\D^b(\X_b) \ar@{->>}[r]_{i^\ast} & \D^b(\X^{ss}_b) \ar@/_/[l] \\ \op{Perf}(\X_b) \ar@{->>}[r]_{i^\ast} & \op{Perf}(\X^{ss}_b) \ar@/_/[l]}$$
\end{prop}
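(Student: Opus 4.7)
The plan is to apply the derived Morita theory of \cite{BFN10} to base change the splitting of Theorem \ref{thm_kirwan_surj_prelim} along $\op{Spec}(k) \to \fB$ corresponding to $b$. Since $\X$, $\X^{ss}$, and $\fB$ are all perfect stacks, we have tensor-product presentations
$$\op{Perf}(\X_b) \simeq \op{Perf}(\X) \otimes_{\op{Perf}(\fB)} \op{Perf}(b), \qquad \op{Perf}(\X_b^{ss}) \simeq \op{Perf}(\X^{ss}) \otimes_{\op{Perf}(\fB)} \op{Perf}(b),$$
in which $i^\ast$ on fibers is identified with the base change of $i^\ast : \op{Perf}(\X) \to \op{Perf}(\X^{ss})$ (using that $\X$, and hence $\X^{ss}$, is smooth so $\op{Perf}(\X) = \D^b(\X)$, and similarly for $\X^{ss}$).

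The key technical step is to verify that the subcategory $\G_w \subset \D^b(\X)$ and the inverse equivalence $\D^b(\X^{ss}) \xrightarrow{\sim} \G_w$ are $\op{Perf}(\fB)$-linear, i.e.\ closed under tensoring with $f^\ast E$ for $E \in \op{Perf}(\fB)$. This reduces to the following weight calculation: for any $z \in Z_i$, the $\Gm$-action on the fiber $(f^\ast E)_z$ factors through the composition $\Gm \xrightarrow{\lambda_i} \op{Aut}_\X(z) \to \op{Aut}_\fB(f(z))$, which is trivial by hypothesis. Therefore $L\sigma_i^\ast f^\ast E$ is concentrated in $\lambda_i$-weight $0$, so $L\sigma_i^\ast (F^\bdot \otimes f^\ast E) \simeq L\sigma_i^\ast F^\bdot \otimes^L L\sigma_i^\ast f^\ast E$ has exactly the same $\lambda_i$-weights as $L\sigma_i^\ast F^\bdot$, preserving the grade restriction rule defining $\G_w$.

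Given this linearity, base change along $\op{Perf}(b) \to \op{Perf}(\fB)$ converts the splitting $\op{Perf}(\X^{ss}) \to \op{Perf}(\X)$ into the desired splitting $\op{Perf}(\X^{ss}_b) \to \op{Perf}(\X_b)$ of $i^\ast$. For the $\D^b$ statement, I would pass first to ind-completions: the $\op{Perf}(\fB)$-linear splitting extends uniquely to a $\D_{qc}(\fB)$-linear colimit-preserving functor $\D_{qc}(\X^{ss}) \to \D_{qc}(\X)$, whose base change $\D_{qc}(\X^{ss}_b) \to \D_{qc}(\X_b)$ splits $i^\ast$ on quasicoherent derived categories. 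Under the identification of $\D^b(\X_b)$ with the derived category of dg-modules over a Koszul resolution $f^\ast \cA^\bdot \to \cO_{\X_b}$ whose cohomology sheaves are coherent and bounded over $\X$, it then suffices to observe that the base-changed splitting intertwines the $f^\ast \cA^\bdot$-module structures (by $\op{Perf}(\fB)$-linearity) and that, because it is computed on underlying $\cO_\X$-complexes by the splitting $\D^b(\X^{ss}) \to \D^b(\X)$, it preserves bounded coherent cohomology over $\X$.

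The main obstacle will be this last step, namely verifying that the base-changed splitting preserves $\D^b$ on the possibly singular derived fibers $\X_b$ and $\X^{ss}_b$, since $\op{Perf}$ and $\D^b$ no longer agree there. The $\op{Perf}(\fB)$-linearity established in step two is essentially what makes this compatibility automatic, but organizing the argument rigorously requires formulating the Morita-theoretic base change at the level of $\D^b$ (via dg-modules over $f^\ast\cA^\bdot$) rather than only at the level of $\op{Perf}$ or $\D_{qc}$; alternatively, when $\X^{ss}$ is proper one can shortcut this by transporting the integral kernel $\tilde{\cO}_\Delta$ of Section \ref{sect_main_result} to $\X_b \times \X^{ss}_b$ and observing that its pullback remains perfect, so convolution with it automatically preserves $\D^b$.
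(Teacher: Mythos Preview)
Your overall strategy and the key linearity step coincide with the paper's: both use the Morita theory of \cite{BFN10} and both reduce the $\op{Perf}(\fB)$-linearity of the splitting to the observation that $L\sigma_i^\ast f^\ast E$ has $\lambda_i$-weight zero under the triviality hypothesis, so $\G_w$ is stable under $(\bullet)\otimes f^\ast E$. Your treatment of the $\op{Perf}$ case via the tensor product $\op{Perf}(\{b\}) \otimes_{\op{Perf}(\fB)} \op{Perf}(\X)$ is exactly the paper's.

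The difference is in the $\D^b$ case, which you correctly flag as the delicate point. You propose ind-completing to $\D_{qc}$, base-changing there, and then arguing via the dg-module model over $f^\ast\cA^\bdot$ that the splitting preserves bounded coherent cohomology. This can be made to work, but the paper avoids the detour entirely by using the \emph{functor} category rather than the tensor product: it identifies
\[
\op{Fun}_{\op{Perf}(\fB)}\bigl(\op{Perf}(\{b\}),\op{Perf}(\X)\bigr)
\]
with the full subcategory of $\D_{qc}(\X_b)$ whose pushforward to $\X$ is perfect, and since $\X$ is smooth this is precisely $\D^b(\X_b)$. The $\op{Perf}(\fB)$-linearity of the splitting then gives the $\D^b$ splitting directly, with no passage through $\D_{qc}$ and no need to track coherence by hand. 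So the paper's dual use of $\op{Fun}$ for $\D^b$ and $\otimes$ for $\op{Perf}$ is what resolves exactly the obstacle you isolated.
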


\begin{rem}
When $\fB = \bA^r$ and $b = 0 \in \fB$, and the fiber over $b$ has codimension $r$, then the derived fiber agrees with the classical fiber. Hence the conclusion of the proposition is purely classical in this complete intersection case.
\end{rem}

\begin{proof}
The restriction $i^\ast : \op{Perf}(\X) \to \op{Perf}(\X^{ss})$ is symmetric monoidal dg-functor, and it is canonically a functor of module categories over the symmetric monoidal dg-category $\op{Perf}(\fB)^\otimes$. The subcategory $\G_w \subset \D^b(\X)$ used to construct the splitting in Theorem \ref{thm_kirwan_surj_prelim} is defined through conditions on the weights of $F|_{Z_i}$ with respect to the $\lambda_i$. If the homomorphism $\Gm \to \op{Aut}_{\fB}(f(z))$ is trivial, then for any $E^\bdot \in \op{Perf}(\fB)$, we have $f^\ast E^\bdot \otimes \G_w \subset \G_w$. It follows that $\G_w$ is canonically a $\op{Perf}(\fB)^\otimes$-module subcategory, and the splitting constructed in Theorem \ref{thm_kirwan_surj_prelim} is a splitting as module categories over $\op{Perf}(\fB)^\otimes$.

Because the splitting is $\op{Perf}(\fB)$-linear, the restriction functor
$$\op{Fun}_{\op{Perf}(\fB)} \left( \op{Perf}(\{b\}), \op{Perf}(\X) \right) \twoheadrightarrow \op{Fun}_{\op{Perf}(\fB)} \left( \op{Perf}(\{b\}), \op{Perf}(\X^{ss}) \right)$$
admits a splitting as well. We claim that the source (respectively target) category can be canonically identified with $\D^b(\X_b)$ (respectively $\D^b(\X^{ss}_b)$), and it thus follows that $\D^b(\X_b) \to \D^b(\X^{ss}_b)$ admits a splitting.

For any diagram of perfect stacks $X^\prime \to X \leftarrow Y$, one has a fully faithful embedding 
$$\op{Fun}_{\op{Perf}(X)}(\op{Perf}(X^\prime),\op{Perf}(Y)) \subset \op{Fun}_{\D_{qc}(X)}(\D_{qc}(X^\prime),\D_{qc}(Y)),$$
where the latter denotes continuous $\D_{qc}(X)$-linear functors. As a result of Theorem 1.2 of \cite{BFN10}, we can identify the latter functor category with $\D_{qc}(X^\prime \times_X Y)$. Explicitly in our situation, $\op{Fun}_{\op{Perf}(\fB)} \left( \op{Perf}(\{b\}), \op{Perf}(\X) \right)$ is equivalent to the full dg-subcategory of $D(\X_b)$ consisting of complexes such that the corresponding integral transform $\D_{qc}(\{b\}) \to \D_{qc}(\X)$ preserves perfect complexes. Because $\op{Perf}(\{b\})$ is generated by the structure sheaf, this is equivalent to the subcategory of $\D_{qc}(\X_b)$ of objects whose pushforward to $\X$ is perfect. Because $\X$ is smooth and $\X_b \subset \X$ is a closed substack, this is precisely $\D^b(\X_b)$.

We can apply a similar analysis to then tensor product. The $\op{Perf}(\fB)$-linearity of the splitting of $i^\ast$ implies that the restriction functor
$$\op{Perf}(\{b\}) \otimes_{\op{Perf}(\fB)} \op{Perf}(\X) \to \op{Perf}(\{b\}) \otimes_{\op{Perf}(\fB)} \op{Perf}(\X^{ss})$$
admits a splitting as well. These categories are identified, by Theorem 1.2 of \cite{BFN10}, with $\D_{qc}(\X_b)^c = \op{Perf}(\X_b)$ and $\D_{qc}(\X^{ss}_b)^c = \op{Perf}(\X^{ss}_b)$ respectively. 
\end{proof}

\begin{ex}
As a special case of Proposition \ref{prop_derived_fibers}, one obtains equivalences of derived categories of singularities. Namely, if $W : \X \to \bA^1$ is a function, sometimes referred to as a super potential, then the category of singularities corresponding to $W$ is
$$\op{B}(\X,W) := \D^b_{sing} (W^{-1}(0)) = \D^b (W^{-1}(0)) / \op{Perf}(W^{-1}(0))$$
Proposition \ref{prop_derived_fibers} implies that the restriction functor $\op{B}(\X,W) \to \op{B}(\X^{ss},W|_{\X^{ss}})$ splits. If for GIT quotients of $\X$ corresponding to two different linearizations, $\op{Perf}(\X^{ss}(\cL_1))$ and $\op{Perf}(\X^{ss}(\cL_2))$ can be identified with the same subcategory  of $\op{Perf}(\X)$ as in Proposition \ref{prop_birat_cobord}, then the corresponding categories of singularities are equivalent
$$\op{B}(\X^{ss}(\cL_1),W|_{\X^{ss}(\cL_1)}) \simeq \op{B}(\X^{ss}(\cL_2),W|_{\X^{ss}(\cL_2)})$$
Note that by an equivariant generalization of \cite{Or12}, these results could be equivalently formulated in terms of categories of matrix factorizations.

In addition, if we introduce an auxiliary $\Gm$ action on $X$ with respect to which $W \in \Gamma(\cO_X)$ has weight $2$, then the categories $\op{B}(X/G \times \Gm, W)$ are often referred to as graded categories of singularities \cite{Sh12}. Proposition \ref{prop_derived_fibers} applies in this situation as well, where $W$ is interpreted as a morphism $X /G \times \Gm \to \bA^1 / \Gm$.
\end{ex}

Proposition \ref{prop_derived_fibers} also applies to the context of hyperk\"{a}hler reduction. Let $T$ be a torus, or any group whose connected component is a torus, and consider a Hamiltonian action of $T$ on a hyperk\"{a}hler manifold $X$, or more generally an algebraic symplectic manifold, with algebraic moment map $\mu : X/T \to \lie{t}^\dual$. One forms the hyperk\"{a}hler quotient by choosing a linearization on $X/T$ and defining $X///T = \mu^{-1}(0) \cap \X^{ss}$. Thus we are in the setting of Proposition \ref{prop_derived_fibers}.

\begin{cor} \label{cor_hyperkahler}
Let $T$ be an extension of a finite group by a torus. Let $T$ act on an algebraic symplectic manifold $X$ with algebraic moment map $\mu : X \to \lie{t}^\dual$. Then the restriction functors
\begin{gather*}
\D^b(\mu^{-1}(0)/T) \to \D^b(\mu^{-1}(0)^{ss} / T) \\
\op{Perf}(\mu^{-1}(0)/T) \to \op{Perf}(\mu^{-1}(0)^{ss} / T)
\end{gather*}
both split, assuming that $\mu^{-1}(0)$ has the expected codimension.
\end{cor}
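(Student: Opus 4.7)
The plan is to realize the moment map as a morphism to a perfect stack and then invoke Proposition \ref{prop_derived_fibers}. Let $T_0 \subset T$ denote the identity component (a torus by hypothesis) and $F := T/T_0$ the finite quotient. Because a torus acts trivially on its own Lie coalgebra, the coadjoint $T$-action on $\lie{t}^\dual = \lie{t}_0^\dual$ factors through $F$, so the $T$-equivariant moment map descends to a morphism of stacks
$$\bar\mu : X/T \longrightarrow \lie{t}^\dual/F,$$
whose target is perfect as the finite quotient of an affine scheme.

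The next step is to verify the hypothesis of Proposition \ref{prop_derived_fibers} at the point $0 \in \lie{t}^\dual/F$. Any distinguished one-parameter subgroup $\lambda_i : \Gm \to T$ arising in the KN stratification of the unstable locus of $X/T$ must factor through the identity component $T_0$, simply because $\Gm$ is connected. Hence for every point $z$ in a critical locus $Z_i$, the composition
$$\Gm \xrightarrow{\lambda_i} \op{Aut}_{X/T}(z) \subset T \longrightarrow \op{Aut}_{\lie{t}^\dual/F}(0) = F$$
is trivial, because it already becomes trivial after projecting $T \twoheadrightarrow F$. Proposition \ref{prop_derived_fibers} then directly produces splittings of the restriction functors on both $\D^b$ and $\op{Perf}$ of the derived fibers of $\bar\mu$ and of $\bar\mu|_{(X/T)^{ss}}$ over $0$.

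Finally, I will identify the derived fibers with the classical ones. The morphism $\bar\mu$ factors as $X/T \xrightarrow{\mu} \lie{t}^\dual \to \lie{t}^\dual/F$, and the point $0 \in \lie{t}^\dual/F$ lifts to $0 \in \lie{t}^\dual \cong \bA^{\dim T}$, so by base change the derived fiber of $\bar\mu$ over $0$ coincides with the derived fiber of $\mu$ over $0$. The assumption that $\mu^{-1}(0)$ has the expected codimension $\dim T$ means precisely that the Koszul resolution of $\cO_{\{0\}}$ on $\lie{t}^\dual$ pulls back to a resolution of $\cO_{\mu^{-1}(0)}$ on $X$, so the derived and classical fibers agree and the splittings from the previous step become the splittings asserted in the corollary. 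The only genuine subtlety I expect is the bookkeeping around the finite component $F$ in the first two paragraphs; in the purely toric case one may replace $\lie{t}^\dual/F$ by the affine scheme $\lie{t}^\dual$ and the hypothesis of Proposition \ref{prop_derived_fibers} holds trivially.
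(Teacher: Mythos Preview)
Your overall strategy matches the paper's: invoke Proposition~\ref{prop_derived_fibers} for the moment map viewed as a morphism of stacks, and then identify the derived fiber over $0$ with $\mu^{-1}(0)/T$. The paper takes $\fB = \lie{t}/T$; your choice $\fB = \lie{t}^\dual/F$ actually makes the verification of the automorphism hypothesis cleaner, since each $\lambda_i$ lands in $T_0$ and hence dies in $F$.

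There is, however, a genuine gap in your identification of the fiber. The factorization you write, $X/T \xrightarrow{\mu} \lie{t}^\dual \to \lie{t}^\dual/F$, does not exist in general: the moment map is $T$-\emph{equivariant}, not $T$-\emph{invariant}, because the finite quotient $F = T/T_0$ may act nontrivially on $\lie{t}^\dual$ (take $T = O(2)$, where $F = \bZ/2$ acts by $-1$ on $\lie{t}$). What one actually has is that the $F$-torsor $\lie{t}^\dual \to \lie{t}^\dual/F$ pulls $\bar\mu$ back to the map $X/T_0 \to \lie{t}^\dual$, so base-changing the $k$-point $0$ gives
\[
(X/T) \times^L_{\lie{t}^\dual/F} \{0\} \;\simeq\; (X/T_0) \times^L_{\lie{t}^\dual} \{0\} \;=\; \mu^{-1}(0)/T_0,
\]
not $\mu^{-1}(0)/T$. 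Equivalently: the $k$-point $0 \in (\lie{t}^\dual/F)(k)$ has automorphism group $F$, and that copy of $F$ is absorbed by the $2$-fiber product. Your argument as written therefore only yields the splittings for the $T_0$-quotient. One repair is to observe that the section produced in the proof of Proposition~\ref{prop_derived_fibers} is $\op{Perf}(\lie{t}^\dual/F)$-linear, hence compatible with the residual $F$-action, and then descend from $\mu^{-1}(0)/T_0$ to its $F$-quotient $\mu^{-1}(0)/T$; alternatively, if $T$ itself is abelian then $F$ acts trivially on $\lie{t}^\dual$, your factorization becomes valid, and the proof goes through as written.
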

\begin{proof}
Because the adjoint representation of the connected component of $T$ on $\lie{t}$ is trivial, the condition on automorphism groups in Proposition \ref{prop_derived_fibers} holds automatically for moment map $\mu : X / T \to \lie{t} / T$. The fact that $\mu^{-1}(0)$ has the expected codimension implies that the derived fiber and classical fiber agree.
\end{proof}

Because $\mu$ is a moment map for the $T$-action, the fiber $\mu^{-1}(0)$ will have expected codimension whenever $X^{ss} = X^s$. Then $T$ is abelian this will be the case for generic linearizations. Thus we have a hyperk\"{a}hler analog of Corollary \ref{cor_all_quotients_equivalent}.

\begin{cor}
Let $X$ be a projective-over-affine algebraic symplectic manifold with a Hamiltonian action of a torus $T$. Then the hyperk\"{a}hler quotient with respect to any two generic linearizations are derived-equivalent.
\end{cor}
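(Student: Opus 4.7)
The plan is to combine Corollary~\ref{cor_all_quotients_equivalent} on the ambient quotient $X/T$ with the Morita-theoretic descent to moment-map fibres from Proposition~\ref{prop_derived_fibers}. First, the $T$-invariant algebraic symplectic form provides a $T$-equivariant trivialization $\omega_X \simeq \cO_X$, so $X$ is equivariantly Calabi--Yau as a $T$-variety. By \cite{DH98}, every codimension-one wall in the $T$-ample cone $\cC^T(X)$ is truly faithful and therefore balanced, so any two generic linearizations $\cL_0,\cL_1$ can be joined by a finite chain of balanced wall crossings. It then suffices to establish the equivalence across a single such wall $\cL_\pm = \cL_\ast \pm \epsilon \cL'$: by Proposition~\ref{prop_var_git}, both $\D^b(X^{ss}(\cL_+)/T)$ and $\D^b(X^{ss}(\cL_-)/T)$ are identified via the corresponding restriction functors with the same grade-restriction window $\G_w \subset \D^b(X^{ss}(\cL_\ast)/T)$.

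Next, since $T$ is abelian, the coadjoint action on $\lie{t}^\dual$ is trivial, so the moment map descends to $\bar\mu \colon X^{ss}(\cL_\ast)/T \to \lie{t}^\dual$, where $\fB := \lie{t}^\dual$ is regarded as a scheme with no group action. The automorphism hypothesis of Proposition~\ref{prop_derived_fibers} is then automatic. Under the genericity assumption $X^{ss}(\cL_\pm) = X^s(\cL_\pm)$, the exact sequence \eqref{eqn_moment_sequence} shows that $d\mu$ is fibrewise surjective on the open union $X^{ss}(\cL_+) \cup X^{ss}(\cL_-)$, so $\mu^{-1}(0)$ has the expected codimension there and the derived fibre of $\bar\mu$ over $0$ agrees with the classical fibre $\mu^{-1}(0) \cap X^{ss}(\cL_\pm)$, which is moreover smooth by the submersion theorem. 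The key technical observation is that $\G_w \subset \D^b(X^{ss}(\cL_\ast)/T)$ is a $\op{Perf}(\lie{t}^\dual)$-linear subcategory: for any $E^\bdot \in \op{Perf}(\lie{t}^\dual)$, the trivial $T$-action on $\lie{t}^\dual$ forces $\bar\mu^\ast E^\bdot$ to have $\lambda_i$-weight zero along each $Z_i$, so tensoring by $\bar\mu^\ast E^\bdot$ preserves the weight window defining $\G_w$.

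With this linearity in hand, the Morita-theoretic argument of Proposition~\ref{prop_derived_fibers} runs verbatim for the equivalences $\G_w \simeq \D^b(X^{ss}(\cL_\pm)/T)$: tensoring over $\op{Perf}(\lie{t}^\dual)$ with $\op{Perf}(\{0\})$ and invoking Theorem~1.2 of \cite{BFN10} to identify the resulting tensor products with categories on the fibre yields equivalences
$$\G_w \otimes_{\op{Perf}(\lie{t}^\dual)} \op{Perf}(\{0\}) \;\simeq\; \D^b(\mu^{-1}(0)^{ss}(\cL_\pm)/T) \;=\; \D^b(X///_{\cL_\pm} T).$$
Since the left-hand side depends only on $\G_w$ and not on the sign, composing the two sign choices gives the desired derived equivalence across this single wall, and chaining across the connecting sequence of walls gives the corollary. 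The main obstacle is upgrading Proposition~\ref{prop_derived_fibers} from a mere splitting on fibres to an honest equivalence, which requires noting that the splitting upstairs is already an equivalence by Theorem~\ref{thm_kirwan_surj} and that tensoring a $\op{Perf}(\lie{t}^\dual)$-linear equivalence over $\op{Perf}(\lie{t}^\dual)$ with $\op{Perf}(\{0\})$ preserves the equivalence; this is a standard manipulation once smoothness of $X^s(\cL_\pm) \cap \mu^{-1}(0)$ guarantees $\op{Perf}$ and $\D^b$ agree on the fibres.
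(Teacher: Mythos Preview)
Your argument is correct and follows essentially the same strategy as the paper: reduce to a single balanced wall crossing using the equivariant Calabi--Yau condition and \cite{DH98}, identify both $\op{Perf}(X^{ss}(\cL_\pm)/T)$ with the same window $\G_w$, then descend to $\mu^{-1}(0)$ via the Morita theory of \cite{BFN10}. The only noteworthy difference is organizational: the paper packages the descent step into Corollary~\ref{cor_hyperkahler} applied to $X^{ss}(\cL_\ast)$, which requires checking that $\mu^{-1}(0)$ has expected codimension on the \emph{wall} semistable locus (using that the positive-dimensional stabilizer locus consists only of the $Z_i$); you instead tensor the abstract $\op{Perf}(\lie{t}^\dual)$-linear equivalence $\G_w \simeq \op{Perf}(X^{ss}(\cL_\pm)/T)$ directly with $\op{Perf}(\{0\})$ and only invoke the geometric identification on the smooth open substacks $X^{ss}(\cL_\pm)$, where expected codimension is immediate from \eqref{eqn_moment_sequence}. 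This sidesteps the slightly more delicate codimension check on the wall and is a mild streamlining, but the underlying mechanism is identical.
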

\begin{proof}
By Corollary \ref{cor_all_quotients_equivalent} all $\X^{ss}(\cL)$ for generic $\cL$ will be derived equivalent. More precisely there will be a finite sequence of wall crossings connecting any two generic linearizations such that for each wall crossing one can identify a subcategory of $\op{Perf}(\X^{ss}(\cL_0))$ which maps isomorphically, via restriction, to both $\op{Perf}(\X^{ss}(\cL_\pm))$. The wall crossings in this case are truly faithful, in the sense of \cite{DH98}, and balanced in the sense discussed above. Thus the loci of points of $X^{ss}(\cL_0)$ which have positive dimensional stabilizers is the disjoint union of the $Z_i$ (which are codimension at least $2$), and their stabilizers are exactly the $\lambda_i(\Gm)$. Using this one can show that $\mu^{-1}(0)$ has the expected codimension in $X^{ss}(\cL_0)$. Hence we can apply Corollary \ref{cor_hyperkahler} to conclude that the splittings $\op{Perf}(\X^{ss}(\cL_\pm)) \subset \op{Perf}(\X^{ss}(\cL_0))$ descend to $\mu^{-1}(0)$, giving an equivalence $\op{Perf}(\X^{ss}(\cL_+)) \simeq \op{Perf}(\X^{ss}(\cL_-))$.
\end{proof}

\bibliography{references}{}
\bibliographystyle{plain}

\end{document}